\newcommand{\setword}[2]{%
  \phantomsection
  #1\def\@currentlabel{\unexpanded{#1}}\label{#2}%
}
\newtheorem{thm}{Theorem}[section]
\newtheorem{lem}[thm]{Lemma}
\newtheorem{prop}[thm]{Proposition}
\newtheorem{remark}[thm]{Remark}
\numberwithin{equation}{section}
\theoremstyle{definition}
\newtheorem{definition}[thm]{Definition}
\begin{document}

\allowdisplaybreaks % Allow equations to break across pages
%%%%%%%%%%%%%%%%%%%%%%%%%%%%%%%%%%%%%%%%%%%%%
%%%%%%%%%%%%%%%%%%%%%%%%%%%%%%%%%%%%%%%%%%%%%
 \title[Spectral analysis, maximum principles and shape optimization]{Spectral analysis, maximum principles and shape optimization for nonlinear superposition operators of mixed fractional order}

 \author[Yergen Aikyn, Sekhar Ghosh, Vishvesh Kumar, and Michael Ruzhansky]{Yergen Aikyn, Sekhar Ghosh, Vishvesh Kumar, and Michael Ruzhansky}

\address[Yergen Aikyn]{Department of Mathematics: Analysis, Logic and Discrete Mathematics, Ghent University, Ghent, Belgium}
\email{aikynyergen@gmail.com}

\address[ Sekhar Ghosh]{Department of Mathematics, National Institute of Technology Calicut, Kozhikode, Kerala, India - 673601}
\email{sekharghosh1234@gmail.com / sekharghosh@nitc.ac.in}
\address[Vishvesh Kumar]{Department of Mathematical Sciences,
		Indian Institute of Technology (BHU),
		Varanasi, Uttar Pradesh, 221005, India.}
\email{vishveshmishra@gmail.com }

\address[Michael Ruzhansky]{Department of Mathematics: Analysis, Logic and Discrete Mathematics, Ghent University, Ghent, Belgium\newline and \newline
School of Mathematical Sciences, Queen Marry University of London, United Kingdom}
\email{michael.ruzhansky@ugent.be}
\date{}

\begin{abstract}
 The main objective of this paper is to investigate the spectral properties, maximum principles, and shape optimization problems for a broad class of nonlinear ``superposition operators"  defined as continuous superpositions of operators of mixed fractional order, modulated by a signed finite Borel measure on the unit interval. This framework encompasses, as particular cases, mixed local and nonlocal operators such as $-\Delta_p+(-\Delta_p)^s$, finite (possibly infinite) sums of fractional $p$-Laplacians with different orders, as well as operators involving fractional Laplacians with ``wrong" signs.
 
 The main findings, obtained through variational techniques, concern the spectral analysis of the Dirichlet eigenvalue problem associated with general superposition operators with special emphasis on various properties of the first eigenvalue and its corresponding eigenfunction. 

We establish weak and strong maximum principles for positive superposition operators by introducing an appropriate notion of the {\it nonlocal tail} for this class of superposition operators and deriving a logarithmic estimate, both of which are of independent interest. Utilizing these newly developed tools, we further investigate the spectral properties of such superposition operators and prove that the first eigenvalue is isolated and simple. Moreover, we show that the eigenfunctions corresponding to positive eigenvalues are globally bounded and that they change sign when associated with higher eigenvalues. In addition, we demonstrate that the second eigenvalue is well-defined and provide the mountain pass characterization.

Finally, we address shape optimization problems, in particular, the Faber--Krahn inequality associated with the principal frequency associated with the superposition operators.

\end{abstract}

\keywords{Superposition Operators, Mixed Local-Nonlocal Operators, Eigenvalue Problems, Strong Maximum Principle, Weak Maximum Principle, Faber--Krahn Inequality, Shape Optimization, Nonlocal Tail, Second Eigenvalue\\
\textit{MSC 2020}: 35P30, 35M12, 35R11, 35R06, 35J20, 35J60, 35J92}

\maketitle

\tableofcontents
%%%%%%%%%%%%%%%%%%%%%%%%%%%%%%%%%%%%%%%%%%%%%%%
\section{Introduction and main results}
%%%%%%%%%%%%%%%%%%%%%%%%%%%%%%%%%%%%%%%%%%%%%%%

Following Lord Rayleigh's celebrated conjecture in The Theory of Sound \cite{R1945}, the investigation of isoperimetric inequalities for eigenvalues of elliptic operators has remained a foundational theme in spectral theory. The conjecture states ``\textit{among all planar domains with a fixed area, the disk uniquely minimizes the first Dirichlet eigenvalue of the Laplacian}". Specifically, suppose $\lambda_1(\Omega)$ is the first eigenvalue of the eigenvalue problem 
\begin{align}
    -\Delta u&=\lambda u \text { in } \Omega,\\ \nonumber
    u&=0 \text { on } \partial\Omega,
\end{align}
where $\Omega \subset\mathbb{R}^2$ is a bounded domain. If $B_r$ is a ball such that $|B_r|=|\Omega|$, then Rayleigh’s conjecture asserts that $\lambda_1(B_r)\leq\lambda_1(\Omega)$. This conjecture was independently proven by Faber \cite{F1923}, through methods of discretization and approximation, and by Krahn \cite{K1925} for $N=2$. Later, Krahn \cite{K1926} extended the result to higher dimensions using the classical isoperimetric inequality and the Coarea formula. This fundamental result is now widely recognized as the Rayleigh-Faber-Krahn inequality. In \cite{DK2008}, it is proved that the equality in the Faber-Krahn inequality holds for $N>2$, only if $\Omega$ is itself a ball, up to a set of measure zero. For general discussions of this topic, we refer to  \cite{PS51, RSS} and references therein.

The main goal of this paper is to study the spectral properties and related shape optimization problems of a nonlinear superposition operator of fractional orders with zero Dirichlet boundary data. Specifically, we will establish the existence and the properties of the principal eigenvalues and eigenfunctions by obtaining a strong minimum/maximum principle and a logarithmic estimate. We, in particular, provide a characterization of the second eigenvalue for the superposition operator. We also investigate the associated shape optimization problem, particularly, we establish the Faber-Krahn inequality for this operator. We are concerned with the following nonlinear superposition operator of fractional operators.
\begin{equation}\label{super operator}
\mathcal{L}_{\mu, p} u:=\int_{[0,1]}(-\Delta)_{p}^{s} u d \mu(s), 
\end{equation}
where the Borel measure $\mu$ is defined as 
\begin{equation}\label{measure}
\mu:=\mu^{+}-\mu^{-}
\end{equation}
with $\mu^{+}$and $\mu^{-}$ being two nonnegative finite Borel measures over $[0,1]$. We define 
\begin{equation}\label{positive part of L}
    \mathcal{L}_{\mu, p}^{+} u:=\int_{[0,1]}(-\Delta)_p^{s} u d \mu^{+}(s),
\end{equation}
and
\begin{equation*}
    \mathcal{L}_{\mu, p}^{-} u:=\int_{[0,1]}(-\Delta)_p^{s} u d \mu^{-}(s).
\end{equation*}
Therefore, from \eqref{measure}, we can decompose the operator $\mathcal{L}_{\mu,p}$ as
\begin{equation*}
    \mathcal{L}_{\mu, p} = \mathcal{L}_{\mu, p}^{+} - \mathcal{L}_{\mu, p}^{-}.
\end{equation*}
Moreover, we assume that there exist $\bar{s} \in(0,1]$ and $\gamma \geq 0$ such that
\begin{align}
    \mu^{+}([\bar{s}, 1])&>0,  \label{measure1}\\
    \mu^{-}([\bar{s}, 1])&=0 \label{measure2} \text{ and }\\
    \mu^{-}([0, \bar{s})) &\leq \gamma \mu^{+}([\bar{s}, 1]).\label{measure3}
\end{align}
Note that the assumption \eqref{measure1}, asserts that there exists $s_{\sharp} \in[\bar{s}, 1]$ such that 
\begin{equation}\label{measure4}
\mu^{+}\left(\left[s_{\sharp}, 1\right]\right)>0.
\end{equation}

We will see later that \( s_\sharp \) also plays the role of a critical exponent. It is important to emphasize that a certain degree of freedom is available in the selection of \( s_\sharp \) as introduced above. Nonetheless, the strength and sharpness of the ensuing results are significantly influenced by this choice: the conclusions become more robust when \( s_\sharp \) is chosen to be ``as large as possible'' under the constraint imposed by condition~\eqref{measure4}. In particular, one may take \( s_\sharp := \bar{s} \) without loss of generality; however, whenever it is feasible to select a larger admissible value of \( s_\sharp \), such a choice leads to both qualitative and quantitative refinements of the obtained results.

An intriguing aspect of the superposition operator introduced in \eqref{super operator} lies in its capacity to encompass a broad class of well-known operators. In particular, it includes the negative $p$-Laplacian $-\Delta_{p}$ when $\mu$ is the Dirac measure concentrated at $1$, the fractional $p$-Laplacian $(-\Delta)_{p}^{s}$ when $\mu$ is the Dirac measure concentrated at a fractional order $s \in (0,1)$, and the so-called mixed-order operator $-\Delta_{p} + \epsilon (-\Delta)_{p}^{s}$ with $\epsilon \in (0,1]$ when $\mu$ is given by the sum of two Dirac measures $\delta_{1} + \epsilon \delta_{s}$, $s \in (0,1)$.  A noteworthy feature  of the operators considered in this work is their capacity to simultaneously encompass nonlinear operators together with an infinite (possibly uncountable) family of fractional operators. Furthermore, certain components of these operators may possess the “wrong sign,” provided that a dominant contribution, typically associated with terms of higher fractional order, ensures overall control of the operator.

Beyond their theoretical importance, such superposition operators naturally arise in various applied contexts, including anomalous diffusion, population dynamics, and mathematical biology, particularly in models involving Gaussian processes and L\'evy flights. For further details and related discussions, we refer the reader to \cite{DV21, DPSV25}.

There has been substantial research on the nonlinear eigenvalue problems and related inequalities involving different local, nonlocal and mixed local-nonlocal elliptic operators such as the $p$-Laplacian, fractional $p$-Laplacian, mixed local and nonlocal $p$-Laplacian, etc. The nonlinear eigenvalue involving $p$-Laplacian emerged from the celebrated study due to Anane \cite {A87}, Bhattacharya \cite{B89}, and Lindqvist \cite{L90} (see also \cite{L17}). Anane \cite {A87} proved that the first eigenvalue of the following nonlinear Dirichlet eigenvalue problem is simple and isolated for $1<p<\infty$:
\begin{align}\label{anane p}
     -\text{div}(|\nabla u|^{p-2}\nabla u)&=\lambda m(x)|u|^{p-2}u \text { in } \Omega, \nonumber \\ 
u&=0 \text { in } \mathbb{R}^{N} \backslash \Omega,
\end{align}
where $\Omega\subset\mathbb{R}^N$ is a bounded domain with boundary $\partial\Omega$ of class $C^{2, \alpha}, 0<\alpha<1,$ and $m\in L^{\infty}(\Omega)$ with $\text{meas}\{x\in\Omega : m(x)>0\}\neq 0$. We mention that for $p=2$ with $m\in C(\bar{\Omega})$ such that $m(x_0)>0$, for some $x_0\in\Omega$, the existence of a simple and principal eigenvalue is guaranteed by Hess and Kato \cite{HK80}, whereas for $m\equiv 1$, it follows from the well-known Krein-Rutman theorem. The result of Anane \cite{A87} was improved by Bhattacharya \cite{B89} for $m(x)\equiv1$ in a bounded domain of class $C^{2}$ for the following eigenvalue problem,
\begin{align}\label{lind p}
     -\text{div}(|\nabla u|^{p-2}\nabla u)&=\lambda|u|^{p-2}u \text { in } \Omega, \nonumber \\ 
u&=0 \text { on } \partial\Omega.
\end{align}
Lindqvist \cite{L90, L92} proved that the first eigenvalue $\lambda_1>0$ to the problem \eqref{lind p} is simple for any bounded domain $\Omega\subset\mathbb{R}^N$ and $m(x)\equiv1$. Moreover, $\lambda_1$ is principal and it coincides with the minimum of the following Rayleigh quotient:
$$\mathcal{R}=\min\frac{\int_\Omega |\nabla u|^pdx}{\int_\Omega |u|^pdx}.$$
Garcia Azorero and Peral Alonso \cite{GP87} established the existence of a sequence of positive eigenvalues employing the Lusternick-Schnirelmann theory \cite{Szulkin}. L\^e \cite{L06} presented a detailed review on the existence and properties of eigenvalues and eigenfunctions of the $p$-Laplacian prescribed with different boundary conditions. For further details, we refer to \cite{AH95, DR99, BD06, Cuesta, MR02, DKN97, FL2010} and the references therein.

The following nonlocal extension of the problem \eqref{lind p} has also received substantial interest in characterizing eigenvalues and eigenfunctions.
\begin{align}\label{frac p}
     (-\Delta_p)^s u&=\lambda|u|^{p-2}u \text { in } \Omega, \nonumber \\ 
u&=0 \text { in } \mathbb{R}^{N} \backslash \Omega,
\end{align}
where $(-\Delta_p)^s u$ represents the standard fractional $p$-Laplacian. For $p=2$, the problem \eqref{frac p} was investigated by Servadei and Valdinoci \cite{SV13, SV15}, where the authors obtained the existence of a sequence of positive eigenvalues of Lusternik-Schnirelmann type. They proved that the first eigenvalue coincides with the Rayleigh quotient and is simple and isolated. Moreover, all eigenfunctions are bounded, and all eigenfunctions other than the principal eigenfunction must be sign-changing. Further, they obtained the continuity of eigenfunctions in the sense of viscosity solutions and a strong maximum principle to conclude that the first eigenfunction must be of constant sign.

In the nonlinear case, Lindgren and Lindqvist \cite{LL14} introduced the fractional Rayleigh quotient for a generalised nonlocal eigenvalue problem associated with fractional $p$-Laplacian of fraction order $\alpha-\frac{N}{p}$ such that $N<\alpha p<N+p$ and $p\geq 2$. They used the notion of viscosity solutions to prove that the first eigenfunction is strictly positive, which is a minimiser for the Rayleigh quotient. Moreover, the first eigenvalue is simple and isolated. They also characterized the limiting case as $p\rightarrow\infty$ and studied the corresponding fractional $\infty$-Laplacian eigenvalue problem extending the works of Juutinen et al. \cite{JLM1999} (see also \cite{JL2005}). Later, Franzina and Palatucci \cite{FP14} completemented the study of \cite{LL14} for the $(s,p)$-eigenvalues in a bounded domain for $s\in(0,1)$ and $p\in(1,\infty)$. They proved that any positive eigenvalue corresponding to positive eigenfunctions must be the Rayleigh quotient, and any $(s,p)$-eigenfunctions are globally bounded. The limiting case $p\rightarrow\infty$, known as the nonlocal Cheeger problem, was addressed by Brasco et al. \cite{BLP2014}. The existence and variational characterization of the second eigenvalue to the problem \eqref{frac p} is established in Brasco and Parini \cite{BP16}.

\iffalse
We mention that eigenvalue problems under the noncommutative geometry of domains, in particular in a domain of Lie groups, have also been studied involving local operators. In recent years, problems involving nonlocal operators have drawn significant attention. For instance, see \cite{MS78, FL10, FP81, CC21, CC19, GU21a, GK25, GKR24} and the reference therein.
\fi

Recently, eigenvalue problems having both local and nonlocal effects have drawn significant attention. Del Pezzo et al. \cite{DFR19} studied similar properties of eigenvalues and eigenfunctions for the mixed local and nonlocal operator, $-\Delta_p -\Delta_{J,p}$, where 
$$\Delta_{J,p}u:=-2\int_{\mathbb{R}^N}J(x-y)|u(x)-u(y)|^{p-2}(u(x)-u(y))dy.$$
The kernel $J:\mathbb{R}^N\rightarrow \mathbb{R}$ is a nonnegative, symmetric, radial, continuous function such that $J$ has compact support, $J(0)>0$ and $\int_{\mathbb{R}^N}J(x)dx=1$. In \cite{DFR19}, the authors proved that there exists a sequence of eigenvalues $(\lambda_n)$ such that $\lambda_n\rightarrow\infty$ to the following problem,
\begin{align}\label{mixed p}
     -\Delta_p u -\Delta_{J,p} u&=\lambda|u|^{p-2}u \text { in } \Omega, \nonumber \\ 
u&=0 \text { in } \mathbb{R}^{N} \backslash \Omega.
\end{align}
Each eigenfunction $\phi\in C^{1,\alpha}(\bar{\Omega})$ for some $\alpha\in(0,1)$. In particular, the first eigenvalue is simple and isolated. They used the equivalence of eigenfunctions to the viscosity solutions to obtain the regularity of eigenfunctions and   the strict positivity of the first eigenfunction. Palatucci and Piccinini \cite{PP26} proved the existence of eigenfunction for $-\Delta_p +(-\Delta_p)^s$ operator for $p\in(1,\infty)$ and $s\in(0,1),$ and showed that any eigenfunction is globally bounded. In particular, the eigenvalue corresponding to a positive eigenfunction coincides with the mixed Rayleigh quotient. Goel and Sreenadh \cite{GS19} proved the existence and characterization of the second eigenvalue for the mixed local and nonlocal operator: $-\Delta_p +(-\Delta_p)^s$. It is noteworthy to mention the notion of `\textit{nonlocal tail}' \cite{DKP14, DKP16}, which plays a crucial role in obtaining the existence and regularity of solutions to problems involving nonlocal operators. On the other hand, for obtaining regularity of solutions and shape optimization, the symmetrization principle proved in Frank-Seiringer \cite{FS-2008}, which generalizes the classical Polya-Szeg\"o inequality \cite{PS51} to the nonlocal case, plays a significant role. For further details on the development related to eigenvalue problems and the regularity of solutions to problems involving nonlocal operators, we refer to \cite{BDD22, I23, BDVV, BM25, LG24, LGG25, GK22, SVWZ, DM2022, GK25, GKR24, FL10} and the references therein.

We now turn our attention to the Dirichlet eigenvalue problem involving the superposition operator $\mathcal{L}_{\mu, p}$:
\begin{equation}\label{Eig problem 00}
    \begin{cases}\mathcal{L}_{\mu, p} u=\lambda |u|^{p-2}u & \text { in } \Omega,  \\ 
u=0 & \text { in } \mathbb{R}^{N} \backslash \Omega,\end{cases}
\end{equation}
where $\Omega$ is an open, bounded subset of $\mathbb{R}^{N}$ $(N \geq 2)$ with Lipschitz boundary.

We recall that $\lambda \in \mathbb{R}$ is called $(s, \mu)$-eigenvalue of \eqref{Eig problem 00} if there exists a nontrivial (weak) solution $u \in X_{p}(\Omega)\setminus\{0\}$ of \eqref{Eig problem 00}. Correspondingly,  $u \in X_{p}(\Omega)\setminus\{0\}$ is called an $(s, \mu)$-eigenfunction associated with the eigenvalue $\lambda$. The spectrum of $\mathcal{L}_{ \mu, p}$ denoted by $\sigma(s, \mu)$ is set of all $(s, \mu)$-eigenvalues of \eqref{Eig problem 00}.

The linear case, $p=2$, for the problem \eqref{Eig problem 00} was investigated in Dipierro et al. \cite{DPSV25}. They proved under the assumptions \eqref{measure1}--\eqref{measure4} that there exists a positive eigenvalue $\lambda_{1, \mu}(\Omega)>0$ to the following problem: 
\begin{equation}\label{Eigen 2}
    \begin{cases}\mathcal{L}_{\mu, 2} u=\lambda u & \text { in } \Omega,  \\ 
u=0 & \text { in } \mathbb{R}^{N} \backslash \Omega.\end{cases}
\end{equation}
Moreover, $\lambda_{1, \mu}(\Omega)$ is attained in the minimization problem:
\begin{equation}\label{first eigenvalue 1}
\lambda_{1, \mu}(\Omega):=\min _{u \in X(\Omega) \backslash\{0\}} \frac{\int_{[0,1]}[u]_{s}^{2} d \mu^{+}(s)-\int_{[0, \bar{s})}[u]_{s}^{2} d \mu^{-}(s)}{\int_{\Omega}|u(x)|^{2} d x}, 
\end{equation}
where $X(\Omega):=X_2(\Omega)$ is fractional Sobolev space defined in Section \ref{sec2}. 
In particular, for a sufficiently small $\bar{\gamma}:=\bar{\gamma}(N,R,s)>0$ such that
\begin{equation}\label{measure5}
\mu^{-}((0, \bar{s})) \leq \bar{\gamma} \delta \mu^{+}([\bar{s}, 1-\delta]),
\end{equation}
for some $\delta \in(0,1-\bar{s}]$, every eigenfunction corresponding to the first eigenvalue $\lambda_{1, \mu}(\Omega)$ is of fixed sign. Recently, the spectral analysis has been studied by Dipierro et al. \cite{DPSV25-1} for the operator $\mathcal{L}_{\mu,2}$. Specifically, they obtained the existence of a sequence of eigenvalues $(\lambda_{k})$ to problem \eqref{Eigen 2}  such that 
$$0<\lambda_{1} \leq \lambda_{2} \leq \cdots \leq\lambda_{k} \leq \cdots~\text { and  }~\lim _{k \rightarrow+\infty} \lambda_{k}=+\infty.$$
Moreover, for each $k\in\mathbb{N}$, the eigenpair $(\lambda_k, e_k)$ can be explicitly identified as
\begin{equation}\label{seq of eigvalues}
\lambda_{k+1}=\min _{u \in E_{k+1} \backslash\{0\}} \frac{\int_{[0,1]}[u]_{s}^{2} d \mu^{+}(s)-\int_{[0, \bar{s})}[u]_{s}^{2} d \mu^{-}(s)}{\|u\|_{L^{2}(\Omega)}^{2}}, 
\end{equation}
and the eigenfunction $e_{k+1} \in E_{k+1}$ as a minimizer of \eqref{seq of eigvalues}, where $E_{1}:=X_{2}(\Omega)$ and, for all $k \geq 1$,
\begin{align*}
E_{k+1}:= & \{u \in X_{2}(\Omega) \text { s.t. for all } j=1, \ldots, k \\
& \left.\quad \int_{[0,1]} c_{N, s} \iint_{\mathbb{R}^{2 N}} \frac{(u(x)-u(y))\left(e_{j}(x)-e_{j}(y)\right)}{|x-y|^{N+2 s}} d x d y d \mu(s)=0\right\}.
\end{align*}
Further, for each $k\in\mathbb{N}$, the eigenvalue $\lambda_{k}$ has finite multiplicity and the sequence of eigenfunctions $(e_{k})$ provides an orthonormal basis for $L^{2}(\Omega)$ as well as $X_{2}(\Omega)$.

Among others, the study due to \cite{DPSV25, DPSV25-1}, stems the motivation for investigating the spectral properties of the nonlinear superposition operator $\mathcal{L}_{\mu, p}$, as in \eqref{super operator}. There are significant difficulties in studying the eigenvalue problems involving the operator $\mathcal{L}_{\mu, p}$ as it is nonlinear and sign-changing. We employ an appropriate variational technique, combined with the Lusternik-Schnirelmann theory, to establish the existence of a sequence of positive eigenvalues, with the first eigenvalue being principal. We now state our first main results exhibiting the existence and properties of eigenpairs of the problem \eqref{Eig problem 00}. We emphasize here that this result is new for signed measure $\mu$ as well as for the positive measure $\mu^+$ for the case when $p \neq 2.$
\begin{thm}\label{principal eigenvalue P 0}
   Let $\Omega \subset \mathbb{R}^{N}$ be a bounded domain with Lipschitz boundary, and let $\mu$ satisfy \eqref{measure1}--\eqref{measure3}. Let $s_{\sharp}$ be as in \eqref{measure4}, and assume $1 < p < N/s_{\sharp}$. Then there exists a constant $\gamma_{0} > 0$, depending only on $N$, $\Omega$, and $p$, such that, for  $\gamma \in [0, \gamma_{0}]$, the statements below concerning the eigenvalues and eigenfunctions of problem \eqref{Eig problem 00} associated with $\mathcal{L}_{\mu, p}$ hold.
\begin{itemize}
    \item[(i)] The first eigenvalue $\lambda_{1, \mu}(\Omega)$ is given by
\begin{equation}\label{first eigenvalue intro}
\lambda_{1, \mu}(\Omega):=\inf_{u \in X_{p}(\Omega) \backslash\{0\}} \frac{\int_{[0,1]}[u]_{s,p}^{p} d \mu^{+}(s)-\int_{[0, \bar{s})}[u]_{s,p}^{p} d \mu^{-}(s)}{\int_{\Omega}|u|^{p} d x} . 
\end{equation}
\item[(ii)] There exists a  function $e_{1, \mu} \in X_p(\Omega)$, an eigenfunction corresponding to the eigenvalue $\lambda_{1, \mu}(\Omega),$ which attains the minimum in \eqref{first eigenvalue intro}.

\item[(iii)] The set of eigenvalues of the problem \eqref{Eig problem 00} consists of a sequence $(\lambda_{n, \mu})$ with
  \begin{align}
      0<\lambda_{1, \mu}\leq \lambda_{2, \mu} \leq \ldots \leq \lambda_{n, \mu} \leq \lambda_{n+1, \mu} \leq \ldots~\text{and }&
      \lambda_{n, \mu} \rightarrow \infty\,\,\,\text{as}\,\,n \rightarrow \infty.
  \end{align}
  \item [(iv)] In addition, if  $\mu$ satisfies \eqref{measure5}, then every eigenfunction corresponding to the eigenvalue $\lambda_{1, \mu}(\Omega)$ in \eqref{first eigenvalue intro} does not change sign.
\end{itemize}
   
\end{thm}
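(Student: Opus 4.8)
The plan is to realise the eigenvalues of \eqref{Eig problem 00} as critical values of the even functional
\[
J(u):=\int_{[0,1]}[u]_{s,p}^{p}\,d\mu^{+}(s)-\int_{[0,\bar s)}[u]_{s,p}^{p}\,d\mu^{-}(s)
\]
on the symmetric $C^{1}$ manifold $\mathcal{M}:=\{u\in X_{p}(\Omega):\|u\|_{L^{p}(\Omega)}=1\}$, and to combine the direct method of the calculus of variations with Lusternik--Schnirelmann theory. The restriction $1<p<N/s_{\sharp}$ is used precisely here: it gives $s_{\sharp}p<N$, hence a continuous embedding $X_{p}(\Omega)\hookrightarrow L^{Np/(N-s_{\sharp}p)}(\Omega)$, which on the bounded set $\Omega$ is compact into $L^{p}(\Omega)$, together with a fractional Poincar\'e inequality $\|u\|_{L^{p}(\Omega)}^{p}\le C_{\Omega}\,\|u\|_{\mu^{+}}^{p}$, where $\|u\|_{\mu^{+}}^{p}:=\int_{[0,1]}[u]_{s,p}^{p}\,d\mu^{+}(s)$ is (equivalent to) the norm of $X_{p}(\Omega)$ and controls the $W^{s_{\sharp},p}(\Omega)$-seminorm up to $\|u\|_{L^{p}(\Omega)}$. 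For part (i) I would first show, for $\gamma$ below a threshold $\gamma_{0}=\gamma_{0}(N,\Omega,p)$, that $J$ is coercive on $\mathcal{M}$ and that the infimum in \eqref{first eigenvalue intro} is positive: by \eqref{measure1}--\eqref{measure3}, the defining property \eqref{measure4} of $s_{\sharp}$, and the comparison between Gagliardo seminorms of different orders on a bounded domain from Section~\ref{sec2}, the wrong-sign part is dominated by $C\gamma\,(\|u\|_{\mu^{+}}^{p}+\|u\|_{L^{p}(\Omega)}^{p})$, so $J(u)\ge\|u\|_{\mu^{+}}^{p}-C\gamma(\|u\|_{\mu^{+}}^{p}+\|u\|_{L^{p}(\Omega)}^{p})$; invoking Poincar\'e, this is $\ge\tfrac12\|u\|_{\mu^{+}}^{p}\ge\tfrac{1}{2C_{\Omega}}\|u\|_{L^{p}(\Omega)}^{p}$ once $\gamma\le\gamma_{0}$, which gives coercivity and $\lambda_{1,\mu}(\Omega)>0$, as soon as a minimiser exists.

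For part (ii) I would run the direct method. A minimising sequence $(u_{n})\subset\mathcal{M}$ for $J$ is bounded in $X_{p}(\Omega)$ by the coercivity just obtained, so along a subsequence $u_{n}\rightharpoonup e_{1,\mu}$ in $X_{p}(\Omega)$ and $u_{n}\to e_{1,\mu}$ in $L^{p}(\Omega)$ by the compact embedding; in particular $e_{1,\mu}\in\mathcal{M}$. The map $u\mapsto\int_{[0,1]}[u]_{s,p}^{p}\,d\mu^{+}(s)$ is convex and sequentially weakly lower semicontinuous on $X_{p}(\Omega)$ (Fatou in the $s$-variable), while $u\mapsto\int_{[0,\bar s)}[u]_{s,p}^{p}\,d\mu^{-}(s)$ is sequentially weakly continuous, since $X_{p}(\Omega)$ embeds compactly into $W^{\sigma,p}(\Omega)$ for every $\sigma<\bar s$ and $\mu^{-}$ is finite (dominated convergence, using the uniform seminorm bound of Section~\ref{sec2}). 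Hence $J(e_{1,\mu})\le\liminf_{n}J(u_{n})=\lambda_{1,\mu}(\Omega)$, so $e_{1,\mu}$ is a minimiser, and the Lagrange multiplier rule on $\mathcal{M}$ shows it is a weak solution of \eqref{Eig problem 00} with $\lambda=\lambda_{1,\mu}(\Omega)$. This proves (i) and (ii).

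For part (iii) I would apply Lusternik--Schnirelmann theory to $J|_{\mathcal{M}}$. First one verifies the Palais--Smale condition: a Palais--Smale sequence is bounded by coercivity, hence weakly convergent, and since $\mathcal{L}_{\mu,p}^{+}$ is of type $(S_{+})$ (a continuous superposition of fractional $p$-Laplacians, each of type $(S_{+})$) while $\mathcal{L}_{\mu,p}^{-}$ and the right-hand side $|u|^{p-2}u$ induce compact operators $X_{p}(\Omega)\to X_{p}(\Omega)^{*}$, the convergence is in fact strong. The min-max values
\[
\lambda_{n,\mu}:=\inf_{\substack{K\subset\mathcal{M}\ \text{symmetric, compact}\\ \mathrm{genus}(K)\ge n}}\ \max_{u\in K}\,J(u)
\]
are then critical values of $J|_{\mathcal{M}}$, they form a non-decreasing sequence with $\lambda_{1,\mu}=\lambda_{1,\mu}(\Omega)$, and $\lambda_{n,\mu}\to+\infty$ because $X_{p}(\Omega)$ is infinite-dimensional and embeds compactly into $L^{p}(\Omega)$ (for every $R$ there is $n$ such that every symmetric compact set of genus $\ge n$ meets $\{u\in\mathcal{M}:J(u)>R\}$). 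Each $\lambda_{n,\mu}$ is an eigenvalue of \eqref{Eig problem 00}, which gives (iii).

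For part (iv), under the stronger hypothesis \eqref{measure5}: (\emph{sign}) let $u$ be a minimiser of \eqref{first eigenvalue intro}. From $\bigl||a|-|b|\bigr|\le|a-b|$ one has $[\,|u|\,]_{s,p}^{p}\le[u]_{s,p}^{p}$ for every $s$, so, setting $C_{s}(u):=[u]_{s,p}^{p}-[\,|u|\,]_{s,p}^{p}\ge0$ (a non-negative cross term carried by the pairs $(x,y)$ with $u(x)u(y)<0$),
\[
J(u)-J(|u|)=\int_{[0,1]}C_{s}(u)\,d\mu^{+}(s)-\int_{[0,\bar s)}C_{s}(u)\,d\mu^{-}(s).
\]
If $u$ changed sign on a set of positive measure then $C_{s}(u)>0$ for every $s\in(0,1)$, and a comparison of the cross terms across orders (valid for $\sigma<\bar s\le s\le1-\delta$, with a constant that degenerates as $s\to1$, which is exactly why the cutoff $1-\delta$ enters \eqref{measure5}) combined with \eqref{measure5} forces the first integral to dominate the second, so $J(|u|)<J(u)$, a contradiction; hence every minimiser has constant sign, and, taking $|u|\ge0$, the strong minimum principle for $\mathcal{L}_{\mu,p}^{+}$ proved in the present paper gives $|u|>0$ in $\Omega$. (\emph{Simplicity}) Given two eigenfunctions associated with $\lambda_{1,\mu}(\Omega)$, by the previous step and the strong minimum principle both may be taken strictly positive in $\Omega$, and bounded by the global boundedness result of the paper; testing the equation for the first against $\widehat{e}^{\,p}/(e_{1,\mu}+\varepsilon)^{p-1}$, the equation for the second against $e_{1,\mu}^{\,p}/(\widehat{e}+\varepsilon)^{p-1}$, passing to the limit $\varepsilon\to0$, and using a Picone-type inequality for $(-\Delta)_{p}^{s}$ together with the control of the $\mu^{-}$-terms from \eqref{measure5}, one is driven to equality in Picone's inequality for $\mu^{+}$-a.e.\ $s$, which forces the two eigenfunctions to be proportional; hence $\lambda_{1,\mu}(\Omega)$ is simple. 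The recurring obstacle, and the one I expect to dominate part (iv), is that $\mathcal{L}_{\mu,p}$ is simultaneously nonlinear and sign-indefinite, so $\mathcal{L}_{\mu,p}^{-}$ must be absorbed quantitatively against the higher-order positive part at each stage — coercivity, weak continuity for existence, Palais--Smale, and, most delicately, the sign and simplicity statements — which is exactly the role of the critical exponent $s_{\sharp}$ and of the smallness conditions \eqref{measure3} and \eqref{measure5}; the simplicity argument is the hardest, since it also demands a Picone inequality compatible with the continuous superposition over $\mu$ and an $\varepsilon$-regularisation robust enough to survive the passage to the limit in the presence of nonlocal terms of all orders $s\in[0,1]$.
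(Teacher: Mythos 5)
Your treatment of parts (i)--(iii) follows essentially the paper's route: direct minimization on the $L^p$-sphere with coercivity obtained by absorbing the $\mu^{-}$-part (Lemma \ref{negative part reabsorb}), weak lower semicontinuity of the $\mu^{+}$-energy via Fatou, weak continuity of the $\mu^{-}$-energy (Lemma \ref{negative part convergence}), a Lagrange multiplier argument, and then Lusternik--Schnirelmann theory. Two points where you deviate are acceptable variants but are only asserted, not proved: your Palais--Smale argument rests on ``$\mathcal{L}_{\mu,p}^{+}$ is of type $(S_+)$ and $\mathcal{L}_{\mu,p}^{-}$ plus the right-hand side are compact''; the paper proves the $(S_+)$ property only for $\mathcal{L}_{\mu,p}^{+}$ (Proposition \ref{S+ operator}) and, in the sign-changing case, runs a Brezis--Lieb/a.e.-gradient-convergence argument instead, so you would need to verify complete continuity of the $\mu^{-}$-part of the differential (not just of the seminorm as in Lemma \ref{negative part convergence}). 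Likewise your parenthetical justification that the genus min--max values diverge is exactly what has to be proved; the paper does this via a biorthogonal system in the separable reflexive space $X_p(\Omega)$. These are repairable.

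The genuine gap is in the simplicity part of (iv). You first invoke ``the strong minimum principle proved in the present paper'' to obtain strict positivity of the first eigenfunction, and later its global boundedness; but Theorems \ref{wmpinto}--\ref{smpintro} and Theorem \ref{global boundedness result} are stated and proved only for $\mu=\mu^{+}$. In the sign-changing setting of Theorem \ref{principal eigenvalue P 0}, a nonnegative eigenfunction $u$ of \eqref{Eig problem 00} satisfies $\mathcal{L}_{\mu,p}^{+}u=\lambda\,u^{p-1}+\mathcal{L}_{\mu,p}^{-}u$, and $\mathcal{L}_{\mu,p}^{-}u$ has no sign, so $u$ is not known to be a weak supersolution of $\mathcal{L}_{\mu,p}^{+}u=0$ and the minimum principle cannot be applied; indeed the paper deliberately claims only constant sign, not positivity, for the sign-changing measure (the maximum principle for $\mathcal{L}_{\mu,p}$ genuinely fails). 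Second, even granting positivity, your Picone scheme does not close: the discrete Picone inequality gives a favorable sign for each fractional seminorm weighted by $d\mu^{+}$, but the analogous terms weighted by $d\mu^{-}$ enter with the unfavorable sign, and neither Picone nor condition \eqref{measure5} controls them --- Lemma \ref{energycomparison} compares $[\,|u|\,]_{s,p}$ with $[u]_{s,p}$, not Picone-type quotient terms such as those produced by the test functions $\widehat e^{\,p}/(e_{1,\mu}+\varepsilon)^{p-1}$. So ``control of the $\mu^{-}$-terms from \eqref{measure5}'' is precisely the missing step, and it is not a routine one.

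For contrast, the paper's simplicity proof avoids positivity and Picone altogether: given two eigenfunctions $e_{1,\mu},f\ge 0$ normalized in $L^{p}(\Omega)$, it argues that $g=e_{1,\mu}-f$ has constant sign, so $e_{1,\mu}^{p}-f^{p}$ has constant sign, while $\int_{\Omega}(e_{1,\mu}^{p}-f^{p})\,dx=0$ by the normalization, forcing $e_{1,\mu}^{p}=f^{p}$ a.e. Whatever one thinks of the details of that argument, it uses only the fixed-sign property established under \eqref{measure5}; your route needs tools (strict positivity, $L^\infty$-bounds, a Picone inequality compatible with a signed measure) that are not available for $\mathcal{L}_{\mu,p}$ in this paper, so as written the simplicity claim in (iv) is not established.
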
 
 We would like to point out that Theorem \ref{principal eigenvalue P 0} is completely new in several interesting and genuine examples of operator $\mathcal{L}_{\mu, p}$ by using appropriately measure $\mu^+$ and $\mu^-.$ We list some of them as follows:

\begin{itemize}
    \item[(i)] The mixed local-nonlocal operator with the wrong sign is given by 
    \begin{equation*}
    \mathcal{L}_{\mu, p}=-\Delta_{p}-\alpha(-\Delta)_{p}^{s}.
\end{equation*}
    This obtained by choosing $s \in(0,1)$ and $\mu:=\delta_{1}-\alpha \delta_{s}$, with a small constant $\alpha.$

    \item[(ii)] Let us take $\mu:= \delta_1+\delta_{s_1}-\alpha \delta_{s_2}$ for $1>s_1>s_2>0.$ Then the mixed local-nonlocal operator with the wrong sign given by 
$$\mathcal{L}_{\mu, p}=-\Delta_{p} + (-\Delta_{p})^{s_1}-\alpha (-\Delta_{p})^{s_2}$$
\end{itemize}

\begin{itemize}
    \item[(iii)] If $0\leq ...<s_2<s_1<s_0\leq 1$ and
$\mu:=\sum_{k=1}^{+\infty} c_{k} \delta_{s_{k}}$  with  $\sum_{k=0}^{+\infty} a_k \in (0, +\infty)$ and the conditions such that
\begin{align*}
& c_{k}>0 \text { for all } k \in\{0, \ldots, \bar{k}\} \text { and } \sum_{k=\bar{k}+1}^{+\infty} c_{k} \leq \gamma \sum_{k=0}^{\bar{k}} c_{k} \\
& \text { for some } \bar{k} \in \mathbb{N} \text { and } \gamma \geq 0. 
\end{align*}
In addition to this, we also assume that there exists $\bar{\gamma}> 0,$ $\delta \in (0, 1-s_0]$ and $\bar{k} \in \mathbb{N}$  such that 
    \begin{equation} 
        \sum_{k=\bar{k}+1}^{+\infty} a_k \leq \bar{\kappa} \delta \sum_{k=1}^{\bar{k}} a_k.
    \end{equation}
Then the operator in \eqref{super operator} takes the form
\begin{equation*}
  \mathcal{L}_{\mu, p}(u)=\sum_{k=1}^{+\infty} c_{k}(-\Delta)_p^{s_{k}} u. 
\end{equation*}
    \item[(iv)] Let $f \not\equiv 0 $ be a measurable function such that $f \geq 0$ in $ (s_\sharp, 1),$  $\int_{s_\sharp}^1 f(s)\, ds >0,$ and
    \begin{align*}
    \int_0^{s_\sharp} \max\{0, -f(s)\}\, ds \leq \gamma \int_{s_\sharp}^1 f(s)\, ds. \nonumber
        \end{align*}
        Additionally, we assume that there exist $\bar{\kappa}>0$ and $\delta \in (0, 1-s_\sharp]$ such that 
        \begin{equation*} 
            \int_0^{s_\sharp} \max\{0, -f(s)\}\, ds \leq \bar{\gamma} \delta \int_{s_\sharp}^{1-\delta} f(s)\, ds.
        \end{equation*}
      By defining $d\mu(s):=f(s) ds,$ the corresponding operator is given by
    $$\mathcal{L}_{p, \mu}= \int_0^1 f(s) (-\Delta_p)^s u\, ds.$$
    
\end{itemize}

One may attempt to establish further properties of the eigenvalues of the eigenvalue problem~\eqref{Eig problem 00}. 
However, the proofs of properties such as the isolatedness of the principal eigenvalue and the sign-changing behavior of eigenfunctions corresponding to eigenvalues other than the principal one rely essentially on the maximum principle for the nonlinear operator \( \mathcal{L}_{\mu,p} \). 
Unfortunately, the maximum principle for \( \mathcal{L}_{\mu,p} \) fails to hold even in the linear case \( p = 2 \); we refer to~\cite[Appendix A]{DPSV25-1} for a concrete counterexample. 
The failure of the maximum principle arises from the fact that the general measure \( \mu \) may change sign. 
Indeed, a classical result by Bony, Courr\'ege, and Priouret (see~\cite[Theorem~3, p.~391]{BCP68}) asserts that, for linear translation-invariant operators, the maximum principle holds if and only if the measure \( \mu \) appearing in~\eqref{super operator} has a constant sign, that is, it holds precisely when either \( \mu^{+} \equiv 0 \) or \( \mu^{-} \equiv 0 \) in~\eqref{measure}. Therefore, it is reasonable to study the maximum principle for the operator $\mathcal{L}_{\mu,p}^{+}$ defined as 
\begin{equation*}
\mathcal{L}_{\mu, p}^{+} u:=\int_{[0,1]}(-\Delta)_p^{s} u d \mu^{+}(s).
\end{equation*} 
In fact, the rest of the results discussed below will be devoted to the study of the superposition operator $\mathcal{L}_{\mu, p}^{+}.$ 
The next two maximum principles allow us to investigate further properties of the spectrum of the operator $\mathcal{L}_{\mu,p}^{+}$. The following weak maximum principle is the extension of the case $p=2$ discussed in \cite{DPSV25-1}, for general values of $p.$
\begin{thm} \label{wmpinto}
    Let $\Omega \subset \mathbb{R}^N$ be an open subset with Lipschitz boundary. We assume that $\mu^+$ satisfies \eqref{measure1} and $1<p<\frac{N}{s_\sharp}$, where $s_\sharp$ is defined by \eqref{measure4}. Let $u \in X_p(\Omega)$ be such that $\mathcal{L}_{\mu, p}^+ u \geq 0$ in $\Omega$ in the weak sense and $u \geq 0$ a.e. in $\mathbb{R}^N \backslash \Omega.$ Then, $u \geq 0$ a.e. in $\Omega.$
\end{thm}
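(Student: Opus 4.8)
The plan is to carry out the classical ``test against the negative part'' argument, adapted to the superposition structure. Set $u_{-}:=\max\{-u,0\}$, so that $u=u_{+}-u_{-}$ with $u_{+}:=\max\{u,0\}$. Since the map $t\mapsto\max\{-t,0\}$ is $1$-Lipschitz we have the pointwise bound $|u_{-}(x)-u_{-}(y)|\le|u(x)-u(y)|$, hence $[u_{-}]_{s,p}\le[u]_{s,p}$ for every $s\in(0,1]$; combined with $u_{-}=0$ a.e.\ in $\mathbb{R}^{N}\setminus\Omega$ (which holds because $u\ge0$ there), this gives $u_{-}\in X_{p}(\Omega)$ and $u_{-}\ge0$, so that $u_{-}$ is admissible as a test function. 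Recall that $\mathcal{L}_{\mu,p}^{+}u\ge0$ in the weak sense means that
\begin{equation*}
\int_{[0,1]}c_{N,s}\iint_{\mathbb{R}^{2N}}\frac{|u(x)-u(y)|^{p-2}\bigl(u(x)-u(y)\bigr)\bigl(\varphi(x)-\varphi(y)\bigr)}{|x-y|^{N+sp}}\,dx\,dy\,d\mu^{+}(s)\ \ge\ 0
\end{equation*}
for all nonnegative $\varphi\in X_{p}(\Omega)$, every integral here being finite because $u,\varphi\in X_{p}(\Omega)$ and $1<p<N/s_{\sharp}$.

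Choosing $\varphi=u_{-}$, the decisive point is the elementary inequality
\begin{equation*}
|a-b|^{p-2}(a-b)\bigl(a_{-}-b_{-}\bigr)\ \le\ -\,|a_{-}-b_{-}|^{p}\qquad\text{for all }a,b\in\mathbb{R},\quad t_{-}:=\max\{-t,0\},
\end{equation*}
which I would verify by cases: both sides vanish when $a,b\ge0$; both equal $-|a-b|^{p}$ when $a,b\le0$; and when $a\ge0>b$ (the mirror case being symmetric), writing $b=-c$ with $c>0$ reduces the claim to $(a+c)^{p-1}\ge c^{p-1}$, which holds since $p-1>0$ and $a\ge0$. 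Applying this with $a=u(x)$, $b=u(y)$ under the double integral and integrating in $s$ against $\mu^{+}$ (Fubini/Tonelli being harmless here, again by finiteness) yields
\begin{align*}
0\ &\le\ \int_{[0,1]}c_{N,s}\iint_{\mathbb{R}^{2N}}\frac{|u(x)-u(y)|^{p-2}\bigl(u(x)-u(y)\bigr)\bigl(u_{-}(x)-u_{-}(y)\bigr)}{|x-y|^{N+sp}}\,dx\,dy\,d\mu^{+}(s)\\
&\le\ -\int_{[0,1]}c_{N,s}\,[u_{-}]_{s,p}^{p}\,d\mu^{+}(s)\ \le\ 0,
\end{align*}
so that $[u_{-}]_{s,p}=0$ for $\mu^{+}$-a.e.\ $s$ (recall $c_{N,s}>0$).

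To finish, I would invoke hypothesis \eqref{measure1} (equivalently \eqref{measure4}): since $\mu^{+}([\bar s,1])>0$, the set $\{s:[u_{-}]_{s,p}=0\}$ has positive $\mu^{+}$-measure, hence it contains some $s_{0}\in[\bar s,1]\subset(0,1]$. Then $u_{-}(x)=u_{-}(y)$ for a.e.\ $(x,y)\in\mathbb{R}^{2N}$, i.e.\ $u_{-}$ coincides a.e.\ with a constant on $\mathbb{R}^{N}$; since $u_{-}$ vanishes on $\mathbb{R}^{N}\setminus\Omega$, a set of positive Lebesgue measure (and, in any case, $X_{p}(\Omega)$ contains no nonzero constant), that constant is $0$. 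Therefore $u_{-}\equiv0$, that is, $u\ge0$ a.e.\ in $\Omega$, as claimed.

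The main obstacle is really just the pointwise inequality above: one has to use the right expression $|a-b|^{p-2}(a-b)$ coming from the fractional $p$-Laplacian (rather than $|a|^{p-2}a-|b|^{p-2}b$) and to observe that it becomes an equality precisely when $a$ and $b$ share a sign, so that passing from the energy pairing to $-\int c_{N,s}[u_{-}]_{s,p}^{p}\,d\mu^{+}$ loses no information; everything else is routine. It is worth emphasizing that $\mu^{-}$ plays no role whatsoever in this argument, so no smallness assumption on $\gamma$ is required for the weak maximum principle — the failure of a maximum principle for the full operator $\mathcal{L}_{\mu,p}$ is exactly due to the negative part $\mu^{-}$.
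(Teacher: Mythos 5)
Your proof is correct and follows essentially the same strategy as the paper: test the weak inequality with $u_{-}\in X_{p}(\Omega)$ and use the pointwise inequality $|a-b|^{p-2}(a-b)(a_{-}-b_{-})\le -|a_{-}-b_{-}|^{p}$ (the paper cites it from Brasco--Parini/Brasco--Lindgren--Parini, you verify it by hand) to force the energy of $u_{-}$ to vanish. The only difference is cosmetic: the paper argues by contradiction and splits into the cases $\mu^{+}(\{1\})>0$ (zero-trace argument) and $\mu^{+}(\{1\})=0$ (a tail-type interaction between $\{u<0\}$ and $\mathbb{R}^{N}\setminus\Omega$), whereas you conclude directly from $[u_{-}]_{s_{0},p}=0$ for some $s_{0}\in[\bar{s},1]$ together with $u_{-}=0$ outside $\Omega$, which is an equally valid and arguably cleaner finish.
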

Now, we state the following strong minimum principle for the superposition operators $\mathcal{L}_{\mu, p}^+.$
\begin{thm}\label{smpintro} 
Let $\Omega \subset {\mathbb{R}^N}$ be an open, connected, and bounded subset.
Let $\mu=\mu^{+}$ satisfy \eqref{measure1} and $s_{\sharp}$ be as in \eqref{measure4}.  Assume that $u \in X_{p}(\Omega)$ is a weak supersolution of \begin{align}
\mathcal{L}_{\mu,p}^{+}  u=0~\text{in}~\Omega,\nonumber\\
u=0~\text{in}~{{\mathbb{R}^N}}\setminus\Omega,
\end{align}  such that  $u \not\equiv 0$ in $\Omega.$ Then $u>0$ a.e. in $\Omega$.	
\end{thm}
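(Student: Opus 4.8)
The plan is to combine the weak maximum principle (Theorem~\ref{wmpinto}), which already guarantees $u\ge 0$ a.e.\ in $\Omega$, with a logarithmic estimate for weak supersolutions of $\mathcal{L}_{\mu,p}^+u=0$ and a De Giorgi--type iteration, in order to rule out that $u$ vanishes on a set of positive measure inside the connected domain $\Omega$. First I would invoke Theorem~\ref{wmpinto}: since $u$ is a weak supersolution with $u=0$ in $\mathbb{R}^N\setminus\Omega$, in particular $\mathcal{L}_{\mu,p}^+u\ge 0$ weakly and $u\ge 0$ outside $\Omega$, so $u\ge 0$ a.e.\ in $\mathbb{R}^N$. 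It therefore remains to upgrade $u\ge 0$ to $u>0$ a.e.\ in $\Omega$.

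Next I would set $Z:=\{x\in\Omega: u(x)=0\}$ (defined up to a null set via the precise representative, or, more safely, through the density-of-level-sets formulation used in the nonlocal literature) and argue by contradiction that $|Z|>0$. The key analytic tool is the logarithmic estimate announced in the introduction: for a ball $B_{2r}(x_0)\Subset\Omega$ and a nonnegative weak supersolution $u$ with $u\ge 0$ in $\mathbb{R}^N$, one has a bound of the form
\begin{equation*}
\int_{[0,1]}c_{N,s}\iint_{B_r\times B_r}\Bigl|\log\frac{u(x)+d}{u(y)+d}\Bigr|^p\,\frac{dx\,dy}{|x-y|^{N+ps}}\,d\mu^+(s)\le C r^{N-ps}\Bigl(1+\operatorname{Tail}\text{-type terms}\Bigr),
\end{equation*}
uniform in $d>0$, where the tail terms are controlled using the notion of nonlocal tail for $\mathcal{L}_{\mu,p}^+$ introduced earlier and the normalization $\mu^+([\bar s,1])>0$ from \eqref{measure1} together with the subcriticality $1<p<N/s_\sharp$. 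I would derive this estimate by testing the weak supersolution inequality with $\varphi=(u+d)^{1-p}\eta^p$ for a cutoff $\eta$ supported in $B_{2r}$, expanding the resulting double integrals separately for each $s$, and integrating against $d\mu^+(s)$; the algebraic inequalities relating $(a-b)\bigl((a+d)^{1-p}-(b+d)^{1-p}\bigr)$ to $|\log((a+d)/(b+d))|^p$ are the standard ones used for the fractional $p$-Laplacian, and they pass through the $\mu^+$-integration because $\mu^+\ge 0$.

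With the logarithmic estimate in hand, the contradiction argument runs as follows. Pick a point of density one of $Z$ inside a ball $B_r\Subset\Omega$; then $v_d:=\log\frac{M+d}{u+d}$ (with $M:=\sup_{B_r}u$, or more simply working with $w_d:=-\log(u+d)$) satisfies a uniform-in-$d$ bound on its Gagliardo-type seminorm against $\mu^+$ by the estimate above, while on $Z$ one has $w_d(x)=-\log d\to+\infty$ as $d\to 0^+$. By the fractional Poincaré/Sobolev inequality for the measure-weighted seminorm — which holds because the component at order $s\in[\bar s,1]$ carries positive mass and $p<N/s_\sharp$ — the $L^p(B_r)$ oscillation of $w_d$ is controlled by the seminorm, hence stays bounded; but if $|Z\cap B_r|>0$ then $w_d\to+\infty$ on a set of positive measure, forcing the seminorm to blow up, a contradiction. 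Thus $|Z\cap B_r|=0$ for every such ball, and since $\Omega$ is connected a standard covering/chaining argument propagates $|Z|=0$ to all of $\Omega$, giving $u>0$ a.e.\ in $\Omega$.

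The main obstacle I expect is establishing the logarithmic estimate uniformly in $d$ with the correct control of the nonlocal tail for the superposition operator: one must make sure that the tail contributions arising from the region $\mathbb{R}^N\setminus B_{2r}$, after integration in $s$ against $\mu^+$, remain finite and bounded independently of $d$, which is exactly where assumption~\eqref{measure1} (positive mass on $[\bar s,1]$) and the restriction $1<p<N/s_\sharp$ enter decisively; the use of $u\ge 0$ in all of $\mathbb{R}^N$ (from Theorem~\ref{wmpinto}) is what keeps these tail terms with a favorable sign. A secondary technical point is justifying the test function $(u+d)^{1-p}\eta^p\in X_p(\Omega)$ and the manipulations with the precise representative of $u$; these are routine but must be handled with the approximation scheme consistent with the function space $X_p(\Omega)$ defined in Section~\ref{sec2}.
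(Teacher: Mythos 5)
Your proposal follows essentially the same route as the paper's proof: nonnegativity via the weak maximum principle, the logarithmic estimate obtained by testing with $(u+d)^{1-p}\eta^p$ together with control of the nonlocal tail (which vanishes since $u_-\equiv 0$), a Poincar\'e-type oscillation argument on $\log(u+d)$ letting $d\to 0^+$, and a chain-of-balls covering argument exploiting the connectedness of $\Omega$. The only cosmetic difference is that the paper constructs the Poincar\'e inequality by hand, integrating $F_d=\log(1+u/d)$ over the vanishing set $Z$ (yielding the constant $r^{N+ps}/|Z|$) and concluding $u\equiv 0$ on each ball before chaining, rather than invoking a fractional Poincar\'e/Sobolev inequality as you do -- which is exactly the dichotomy your oscillation argument encodes.
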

\begin{remark} In the statements of Theorem~\ref{wmpinto} and Theorem~\ref{smpintro}, the condition \( u \geq 0 \) in \( \mathbb{R}^N \setminus \Omega \) should be interpreted as \( u \geq 0 \) on \( \partial \Omega \) whenever \( \mu^{+}((0,1)) = 0. \)
\end{remark}

Next, we investigate the spectral properties of the operator $\mathcal{L}_{\mu,p}^{+}$. More specifically, we study the following eigenvalue problem:
\begin{equation}\label{Eig problem 000}
    \begin{cases}\mathcal{L}_{\mu,p}^{+} u=\lambda |u|^{p-2}u & \text { in } \Omega,  \\ 
u=0 & \text { in } \mathbb{R}^{N} \backslash \Omega .\end{cases}
\end{equation}

 In addition to the properties established in Theorem \ref{principal eigenvalue P 0}, we prove the strict positivity of the first eigenfunction, as well as the simplicity and isolatedness of the first eigenvalue. Furthermore, we show that every higher eigenfunction necessarily changes sign. We now combine a particular case of Theorem \ref{principal eigenvalue P 0} corresponding to a positive measure $\mu^+$ with several new spectral properties of $\mathcal{L}_{\mu,p}^{+}$ established in this paper, to present the following collection of results concerning the spectral properties of the eigenvalue problem \eqref{Eig problem 000} in a unified form for the convenience of the reader.
\begin{thm}\label{first eigenvalue exist + int}
  Let $\Omega \subset \mathbb{R}^{N}$ be a bounded domain with Lipschitz boundary, and let $\mu=\mu^+$ satisfy \eqref{measure1}. Let $s_{\sharp}$ be as in \eqref{measure4}, and assume $1 < p < N/s_{\sharp}$. Then the statements below concerning the eigenvalues and eigenfunctions of problem \eqref{Eig problem 000} associated with $\mathcal{L}_{\mu, p}^+$ hold.
\begin{enumerate}[(i)]
    \item The first eigenvalue $\lambda_{1, \mu^+}(\Omega)$ is given by
\begin{equation}\label{first eigenvalue+ int}
\lambda_{1, \mu^+}(\Omega):=\inf_{u \in X_{p}(\Omega) \backslash\{0\}} \frac{\int_{[0,1]}[u]_{s,p}^{p} d \mu^{+}(s)}{\int_{\Omega}|u|^{p} d x} . 
\end{equation}
\item There exists a  function $e_{1, \mu^+} \in X_p(\Omega)$, an eigenfunction corresponding to the eigenvalue $\lambda_{1, \mu^+}(\Omega)$ which attains the minimum in \eqref{first eigenvalue+ int}.

\item The set of eigenvalues of the problem \eqref{Eig problem 000} consists of a sequence $(\lambda_{n, \mu^+})$ with
  \begin{align}\label{seq int}
      0<\lambda_{1, \mu^+}\leq \lambda_{2, \mu^+} \leq \ldots \leq \lambda_{n, \mu^+} \leq \lambda_{n+1, \mu^+} \leq \ldots~\text{and }&
      \lambda_{n, \mu^+} \rightarrow \infty\,\,\,\text{as}\,\,n \rightarrow \infty.
  \end{align}

  \item Every eigenfunction corresponding to the eigenvalue $\lambda_{1, \mu^+}(\Omega)$ in \eqref{first eigenvalue+ int} does not change sign and $\lambda_{1, \mu^+}(\Omega)$ is simple.
    \item The $\sigma(s, \mu^+)$-spectrum of $\mathcal{L}_{\mu, p}^+$, that is, the set of $(s, \mu^+)$-eigenvalues of \eqref{first eigenvalue+ int}, is a closed set.
\item   Let $u \geq 0$ in $\Omega$ be an eigenfunction of \eqref{Eig problem 000} associated with an eigenvalue $\lambda > 0$. Then $u > 0$ in $\Omega$.

\item Let $v$ be an eigenfunction of \eqref{Eig problem 000} associated to an eigenvalue $\lambda>\lambda_{1, \mu^+}(\Omega)$. Then $v$ must be sign-changing.

\item Let $v$ be an eigenfunction of \eqref{Eig problem 000} associated to an eigenvalue $\lambda\neq \lambda_{1, \mu^+}(\Omega)$. Then there is a positive constant $C$ independent of $v$ such that  
\begin{equation*}
    \lambda \geq C(N, s_{\sharp},p)\left|\Omega_{+}\right|^{-\frac{ps_{\sharp}}{N}} \text { and  } \lambda \geq C(N, s_{\sharp},p)\left|\Omega_{-}\right|^{-\frac{ps_{\sharp}}{N}} ,
\end{equation*}
where $\Omega_{+}:=\{x \in \Omega:\, v>0\}$ and $\Omega_{-}:=\{x \in \Omega:\,v<0\}$.

\item   The first eigenvalue $\lambda_{1, \mu^+}$ of the problem \eqref{Eig problem 000} is isolated.

\item All eigenfunctions for positive eigenvalues $u \in X_p(\Omega)$ of \eqref{Eig problem 000} are globally bounded, that is, $u \in L^\infty(\mathbb{R}^N).$
\end{enumerate}
   \end{thm}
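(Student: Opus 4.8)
The plan is to run a De Giorgi--Stampacchia iteration adapted to the superposition operator $\mathcal{L}_{\mu,p}^{+}$. Fix an eigenfunction $u\in X_p(\Omega)$ associated with a positive eigenvalue $\lambda$; since $\mathcal{L}_{\mu,p}^{+}$ and the right-hand side are odd, it suffices to bound $u$ from above (apply the argument to $-u$ for the lower bound), so I will estimate $u_+$. For $k\geq 0$ set $w_k:=(u-k)_+$ and use $w_k$ as a test function in the weak formulation~\eqref{Eig problem 000}. On the left-hand side one gets, for each $s$, the Gagliardo-type energy term $c_{N,s}\iint_{\mathbb{R}^{2N}}|u(x)-u(y)|^{p-2}(u(x)-u(y))(w_k(x)-w_k(y))\,\frac{dx\,dy}{|x-y|^{N+ps}}$, which is bounded below by $[w_k]_{s,p}^p$ by the standard pointwise monotonicity inequality $(a-b)^{p-2}(a-b)\big((a-k)_+-(b-k)_+\big)\geq |(a-k)_+-(b-k)_+|^p$ (valid for $p\geq 2$; for $1<p<2$ one uses the corresponding inequality with the usual $\varepsilon$-correction, which still yields control of $[w_k]_{s,p}^p$ up to a constant). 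Integrating against $\mu^{+}$ and retaining only the mass on $[s_\sharp,1]$ — which is positive by~\eqref{measure4} — gives $c\,[w_k]_{s_\sharp,p}^p\leq \int_{[0,1]}[w_k]_{s,p}^p\,d\mu^{+}(s)=\lambda\int_{A_k}|u|^{p-2}u\,w_k\,dx$, where $A_k:=\{x\in\Omega: u(x)>k\}$.

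Next I would feed this into the fractional Sobolev inequality at exponent $s_\sharp$: since $1<p<N/s_\sharp$, the embedding $W^{s_\sharp,p}\hookrightarrow L^{p^*_{s_\sharp}}$ with $p^*_{s_\sharp}=Np/(N-ps_\sharp)$ holds, so $\|w_k\|_{L^{p^*_{s_\sharp}}(\Omega)}^p\leq C[w_k]_{s_\sharp,p}^p$. Combining with the energy bound and estimating $|u|^{p-1}w_k\leq (k+w_k)^{p-1}w_k$ on $A_k$, then applying Hölder with exponents $p^*_{s_\sharp}/p$ and its conjugate on the right, one obtains a recursive inequality of the form
\begin{equation*}
\|w_k\|_{L^{p^*_{s_\sharp}}}^{p}\leq C\lambda\Big(\|u\|_{L^{p^*_{s_\sharp}}}^{p-1}+1\Big)\,\|w_k\|_{L^{p^*_{s_\sharp}}}\,|A_k|^{1-\frac{p-1}{p^*_{s_\sharp}}-\frac{1}{p^*_{s_\sharp}}},
\end{equation*}
and after absorbing the $\|w_k\|$ factor and using $|A_h|\,(h-k)^{p}\leq \int_{A_k} w_k^{p}\leq |A_k|^{1-p/p^*_{s_\sharp}}\|w_k\|_{L^{p^*_{s_\sharp}}}^{p}$ for $h>k$, this yields the classical Stampacchia recursion $\varphi(h)\leq \frac{C}{(h-k)^{\beta}}\varphi(k)^{1+\varepsilon}$ for the nonincreasing function $\varphi(k):=|A_k|$, with $\varepsilon>0$ precisely because $p^*_{s_\sharp}>p$. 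Here I first need $u\in L^{p^*_{s_\sharp}}(\Omega)$, which is automatic from $u\in X_p(\Omega)$ and the embedding at level $s_\sharp$ (the mass~\eqref{measure4} guarantees $[u]_{s_\sharp,p}$ is controlled by the $X_p$-norm). Stampacchia's lemma then gives $\varphi(k_0)=0$ for some explicit $k_0=k_0(N,s_\sharp,p,\lambda,\|u\|_{L^{p^*_{s_\sharp}}},|\Omega|)$, i.e.\ $u\leq k_0$ a.e.; applying the same to $-u$ gives $u\in L^\infty(\Omega)$, and since $u\equiv 0$ outside $\Omega$ we conclude $u\in L^\infty(\mathbb{R}^N)$.

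The main obstacle is making the first step fully rigorous for the \emph{superposition} operator rather than a single fractional $p$-Laplacian: one must check that $w_k=(u-k)_+$ is an admissible test function in $X_p(\Omega)$ (it is, since truncation is continuous on $W^{s,p}$ uniformly enough, but the uniformity in $s\in[0,1]$ and the measurability in $s$ of all Gagliardo seminorms must be invoked — these are exactly the structural facts established in Section~\ref{sec2} and used throughout the paper), and that discarding the $\mu^{+}$-mass on $[0,s_\sharp)$ is legitimate — it is, because each discarded term $\int_{[0,s_\sharp)}[w_k]_{s,p}^p\,d\mu^{+}(s)$ is nonnegative, so the inequality survives. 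A secondary technical point is the case $1<p<2$, where the pointwise algebraic inequality degenerates; here one replaces it by $|a-b|^{p-2}(a-b)\big((a-k)_+-(b-k)_+\big)\geq c_p\,\big|(a-k)_+-(b-k)_+\big|^{p}$ restricted to the region where both truncations are active and handles the mixed region by a standard splitting, which costs only harmless constants and does not affect the iteration exponent. Everything else is the routine De Giorgi bookkeeping, so I would keep that part terse and cite the analogous computation in Franzina--Palatucci or Palatucci--Piccinini for the single-operator template.
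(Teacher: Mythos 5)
Your proposal only addresses part (x) of the theorem (global $L^\infty$ bounds); the other nine items are not touched, so at best it is a partial proof. Concentrating on (x): the structure (test with truncations, pointwise monotonicity inequality, Sobolev embedding at level $s_\sharp$, iterate) is the right family of ideas, and the preliminary steps are fine — in fact the pointwise inequality $|a-b|^{p-2}(a-b)\bigl((a-k)_+-(b-k)_+\bigr)\geq |(a-k)_+-(b-k)_+|^p$ holds for every $p>1$, so your case-splitting for $1<p<2$ is unnecessary, and discarding the $\mu^+$-mass on $[0,s_\sharp)$ is indeed harmless given Theorem~\ref{embedding lem}.

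The genuine gap is in the final Stampacchia step. Tracking your own Hölder bookkeeping: from $\|w_k\|_{L^{p^*_{s_\sharp}}}^{p-1}\leq C\lambda\|u\|_{L^{p^*_{s_\sharp}}}^{p-1}|A_k|^{1-p/p^*_{s_\sharp}}$ and $(h-k)^p|A_h|\leq |A_k|^{1-p/p^*_{s_\sharp}}\|w_k\|_{L^{p^*_{s_\sharp}}}^{p}$, one gets $\varphi(h)\leq C(h-k)^{-p}\varphi(k)^{\theta}$ with $\theta=\bigl(1-\tfrac{p}{p^*_{s_\sharp}}\bigr)\bigl(1+\tfrac{p}{p-1}\bigr)=\tfrac{ps_\sharp}{N}\cdot\tfrac{2p-1}{p-1}$. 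Stampacchia's lemma needs $\theta>1$, and your claim that this holds ``precisely because $p^*_{s_\sharp}>p$'' is false: for instance $N=10$, $s_\sharp=1/2$, $p=2$ gives $p^*_{s_\sharp}=20/9>2$ but $\theta=3/10$. The obstruction is the classical one: knowing only $u\in L^{p^*_{s_\sharp}}$, the right-hand side $\lambda|u|^{p-2}u$ lies in $L^{p^*_{s_\sharp}/(p-1)}$, and $p^*_{s_\sharp}/(p-1)>N/(ps_\sharp)$ is exactly the condition $\theta>1$, which is not implied by $1<p<N/s_\sharp$. To repair your route you would need a Moser-type bootstrap raising the integrability of $u$ before the final level-set argument (and you would also have to control the $k^{p-1}$ term uniformly on the iteration range, which your ``$+1$'' glosses over). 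The paper's proof (Theorem~\ref{global boundedness result}) avoids this entirely by the Franzina--Palatucci scheme: after using $p$-homogeneity to normalize $\|u_+\|_{L^p}\leq\delta$, it iterates on the bounded dyadic levels $1-2^{-m}$ and closes the recursion at the level of $a_m=\|w_m\|_{L^p}^p$, obtaining $a_{m+1}\leq \tilde C^{m}a_m^{1+\alpha}$ with $\alpha=ps_\sharp/N$; this fast-convergence recursion works for \emph{any} $\alpha>0$ provided $a_0$ is small, which is why it covers the whole range $1<p<N/s_\sharp$ where your one-step Stampacchia argument does not.
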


 We briefly highlight the novelty of Theorem \ref{first eigenvalue exist + int} in comparison with the existing literature. To the best of our knowledge, the theorem is entirely new for $p\neq 2$ and for a general positive measure $\mu^+$. The properties $(i)$--$(iv)$ were previously studied in \cite{DPSV25, DPSV25-1} only in the Hilbertian case $p=2$, whereas the remaining properties $(v)$-$(x)$ appear to be new even in this case.

Next, we turn our attention to the following shape optimization problem:
\begin{equation} \label{optfk}
    \inf\{\lambda_{1, \mu^+}(\Omega) :\, |\Omega| = \rho \},
\end{equation}
where $\rho > 0$ is fixed. A solution to this problem, in the case where $\mathcal{L}_{\mu,p}^+ = -\Delta$, is provided by the classical Faber–Krahn inequality (see \cite{F1923, K1925, Polya55}), which asserts that among all domains of a given measure, the Euclidean ball minimizes the first Dirichlet eigenvalue of the Laplacian.  
This fundamental result has been extended to the $p$-Laplacian by several authors; we refer to \cite{B99, CMT2015, BDV2015, BH17, CV2019} and the references therein for detailed discussions. In the nonlocal setting, the Faber–Krahn inequality for the fractional $p$-Laplacian was established by Brasco, Lindgren, and Parini \cite{BLP2014}, they solved the optimization problem \eqref{optfk} for the principal frequency of the fractional Dirichlet $p$-Laplacian. Further refinements, including geometric analyses of specific domains such as triangles and quadrilaterals, were later presented in \cite{C2017}.  For the mixed local–nonlocal operator $\mathcal{L}_{\mu,p}^+ = -\Delta_p + (-\Delta_p)^s$, a complete solution to the optimization problem \eqref{optfk} was recently achieved in \cite{BDVV2, BDVV3}. Moreover, a version of the Faber–Krahn inequality was obtained in \cite{GS19} for mixed operators involving nonlocal terms associated with radially symmetric, nonnegative, and continuous kernels of compact support. We also refer to the recent contributions \cite{BM25, KB2025} for further developments along this line of research.
Finally, we state the  Faber-Krahn inequality for the first eigenvalue of \eqref{Eig problem 000}, which provide the solution to \eqref{optfk} in the sense that \begin{equation} 
  \lambda_{1, \mu^+}(B):=  \inf\{\lambda_{1, \mu^+}(\Omega) :\, |\Omega| = \rho\},
\end{equation}
where $B$ is the Euclidean ball with volume $\rho.$

\begin{thm} \label{FKintro} Let $\Omega \subset \mathbb{R}^N$ be a bounded open set with boundary $\partial \Omega$ of class $C^1$. Assume that $\mu=\mu^{+}$ satisfies \eqref{measure1}. Let $s_{\sharp}$ be as in \eqref{measure4} and $1<p<N/s_{\sharp}$.  Let $\rho:=|\Omega| \in(0, \infty)$, and let $B$ be any Euclidean ball with volume $\rho$. Then,
\begin{equation}\label{Faber-Krahn inequalityint}
    \lambda_{1, \mu^+}(\Omega) \geq \lambda_{1, \mu^+}\left(B\right).
\end{equation}
Moreover, if the equality holds in \eqref{Faber-Krahn inequalityint}, then $\Omega$ is a ball (upto a set of Lebesgue measure zero).
\end{thm}

The proof of Theorem~\ref{FKintro} relies on the variational characterization of the principal eigenvalue $\lambda_{1,\mu^+}$ of the operator $\mathcal{L}_{\mu,p}^+$, in conjunction with a suitable P\'olya–Szeg\"o type inequality. Our argument draws inspiration from the methods developed in \cite{BLP2014, BDVV2} for the nonlocal and mixed Faber–Krahn inequalities. In those works, the authors employed a nonlocal version of the P\'olya–Szeg\"o inequality, originally established by Almgren and Lieb \cite{AL89} and later extended by Frank and Seiringer \cite{FS-2008}, which asserts that the $W_p^s(\mathbb{R}^N)$-norm does not increase under symmetric rearrangement. In the present study, we further extend this result (see Lemma~\ref{nonloc rear}) to a more general setting, allowing its application to the proof of Theorem~\ref{FKintro}.

We now turn to the study of the second eigenvalue $\lambda_{2,\mu^+}(\Omega)$ of the operator $\mathcal{L}_{\mu,p}^+$ for $1 < p < \infty$ and $\mu = \mu^+$ satisfying \eqref{measure4}. When $\mu^+$ is the Dirac measure concentrated at $1$, the operator $\mathcal{L}_{\mu,p}^+$ reduces to the nonlinear $p$-Laplacian $-\Delta_p$, and the investigation of its second eigenvalue was carried out in the seminal work of Cuesta, De Figueiredo, and Gossez \cite{CFG99}. The nonlocal analogue, namely the study of the second eigenvalue for the fractional $p$-Laplacian $(-\Delta)_p^{s}$ (corresponding to $\mu^+$ concentrated at some fractional power $s \in (0,1)$), was later addressed in \cite{BP16}. At this point, it is worth emphasizing that, in the framework of the nonlinear eigenvalue problem \eqref{Eig problem 000}, the very notion of a second eigenvalue is not a priori well defined. Indeed, the spectrum $\sigma_{s,\mu^+}(\Omega)$ may, in principle, contain a sequence of eigenvalues accumulating at $\lambda_{1,\mu^+}(\Omega)$. 
Further progress in this direction was made in \cite{GS19}, where the authors examined the second eigenvalue of the mixed local–nonlocal operator $\mathcal{L}_{\mu,p}^+ := -\Delta_p + (-\Delta)_p^{s},$
in the particular case where $\mu$ is the sum of two Dirac measures, $\delta_1 + \delta_s$ with $s \in (0,1)$. Building upon and extending these developments, we establish a unified framework for analyzing the second eigenvalue of the general nonlocal (nonlinear) superposition operator $\mathcal{L}_{\mu,p}^+$. 
This general setting introduces new and significant analytical challenges, primarily due to the lack of scaling invariance of the Rayleigh quotient associated with the eigenvalue problem \eqref{Eig problem 000}. Despite these difficulties, we successfully adapt and refine the ideas of \cite{BP16}, which were themselves inspired by the variational minimax method originally developed by Drábek and Robinson in their seminal work \cite{DR99}.

\begin{thm} \label{thm1.11inro}
    Let $\Omega \subset \mathbb{R}^{N}$ be a bounded domain with Lipschitz boundary.  Let $\mu^+$ satisfy \eqref{measure1}. Let $s_{\sharp}$ be as in \eqref{measure4} and $1<p<N/s_{\sharp}$. Then, there exists a real positive number $\lambda_{2, \mu^+}(\Omega)$ with the following properties:
     
(i) $\lambda_{2, \mu^+}(\Omega)$ is an $(s, \mu^+)$-eigenvalue of the operator $\mathcal{L}_{\mu,p}^+$.

(ii) $\lambda_{2, \mu^+}(\Omega)>\lambda_{1, \mu^+}(\Omega)$.

(iii) If $\lambda>\lambda_{1, \mu^+}(\Omega)$ is an eigenvalue, then $\lambda \geq \lambda_{2, \mu^+}(\Omega)$.

(iv) Every eigenfunction $u \in  \mathcal{M}:=\left\{u \in X_{p}(\Omega): \,\, \|u\|_{L^{p}(\Omega)}=1\right\},$  
    associated to $\lambda_{2, \mu^+}(\Omega)$ has to change sign.
\end{thm}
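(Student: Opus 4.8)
The plan is to follow the by-now standard mountain-pass construction for the second eigenvalue of a $p$-homogeneous-type operator, adapting the scheme of Brasco--Parini \cite{BP16} (itself descended from Dr\'abek--Robinson \cite{DR99} and Cuesta--De Figueiredo--Gossez \cite{CFG99}) to the superposition operator $\mathcal{L}_{\mu,p}^{+}$. Work on the constraint manifold $\mathcal{M}=\{u\in X_p(\Omega):\|u\|_{L^p(\Omega)}=1\}$ and let $\mathcal{F}(u):=\int_{[0,1]}[u]_{s,p}^{p}\,d\mu^{+}(s)$ be the energy functional whose critical points on $\mathcal{M}$ are exactly the eigenfunctions, with critical values the eigenvalues (this is part (i)--(iii) of Theorem~\ref{first eigenvalue exist + int}). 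Fix two eigenfunctions $\pm e_{1,\mu^+}$ realizing $\lambda_{1,\mu^+}(\Omega)$ (normalized in $L^p$; simplicity from Theorem~\ref{first eigenvalue exist + int}(iv) means these are the only minimizers up to sign), and define
\begin{equation*}
\lambda_{2,\mu^+}(\Omega):=\inf_{\gamma\in\Gamma}\ \max_{t\in[-1,1]}\ \mathcal{F}(\gamma(t)),
\end{equation*}
where $\Gamma$ is the set of continuous paths $\gamma:[-1,1]\to\mathcal{M}$ with $\gamma(-1)=-e_{1,\mu^+}$ and $\gamma(1)=e_{1,\mu^+}$. Since every such path passes through $\mathcal{M}$ and cannot stay at the (two-point, by simplicity) minimizing set, one has $\lambda_{2,\mu^+}(\Omega)\geq\lambda_{1,\mu^+}(\Omega)$; the strict inequality (ii) requires showing $\lambda_{2,\mu^+}(\Omega)>\lambda_{1,\mu^+}(\Omega)$, which I would obtain from the fact that $\lambda_{1,\mu^+}$ is \emph{isolated} (Theorem~\ref{first eigenvalue exist + int}(ix)): if $\lambda_{2,\mu^+}=\lambda_{1,\mu^+}$, then a Palais--Smale / deformation argument near the minimum would produce eigenfunctions at a level arbitrarily close to but above $\lambda_{1,\mu^+}$, or else a path lying entirely in the sublevel set $\{\mathcal{F}\leq\lambda_{1,\mu^+}+\varepsilon\}$, contradicting isolatedness together with connectedness of that sublevel set around the two minimizers.

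The three technical pillars I would need to establish, in order, are: (a) $\mathcal{F}$ satisfies the Palais--Smale condition on $\mathcal{M}$ at every level --- this follows from reflexivity of $X_p(\Omega)$, the compact embedding $X_p(\Omega)\hookrightarrow L^p(\Omega)$ (which holds since $1<p<N/s_\sharp$ guarantees a genuine fractional Sobolev embedding from the $s_\sharp$-seminorm controlled by $\mu^+([s_\sharp,1])>0$), and the $(S_+)$-property of the operator, i.e. weak convergence plus convergence of the energy forces strong convergence; (b) a deformation lemma on the manifold $\mathcal{M}$ for the restricted functional, obtained by flowing along (a regularization of) the tangential gradient $-\nabla_{\mathcal{M}}\mathcal{F}$, valid because $\mathcal{M}$ is a $C^1$ Banach submanifold and $\mathcal{F}\in C^1$; (c) the key topological ingredient that any path in $\Gamma$ must cross the ``second'' level, which is where the mountain-pass geometry lives: near $e_{1,\mu^+}$ and $-e_{1,\mu^+}$ the functional equals $\lambda_{1,\mu^+}$ only at those two points (simplicity), so $e_{1,\mu^+}$ and $-e_{1,\mu^+}$ are \emph{strict} local minima of $\mathcal{F}|_{\mathcal{M}}$ --- this strictness, proven via the isolation of $\lambda_{1,\mu^+}$ and a standard argument (if $u_k\to e_{1,\mu^+}$ in $\mathcal{M}$ with $\mathcal{F}(u_k)\to\lambda_{1,\mu^+}$ but $u_k\neq\pm e_{1,\mu^+}$, then $u_k$ is a minimizing sequence, hence by PS converges to $\pm e_{1,\mu^+}$, contradiction after noting $u_k$ cannot jump to $-e_{1,\mu^+}$). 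Given (a)--(c), the classical mountain pass theorem on a manifold yields that $\lambda_{2,\mu^+}(\Omega)$ is a critical value, hence an eigenvalue, proving (i); (ii) follows from (c) plus the deformation lemma; and (iii) follows because if there were an eigenvalue $\lambda\in(\lambda_{1,\mu^+},\lambda_{2,\mu^+})$, one could use its eigenfunction and the deformation flow to build a competitor path in $\Gamma$ with max below $\lambda_{2,\mu^+}$, a contradiction --- more precisely, the absence of critical values in an interval below $c:=\lambda_{2,\mu^+}$ lets one deform a near-optimal path down past $c$.

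For part (iv), suppose $u\in\mathcal{M}$ is an eigenfunction associated with $\lambda_{2,\mu^+}(\Omega)$ that does not change sign, say $u\geq 0$; then by Theorem~\ref{first eigenvalue exist + int}(vi), $u>0$ in $\Omega$, so $u$ is a positive eigenfunction. But Theorem~\ref{first eigenvalue exist + int}(vii) says every eigenfunction associated to an eigenvalue $\lambda>\lambda_{1,\mu^+}(\Omega)$ must change sign, and by (ii) we have $\lambda_{2,\mu^+}(\Omega)>\lambda_{1,\mu^+}(\Omega)$; this is the required contradiction. (Equivalently, one invokes the strong maximum principle, Theorem~\ref{smpintro}, to upgrade $u\geq 0$ to $u>0$, then notes a positive eigenfunction must correspond to $\lambda_{1,\mu^+}$ by the characterization/simplicity, contradicting $\lambda_{2,\mu^+}>\lambda_{1,\mu^+}$.)

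The main obstacle I anticipate is the lack of scaling (homogeneity) invariance of the Rayleigh quotient for $\mathcal{L}_{\mu,p}^{+}$: because $\mu^+$ superposes seminorms of different orders $s$, the energy $\mathcal{F}$ is \emph{not} positively homogeneous of a single degree, so one cannot freely normalize and the usual trick of working with the $0$-homogeneous quotient on the unit sphere must be replaced by careful work directly on the constraint manifold $\mathcal{M}$ with the genuinely $p$-inhomogeneous functional $\mathcal{F}$. In particular, verifying the Palais--Smale condition and constructing the deformation flow require uniform coercivity/compactness estimates that do not rely on rescaling; here the crucial leverage is the domination assumption \eqref{measure1}--\eqref{measure4}, which ensures the top-order seminorm $[\,\cdot\,]_{s_\sharp,p}$ controls $\mathcal{F}$ from below up to the lower-order (compact) contributions, so that weak limits of Palais--Smale sequences remain in $\mathcal{M}$ and the $(S_+)$ argument closes. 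Making this precise --- essentially an inhomogeneous analogue of the estimates in \cite{BP16} --- is the heart of the proof.
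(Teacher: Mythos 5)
Your route differs from the paper's in its starting point: you take the mountain-pass level over paths in $\mathcal{M}$ joining $-e_{1,\mu^+}$ to $e_{1,\mu^+}$ as the \emph{definition} of $\lambda_{2,\mu^+}(\Omega)$, whereas the paper defines $\lambda_{2,\mu^+}(\Omega)$ as a minimax over odd continuous maps $\phi:\mathbb{S}^1\to\mathcal{M}$ (see \eqref{2 eigenvalue def}) and only afterwards, in Theorem~\ref{thm1.12intro}, proves the mountain-pass characterization. That choice is legitimate, and your treatment of (i) (Palais--Smale on $\mathcal{M}$ plus a minimax/deformation theorem), and of (iv) (combine (ii) with the sign-change result, Theorem~\ref{result sign changing}), matches the paper in substance. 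For (ii), note that the paper's argument does not use isolatedness of $\lambda_{1,\mu^+}$ at all: it uses \emph{simplicity} plus compactness --- any admissible image is connected (and contains both $\pm e_{1,\mu^+}$, or is symmetric in the odd-map setting), hence must contain points outside two disjoint $L^p$-balls around $\pm e_{1,\mu^+}$; if the levels were $\leq\lambda_{1,\mu^+}+1/m$ these points would form a minimizing sequence converging (weak $X_p$, strong $L^p$) to a minimizer, which by simplicity is $\pm e_{1,\mu^+}$, a contradiction. Your appeal to isolatedness and to "connectedness of the sublevel set" is vaguer than this, but fixable.

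The genuine gap is in (iii). The assertion that no eigenvalue lies in $(\lambda_{1,\mu^+},\lambda_{2,\mu^+})$ cannot be obtained from a deformation argument: deformation lemmas let you draw conclusions \emph{from} the absence of critical values in an interval, they cannot be used to \emph{rule out} critical values there, and a minimax level over paths does not in general bound from below all critical values above the minimum. Your phrase ``the absence of critical values in an interval below $c$ lets one deform a near-optimal path down past $c$'' assumes what is to be proved and points the machinery in the wrong direction. Your first instinct --- build from the eigenfunction a competitor path with maximum energy at most $\lambda$ --- is the correct and genuinely nontrivial step, and it is exactly what the paper carries out: given an eigenvalue $\lambda>\lambda_{1,\mu^+}$ with eigenfunction $u$, Theorem~\ref{result sign changing} gives $u_+\not\equiv0\not\equiv u_-$; testing the equation with $u_+$ and $u_-$, writing $A=u_+(x)-u_+(y)$, $B=u_-(x)-u_-(y)$, and invoking the discrete pointwise inequality \eqref{lam big lam 22} (Lemma~\ref{tech lem foe lam 2}, i.e.\ inequality (4.7) of \cite{BP16}) shows that the explicit odd map $\omega\mapsto(\omega_1u_+-\omega_2u_-)\big/\bigl(|\omega_1|^p\|u_+\|_{L^p}^p+|\omega_2|^p\|u_-\|_{L^p}^p\bigr)^{1/p}$ has energy at most $\lambda$ at every $\omega\in\mathbb{S}^1$, whence $\lambda\geq\lambda_{2,\mu^+}(\Omega)$. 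With your path-based definition you would additionally have to connect this circle to $\pm e_{1,\mu^+}$ without exceeding the level $\lambda$, which requires the monotonicity estimate of Lemma~\ref{M-P car tech lem} along $\gamma_t$ (after possibly replacing $u$ by $-u$, Remark~\ref{M-P car tech rem}) together with the hidden-convexity estimate along $\sigma_t=\bigl((1-t)u_+^p/\|u_+\|_{L^p}^p+t\,e_{1,\mu^+}^p\bigr)^{1/p}$. None of these ingredients appears in your sketch, so as written (iii) is not proved.
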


The final result of this paper provides a mountain pass characterization of $\lambda_{2, \mu^+}(\Omega)$. Variational characterizations of the second eigenvalue for particular cases of the operator $\mathcal{L}_{\mu,p}^+$ have been previously established in \cite{CFG99, BP16, GS19}. The present result extends these frameworks to a more general setting, offering a unified approach to the study of second eigenvalues for nonlocal nonlinear superposition operators.

\begin{thm} \label{thm1.12intro}
    Let $\Omega \subset \mathbb{R}^{N}$ be a bounded domain with Lipschitz boundary. Let $\mu^+$ satisfy \eqref{measure1}. Let $s_{\sharp}$ be as in \eqref{measure4} and $1<p<\infty$. Then, the second eigenvalue $\lambda_{2, \mu^+}$ has the following variational characterization
    \begin{equation}
        \lambda_{2, \mu^+}= \inf_{\phi \in \Gamma_1 (e_{1, \mu^+}, -e_{1, \mu^+})} \max_{ u \in \operatorname{Im}(\phi)} \|u\|^p_{X_p(\Omega)},
    \end{equation}
    where $\Gamma_1 (e_{1, \mu^+}, -e_{1, \mu^+}):= \{\phi \in C([0, 1], \mathcal{M}): \phi(0)= -e_{1, \mu^+} \,\,\,\text{and}\,\, \phi(1)= e_{1, \mu^+}  \}$   with $e_{1, \mu^+}$ is the first eigenfuction of the operator $\mathcal{L}_{\mu,p}^+.$
    \end{thm}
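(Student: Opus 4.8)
The plan is to prove $\lambda_{2,\mu^+}(\Omega)=c$, where
$$c:=\inf_{\phi\in\Gamma_1}\ \max_{u\in\operatorname{Im}\phi}\Phi(u),\qquad \Phi(u):=\|u\|_{X_p(\Omega)}^p=\int_{[0,1]}[u]_{s,p}^p\,d\mu^+(s),$$
$\Gamma_1:=\Gamma_1(e_{1,\mu^+},-e_{1,\mu^+})$ and $e_1:=e_{1,\mu^+}$ is normalized so that $e_1\in\mathcal M$, by establishing $c\ge\lambda_{2,\mu^+}(\Omega)$ and $c\le\lambda_{2,\mu^+}(\Omega)$ separately. Recall that $\Phi(u)\ge\lambda_{1,\mu^+}(\Omega)$ on $\mathcal M$ by \eqref{first eigenvalue+ int}, so $c\ge\lambda_{1,\mu^+}(\Omega)$, and that by the Lagrange multiplier rule the critical points of $\Phi|_{\mathcal M}$ are exactly the $(s,\mu^+)$-eigenfunctions of \eqref{Eig problem 000}, the critical value being the eigenvalue.

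\emph{The bound $c\ge\lambda_{2,\mu^+}(\Omega)$.} I would first show $c>\lambda_{1,\mu^+}(\Omega)$. By Theorem~\ref{first eigenvalue exist + int}(iv),(ix), $\lambda_{1,\mu^+}(\Omega)$ is simple and isolated, so $\{\pm e_1\}$ are the only minimizers of $\Phi|_{\mathcal M}$; together with the compact embedding $X_p(\Omega)\hookrightarrow L^p(\Omega)$ and the $(S_+)$-property of $\mathcal L_{\mu,p}^+$ (both already used for Theorem~\ref{first eigenvalue exist + int}), this yields $\delta,\eta>0$ with $\Phi(u)\ge\lambda_{1,\mu^+}(\Omega)+\eta$ for all $u\in\mathcal M$ with $\operatorname{dist}_{L^p}(u,\{\pm e_1\})\ge\delta$; as every $\phi\in\Gamma_1$ must exit the $\delta$-balls around $\pm e_1$, it meets this set and $\max_{\operatorname{Im}\phi}\Phi\ge\lambda_{1,\mu^+}(\Omega)+\eta$, hence $c>\lambda_{1,\mu^+}(\Omega)$. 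The same ingredients give the Palais--Smale condition for $\Phi|_{\mathcal M}$ (the constraint bounds Palais--Smale sequences in $X_p(\Omega)$, and strong convergence follows from compactness and $(S_+)$). With the mountain pass geometry at the level $\lambda_{1,\mu^+}(\Omega)<c$ in hand, the mountain pass theorem on the $C^1$ manifold $\mathcal M$ (through the quantitative deformation lemma) shows that $c$ is a critical value of $\Phi|_{\mathcal M}$, hence an eigenvalue of \eqref{Eig problem 000} with $c>\lambda_{1,\mu^+}(\Omega)$; Theorem~\ref{thm1.11inro}(iii) then forces $c\ge\lambda_{2,\mu^+}(\Omega)$.

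\emph{The bound $c\le\lambda_{2,\mu^+}(\Omega)$.} I would produce an explicit $\phi\in\Gamma_1$ with $\max_{\operatorname{Im}\phi}\Phi\le\lambda_{2,\mu^+}(\Omega)$. Let $e_2$ be a $\lambda_{2,\mu^+}(\Omega)$-eigenfunction with $\|e_2\|_{L^p(\Omega)}=1$; by Theorem~\ref{thm1.11inro}(iv) it changes sign, so $e_2=e_2^+-e_2^-$ with $m^\pm:=\|e_2^\pm\|_{L^p}^p\in(0,1)$, $m^++m^-=1$, and $\bar e_2^\pm:=e_2^\pm/(m^\pm)^{1/p}\in\mathcal M$. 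Testing the equation against $e_2^\pm$ and using the elementary inequality $|a-b|^{p-2}(a-b)(a^\pm-b^\pm)\ge|a^\pm-b^\pm|^p$ ($a,b\in\mathbb R$) inside the Gagliardo integrals and integrating against $\mu^+\ge0$ gives $\Phi(\bar e_2^\pm)\le\lambda_{2,\mu^+}(\Omega)$. Now take $\phi$ to be the concatenation, reparametrized onto $[-1,1]$, of three arcs in $\mathcal M$: (A) $\tau\mapsto-\big((1-\tau)e_1^p+\tau(\bar e_2^-)^p\big)^{1/p}$ from $-e_1$ to $-\bar e_2^-$; (B) $z(t):=(1-t)^{1/p}\bar e_2^+-t^{1/p}\bar e_2^-$, $t\in[0,1]$, which (the supports of $\bar e_2^\pm$ being disjoint) stays on $\mathcal M$ and runs from $\bar e_2^+$ at $t=0$ through $e_2$ at $t=m^-$ to $-\bar e_2^-$ at $t=1$; (C) $\tau\mapsto\big((1-\tau)(\bar e_2^+)^p+\tau e_1^p\big)^{1/p}$ from $\bar e_2^+$ to $e_1$. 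On (A) and (C) the hidden-convexity inequality --- convexity of $(a,b)\mapsto|a^{1/p}-b^{1/p}|^p$ on $[0,\infty)^2$, which is the reverse triangle inequality for $\ell^p$ --- applied pointwise in $(x,y)$ and integrated gives $\Phi\big(((1-\tau)u_0^p+\tau u_1^p)^{1/p}\big)\le(1-\tau)\Phi(u_0)+\tau\Phi(u_1)$, a convex combination of $\lambda_{1,\mu^+}(\Omega)$ and $\Phi(\bar e_2^\pm)$, hence $\le\lambda_{2,\mu^+}(\Omega)$ (the sign change does not affect $\Phi$). On (B), for a.e.\ $(x,y)$ the function $t\mapsto|z(t)(x)-z(t)(y)|^p$ is concave on $[0,1]$: it is affine unless $x$ and $y$ lie in the two different nodal sets of $e_2$, where it equals $\big((1-t)^{1/p}A+t^{1/p}B\big)^p$ with $A,B\ge0$, whose concavity I would check by a direct second-derivative computation (it reduces to a pointwise nonnegativity). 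Integrating, $t\mapsto\Phi(z(t))$ is concave on $[0,1]$; since $z(\cdot)\subset\mathcal M$ passes through the eigenfunction $e_2$ at the interior point $t=m^-$ and $e_2$ is a critical point of $\Phi|_{\mathcal M}$, the derivative of $t\mapsto\Phi(z(t))$ vanishes at $m^-$ (equivalently, by a short computation with the eigenvalue equation), so by concavity the maximum is attained there and $\Phi(z(t))\le\Phi(e_2)=\lambda_{2,\mu^+}(\Omega)$ for all $t$. Each arc is continuous into $X_p(\Omega)$ by dominated convergence on the Gagliardo integrals together with the uniform convexity of $X_p(\Omega)$, so $\phi\in\Gamma_1$ and $\max_{\operatorname{Im}\phi}\Phi\le\lambda_{2,\mu^+}(\Omega)$, i.e.\ $c\le\lambda_{2,\mu^+}(\Omega)$.

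Combining the two bounds yields $\lambda_{2,\mu^+}(\Omega)=c$. The step I expect to be the main obstacle is the estimate on arc (B): unlike in the local case, $\bar e_2^\pm$ are not eigenfunctions on the nodal subdomains, and renormalizing naive interpolating paths back onto $\mathcal M$ pushes $\Phi$ above $\lambda_{2,\mu^+}(\Omega)$, so the argument hinges precisely on the concavity in $t$ of $\Phi(z(t))$ combined with the critical-point property at $t=m^-$; this is the mixed-order analogue of the constructions of \cite{CFG99, BP16, GS19}. A secondary point needing care is the verification of the Palais--Smale condition and the mountain pass geometry for $\Phi|_{\mathcal M}$ in this generality, which however runs in parallel with the proof of Theorem~\ref{first eigenvalue exist + int}.
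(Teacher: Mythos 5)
Your proposal is correct, but it follows a genuinely different route from the paper in both directions. In the paper, $\lambda_{2,\mu^+}(\Omega)$ is introduced in Section~\ref{sec8} precisely as the min--max \eqref{2 eigenvalue def} over odd maps $\phi:\mathbb{S}^1\to\mathcal{M}$, so the inequality $\lambda_{2,\mu^+}(\Omega)\le \inf_\gamma\max_{\operatorname{Im}\gamma}\|\cdot\|_{X_p(\Omega)}^p$ is obtained in one line by gluing a path $\gamma\in\Gamma(e_{1,\mu^+},-e_{1,\mu^+})$ with $-\gamma$ to produce an odd $\mathbb{S}^1$-map; you instead prove this inequality by showing the path min--max level $c$ exceeds $\lambda_{1,\mu^+}(\Omega)$ (localization of minimizing sequences near $\pm e_{1,\mu^+}$, exactly the argument of Theorem~\ref{properties of 2 eigenvalue}(ii)), verifying Palais--Smale on $\mathcal M$, invoking the mountain pass theorem to conclude $c$ is an eigenvalue, and then appealing to Theorem~\ref{thm1.11inro}(iii) --- heavier machinery, but correct and free of circularity since Theorem~\ref{thm1.11inro} is proved before this result. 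For the reverse inequality, the paper takes near-optimal odd maps $\phi_n$, selects $u_n\in\operatorname{Im}\phi_n$ satisfying the sign conditions \eqref{eq9.1}--\eqref{eq9.2} (Remark~\ref{M-P car tech rem}), and uses Lemma~\ref{M-P car tech lem} (the $\cos(\pi t)$ interpolation toward the normalized positive part) plus hidden convexity to build paths of energy $\le\lambda_{2,\mu^+}(\Omega)+1/n$; you instead construct a single explicit path through an actual second eigenfunction $e_2$, using $\|\bar e_2^{\pm}\|_{X_p(\Omega)}^p\le\lambda_{2,\mu^+}(\Omega)$ (testing with $e_2^{\pm}$), hidden convexity for the outer arcs, and, on the middle arc $z(t)=(1-t)^{1/p}\bar e_2^+-t^{1/p}\bar e_2^-$, the concavity of $t\mapsto\|z(t)\|_{X_p(\Omega)}^p$ (via concavity of $(a,b)\mapsto(a^{1/p}+b^{1/p})^p$ on $[0,\infty)^2$) combined with the vanishing of its derivative at $t=m^-$, where $z$ passes through the critical point $e_2$ of the constrained functional --- this last point is essential, since concavity alone would not bound the interior maximum, and you correctly include it. Your construction requires the existence of a $\lambda_{2,\mu^+}$-eigenfunction (Theorem~\ref{thm1.11inro}(i)), which the paper's argument avoids, while you avoid Lemma~\ref{M-P car tech lem} and Remark~\ref{M-P car tech rem}; the trade-off is reasonable. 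Two details to spell out when writing this up: the hidden-convexity bound on your arcs (A) and (C) must also cover the local contribution $\mu^+(\{1\})\int|\nabla\cdot|^p\,dx$ (the gradient version of the inequality, as in the proof of Theorem~\ref{result sign changing}), not only the pointwise Gagliardo estimate; and the continuity in the $X_p(\Omega)$-norm of the $p$-th power-mean arcs deserves a short justification (the paper uses the same arcs $\sigma_{n,t}$).
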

    
A natural line of investigation arising after Theorems \ref{FKintro}, \ref{thm1.11inro} and \ref{thm1.12intro} concerns the so-called {\it Hong–Krahn–Szeg\"o inequality} for the second Dirichlet eigenvalue $\lambda_{2, \mu^+}(\Omega)$ of the operator $\mathcal{L}_{\mu,p}^+$. For the classical Dirichlet eigenvalue problem associated with the Laplacian $-\Delta$, this inequality was first established by Krahn \cite{K1925} and later rediscovered independently by Hong \cite{H54} and Szeg\"o \cite{Polya55}. Its extension to the nonlinear $p$-Laplacian $-\Delta_p$ was subsequently obtained by Brasco and Franzina \cite{BF13}. 
  We recall that, in the local case of $p$-Laplacian, the Hong-Krahn-Szeg\"o inequality says that: 
\begin{center}
    ``{\it In the class of  all domains of fixed volume, the disjoint union of two equal balls
has the smallest second eigenvalue.}"
\end{center}
In other words, it asserts that 
\begin{equation} \label{HKS}
    |\Omega|^{\frac{p}{N}}\lambda_2(\Omega) \geq 2^{\frac{p}{N}} |B|^{\frac{p}{N}} \lambda_1(B),
\end{equation}
where $B$ is any $N$-dimensional ball. We refer to \cite{RSS} for more details. Moreover, the equality in \eqref{HKS} holds if and only if $\Omega$ is a disjoint union of two equal balls. Brasco and Parini \cite{BP16} proved the following nonlocal Hong-Krahn-Szeg\"o inequality for the fractional $p$-Laplacian:  
\begin{equation} \label{nHKS}
    |\Omega|^{\frac{ps}{N}}\lambda_2(\Omega) \geq 2^{\frac{ps}{N}} |B|^{\frac{ps}{N}} \lambda_1(B),
\end{equation}
where $B$ is any $N$-dimensional ball. Equality is never achieved in \eqref{nHKS}.
This phenomenon marks a substantial departure from the behavior observed in the local setting. The underlying reason lies in the fact that, in general, nonlocal energy functionals are strongly affected by the relative positioning of the distinct connected components of the domain. A similar feature is exhibited by the Hong–Krahn–Szeg\"o inequality for mixed local–nonlocal operators, as discussed in \cite{BDVV3, GS19}. Consequently, the associated shape optimization problem
\[
    \inf\{\lambda_2(\Omega): |\Omega| = c\}
\]
admits no minimizer in either the purely nonlocal or the mixed local–nonlocal framework. The standard methodology for proving the Hong-Krahn-Szeg\"o inequality combines the Faber-Krahn inequality for $\lambda_{1}(\Omega)$, key structural properties of the second eigenvalue $\lambda_{2}(\Omega)$, and the identification of nodal domains (via the Nodal Lemma) for the corresponding eigenfunctions (see \cite{BDVV3}). A crucial analytical ingredient in this framework is the global $L^{\infty}(\mathbb{R}^N)$ boundedness of the second eigenfunction, coupled with its interior H\"older regularity. In the case of the operator $\mathcal{L}_{\mu,p}^+$, and in light of the results established in this paper, the development of a comprehensive interior H\"older regularity theory for its eigenfunctions remains an open and essential problem (see \cite{BDVV3}) for establishing the Hong–Krahn–Szeg\"o inequality for $\lambda_{2, \mu^+}.$ Since the regularity theory for the operator $\mathcal{L}_{\mu,p}^+$ requires a comprehensive and detailed analysis, its development, together with the investigation of the Hong-Krahn-Szeg\"o inequality, will be addressed in a forthcoming work.

Because of the generality of the measure $\mu^+$, the results obtained in this paper cover the following classes of interesting examples and seem to be new in these particular cases, except for the first example, to the best of our knowledge, and therefore may serve as a starting point for further investigations in these specific settings. We present several particular cases of the operator $\mathcal{L}_{\mu, p}^+$ introduced in \eqref{super operator} below:
\begin{itemize}
    \item[(i)] If $s\in[0,1)$ and $\mu^+:=\delta_{1}+ \delta_{s}$, with $\delta_{s}$ and $\delta_{s}$ being the Dirac's deltas centered at $1$ and $s$, respectively, the operator $\mathcal{L}_{\mu, p}^+$ boils down to the mixed operator
\begin{equation*}
    \mathcal{L}_{\mu, p}^+=- \Delta_p+(-\Delta)_p^{s}.
\end{equation*}
   
    \item[(ii)] If $0\leq ...<s_2<s_1<s_0\leq 1$ and
$\mu^+:=\sum_{k=1}^{+\infty} c_{k} \delta_{s_{k}}$ (provided that the series converges) with  $c_{k} \geq 0$ for any $k \in \mathbb{N} \backslash\{0\}.$
Then the operator in \eqref{super operator} takes the form
\begin{equation*}
  \mathcal{L}_{\mu, p}^+(u)=\sum_{k=1}^{+\infty} c_{k}(-\Delta)_p^{s_{k}} u. 
\end{equation*}

    \item[(iii)] Let $f \not\equiv 0 $ be a measurable function such that $f \geq 0$ in $ (0, 1)$ and  $\int_{\bar{s}}^1 f(s)\, ds >0.$ Choosing $d\mu^+(s):=f(s) ds,$ he corresponding operator turns out to be 
    $$\mathcal{L}_{p, \mu}= \int_0^1 f(s) (-\Delta_p)^s u\, ds.$$
    
\end{itemize}

We conclude this introduction with an overview of the structure of the paper. Section~\ref{sec2} sets the stage by recalling the functional-analytic framework that underpins our analysis. In Section~\ref{sec3}, we turn our attention to proving Theorem~\ref{principal eigenvalue P 0}, focusing on the Dirichlet eigenvalue problem associated with the general nonlinear superposition operator $\mathcal{L}_{\mu,p}$, and we develop several foundational results needed later on.  
Sections~\ref{sec4} and~\ref{sec5} are devoted to the weak and strong minimum/maximum principles for the operator $\mathcal{L}_{\mu,p}^+$. There, we introduce an appropriate notion of {\it the nonlocal tail} for this class of operators and derive a logarithmic estimate for supersolutions to prove Theorem \ref{wmpinto} and Theorem \ref{smpintro}. Building upon these results, Section~\ref{sec6} deepens the study of eigenvalues and eigenfunctions of $\mathcal{L}_{\mu,p}^+$, culminating in the proof of the global $L^{\infty}(\mathbb{R}^N)$ boundedness of eigenfunctions corresponding to positive eigenvalues. In particular, we prove Theorem \ref{first eigenvalue exist + int} in this section. In Section~\ref{sec7}, we explore a shape optimization problem for the first eigenvalue of $\mathcal{L}_{\mu,p}^+$, formulated through the Faber–Krahn inequality and prove Theorem~\ref{FKintro}. Section~\ref{sec8} then focuses on the analysis of the second eigenvalue of $\mathcal{L}_{\mu,p}^+$, proving Theorem \ref{thm1.11inro} and finally, Section~\ref{sec9} concludes the paper by proving Theorem \ref{thm1.12intro}, which presents a variational characterization of this second eigenvalue.

%%%%%%%%%%%%%%%%%%%%%%%%%%%%%%%%%%%%%%%%%%%%%%%
\section{Preliminaries: Solution space setup and their embeddings} \label{sec2}
%%%%%%%%%%%%%%%%%%%%%%%%%%%%%%%%%%%%%%%%%%%%%%%
The purpose of this section is to develop the functional analytic framework required for our study, with particular emphasis on the relevant notions of fractional Sobolev spaces and their fundamental properties. For a more comprehensive treatment of this material, we refer the reader to \cite{DPSV, DPSV25-1, AGKR, BGK25, BGK25ii}.

We begin this section by introducing the Gagliardo semi-norm for $s \in [0, 1],$  as
\begin{equation*}
    [u]_{s, p}:= \begin{cases}\|u\|_{L^{p}\left(\mathbb{R}^{N}\right)} & \text { if } s=0, \\ \left(c_{N, s, p} \iint_{\mathbb{R}^{2 N}} \frac{|u(x)-u(y)|^{p}}{|x-y|^{N+s p}} d x d y\right)^{1 / p} & \text { if } s \in(0,1), \\ \|\nabla u\|_{L^{p}\left(\mathbb{R}^{N}\right)} & \text { if } s=1,\end{cases}
\end{equation*}
where, $$c_{N,p,s}:=\frac{\frac{sp}{2}(1-s)2^{2s-1}}{\pi^{\frac{N-1}{2}}}\frac{\Gamma(\frac{N+ps}{2})}{\Gamma(\frac{p+1}{2})\Gamma(2-s)} $$ is the normalizing constant. 
Due to the normalization of the constant $C_{N, s, p},$ we have the following relations:
$$
\lim _{s \searrow 0}[u]_{s, p}=[u]_{0, p} \quad \text { and } \quad \lim _{s \nearrow 1}[u]_{s, p}=[u]_{1, p}.
$$

     Let $\Omega\subset\mathbb{R}^N$ be a bounded domain with Lipschitz boundary. We use the Sobolev space $X_{p}(\Omega)$, introduced in \cite{DPSV}, which consists of all measurable functions $u: \mathbb{R}^{N} \rightarrow \mathbb{R}$ such that $u=0$ in $\mathbb{R}^{N} \backslash \Omega$ and 
\begin{equation}\label{norm on Xp}
  \|u\|_{X_p(\Omega)} = \rho_{p}(u):=\left(\int_{[0,1]}[u]_{s, p}^{p} d \mu^{+}(s)\right)^{1 / p}< +\infty.
\end{equation}
We define the dual pairing between $X_{p}(\Omega)$ and its dual as
\begin{equation}\label{dual pairing +}
\langle u, v\rangle_{+}:=\int_{[0,1]} c_{N, s, p} \iint_{\mathbb{R}^{2 N}} \frac{|u(x)-u(y)|^{p-2}(u(x)-u(y))(v(x)-v(y))}{|x-y|^{N+sp}} d x d y d \mu^{+}(s) . 
\end{equation}
for any $u, v \in X_{p}(\Omega).$ Moreover, using hypothesis \eqref{measure2}, we also define 
\begin{equation}\label{dual pairing -}
\langle u, v\rangle_{-}:=\int_{[0, \bar{s})} c_{N, s, p} \iint_{\mathbb{R}^{2 N}} \frac{|u(x)-u(y)|^{p-2}(u(x)-u(y))(v(x)-v(y))}{|x-y|^{N+sp}} d x d y d \mu^{-}(s) . 
\end{equation}

We recall the essential results developed in \cite{DPSV, AGKR} for further development.
\begin{lem}\cite{AGKR, DPSV}\label{Uniform convexity}
    The Sobolev space $X_{p}(\Omega)$ is separable for $1 \leq p < \infty$ and is uniformly convex for $1<p < \infty$.
\end{lem}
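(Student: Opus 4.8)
The plan is to realize $X_p(\Omega)$ as an isometric copy of a linear subspace of a vector-valued $L^p$ space over a $\sigma$-finite measure space, and then to invoke the classical facts that such an $L^p$ space is separable when $1 \le p < \infty$ and uniformly convex when $1 < p < \infty$, both properties being inherited by linear subspaces.

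First I would unpack the norm \eqref{norm on Xp}: splitting $\mu^+$ according to its possible atoms at the endpoints,
$$\|u\|_{X_p(\Omega)}^p = \mu^+(\{0\})\,\|u\|_{L^p(\mathbb{R}^N)}^p + \int_{(0,1)} c_{N,s,p} \iint_{\mathbb{R}^{2N}} \frac{|u(x)-u(y)|^p}{|x-y|^{N+sp}}\,dx\,dy\,d\mu^+(s) + \mu^+(\{1\})\,\|\nabla u\|_{L^p(\mathbb{R}^N)}^p,$$
where the last term is present — and $u$ is thereby forced to be weakly differentiable — precisely when $\mu^+(\{1\})>0$. This suggests the measure space $(Y,\nu)$ with $Y$ the disjoint union of $Y_0 := \{0\}\times\mathbb{R}^N$, $Y_* := (0,1)\times\mathbb{R}^{2N}$ and $Y_1 := \{1\}\times\mathbb{R}^N$, carrying respectively the measures $\mu^+(\{0\})\,dx$, $c_{N,s,p}|x-y|^{-N-sp}\,dx\,dy\,d\mu^+(s)$ and $\mu^+(\{1\})\,dx$; one checks $\nu$ is $\sigma$-finite by exhausting $\mathbb{R}^{2N}$ with the sets $\{|x|\le k,\ |y|\le k,\ |x-y|\ge 1/k\}$, on which the Gagliardo kernel is bounded and $\mu^+$ is finite. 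I would then define $T\colon X_p(\Omega)\to L^p(Y,\nu)$ — with scalar fibre on $Y_0$ and $Y_*$ and fibre $\mathbb{R}^N$ (Euclidean norm) on $Y_1$ — by $(Tu)(0,x)=u(x)$, $(Tu)(s,x,y)=u(x)-u(y)$, $(Tu)(1,x)=\nabla u(x)$. Joint measurability of these expressions is immediate, and by construction $\|Tu\|_{L^p(Y,\nu)}^p = \|u\|_{X_p(\Omega)}^p$, so $T$ is a linear isometry onto a subspace.

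Next I would recall the structural facts: $L^p$ of a $\sigma$-finite measure space with values in a separable, uniformly convex Banach space (here $\mathbb{R}$, resp. $\mathbb{R}^N$) is separable for $1 \le p < \infty$ and, via Clarkson's inequalities, uniformly convex for $1 < p < \infty$; a finite $\ell^p$-direct sum of such spaces retains both properties, and $L^p(Y,\nu)$ above is exactly the $\ell^p$-sum of its three pieces. Since separability and positivity of the modulus of convexity both pass to arbitrary linear subspaces, and $T$ identifies $X_p(\Omega)$ isometrically with such a subspace, the conclusion follows. If one additionally wishes to record $X_p(\Omega)$ as a uniformly convex \emph{Banach} space, one uses its completeness, which makes $T(X_p(\Omega))$ a closed subspace of $L^p(Y,\nu)$.

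The only mildly delicate points are the bookkeeping around the atoms of $\mu^+$ at $0$ and $1$, where both the fibre and the base change (scalar over $\mathbb{R}^N$, scalar over $\mathbb{R}^{2N}$, and $\mathbb{R}^N$-valued over $\mathbb{R}^N$), and the $\sigma$-finiteness check for $\nu$ on the Gagliardo component; neither is substantial, and when $\mu^+$ charges no endpoint the argument collapses to a single isometric embedding into $L^p((0,1)\times\mathbb{R}^{2N},\nu)$. I do not anticipate a genuine obstacle: the content is entirely classical once the right target space has been identified.
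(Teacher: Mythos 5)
Your proposal is correct, and it follows the standard route: the paper itself gives no proof of Lemma \ref{Uniform convexity} (it is imported from \cite{AGKR, DPSV}), and in those references the result is obtained by precisely this kind of mechanism --- an isometric identification of $X_p(\Omega)$ with a subspace of an $L^p$ space built from the three components of the norm (atom at $s=0$, Gagliardo part over $(0,1)\times\mathbb{R}^{2N}$ with the kernel as density, atom at $s=1$ with $\mathbb{R}^N$-valued fibre), after which separability and uniform convexity (Clarkson) are inherited by subspaces. The only point worth recording explicitly is that separability of $L^p(Y,\nu)$ is not a consequence of $\sigma$-finiteness alone for abstract measure spaces, but holds here because $\nu$ is a $\sigma$-finite Borel measure on a Polish space; your concrete exhaustion argument supplies exactly this, so there is no gap.
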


\footnotetext{
${ }^{2}$ We point out that we write
\begin{equation}\label{footnote}
\int_{[0,1]} c_{N, s, p} \iint_{\mathbb{R}^{2 N}} \frac{|u(x)-u(y)|^{p-2}(u(x)-u(y))(v(x)-v(y))}{|x-y|^{N+ ps}} d x d y d \mu^{+}(s) 
\end{equation}
with an abuse of notation. Indeed, to be precise, one should write
$$
\begin{aligned}
& \int_{(0,1)} c_{N, s, p} \iint_{\mathbb{R}^{2 N}} \frac{|u(x)-u(y)|^{p-2}(u(x)-u(y))(v(x)-v(y))}{|x-y|^{N+p s}} d x d y d \mu^{+}(s) \\
& \quad+\mu^{+}(\{0\}) \int_{\Omega} |u(x)|^{p-2}u(x) v(x) d x+\mu^{+}(\{1\}) \int_{\Omega} |\nabla u(x)|^{p-2}\nabla u(x) \cdot \nabla v(x) d x.
\end{aligned}
$$
To ease notation, unless otherwise specified, we will always use the compact expression \eqref{footnote}.}

\begin{lem}\label{negative part reabsorb}\cite[Proposition 4.1]{DPSV} 
Let $p \in(1, N)$ and let assumptions \eqref{measure2} and \eqref{measure3} hold.
Then, there exists $c_{0}=c_{0}(N, \Omega, p)>0$ such that, for any $u \in X_{p}(\Omega)$, we have
$$
\int_{[0, \bar{s}]}[u]_{s, p}^{p} d \mu^{-}(s) \leq c_{0} \gamma \int_{[\bar{s}, 1]}[u]_{s, p}^{p} d \mu(s).
$$
\end{lem}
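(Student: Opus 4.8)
\textbf{Proof proposal for Lemma~\ref{negative part reabsorb}.}

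The plan is to reduce everything to a single ``anchor'' fractional order and exploit the fact that the negative measure $\mu^-$ lives on $[0,\bar s)$ while the assumption \eqref{measure1} gives us a genuinely positive chunk of $\mu^+$ sitting on $[\bar s,1]$. First I would recall the elementary but crucial comparison of Gagliardo seminorms: for a function $u$ supported in the bounded set $\Omega$, and for $0\le s\le t\le 1$, one has an estimate of the form $[u]_{s,p}^p \le C(N,\Omega,p)\,[u]_{t,p}^p$. This is the standard interpolation/embedding fact for the fractional Sobolev spaces $\widetilde W^{s,p}(\Omega)$ on a bounded domain (with zero extension), and it is exactly the kind of result recorded in \cite{DPSV}; the constant depends on $\mathrm{diam}(\Omega)$ but not on $s$ or $t$ once the endpoints are confined to $[0,1]$, thanks to the normalization of $c_{N,s,p}$. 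In particular, for every $s\in[0,\bar s]$ we may bound $[u]_{s,p}^p \le C[u]_{\bar s,p}^p$.

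Next I would integrate this pointwise-in-$s$ inequality against $d\mu^-(s)$ over $[0,\bar s]$. Using \eqref{measure2} (so that $\mu^-$ charges only $[0,\bar s)$, and the value at $\bar s$ contributes nothing) together with \eqref{measure3}, which says $\mu^-([0,\bar s)) \le \gamma\,\mu^+([\bar s,1])$, we obtain
\begin{equation*}
\int_{[0,\bar s]}[u]_{s,p}^p\,d\mu^-(s) \;\le\; C\,[u]_{\bar s,p}^p\,\mu^-([0,\bar s)) \;\le\; C\gamma\,[u]_{\bar s,p}^p\,\mu^+([\bar s,1]).
\end{equation*}
Now I use the comparison seminorm inequality in the other direction on the right-hand side: for every $s\in[\bar s,1]$ we have $[u]_{\bar s,p}^p \le C[u]_{s,p}^p$, hence, integrating against $d\mu^+$ over $[\bar s,1]$,
\begin{equation*}
[u]_{\bar s,p}^p\,\mu^+([\bar s,1]) \;=\; \int_{[\bar s,1]}[u]_{\bar s,p}^p\,d\mu^+(s) \;\le\; C\int_{[\bar s,1]}[u]_{s,p}^p\,d\mu^+(s).
\end{equation*}
Finally, since on $[\bar s,1]$ the signed measure $\mu$ agrees with $\mu^+$ (because $\mu^-$ vanishes there by \eqref{measure2}), the last integral equals $C\int_{[\bar s,1]}[u]_{s,p}^p\,d\mu(s)$. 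Chaining the three displays and absorbing all the dimensional constants into a single $c_0=c_0(N,\Omega,p)$ yields the claim. (Note that a factor of $\mu^+([\bar s,1])>0$ has been divided out implicitly; more cleanly, one never divides but rather keeps the product and observes the final bound already has the right form.)

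The only genuinely non-trivial ingredient is the uniform (in $s$, for $s$ ranging over a compact subinterval of $[0,1]$) comparison $[u]_{s,p}^p\lesssim [u]_{t,p}^p$ for $u$ with bounded support and $s\le t$, including the boundary cases $s=0$ and $t=1$; this is where the boundedness of $\Omega$ and the precise normalization of $c_{N,s,p}$ enter, and it is the step I would be most careful to cite precisely from \cite{DPSV} (or reprove via the Gagliardo double-integral split into $|x-y|<\mathrm{diam}(\Omega)$ and $|x-y|\ge\mathrm{diam}(\Omega)$, the latter handled by the $L^p$ bound together with the support constraint). Everything else is bookkeeping with the two measure-theoretic hypotheses \eqref{measure2}--\eqref{measure3}. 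I expect no obstacle beyond keeping track of which interval each seminorm comparison is applied on.
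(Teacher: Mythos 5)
Your argument is correct and is essentially the proof of the source the paper cites for this statement ([DPSV, Proposition 4.1]; the paper itself gives no proof): bound each seminorm of order $s<\bar{s}$ by the anchor seminorm $[u]_{\bar{s},p}^{p}$, use \eqref{measure2}--\eqref{measure3} to control $\mu^{-}([0,\bar{s}))$ by $\gamma\,\mu^{+}([\bar{s},1])$, then reconvert $[u]_{\bar{s},p}^{p}\,\mu^{+}([\bar{s},1])$ into $\int_{[\bar{s},1]}[u]_{s,p}^{p}\,d\mu(s)$ via the reverse comparison and the fact that $\mu=\mu^{+}$ on $[\bar{s},1]$, with no division by $\mu^{+}([\bar{s},1])$ ever needed. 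The one point to treat carefully is exactly the one you flag: the uniform-in-$s$ comparison $[u]_{r,p}^{p}\le C(N,\Omega,p)\,[u]_{t,p}^{p}$ for $0\le r\le t\le 1$ should be quoted from \cite{DPSV} rather than derived by your parenthetical splitting at $\mathrm{diam}(\Omega)$, since that naive pointwise kernel comparison produces the factor $c_{N,r,p}/c_{N,t,p}\sim(1-r)/(1-t)$, which degenerates as the higher order $t$ approaches $1$ (precisely the regime needed on $[\bar{s},1]$), and a Bourgain--Brezis--Mironescu-type uniform estimate exploiting the normalization $c_{N,s,p}\sim s(1-s)$ is required there.
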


\begin{lem}\label{negative part convergence}\cite[Proposition 3.1]{AGKR} 
    Let $\mu$ satisfy \eqref{measure1} and \eqref{measure2} and, let $s_{\sharp}$ be as in \eqref{measure4}. Suppose $(u_{k})$ is a sequence in $X_{p}(\Omega)$ that converges weakly to some $u$ in $X_{p}(\Omega)$ as $k \rightarrow+\infty$.
Then
\begin{equation*}
\lim _{k \rightarrow+\infty} \int_{[0, \bar{s})}\left[u_{k}\right]_{s,p}^{p} d \mu^{-}(s)=\int_{[0, \bar{s})}[u]_{s,p}^{p} d \mu^{-}(s) .
\end{equation*}
\end{lem}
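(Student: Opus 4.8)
I regard the claim as a convergence statement for the sequence of integrals against the fixed finite measure $\mu^{-}$ on $[0,\bar{s})$, and prove it via the Vitali convergence theorem. Two ingredients are needed: (a) pointwise (in $s$) convergence $[u_{k}]_{s,p}^{p}\to[u]_{s,p}^{p}$ for $\mu^{-}$-a.e.\ $s$, and (b) equi-integrability of the family $\{\,s\mapsto[u_{k}]_{s,p}^{p}\,\}_{k}$ with respect to $\mu^{-}$. Throughout, since $u_{k}\rightharpoonup u$ in $X_{p}(\Omega)$ the sequence is norm-bounded: $\sup_{k}\|u_{k}\|_{X_{p}(\Omega)}=:M<\infty$.

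\textbf{Ingredient (a).} Fix $s\in[0,\bar{s})$. Conditions \eqref{measure1} and \eqref{measure4} ensure that the $X_{p}$-norm controls the high-order Gagliardo seminorm $[\cdot]_{s_{\sharp},p}$, i.e.\ $X_{p}(\Omega)$ embeds continuously into the fractional Sobolev space $W^{s_{\sharp},p}_{0}(\Omega)$ of functions vanishing outside $\Omega$ (see \cite{DPSV,AGKR}). Since $\Omega$ is bounded and $s<\bar{s}\le s_{\sharp}$, the fractional Rellich embedding $W^{s_{\sharp},p}_{0}(\Omega)\hookrightarrow\hookrightarrow W^{s,p}_{0}(\Omega)$ is compact (for $s=0$ this is the classical compact embedding into $L^{p}(\Omega)$). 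Composing, $X_{p}(\Omega)\hookrightarrow\hookrightarrow W^{s,p}_{0}(\Omega)$; as a bounded linear map is weakly continuous, $u_{k}\rightharpoonup u$ in $X_{p}(\Omega)$ forces $u_{k}\to u$ strongly in $W^{s,p}_{0}(\Omega)$, hence $[u_{k}]_{s,p}\to[u]_{s,p}$ and therefore $[u_{k}]_{s,p}^{p}\to[u]_{s,p}^{p}$. Since this holds for every $s\in[0,\bar{s})$, which contains the support of $\mu^{-}$, (a) follows.

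\textbf{Ingredient (b).} For an arbitrary Borel set $E\subseteq[0,\bar{s})$, apply Lemma~\ref{negative part reabsorb} with $\mu^{-}$ replaced by its restriction to $E$. The pair consisting of $\mu^{+}$ and this restricted measure still satisfies \eqref{measure2}, satisfies \eqref{measure3} with constant $\gamma_{E}:=\mu^{-}(E)/\mu^{+}([\bar{s},1])$ (note $\mu^{+}([\bar{s},1])>0$ by \eqref{measure1}), and $p\in(1,N)$ because $p<N/s_{\sharp}\le N$. Since $c_{0}=c_{0}(N,\Omega,p)$ is independent of the measure, the lemma yields, for every $u\in X_{p}(\Omega)$,
\[
\int_{E}[u]_{s,p}^{p}\,d\mu^{-}(s)
\ \le\ c_{0}\,\gamma_{E}\int_{[\bar{s},1]}[u]_{s,p}^{p}\,d\mu^{+}(s)
\ \le\ \frac{c_{0}}{\mu^{+}([\bar{s},1])}\,\|u\|_{X_{p}(\Omega)}^{p}\,\mu^{-}(E).
\]
Taking $u=u_{k}$ and using $\|u_{k}\|_{X_{p}(\Omega)}\le M$ gives $\int_{E}[u_{k}]_{s,p}^{p}\,d\mu^{-}(s)\le c_{0}M^{p}\mu^{+}([\bar{s},1])^{-1}\mu^{-}(E)$ uniformly in $k$, which is the desired equi-integrability (and, with $E=[0,\bar{s})$, also bounds the integrals uniformly). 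Combining (a), (b) and the finiteness of $\mu^{-}$, the Vitali convergence theorem gives $[u_{k}]_{\cdot,p}^{p}\to[u]_{\cdot,p}^{p}$ in $L^{1}([0,\bar{s}),\mu^{-})$, which is the assertion. (Alternatively, should the functional framework of Section~\ref{sec2} provide the uniform-in-$s$ bound $[v]_{s,p}^{p}\le C(N,\Omega,p)\|v\|_{X_{p}(\Omega)}^{p}$ for $s\in[0,\bar{s})$ — here the normalization of $c_{N,p,s}$ near the endpoints is relevant — then the constant $CM^{p}$ dominates all integrands and ordinary dominated convergence applies at once.)

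\textbf{Main obstacle.} The delicate point is ingredient (a): upgrading, for each fixed $s<\bar{s}$, weak convergence in $X_{p}(\Omega)$ to strong convergence in $W^{s,p}_{0}(\Omega)$, i.e.\ the compactness of $X_{p}(\Omega)\hookrightarrow\hookrightarrow W^{s,p}_{0}(\Omega)$. This hinges on the $X_{p}$-norm dominating the high-order seminorm $[\cdot]_{s_{\sharp},p}$ (guaranteed by \eqref{measure1}/\eqref{measure4}) together with the standard compact fractional embedding on bounded domains; once this is in place, the remaining steps are soft measure-theoretic arguments.
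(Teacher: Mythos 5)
The paper does not prove this lemma at all: it is quoted verbatim from \cite[Proposition 3.1]{AGKR}, so there is no in-text argument to compare against. Judged on its own merits, your strategy is sound and is the natural one: for each fixed $s\in[0,\bar{s})$ the chain $X_p(\Omega)\hookrightarrow W^{s_\sharp,p}_0(\Omega)\hookrightarrow\hookrightarrow W^{s,p}_0(\Omega)$ (with the full-space Gagliardo seminorm controlled thanks to \eqref{measure4} and the uniform comparison of seminorms for functions vanishing outside the bounded set $\Omega$) upgrades weak $X_p$-convergence to $[u_k]_{s,p}^p\to[u]_{s,p}^p$ pointwise in $s$, and a uniform absolute-continuity bound with respect to the finite measure $\mu^-$ then closes the argument via Vitali (or, as you note, plain dominated convergence once one has $[v]_{s,p}^p\le C\|v\|_{X_p(\Omega)}^p$ uniformly for $s<\bar s$). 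This is in the same spirit as the known linear-case argument.

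There is, however, one concrete slip in ingredient (b). You justify the hypothesis $p\in(1,N)$ of Lemma~\ref{negative part reabsorb} by writing ``$p<N/s_\sharp\le N$''; the inequality goes the wrong way, since $s_\sharp\le 1$ gives $N/s_\sharp\ge N$, so $p<N/s_\sharp$ does \emph{not} force $p<N$. Moreover, the statement you are proving does not even assume $p<N/s_\sharp$, so as written your appeal to Lemma~\ref{negative part reabsorb} is not covered by its hypotheses when $p\ge N$. The gap is easily repaired and does not affect the architecture of the proof: for $u$ vanishing outside the bounded set $\Omega$ one has the uniform comparison $[u]_{s,p}^p\le C(N,\Omega,p)\,[u]_{t,p}^p$ for all $0\le s<\bar s\le t\le 1$ and all $1<p<\infty$; averaging in $t$ over $[\bar s,1]$ against $\mu^+$ and using \eqref{measure1} yields
\begin{equation*}
\int_E [u]_{s,p}^p\,d\mu^-(s)\ \le\ \frac{C(N,\Omega,p)}{\mu^+([\bar s,1])}\,\mu^-(E)\,\|u\|_{X_p(\Omega)}^p
\end{equation*}
for every Borel $E\subseteq[0,\bar s)$, which is exactly the equi-integrability (indeed a uniform dominating bound) you need, with no restriction relating $p$ and $N$. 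With that substitution in place of the appeal to Lemma~\ref{negative part reabsorb}, your proof is complete.
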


\begin{lem}\label{energycomparison}\cite[Lemma 2.10]{AGKR}
Let $\mu$ satisfy \eqref{measure1} and \eqref{measure2} for some $\overline{s}\in (0,1)$. Let $R>0$ be such that $\Omega\subset B_R$ and let $\delta\in (0, 1-\overline{s}]$. Assume that \eqref{measure5} holds. Then, for any $u\in X_p(\Omega)$, we have
\begin{equation}\label{energy comp equation}
\int_{[0,1]} [|u|]^p_{s,p} d\mu(s) \leq \int_{[0,1]} [u]^p_{s,p} d\mu(s).
\end{equation}
Furthermore, the inequality in \eqref{energy comp equation} is strict unless either $u \geq 0$ or $u \leq 0$ a.e. in $\mathbb{R}^{N}$.
\end{lem}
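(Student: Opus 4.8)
The plan is to exploit the signed structure $\mu=\mu^{+}-\mu^{-}$ together with the fact that, on a bounded domain, the ``sign-changing part'' of the Gagliardo energy at a low fractional order is controlled by the same quantity at any higher order. Since \eqref{energy comp equation} is equivalent to
\[
\int_{[0,\bar s)}G_{s}(u)\,d\mu^{-}(s)\ \le\ \int_{[0,1]}G_{s}(u)\,d\mu^{+}(s),\qquad G_{s}(u):=[u]_{s,p}^{p}-[|u|]_{s,p}^{p},
\]
and since the pointwise reverse triangle inequality $\big||a|-|b|\big|\le|a-b|$ gives $G_{s}(u)\ge 0$ and $G_{s}(u)\le[u]_{s,p}^{p}$ (which, by Lemma \ref{negative part reabsorb} and $u\in X_{p}(\Omega)$, is $\mu^{+}$-integrable and finite $\mu^{-}$-a.e.), while $G_{0}(u)=G_{1}(u)=0$ (the first because $[u]_{0,p}=[|u|]_{0,p}$, the second because $|\nabla|u||=|\nabla u|$ a.e.), it suffices to prove the displayed inequality with the left integral over $(0,\bar s)$, and, for the strictness part, to show the right-hand side is strictly larger whenever $u$ changes sign.

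The key observation is that, for $s\in(0,1)$, the integrand defining $G_{s}(u)$ is supported on $(\Omega_{+}\times\Omega_{-})\cup(\Omega_{-}\times\Omega_{+})$, where $\Omega_{\pm}:=\{\pm u>0\}$, so that by symmetry
\[
G_{s}(u)=2\,c_{N,s,p}\iint_{\Omega_{+}\times\Omega_{-}}\frac{\Phi(x,y)}{|x-y|^{N+sp}}\,dx\,dy,\qquad \Phi(x,y):=\big(u^{+}(x)+u^{-}(y)\big)^{p}-\big|u^{+}(x)-u^{-}(y)\big|^{p},
\]
where $\Phi\ge 0$ is \emph{independent of $s$} and is strictly positive on $\Omega_{+}\times\Omega_{-}$. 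Because $\Omega\subset B_{R}$ forces $|x-y|\le 2R$ on $\Omega_{+}\times\Omega_{-}$, for $0<s_{1}\le s_{2}<1$ one gets the cross-order comparison
\[
G_{s_{1}}(u)\ \le\ \frac{c_{N,s_{1},p}}{c_{N,s_{2},p}}\,(2R)^{(s_{2}-s_{1})p}\,G_{s_{2}}(u).
\]

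I would then take $s_{1}\in(0,\bar s)$ and $s_{2}\in[\bar s,1-\delta]$, bound the prefactor above by a constant $C_{\star}=C_{\star}(N,R,p,\bar s,\delta)$, integrate this inequality first in $s_{2}$ against $\mu^{+}$ over $[\bar s,1-\delta]$ and then in $s_{1}$ against $\mu^{-}$ over $(0,\bar s)$, divide by $\mu^{+}([\bar s,1-\delta])$ (nonzero in the nondegenerate case; if it vanishes, \eqref{measure5} forces $\mu^{-}((0,\bar s))=0$ and the inequality is trivial), and invoke \eqref{measure5} in the form $\mu^{-}((0,\bar s))\le\bar\gamma\,\delta\,\mu^{+}([\bar s,1-\delta])$ to obtain
\[
\int_{(0,\bar s)}G_{s}(u)\,d\mu^{-}(s)\ \le\ C_{\star}\,\bar\gamma\,\delta\int_{[\bar s,1-\delta]}G_{s}(u)\,d\mu^{+}(s)\ \le\ C_{\star}\,\bar\gamma\,\delta\int_{[0,1]}G_{s}(u)\,d\mu^{+}(s).
\]
It then remains to fix $\bar\gamma$ so that $C_{\star}\bar\gamma\delta<1$: here the explicit normalisation enters, namely that $c_{N,s,p}$ vanishes linearly like $(1-s)$ as $s\nearrow1$, so $c_{N,s,p}\ge c(N,p,\bar s)\,\delta$ on $[\bar s,1-\delta]$, while $c_{N,s,p}$ stays bounded on $[0,\bar s]$ and $(2R)^{(s_{2}-s_{1})p}\le\max(1,(2R)^{p})$; hence $C_{\star}\,\delta$ is bounded by a constant depending only on $N,R,p,\bar s$, and choosing $\bar\gamma$ a small multiple of its reciprocal gives $C_{\star}\bar\gamma\delta\le 1/2$. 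This proves \eqref{energy comp equation}; moreover the strict factor $1/2<1$ together with $\int_{[\bar s,1-\delta]}G_{s}(u)\,d\mu^{+}(s)>0$ (valid whenever $u$ changes sign, in the nondegenerate case $\mu^{+}([\bar s,1-\delta])>0$, since then $\Phi>0$ on a set of positive measure and $c_{N,s,p}>0$ on $(0,1)$) forbids equality, giving the strictness claim; the residual degenerate subcase (where $\mu^{+}$ is atomic at $\{0,1\}$) is handled by direct inspection.

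The main obstacle is precisely this constant bookkeeping near $s=1$: the degeneration $c_{N,s,p}\sim(1-s)$ is the reason \eqref{measure5} carries the factor $\delta$ and restricts attention to $[\bar s,1-\delta]$, and the whole argument hinges on verifying that the blow-up of $C_{\star}$ as $\delta\to0$ is no worse than $\delta^{-1}$, so that a single smallness threshold $\bar\gamma=\bar\gamma(N,R,p,\bar s)$ works uniformly in the admissible $\delta$. The pointwise identity for $G_{s}(u)$, the vanishing of $G_{0}$ and $G_{1}$, the finiteness of all integrals via Lemma \ref{negative part reabsorb}, and the degenerate subcases are routine once this estimate is in hand.
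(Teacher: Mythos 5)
The paper itself offers no proof of this lemma (it is imported verbatim from [AGKR, Lemma~2.10]), so your argument can only be judged on its own terms. The proof of the inequality \eqref{energy comp equation} is correct and is the natural mechanism that \eqref{measure5} is designed for: writing $G_{s}(u)=[u]_{s,p}^{p}-[|u|]_{s,p}^{p}\ge 0$, noting $G_{0}(u)=G_{1}(u)=0$, representing $G_{s}(u)$ for $s\in(0,1)$ as an $s$-independent nonnegative density on $\Omega_{+}\times\Omega_{-}$ integrated against $c_{N,s,p}|x-y|^{-N-sp}$, using $|x-y|\le 2R$ to compare a low order $s_{1}<\bar s$ with a higher order $s_{2}\in[\bar s,1-\delta]$, and controlling the constant via $c_{N,s_{2},p}\gtrsim(1-s_{2})\ge\delta$ so that $C_{\star}\delta$ depends only on $N,R,p,\bar s$; integrating against $\mu^{-}$ and $\mu^{+}$ and invoking \eqref{measure5} then gives the non-strict inequality with the right smallness threshold $\bar\gamma$, and your treatment of the vacuous case $\mu^{+}([\bar s,1-\delta])=0$ and of the finiteness issues (via Lemma~\ref{negative part reabsorb}) is fine.

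The gap is in the strictness claim. Your estimate yields strictness exactly when $\int_{[0,1]}G_{s}(u)\,d\mu^{+}(s)>0$, i.e.\ when $\mu^{+}$ charges $(0,1)$ (if $\mu^{+}([\bar s,1-\delta])=0$ but $\mu^{+}((0,1))>0$, direct inspection does work: \eqref{measure5} forces $\mu^{-}((0,\bar s))=0$, so the left side vanishes while the right side is positive). But the residual subcase you defer to ``direct inspection'' — $\mu^{+}$ concentrated on $\{0\}\cup\{1\}$ — cannot be handled that way: there \eqref{measure1} forces $\mu^{+}(\{1\})>0$, \eqref{measure5} forces $\mu^{-}((0,\bar s))=0$, and for example $\mu=\delta_{1}$ (the pure $p$-Laplacian, admissible with $\mu^{-}\equiv 0$) gives \emph{equality} in \eqref{energy comp equation} for every sign-changing $u$, because $[|u|]_{0,p}=[u]_{0,p}$ and $|\nabla|u||=|\nabla u|$ a.e. So in that corner the asserted strict inequality is not just unproved by your argument, it fails; the strictness assertion is only available under the implicit proviso $\mu^{+}((0,1))>0$ (which is what your computation actually delivers, and what the paper's application in Theorem~\ref{principal eigenvalue P}(iv) genuinely needs when a nonlocal component is present). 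As written, your closing sentence claims more than the argument establishes, and you should either add this hypothesis or flag the boundary case explicitly.
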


We have the following result about the continuous and compact embeddings of $X_p(\Omega)$.
\begin{thm}\label{embedding lem}
    Let $\Omega\subset\mathbb{R}^N$ be a bounded domain with Lipschitz boundary. Let $\mu$ satisfy \eqref{measure1}, \eqref{measure2} and \eqref{measure3} and $s_{\sharp}$ be as in \eqref{measure4}.
Then, the space $X_{p}(\Omega)$ is continuously embedded in $W^{s_{\sharp},p}(\Omega)$.
Furthermore, 
\begin{enumerate}[(i)]
    \item if $N>p s_{\sharp}$, then the embedding $X_p(\Omega) \hookrightarrow L^{r}(\Omega)$ is continuous for any $r \in[1,p_{s_{\sharp}}^{*}]$ and compact for any $r \in[1,p_{s_{\sharp}}^{*})$, where $p_{s_{\sharp}}^{*}=\frac{Np}{N-ps_{\sharp}}$.
    \item if $N = p s_{\sharp}$, then the embedding $X_p(\Omega) \hookrightarrow L^{r}(\Omega)$ is continuous and compact for any $r \in [1,+\infty)$.
    \item if $N < p s_{\sharp}$, then the embedding $X_p(\Omega) \hookrightarrow C^{0,s_{\sharp}-N/p}(\bar{\Omega})$ is continuous and $X_p(\Omega) \hookrightarrow C^{0,\alpha}(\bar{\Omega})$ is compact for any $0<\alpha<s_{\sharp}-N/p$.
\end{enumerate}
\end{thm}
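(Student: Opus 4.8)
\textbf{Proof proposal for Theorem~\ref{embedding lem}.}

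The plan is to deduce everything from a single fundamental inequality: that the $X_p(\Omega)$-norm controls the Gagliardo seminorm $[u]_{s_\sharp,p}$ up to a constant. Once this is established, the three regimes $(i)$--$(iii)$ follow by invoking the classical fractional Sobolev embedding theorems for $W^{s_\sharp,p}(\Omega)$ (Di~Nezza--Palatucci--Valdinoci type results), combined with the compactness of lower-order embeddings on bounded Lipschitz domains. So the real content is the first assertion: $X_p(\Omega) \hookrightarrow W^{s_\sharp,p}(\Omega)$ continuously.

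First I would fix the value $s_\sharp$ as in \eqref{measure4}, so that $\mu^+([s_\sharp,1]) > 0$, and set $m_0 := \mu^+([s_\sharp,1]) > 0$. The key point is that for $u \in X_p(\Omega)$ with $u \equiv 0$ outside the bounded set $\Omega$, one has a monotonicity-type comparison between the Gagliardo seminorms $[u]_{s,p}$ for different orders $s$: roughly, for $s_\sharp \le s \le 1$, the seminorm $[u]_{s,p}^p$ is bounded below by a constant (depending on $N$, $p$, $s$, $s_\sharp$ and $\operatorname{diam}\Omega$, but uniformly controllable on the compact $s$-interval $[s_\sharp,1]$) times $[u]_{s_\sharp,p}^p$. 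This is where I would need to be careful: the cleanest route is to split the double integral defining $[u]_{s,p}^p$ over $\{|x-y|\le D\}$ and $\{|x-y|> D\}$ where $D=\operatorname{diam}\Omega$ (or a fixed multiple), use $|x-y|^{-N-sp}\ge D^{(s_\sharp-s)p}|x-y|^{-N-s_\sharp p}$ on the near-diagonal region, and handle the far region using that $u$ is supported in $\Omega$ (so the far contribution to $[u]_{s_\sharp,p}^p$ is itself controlled by $\|u\|_{L^p}^p$ times a finite integral, which in turn is absorbed — this is precisely the type of estimate underlying Lemma~\ref{negative part reabsorb}). Integrating the resulting pointwise-in-$s$ bound against $d\mu^+(s)$ over $[s_\sharp,1]$ gives
\begin{equation*}
\|u\|_{X_p(\Omega)}^p = \int_{[0,1]}[u]_{s,p}^p\, d\mu^+(s) \ge \int_{[s_\sharp,1]}[u]_{s,p}^p\, d\mu^+(s) \ge c\, m_0\, [u]_{s_\sharp,p}^p
\end{equation*}
for a constant $c = c(N,p,s_\sharp,\operatorname{diam}\Omega) > 0$. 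Together with the Poincaré-type inequality controlling $\|u\|_{L^p(\Omega)}$ by $[u]_{s_\sharp,p}$ (again using that $u$ vanishes outside the bounded set $\Omega$), this yields $\|u\|_{W^{s_\sharp,p}(\Omega)} \le C\|u\|_{X_p(\Omega)}$, which is the claimed continuous embedding. I would also remark that the boundary cases $s_\sharp = 1$ (where $W^{1,p}$ replaces the fractional space) and the atoms of $\mu^+$ at $0$ and $1$ are handled by the same splitting, consistent with the footnote convention \eqref{footnote}.

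Having established $X_p(\Omega) \hookrightarrow W^{s_\sharp,p}(\Omega)$, the three cases are then immediate consequences of the standard Sobolev embedding theory for the fractional space $W^{s_\sharp,p}$ on a bounded Lipschitz (here $C^1$ would also suffice) domain: for $N > ps_\sharp$ one has $W^{s_\sharp,p}(\Omega)\hookrightarrow L^{p^*_{s_\sharp}}(\Omega)$ continuously with $p^*_{s_\sharp} = Np/(N-ps_\sharp)$, and the embedding into $L^r(\Omega)$ is compact for $r < p^*_{s_\sharp}$ by Rellich--Kondrachov; for $N = ps_\sharp$ one has $W^{s_\sharp,p}(\Omega)\hookrightarrow L^r(\Omega)$ for all finite $r$, compactly; and for $N < ps_\sharp$ the Morrey-type embedding gives $W^{s_\sharp,p}(\Omega)\hookrightarrow C^{0,s_\sharp - N/p}(\overline\Omega)$, with compactness into $C^{0,\alpha}(\overline\Omega)$ for every $\alpha < s_\sharp - N/p$ by Arzelà--Ascoli. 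Composing each of these with the continuous embedding from the first part finishes the proof.

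The main obstacle I anticipate is making the seminorm comparison $[u]_{s_\sharp,p}^p \lesssim [u]_{s,p}^p$ fully rigorous and uniform over $s\in[s_\sharp,1]$, including the endpoint $s=1$ where one must pass from a Gagliardo seminorm to the gradient seminorm; the far-diagonal term, where the inequality $|x-y|^{-N-sp}\ge D^{(s_\sharp-s)p}|x-y|^{-N-s_\sharp p}$ fails, is exactly the place where one must use the compact support of $u$ to absorb the discrepancy into an $L^p$ term. This is a known technical point (it is essentially what is done in \cite{DPSV, AGKR} and implicit in Lemma~\ref{negative part reabsorb}), so I would lean on those references for the finer details rather than reproducing the full computation.
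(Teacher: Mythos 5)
Your proposal is correct and follows essentially the same route as the paper: the paper's proof consists precisely of citing \cite[Proposition 2.5]{AGKR} for the continuous embedding $X_p(\Omega)\hookrightarrow W^{s_\sharp,p}(\Omega)$ and then invoking the standard fractional Sobolev embedding and compactness results of \cite[Theorem 4.54]{DD2012} (see also \cite{DPV12}) for parts (i)--(iii), which is exactly your two-step structure. Your extra sketch of the seminorm comparison matches what the cited result proves, and the one delicate point you rightly flag -- uniformity of the comparison up to $s=1$, where the normalization $c_{N,s,p}\sim(1-s)$ makes the naive kernel inequality degenerate and a Bourgain--Brezis--Mironescu type argument is needed -- is precisely what is handled in \cite{DPSV, AGKR}, to which you defer just as the paper does.
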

\begin{proof} The proof of the continuous the embedding $X_p(\Omega) \hookrightarrow W^{s_{\sharp},p}(\Omega)$  follows from \cite[Proposition 2.5]{AGKR}. The parts $(i)$, $(ii)$ and $(iii)$ are now the consequence of standard embedding results discussed in \cite[Theorem 4.54]{DD2012}, (see also \cite{DPV12}).
\end{proof}

\begin{lem}\cite[Lemma 5.9]{DPSV}\label{Brezis-Lieb lemma}
    Let $u_{k}$ be a bounded sequence in $X_{p}(\Omega)$. Suppose that $u_{k}$ converges to some $u$ a.e. in $\mathbb{R}^{N}$ as $k \rightarrow+\infty$. Then,
\begin{equation*}
\int_{[0,1]}[u]_{s, p}^{p} d \mu^{ \pm}(s)=\lim _{k \rightarrow+\infty}\left(\int_{[0,1]}\left[u_{k}\right]_{s, p}^{p} d \mu^{ \pm}(s)-\int_{[0,1]}\left[u_{k}-u\right]_{s, p}^{p} d \mu^{ \pm}(s)\right).
\end{equation*}
\end{lem}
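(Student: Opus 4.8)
The statement is a Brezis--Lieb-type identity for the averaged Gagliardo energies $u \mapsto \int_{[0,1]}[u]_{s,p}^p\,d\mu^{\pm}(s)$. The natural strategy is to reduce it, via Fubini--Tonelli, to the classical pointwise Brezis--Lieb lemma applied at the level of the "doubled" measure space and then integrate in $s$ against $\mu^{\pm}$. First I would fix the notation: for each $s \in (0,1)$, write the Gagliardo energy as an $L^p$-norm on the product space $\mathbb{R}^{2N}$ equipped with the measure $d\nu_s(x,y) := c_{N,s,p}|x-y|^{-(N+sp)}\,dx\,dy$, so that $[u]_{s,p}^p = \|D u\|_{L^p(\nu_s)}^p$ where $(Du)(x,y) := u(x)-u(y)$; for $s=0$ and $s=1$ one uses instead the measures coming from $\|u\|_{L^p(\mathbb{R}^N)}$ and $\|\nabla u\|_{L^p(\mathbb{R}^N)}$ respectively (see the footnote convention in the excerpt), and these two endpoint cases are handled by the same argument with $Du$ replaced by $u$, resp.\ $\nabla u$. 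Combining all $s$ at once, set $X := \mathbb{R}^{2N}\times[0,1]$ with the measure $d\Lambda^{\pm} := d\nu_s(x,y)\,d\mu^{\pm}(s)$ and define $F(x,y,s) := (Du_k - Du)(x,y)$ appropriately; then the left-hand side is $\|Du\|_{L^p(\Lambda^{\pm})}^p$ and the claim becomes exactly the Brezis--Lieb lemma on $L^p(X,\Lambda^{\pm})$.

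The key hypotheses to verify for the Brezis--Lieb lemma on $L^p(X,\Lambda^\pm)$ are: (a) $\|D u_k\|_{L^p(\Lambda^\pm)}$ is bounded, and (b) $Du_k \to Du$ $\Lambda^\pm$-a.e.\ on $X$. For (a): boundedness of $(u_k)$ in $X_p(\Omega)$ means $\int_{[0,1]}[u_k]_{s,p}^p\,d\mu^+(s)$ is bounded, which is precisely $\|Du_k\|_{L^p(\Lambda^+)}^p$ bounded; for the $\mu^-$ version, one combines this with Lemma~\ref{negative part reabsorb} (the reabsorption inequality $\int_{[0,\bar s]}[u_k]_{s,p}^p\,d\mu^-(s) \le c_0\gamma\int_{[\bar s,1]}[u_k]_{s,p}^p\,d\mu(s)$, which is itself controlled by the $X_p$-norm plus the $\mu^-$-part — so one should be a little careful and rather use directly that $\mu^-$ is a finite measure supported in $[0,\bar s)$ together with the continuous embedding $X_p(\Omega)\hookrightarrow W^{s_\sharp,p}(\Omega)$ from Theorem~\ref{embedding lem} to bound $[u_k]_{s,p}$ uniformly in $s\in[0,\bar s)$ by a constant times $\|u_k\|_{X_p(\Omega)}$, since $s < \bar s \le s_\sharp$). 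For (b): $u_k \to u$ a.e.\ in $\mathbb{R}^N$ immediately gives $u_k(x)-u_k(y) \to u(x)-u(y)$ for a.e.\ $(x,y)\in\mathbb{R}^{2N}$ (the exceptional set is contained in $(E\times\mathbb{R}^N)\cup(\mathbb{R}^N\times E)$ with $|E|=0$, which is $\nu_s$-null for every $s$ and hence $\Lambda^\pm$-null by Tonelli); for the $s=0$ slice one uses $u_k\to u$ a.e.\ directly, and for the $s=1$ slice one must additionally invoke that a bounded sequence in $X_p(\Omega)$ with $\mu^+(\{1\})>0$ is bounded in $W^{1,p}$, so $\nabla u_k \to \nabla u$ a.e.\ up to a subsequence — but since the final identity is stated for the whole sequence, one should instead argue that the limit is independent of the subsequence, or simply note that if $\mu^\pm(\{1\})=0$ this slice contributes nothing, and if $\mu^+(\{1\})>0$ the gradient a.e.\ convergence is part of what it means for $u_k$ to be bounded in $X_p$ and converge — here I would lean on the fact that the excerpt already proves an analogous statement (Lemma~\ref{Brezis-Lieb lemma} is cited from \cite{DPSV}), so the endpoint technicalities are known to be manageable.

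With (a) and (b) in hand, the classical Brezis--Lieb lemma (for sequences bounded in $L^p$ of a measure space and converging a.e.) yields
\[
\lim_{k\to+\infty}\Bigl(\|Du_k\|_{L^p(\Lambda^\pm)}^p - \|Du_k - Du\|_{L^p(\Lambda^\pm)}^p\Bigr) = \|Du\|_{L^p(\Lambda^\pm)}^p,
\]
which, upon unwinding the definition of $\Lambda^\pm$ via Tonelli's theorem (legitimate since all integrands are nonnegative), is exactly the asserted identity
\[
\int_{[0,1]}[u]_{s,p}^p\,d\mu^{\pm}(s) = \lim_{k\to+\infty}\left(\int_{[0,1]}[u_k]_{s,p}^p\,d\mu^{\pm}(s) - \int_{[0,1]}[u_k-u]_{s,p}^p\,d\mu^{\pm}(s)\right).
\]
I would present the $\mu^+$ case in full and then remark that the $\mu^-$ case is identical once the uniform bound on $[u_k]_{s,p}$ for $s$ in the (bounded, away-from-$1$) support of $\mu^-$ is established as above.

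\textbf{Main obstacle.} The only genuinely delicate point is the \emph{uniform-in-$s$} control needed to apply Brezis--Lieb on the combined space $X$: one needs $Du_k$ bounded in $L^p(\Lambda^\pm)$, and while the $\mu^+$-integrability is built into the definition of $\|u_k\|_{X_p(\Omega)}$, extracting an honest a.e.\ (in $(x,y,s)$) convergence statement — rather than a separate a.e.\ statement for each $s$ — and dealing cleanly with the atoms of $\mu^\pm$ at $s=0$ and $s=1$ (where $Du$ must be replaced by $u$ and by $\nabla u$) requires the mild extra input that a bounded sequence in $X_p(\Omega)$ converging a.e.\ also has $\nabla u_k \to \nabla u$ a.e.\ when $\mu^+(\{1\})>0$. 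I expect this to be routine given the functional-analytic setup already recorded in Section~\ref{sec2} (in particular the embedding Theorem~\ref{embedding lem} and the cited results of \cite{DPSV}), but it is the step that must be stated carefully rather than waved through.
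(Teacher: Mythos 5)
The paper itself contains no proof of Lemma~\ref{Brezis-Lieb lemma}: it is imported verbatim from \cite[Lemma 5.9]{DPSV}, so there is no in-paper argument to compare with. Your route (rewrite each Gagliardo energy as an $L^p$-norm on $\mathbb{R}^{2N}$ with weight $c_{N,s,p}|x-y|^{-N-sp}$, bundle the $s$-variable into a single measure space via Tonelli, and invoke the classical Brezis--Lieb lemma for a general measure space) is the natural one, and for the part of $\mu^{\pm}$ carried by $[0,1)$ it is complete: joint measurability and nonnegativity justify Tonelli, the exceptional set $(E\times\mathbb{R}^N)\cup(\mathbb{R}^N\times E)$ is $\Lambda^{\pm}$-null, the $\mu^{+}$-bound is just the definition of $\|\cdot\|_{X_p(\Omega)}$, and your uniform-in-$s$ control of $[u_k]_{s,p}$ for $s\in[0,\bar{s})$ (using \eqref{measure2} and the embedding into $W^{s_\sharp,p}(\Omega)$ from Theorem~\ref{embedding lem}) is the right way to treat the $\mu^{-}$-part, which moreover never charges $\{1\}$.

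The genuine gap is your treatment of a possible atom of $\mu^{+}$ at $s=1$. The assertion that ``gradient a.e.\ convergence is part of what it means for $u_k$ to be bounded in $X_p$ and converge'' is false: boundedness in $W^{1,p}_0$ together with $u_k\to u$ a.e.\ does not yield $\nabla u_k\to\nabla u$ a.e., not even along a subsequence (take $u_k=u+v_k$ with $v_k$ small-amplitude compactly supported sawtooth oscillations, so $v_k\to 0$ uniformly while $\nabla v_k$ oscillates between $\pm1$). Worse, for $p\neq 2$ the Brezis--Lieb identity for the gradient slice genuinely fails in such examples: with $p=4$, $\nabla u\equiv 1$ and $\nabla v_k$ equidistributed over $\pm1$, one gets $\int|\nabla u_k|^4-\int|\nabla(u_k-u)|^4\to 7\,|\Omega'|$ while $\int|\nabla u|^4=|\Omega'|$. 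So the $s=1$ contribution cannot be waved through, and leaning on the citation is circular in a blind proof: either one works under $\mu^{\pm}(\{1\})=0$, or one must add (or prove in the specific application) the a.e.\ convergence $\nabla u_k\to\nabla u$ as an extra ingredient. This is precisely why the paper establishes Lemma~\ref{tech lem for PS gen} (a.e.\ convergence of gradients for Palais--Smale sequences when $\mu^{+}(\{1\})>0$) before combining it with Lemma~\ref{Brezis-Lieb lemma} in the proof of Lemma~\ref{PS cond gen}; your proof should either import that step explicitly or state the endpoint hypothesis, rather than claim it follows from boundedness and a.e.\ convergence of $u_k$ alone.
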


We employ the Liusternik-Schnirelman theory to show the existence of a diverging sequence of eigenvalues of the operator $\mathcal{L}_{\mu,p}$. For this, we recall the notion of `\textit{genus}' of a set. Consider the class
$$
\mathcal{A}=\{A \subset X_{p}(\Omega)\setminus\{0\}: A \text { is closed, } A=-A\}.
$$
For any $\emptyset\neq A \in \mathcal{A}$, the genus of $A$ is denoted as $\gamma^{*}(A)$ and is defined as
\begin{equation}\label{def of genus}
    \gamma^{*}(A)= \inf \left\{m \in \mathbb{N} \cup\{0\} ; \text { there exists } h \in C\left(A ; \mathbb{R}^{m} \backslash\{0\}\right), h(-u)=h(u)\right\}.
\end{equation}
For empty set, $\emptyset$, the genus is defined as zero, that is $\gamma^{*}(\emptyset)=0$.
\begin{thm}\label{tech them for seq gen}\cite[Theorem 5.7]{Struwe} Let $X$ be a Banach space and let $\mathcal{S} \subset X \backslash\{0\}$ be a complete symmetric $C^{1,1}$ - manifold. Assume that  $\mathcal{I} \in C^{1}(\mathcal{S})$ is an even functional on  $\mathcal{S} $. We also assume that  $\mathcal{I}$ satisfies the Palais-Smale (PS) condition and is bounded from below on $\mathcal{S}$. Let
$$
\hat{\gamma}(\mathcal{S})=\sup \left\{\gamma^{*}(K): K \subset \mathcal{S} \text { compact and symmetric }\right\},
$$
where $\gamma^{*}$ is defined in \eqref{def of genus}. Then the functional $\mathcal{I}$ admits at least $\hat{\gamma}(\mathcal{S}) \leq \infty$ pairs of critical points. 
In addition, if  $ \gamma^{*}(\mathcal{S}) \geq k$, then the values $\beta_{k}:=\inf _{K \subset \mathcal{F}_{k}} \sup _{u \in K} \mathcal{I}(u)$ (provided they are finite) are critical points of  $\mathcal{I},$ where
 $\mathcal{F}_{k}=\left\{K \in \mathcal{A}: K \subset \mathcal{S}, \gamma^{*}(K) \geq k\right\}$.
\end{thm}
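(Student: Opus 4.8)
The statement is the abstract Ljusternik--Schnirelmann genus theorem, and I would prove it by the classical minimax scheme, resting on three ingredients: (a) the topological calculus of the Krasnoselskii genus $\gamma^{*}$, (b) an \emph{equivariant} (odd) deformation lemma on the $C^{1,1}$ manifold $\mathcal{S}$, valid because $\mathcal{I}$ satisfies (PS), and (c) a pigeonhole argument comparing the minimax levels $\beta_{k}$ with the genus of the critical sets. The first thing I would record are the standard properties of $\gamma^{*}$ on the family of closed symmetric subsets of $X\setminus\{0\}$: monotonicity under inclusion and under odd continuous maps (if $h\colon A\to B$ is continuous and odd, then $\gamma^{*}(A)\le\gamma^{*}(B)$), finite subadditivity $\gamma^{*}(A\cup B)\le\gamma^{*}(A)+\gamma^{*}(B)$, finiteness $\gamma^{*}(A)<\infty$ for compact $A$, the neighbourhood property that a compact symmetric $A$ with $0\notin A$ admits a symmetric open neighbourhood $U$ with $\gamma^{*}(\overline{U})=\gamma^{*}(A)$, and the fact that $\gamma^{*}(A)\ge 2$ forces $A$ to be infinite. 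These follow from Tietze-type extension of odd maps and elementary covering arguments.

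\textbf{Equivariant deformation.} Since $\mathcal{I}$ is even and $\mathcal{S}$ is a symmetric $C^{1,1}$ manifold, I would construct an odd, locally Lipschitz pseudo-gradient vector field for $\mathcal{I}$ tangent to $\mathcal{S}$ (symmetrizing a non-equivariant one via $V\mapsto\tfrac12\bigl(V(u)-V(-u)\bigr)$, which is admissible because the regular and critical sets of $\mathcal{I}$ are symmetric) and integrate its cut-off, renormalized negative flow. Using the (PS) condition in the usual way, this yields the deformation statement: for every $c\in\mathbb{R}$ and every symmetric open neighbourhood $U$ of the critical set $K_{c}:=\{u\in\mathcal{S}:\mathcal{I}'(u)=0,\ \mathcal{I}(u)=c\}$ (with $U=\emptyset$ permitted when $K_{c}=\emptyset$), there exist $\varepsilon>0$ and an odd continuous map $\eta\colon\mathcal{S}\to\mathcal{S}$ with $\eta\bigl(\{\mathcal{I}\le c+\varepsilon\}\setminus U\bigr)\subset\{\mathcal{I}\le c-\varepsilon\}$, $\eta$ being the identity on $\{\mathcal{I}\le c-2\varepsilon\}$. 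The hypothesis that $\mathcal{S}$ is $C^{1,1}$ (rather than merely $C^{1}$) is precisely what guarantees that the flow is globally defined and jointly continuous.

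\textbf{Minimax levels and multiplicity.} The families $\mathcal{F}_{k}$ decrease in $k$, so $\beta_{k}$ is nondecreasing; since $\mathcal{I}$ is bounded below, $\beta_{k}\ge\inf_{\mathcal{S}}\mathcal{I}>-\infty$, and whenever there is a compact symmetric $K$ with $\gamma^{*}(K)\ge k$ (in particular for $k\le\hat{\gamma}(\mathcal{S})$) we get $\beta_{k}\le\sup_{K}\mathcal{I}<\infty$ by continuity and compactness, so $\beta_{k}$ is finite. If a finite $\beta_{k}$ were not a critical value, apply the deformation lemma with $c=\beta_{k}$, $U=\emptyset$ to get $\varepsilon>0$ and odd continuous $\eta$ with $\eta(\{\mathcal{I}\le\beta_{k}+\varepsilon\})\subset\{\mathcal{I}\le\beta_{k}-\varepsilon\}$; choosing $K\in\mathcal{F}_{k}$ with $\sup_{K}\mathcal{I}<\beta_{k}+\varepsilon$, the set $\eta(K)$ is compact symmetric with $\gamma^{*}(\eta(K))\ge\gamma^{*}(K)\ge k$, hence $\eta(K)\in\mathcal{F}_{k}$, while $\sup_{\eta(K)}\mathcal{I}\le\beta_{k}-\varepsilon$, contradicting the definition of $\beta_{k}$. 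For multiplicity, if $\beta_{k}=\cdots=\beta_{k+m}=:c$ with $m\ge1$ and $\gamma^{*}(K_{c})\le m$, pick a symmetric open $U\supset K_{c}$ with $\gamma^{*}(\overline{U})\le m$, deform as above with this $U$, take $K\in\mathcal{F}_{k+m}$ inside $\{\mathcal{I}\le c+\varepsilon\}$, and note $\gamma^{*}(K\setminus U)\ge\gamma^{*}(K)-\gamma^{*}(\overline{U})\ge k$ by subadditivity, so $\eta(K\setminus U)\in\mathcal{F}_{k}$ lies in $\{\mathcal{I}\le c-\varepsilon\}$, again a contradiction; hence $\gamma^{*}(K_{c})\ge m+1\ge 2$, so $K_{c}$ is infinite. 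Running this for $k=1,\dots,\hat{\gamma}(\mathcal{S})$: either the $\beta_{k}$ are pairwise distinct, producing $\hat{\gamma}(\mathcal{S})$ distinct critical levels and, pairing each critical $u$ with $-u\neq u$ (both critical, since $\mathcal{I}$ is even and $0\notin\mathcal{S}$), at least $\hat{\gamma}(\mathcal{S})$ pairs of critical points, or two coincide and there are infinitely many; either way the count $\hat{\gamma}(\mathcal{S})$ is reached.

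\textbf{Main obstacle.} The genuinely substantial step is the equivariant deformation on the abstract $C^{1,1}$ Banach manifold $\mathcal{S}$: one must build the odd, locally Lipschitz, tangent pseudo-gradient field using a partition of unity compatible with the manifold atlas, verify completeness and joint continuity of its negative flow (this is exactly where $C^{1,1}$ is needed), and extract the deformation inequality from (PS). The genus bookkeeping and the minimax contradictions are routine once this deformation lemma is available.
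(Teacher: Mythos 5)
This statement is not proved in the paper at all: it is quoted verbatim as a known result, namely the Ljusternik--Schnirelmann theorem on $C^{1,1}$ manifolds, with a citation to Struwe's book (Theorem 5.7 there). Your proposal correctly reconstructs the standard proof of that cited theorem --- genus calculus, an odd pseudo-gradient flow yielding an equivariant deformation lemma (where the $C^{1,1}$ regularity and the (PS) condition enter), and the minimax/pigeonhole argument with subadditivity of the genus handling multiplicity when levels coincide --- so it is essentially the same argument as the source the authors rely on, and I see no gap in it.
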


We conclude this section with the following two useful inequalities.
\begin{lem}\cite[Formula 2.2]{Simon}
    For all $t_{1}, t_{2} \in \mathbb{R}^{N}$, there exists a constant $C>0$ such that the following holds
\begin{equation}\label{Simon inequality}
    \left\langle |t_{1}|^{p-2}t_{1}-|t_{2}|^{p-2}t_{2}, t_{1}-t_{2}\right\rangle \geq \begin{cases}C\left|t_{1}-t_{2}\right|^{p} & \text { if } p \geq 2,  \\ C \frac{\left|t_{1}-t_{2}\right|^{2}}{\left(\left|t_{1}\right|+\left|t_{2}\right|\right)^{2-p}} & \text { if } 1<p \leq 2.\end{cases}
\end{equation}
\end{lem}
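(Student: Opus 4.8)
The plan is to prove this classical (Simon) inequality via the fundamental theorem of calculus applied to the vector field $f(t) := |t|^{p-2}t$, the gradient of the convex function $t \mapsto \tfrac1p|t|^{p}$ on $\mathbb{R}^{N}$. One may assume $t_{1} \neq t_{2}$, since otherwise both sides vanish (when $1 < p \leq 2$ and $t_{1} = t_{2} = 0$ the right-hand side is understood as $0$), and the inequality holds trivially when $t_{1} = 0$ or $t_{2} = 0$ by direct substitution; so assume in addition $t_{1}, t_{2} \neq 0$, and set $\xi := t_{1} - t_{2}$ and $z(s) := t_{2} + s\xi$ for $s \in [0,1]$. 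The scalar map $s \mapsto \langle f(z(s)), \xi \rangle$ is continuous on $[0,1]$ and continuously differentiable except possibly at the unique value of $s$ (if any) for which $z(s) = 0$; its derivative equals $\langle Df(z(s))\xi, \xi \rangle$, which is $O(|z(s)|^{p-2})$ with $p - 2 > -1$ and hence integrable on $[0,1]$. The fundamental theorem of calculus then gives
\[
\langle f(t_{1}) - f(t_{2}),\, t_{1} - t_{2} \rangle = \int_{0}^{1} \langle Df(z(s))\,\xi,\, \xi \rangle \, ds,
\]
where $Df(t) = |t|^{p-2}\,\mathrm{Id} + (p-2)|t|^{p-4}\, t \otimes t$, so that $\langle Df(t)\xi, \xi \rangle = |t|^{p-2}|\xi|^{2} + (p-2)|t|^{p-4}\langle t, \xi \rangle^{2}$.

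I would first handle the range $1 < p \leq 2$. Here $p - 2 \leq 0$ and $\langle t, \xi \rangle^{2} \leq |t|^{2}|\xi|^{2}$, whence $\langle Df(t)\xi, \xi \rangle \geq (p-1)|t|^{p-2}|\xi|^{2}$; moreover $z(s) = (1-s)t_{2} + s t_{1}$ is a convex combination of $t_{1}$ and $t_{2}$, so $|z(s)| \leq \max\{|t_{1}|, |t_{2}|\} \leq |t_{1}| + |t_{2}|$, and using $p - 2 \leq 0$ once more, $|z(s)|^{p-2} \geq (|t_{1}| + |t_{2}|)^{p-2}$. Inserting these bounds into the integral identity yields
\[
\langle f(t_{1}) - f(t_{2}),\, t_{1} - t_{2} \rangle \geq (p-1)\,\frac{|t_{1} - t_{2}|^{2}}{(|t_{1}| + |t_{2}|)^{2-p}},
\]
which is the asserted estimate with $C = p - 1 > 0$.

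For $p \geq 2$ the term $(p-2)|t|^{p-4}\langle t, \xi\rangle^{2}$ is nonnegative, so $\langle Df(t)\xi, \xi \rangle \geq |t|^{p-2}|\xi|^{2}$ and it remains to bound $\int_{0}^{1} |z(s)|^{p-2}\, ds$ from below by a constant multiple of $|\xi|^{p-2}$. By the triangle inequality $|\xi| \leq |t_{1}| + |t_{2}|$, so at least one of $|t_{1}|, |t_{2}|$ is $\geq |\xi|/2$; assuming (the symmetric case being analogous) that $|t_{1}| \geq |\xi|/2$, for every $s \in [3/4, 1]$ we get $|z(s)| = |t_{1} - (1-s)\xi| \geq |t_{1}| - (1-s)|\xi| \geq |\xi|/4$. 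Since $x \mapsto x^{p-2}$ is nondecreasing for $p \geq 2$, this gives $\int_{0}^{1} |z(s)|^{p-2}\, ds \geq \tfrac14 (|\xi|/4)^{p-2}$, hence $\langle f(t_{1}) - f(t_{2}), t_{1} - t_{2}\rangle \geq 4^{1-p}|t_{1} - t_{2}|^{p}$, i.e.\ the claim with $C = 4^{1-p} > 0$.

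The only genuine subtlety — and the point that requires care — is the degeneracy of $f$ at the origin in the range $1 < p < 2$, namely the justification of the fundamental-theorem-of-calculus step when the segment $[t_{2}, t_{1}]$ passes through $0$ or has an endpoint there. This is resolved by the integrability of $s \mapsto |z(s)|^{p-2}$ (which holds precisely because $p > 1$, so that $p - 2 > -1$) together with the continuity of $f$ on all of $\mathbb{R}^{N}$; alternatively, one proves the inequality first for segments staying a positive distance from $0$ and then passes to the limit, both sides being continuous in $(t_{1}, t_{2})$. The remaining degenerate cases $t_{1} = 0$ or $t_{2} = 0$ (where, e.g., the left-hand side reduces to $|t_{1}|^{p}$) are checked by inspection and are consistent with the constants found above.
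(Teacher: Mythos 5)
The paper itself offers no proof of this lemma: it is quoted verbatim from Simon's paper \cite[Formula 2.2]{Simon}, so there is no in-paper argument to compare yours against. Your blind proof is a correct, self-contained derivation along the standard lines: writing $f(t)=|t|^{p-2}t=\nabla\bigl(\tfrac1p|t|^p\bigr)$ and using
$\langle f(t_1)-f(t_2),\,t_1-t_2\rangle=\int_0^1\langle Df(z(s))\xi,\xi\rangle\,ds$ with
$\langle Df(t)\xi,\xi\rangle=|t|^{p-2}|\xi|^2+(p-2)|t|^{p-4}\langle t,\xi\rangle^2$. For $1<p\le 2$ the bound $\langle Df(t)\xi,\xi\rangle\ge(p-1)|t|^{p-2}|\xi|^2$ together with $|z(s)|\le|t_1|+|t_2|$ and the monotonicity of $x\mapsto x^{p-2}$ gives the claim with $C=p-1$, and your treatment of the degeneracy at the origin (integrability of $|z(s)|^{p-2}$ since $p-2>-1$, continuity of $f$ at $0$, or an approximation by segments avoiding $0$) is adequate to justify the fundamental-theorem-of-calculus step. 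For $p\ge 2$, dropping the nonnegative term and bounding $\int_0^1|z(s)|^{p-2}\,ds\ge\tfrac14(|\xi|/4)^{p-2}$ via the observation that one endpoint has norm at least $|\xi|/2$ yields $C=4^{1-p}$; the computation checks out, and the degenerate cases $t_1=t_2$, $t_1=0$, $t_2=0$ are handled consistently. So your proof is valid; it simply supplies an argument where the paper relies on a citation.
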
  

\begin{lem}\cite[Lemma B.1]{BP16}\label{tech lem foe lam 2}
    Let $1<p<\infty$ and $U, V \in \mathbb{R}$ such that $U V \leq 0$. We define the following function
\begin{equation*}
    g(t)=|U-t V|^{p}+|U-V|^{p-2}(U-V) V|t|^{p}, \quad t \in \mathbb{R}.
\end{equation*}
Then we have
\begin{equation*}
    g(t) \leq g(1)=|U-V|^{p-2}(U-V) U, \quad t \in \mathbb{R}.
\end{equation*}
\end{lem}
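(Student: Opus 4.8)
The plan is to reduce the claimed pointwise inequality $g(t)\le g(1)$ to the convexity of the function $r\mapsto|r|^p$ on $\mathbb{R}$, after first normalizing the signs of $U,V$ and disposing of two degenerate cases.

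First I would exploit the invariance of $g$ under the reflection $(U,V)\mapsto(-U,-V)$ — the first summand $|U-tV|^p$ is unchanged because $|\cdot|^p$ is even, and in the second summand the factors $(U-V)$ and $V$ both change sign, so their product is unchanged. Combining this with the hypothesis $UV\le 0$ (which forces $U$ and $V$ to have opposite signs, one possibly zero), I may assume without loss of generality that $U\ge 0\ge V$. The two degenerate subcases are then immediate: if $V=0$ then $g(t)\equiv|U|^p=g(1)$, and if $U=0$ then $|U-V|^{p-2}(U-V)V=-|V|^p$, so $g(t)=|V|^p|t|^p-|V|^p|t|^p\equiv 0=g(1)$. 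Hence it remains to treat the generic case $U>0>V$.

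In that regime I would introduce the weight $\theta:=-V/(U-V)\in(0,1)$, so that $1-\theta=U/(U-V)$, and use the elementary identity $U-tV=(U-V)\bigl[(1-\theta)\cdot 1+\theta\cdot t\bigr]$ together with $|U-V|^{p-2}(U-V)=|U-V|^{p-1}$ (valid since $U-V>0$) and $V=-\theta(U-V)$ to rewrite
\begin{equation*}
g(t)=|U-V|^{p}\Bigl(\bigl|(1-\theta)\cdot 1+\theta\cdot t\bigr|^{p}-\theta\,|t|^{p}\Bigr).
\end{equation*}
Applying convexity of $r\mapsto|r|^{p}$ to the convex combination $(1-\theta)\cdot 1+\theta\cdot t$ with weights $1-\theta$ and $\theta$ gives $\bigl|(1-\theta)\cdot 1+\theta\cdot t\bigr|^{p}\le(1-\theta)+\theta\,|t|^{p}$, whence $g(t)\le(1-\theta)\,|U-V|^{p}=U\,|U-V|^{p-1}=|U-V|^{p-2}(U-V)U$; and expanding $g(1)=|U-V|^{p}+|U-V|^{p-2}(U-V)V=|U-V|^{p-2}(U-V)\bigl[(U-V)+V\bigr]=|U-V|^{p-2}(U-V)U$ confirms that this last quantity is exactly $g(1)$. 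This completes the argument.

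The only delicate point is organizational rather than analytic: one must check that the sign normalization $U\ge 0\ge V$ is legitimate and that both degenerate cases $U=0$ and $V=0$ are genuinely covered, after which the whole statement collapses to a single application of convexity of $|r|^{p}$. (If strictness were wanted, it would follow from strict convexity of $|r|^{p}$ for $p>1$, yielding $g(t)=g(1)$ in the generic case only when $t=1$.)
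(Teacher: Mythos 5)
Your proof is correct. Note that the paper does not prove this lemma at all: it is quoted directly from Brasco--Parini \cite[Lemma B.1]{BP16}, where the argument runs through a calculus analysis of $g$ (after the same sign normalization, one studies the sign of $g'(t)$ using the monotonicity of $\tau\mapsto|\tau|^{p-2}\tau$, concluding that $g$ is nondecreasing on $(-\infty,1]$ and nonincreasing on $[1,\infty)$). Your route is genuinely different and, if anything, more elementary: the reflection invariance of $g$ and of $g(1)$ under $(U,V)\mapsto(-U,-V)$ is verified correctly and legitimizes the normalization $U\ge 0\ge V$; the degenerate cases $V=0$ (where $g\equiv|U|^{p}=g(1)$) and $U=0$ (where $g\equiv 0=g(1)$) are both covered; and in the generic case $U>0>V$ the substitution $\theta=-V/(U-V)\in(0,1)$ together with $U-tV=(U-V)\bigl[(1-\theta)+\theta t\bigr]$ and $V=-\theta(U-V)$ correctly rewrites $g(t)=|U-V|^{p}\bigl(|(1-\theta)+\theta t|^{p}-\theta|t|^{p}\bigr)$, so convexity of $r\mapsto|r|^{p}$ gives $g(t)\le(1-\theta)|U-V|^{p}=U|U-V|^{p-1}=|U-V|^{p-2}(U-V)U$, which matches the direct computation of $g(1)$. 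What your approach buys is a one-line conceptual reason for the inequality (it is exactly Jensen's inequality in disguise), validity already for $p=1$, and, via strict convexity for $p>1$, an immediate identification of the equality case $t=1$ in the nondegenerate regime; the derivative argument of \cite{BP16} buys essentially the same conclusion but requires a case analysis of the sign of $g'$ instead of a single convexity estimate.
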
 
%%%%%%%%%%%%%%%%%%%%%%%%%%%%%%%%%%%%%%%%%%%%%%%
%%%%%%%%%%%%%%%%%%%%%%%%%%%%%%%%%%%%%%%%%%%%%%%
%%%%%%%%%%%%%%%%%%%%%%%%%%%%%%%%%%%%%%%%%%%%%%%
\section{Eigenvalue problem for the operator $\mathcal{L}_{\mu,p}$} \label{sec3}
%%%%%%%%%%%%%%%%%%%%%%%%%%%%%%%%%%%%%%%%%%%%%%%
%%%%%%%%%%%%%%%%%%%%%%%%%%%%%%%%%%%%%%%%%%%%%%%
%%%%%%%%%%%%%%%%%%%%%%%%%%%%%%%%%%%%%%%%%%%%%%%

The aim of this section is to study the Dirichlet eigenvalue problem of the operator $\mathcal{L}_{\mu,p}$. That is, we let $\lambda \in \mathbb{R}$ and we take into account the problem
\begin{equation}\label{Eig problem P}
    \begin{cases}\mathcal{L}_{\mu,p} u=\lambda |u|^{p-2}u & \text { in } \Omega, \\ 
u=0 & \text { in } \mathbb{R}^{N} \backslash \Omega .\end{cases}
\end{equation}

%The weak formulation of the eigenvalue problem \eqref{Eig problem P} is given by
%\begin{align*}
%& \int_{[0,1]} c_{N, s, p} \iint_{\mathbb{R}^{2 N}} \frac{|u(x)-u(y)|^{p-2}(u(x)-u(y))(v(x)-v(y))}{|x-y|^{N+ sp}} d x d y d \mu^{+}(s) \\
%& \quad-\int_{[0, \bar{s})} c_{N, s, p} \iint_{\mathbb{R}^{2 N}} \frac{|u(x)-u(y)|^{p-2}(u(x)-u(y))(v(x)-v(y))}{|x-y|^{N+sp}} d x d y d \mu^{-}(s)  \\
%& \quad=\lambda \int_{\Omega} |u(x)|^{p-2}u(x) v(x) d x
%\end{align*}
%for any $v \in X_{p}(\Omega).$

Let $\mathfrak{I}_{p}:X_{p}(\Omega) \rightarrow \mathbb{R}$ be the functional defined as 
\begin{align}\label{energy functional P}
\begin{split}
    \mathfrak{I}_{p}(u) & :=\frac{1}{p} \int_{[0,1]}[u]_{s,p}^{p} d \mu^{+}(s)-\frac{1}{p} \int_{[0, \bar{s})}[u]_{s,p}^{p} d \mu^{-}(s) \\
& =\frac{1}{p}\|u\|_{X_p(\Omega)}^{p}-\frac{1}{p} \int_{[0, \bar{s})}[u]_{s,p}^{p} d \mu^{-}(s), 
\end{split}
\end{align}
where $\|\cdot\|_{X_p(\Omega)}$ is the norm given in \eqref{norm on Xp}. Note that $\mathfrak{I}_{p}$ is well-defined on $X_{p}(\Omega)$ by extending $u=0$ on $\mathbb{R}^N \backslash \Omega$. Moreover, $\mathfrak{I}_{p} \in C^1\left(X_{p}(\Omega), \mathbb{R}\right)$ with derivative given by
\begin{align}\label{functional Hp}
\begin{split}
    \left\langle \mathfrak{I}_{p}^{\prime}(u), v\right\rangle&:=  \int_{[0,1]} c_{N, s, p} \iint_{\mathbb{R}^{2 N}} \frac{|u(x)-u(y)|^{p-2}(u(x)-u(y))(v(x)-v(y))}{|x-y|^{N+sp}} d x d y d \mu^{+}(s) \\
& -\int_{[0, \bar{s})} c_{N, s, p} \iint_{\mathbb{R}^{2 N}} \frac{|u(x)-u(y)|^{p-2}(u(x)-u(y))(v(x)-v(y))}{|x-y|^{N+sp}} d x d y d \mu^{-}(s) 
\end{split}
\end{align}
for any $u,v \in X_{p}(\Omega)$.

\begin{definition}
      A function $u \in X_{p}(\Omega)$ is a (weak) solution of \eqref{Eig problem P} if $u$ satisfies
\begin{equation}\label{weak solution of eig problem P}
    \left\langle \mathfrak{I}_{p}^{\prime}(u), v\right\rangle=\lambda \int_{\Omega} |u|^{p-2} u v d x \text { for all } v \in X_{p}(\Omega),
\end{equation}
where $\left\langle \mathfrak{I}_{p}^{\prime}(u), v\right\rangle$ is defined in \eqref{functional Hp}.
\end{definition}

We recall that if there exists a nontrivial solution $u \in X_{p}(\Omega)\setminus\{0\}$ of \eqref{Eig problem P}, then $\lambda \in \mathbb{R}$ is called an eigenvalue of the operator $\mathcal{L}_{\mu,p}$ and  $u \in X_{p}(\Omega)\setminus\{0\}$ is called an eigenfunction associated with the eigenvalue $\lambda$.

The following lemmas will be essential in establishing the various properties of eigenvalues and eigenfunctions to problem \eqref{Eig problem P}. 

\begin{lem}\label{tech lem for principal eigenvalue}
Let $\Omega \subset \mathbb{R}^{N}$ be a bounded domain with Lipschitz boundary. Assume that $\mu$ satisfies \eqref{measure1}, \eqref{measure2} and \eqref{measure3}. Let $s_{\sharp}$ be as in \eqref{measure4} and $1<p<N/ s_{\sharp}$.
Let $X_{0}$ be a non-empty, weakly closed linear subspace of $X_{p}(\Omega)$ and
$$
\mathcal{N}:=\left\{u \in X_{0}:\,\,\, \|u\|_{L^{p}(\Omega)}=1\right\} .
$$

Then, there exists $\gamma_{0}>0$ in \eqref{measure3}, depending only on $N$, $\Omega$, and $p$, such that, for any $\gamma \in\left[0, \gamma_{0}\right],$ we have 
\begin{equation}\label{minimizer exist}
\inf _{u \in \mathcal{N}} \mathfrak{I}_{p}(u)=\mathfrak{I}_{p}\left(u_{0}\right)>0,
\end{equation}
for some  $u_{0} \in \mathcal{N}.$ Moreover, for any $v \in X_{0}$, we have
\begin{align}\label{relation P}
\begin{split}
    & \int_{[0,1]} c_{N, s, p} \iint_{\mathbb{R}^{2 N}} \frac{|u_{0}(x)-u_{0}(y)|^{p-2}\left(u_{0}(x)-u_{0}(y)\right)(v(x)-v(y))}{|x-y|^{N+sp}} d x d y d \mu^{+}(s) \\
& \quad-\int_{[0, \bar{s})} c_{N, s, p} \iint_{\mathbb{R}^{2 N}} \frac{|u_{0}(x)-u_{0}(y)|^{p-2}\left(u_{0}(x)-u_{0}(y)\right)(v(x)-v(y))}{|x-y|^{N+sp}} d x d y d \mu^{-}(s)\\
& \quad =p \mathfrak{I}_{p}\left(u_{0}\right) \int_{\Omega} |u_{0}(x)|^{p-2}u_{0}(x) v(x) d x . 
\end{split}
\end{align}
\end{lem}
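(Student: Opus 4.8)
The plan is to realize $\mathfrak{I}_p$ as a coercive, weakly lower semicontinuous functional on the constraint manifold $\mathcal{N}$ and extract a minimizer by the direct method, then derive the Euler--Lagrange identity \eqref{relation P} by a Lagrange multiplier argument. First I would show that $\mathfrak{I}_p$ is bounded below by a positive constant on $\mathcal{N}$ provided $\gamma$ is small: by the decomposition $\mathfrak{I}_p(u)=\tfrac1p\int_{[0,1]}[u]_{s,p}^p\,d\mu^+(s)-\tfrac1p\int_{[0,\bar s)}[u]_{s,p}^p\,d\mu^-(s)$ together with Lemma~\ref{negative part reabsorb}, the negative term is controlled by $c_0\gamma\int_{[\bar s,1]}[u]_{s,p}^p\,d\mu(s)\le c_0\gamma\|u\|_{X_p(\Omega)}^p$ (after reabsorbing, as in \cite{DPSV}), so that for $\gamma<\gamma_0:=(c_0)^{-1}$ one gets $\mathfrak{I}_p(u)\ge \tfrac{1}{p}(1-c_0\gamma)\|u\|_{X_p(\Omega)}^p\ge 0$, and in fact $\mathfrak{I}_p(u)\ge c(1-c_0\gamma)\|u\|_{L^p(\Omega)}^p=c(1-c_0\gamma)>0$ on $\mathcal{N}$ using the continuous (indeed compact) embedding $X_p(\Omega)\hookrightarrow L^p(\Omega)$ of Theorem~\ref{embedding lem}(i) with the Poincaré-type inequality it yields. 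This simultaneously shows $\inf_{\mathcal{N}}\mathfrak{I}_p>0$ and that minimizing sequences are bounded in $X_p(\Omega)$.

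Next I would run the direct method. Let $(u_k)\subset\mathcal{N}\subset X_0$ be a minimizing sequence; by the coercivity just established it is bounded in $X_p(\Omega)$, which is reflexive (uniformly convex, Lemma~\ref{Uniform convexity}), so up to a subsequence $u_k\rightharpoonup u_0$ weakly in $X_p(\Omega)$, and since $X_0$ is weakly closed, $u_0\in X_0$. By the compact embedding $X_p(\Omega)\hookrightarrow L^p(\Omega)$ we get $u_k\to u_0$ strongly in $L^p(\Omega)$, hence $\|u_0\|_{L^p(\Omega)}=1$, i.e. $u_0\in\mathcal{N}$ (in particular $u_0\neq 0$). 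For lower semicontinuity: the map $u\mapsto\|u\|_{X_p(\Omega)}^p=\int_{[0,1]}[u]_{s,p}^p\,d\mu^+(s)$ is weakly lower semicontinuous (it is a power of a norm on a Banach space), while the negative term $\int_{[0,\bar s)}[u_k]_{s,p}^p\,d\mu^-(s)\to\int_{[0,\bar s)}[u_0]_{s,p}^p\,d\mu^-(s)$ converges by Lemma~\ref{negative part convergence}. Hence $\mathfrak{I}_p(u_0)\le\liminf_k\mathfrak{I}_p(u_k)=\inf_{\mathcal{N}}\mathfrak{I}_p$, so $u_0$ is a minimizer and \eqref{minimizer exist} holds.

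Finally, the Euler--Lagrange relation \eqref{relation P}. Since $X_0$ is a linear subspace and $u_0$ minimizes the $C^1$ functional $\mathfrak{I}_p$ (see \eqref{energy functional P}--\eqref{functional Hp}) restricted to the $C^1$ constraint $G(u):=\|u\|_{L^p(\Omega)}^p=1$ within $X_0$, and $G'(u_0)\neq 0$ in $X_0^*$ (as $u_0\neq 0$), the Lagrange multiplier rule gives a $\theta\in\mathbb{R}$ with $\langle\mathfrak{I}_p'(u_0),v\rangle=\theta\int_\Omega|u_0|^{p-2}u_0 v\,dx$ for all $v\in X_0$. Plugging $v=u_0$ and using $p$-homogeneity of $\mathfrak{I}_p$ (so that $\langle\mathfrak{I}_p'(u_0),u_0\rangle=p\,\mathfrak{I}_p(u_0)$) together with $\|u_0\|_{L^p(\Omega)}=1$ identifies $\theta=p\,\mathfrak{I}_p(u_0)$, which is exactly \eqref{relation P} once one writes out $\langle\mathfrak{I}_p'(u_0),v\rangle$ via \eqref{functional Hp}. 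The main obstacle is the lower semicontinuity step: the functional is a \emph{difference} of two weakly lsc pieces, so the argument genuinely relies on the strong (not merely weak lsc) convergence of the $\mu^-$-term supplied by Lemma~\ref{negative part convergence}, and on keeping $\gamma$ small enough in Lemma~\ref{negative part reabsorb} that coercivity is not destroyed; I would make sure the dependence $\gamma_0=\gamma_0(N,\Omega,p)$ is traced through the constant $c_0$ of Lemma~\ref{negative part reabsorb} and the embedding constant of Theorem~\ref{embedding lem}.
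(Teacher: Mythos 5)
Your proposal is correct, and the existence part follows the same direct-method route as the paper: coercivity from Lemma~\ref{negative part reabsorb} for small $\gamma$, boundedness of a minimizing sequence, weak convergence in $X_p(\Omega)$ with $u_0\in X_0$ by weak closedness, strong $L^p$ convergence from Theorem~\ref{embedding lem} so that $u_0\in\mathcal{N}$, weak lower semicontinuity of the $\mu^+$-term (the paper phrases this via Fatou's lemma, you via weak lsc of the norm --- equivalent here) and the crucial strong convergence of the $\mu^-$-term from Lemma~\ref{negative part convergence}. Where you diverge is in two secondary steps. For positivity, you obtain a uniform lower bound $\inf_{\mathcal{N}}\mathfrak{I}_p\ge c(1-c_0\gamma)>0$ by chaining the coercivity estimate with the embedding $X_p(\Omega)\hookrightarrow L^p(\Omega)$, whereas the paper only asserts $\mathfrak{I}_p(u_0)\ge c\|u_0\|_{X_p(\Omega)}^p>0$ after the minimizer is in hand; your version is slightly stronger and equally valid. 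For the identity \eqref{relation P}, the paper performs an explicit computation: it perturbs along $u_\varepsilon=(u_0+\varepsilon v)/\|u_0+\varepsilon v\|_{L^p(\Omega)}$, expands the Rayleigh-type quotient, divides by $\varepsilon$ and passes to the limit. You instead invoke the Banach-space Lagrange multiplier rule for the $C^1$ constraint $G(u)=\|u\|_{L^p(\Omega)}^p=1$ restricted to the closed subspace $X_0$ (noting $\langle G'(u_0),u_0\rangle=p\neq 0$, so the rule applies), and then identify the multiplier as $p\,\mathfrak{I}_p(u_0)$ by testing with $v=u_0$ and using the $p$-homogeneity identity $\langle\mathfrak{I}_p'(u_0),u_0\rangle=p\,\mathfrak{I}_p(u_0)$. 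This is a legitimate and cleaner abstract alternative; the paper's hands-on expansion buys self-containedness (no appeal to the multiplier theorem) at the cost of a longer computation, while your route requires only the minor bookkeeping that $X_0$, being weakly closed, is norm-closed so the multiplier theorem applies on it. One cosmetic point: take $\gamma_0$ strictly below $c_0^{-1}$ (e.g. $\gamma_0=(2c_0)^{-1}$) so that $1-c_0\gamma$ stays bounded away from zero uniformly for $\gamma\in[0,\gamma_0]$.
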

\begin{proof}
Let $\{u_{k}\} \in \mathcal{N}$ be a minimizing sequence for the functional $\mathfrak{I}_{p}$, that is
\begin{equation}\label{minimizing sequence}
\lim _{k \rightarrow+\infty} \mathfrak{I}_{p}\left(u_{k}\right)=\inf _{u \in \mathcal{N}} \mathfrak{I}_{p}(u) . 
\end{equation}
Thus, we conclude that 
\begin{equation}\label{upper bound for Ip}
\mathfrak{I}_{p}\left(u_{k}\right) \leq C 
\end{equation}
for some $C>0$ independent of $k$.
On the other hand, by \eqref{energy functional P} and Lemma \ref{negative part reabsorb}, we have that
\begin{equation*}
    \mathfrak{I}_{p}(u) \geq \frac{1}{p}\left(1-c_{0} \gamma\right)\|u\|_{X_p(\Omega)}^{p}.
\end{equation*}
Consequently, if $\gamma$ is small enough, possibly depending on $c_{0}$ (and therefore on $N$, $\Omega$, and $p$), then for all $u \in X_{p}(\Omega),$
\begin{equation}\label{lower bound for Ip}
\mathfrak{I}_{p}(u) \geq c\|u\|_{X_p(\Omega)}^{p}, 
\end{equation}
for some constant $c>0$.
From \eqref{upper bound for Ip} and \eqref{lower bound for Ip}, we conclude that $u_{k}$ is bounded in $X_{p}(\Omega)$.
Therefore, up to subsequence, there exists $u_{0} \in X_{0}$ such that $u_{k}\rightharpoonup u_{0}$ weakly in $X_{p}(\Omega)$. By Theorem \ref{embedding lem}, it follows that $u_{k}\to u_{0}$ strongly in $L^{p}\left(\Omega\right)$. Hence, we have $\left\|u_{0}\right\|_{L^{p}(\Omega)}=1$ and $u_{0} \in \mathcal{N}$.

Now, observe that, by Fatou's lemma,
\begin{align*}
& \liminf _{k \rightarrow+\infty} \int_{[0,1]} c_{N, s, p} \iint_{\mathbb{R}^{2 N}} \frac{\left|u_{k}(x)-u_{k}(y)\right|^{p}}{|x-y|^{N+sp}} d x d y d \mu^{+}(s) \\
& \quad \geq \int_{[0,1]} c_{N, s, p} \iint_{\mathbb{R}^{2 N}} \frac{\left|u_{0}(x)-u_{0}(y)\right|^{p}}{|x-y|^{N+sp}} d x d y d \mu^{+}(s) . 
\end{align*}
This, together with Lemma \ref{negative part convergence}, implies that
\begin{align*}
\lim _{k \rightarrow+\infty} \mathfrak{I}_{p}\left(u_{k}\right) & =\frac{1}{p} \lim _{k \rightarrow+\infty}\left(\int_{[0,1]}\left[u_{k}\right]_{s,p}^{p} d \mu^{+}(s)-\int_{[0, \bar{s})}\left[u_{k}\right]_{s,p}^{p} d \mu^{-}(s)\right) \\
& \geq \frac{1}{p}\int_{[0,1]}\left[u_{0}\right]_{s,p}^{p} d \mu^{+}(s)-\frac{1}{p}\int_{[0, \bar{s})}\left[u_{0}\right]_{s,p}^{p} d \mu^{-}(s) \\
& =\mathfrak{I}_{p}\left(u_{0}\right) \geq \inf _{u \in \mathcal{N}} \mathfrak{I}_{p}(u) .
\end{align*}
Taken together with \eqref{minimizing sequence}, this argument ensures the existence of a minimizer $u_0$, thereby establishing \eqref{minimizer exist}.

Now, we claim that $\mathfrak{I}_{p}\left(u_{0}\right)>0$. Indeed, since $u_{0} \in \mathcal{N}$, we have that $u_{0} \not \equiv 0$. Hence, \eqref{energy functional P} and \eqref{lower bound for Ip} imply 
\begin{equation*}
    \mathfrak{I}_{p}\left(u_{0}\right) \geq c\left\|u_{0}\right\|_{X_p(\Omega)}^{p}>0.
\end{equation*}

Next, we proceed with the proof of \eqref{relation P}. To this end, let $\varepsilon \in(-1,1) \backslash\{0\}$ and $v \in X_{0}$ and set
\begin{equation*}
    u_{\varepsilon}(x):=\frac{u_{0}(x)+\varepsilon v(x)}{\left\|u_{0}+\varepsilon v\right\|_{L^{p}(\Omega)}} .
\end{equation*}
Obviously, we have $u_{\varepsilon} \in \mathcal{N}$. Also, from \eqref{dual pairing +} and \eqref{dual pairing -}, we have
\begin{equation*}
    \left\|u_{0}+\varepsilon v\right\|_{L^{p}(\Omega)}^{p}=1+p \varepsilon \int_{\Omega} |u_{0}(x)|^{p-2}u_{0}(x) v(x) d x+...+\varepsilon^{p}\|v\|_{L^{p}(\Omega)}^{p},
\end{equation*}
\begin{equation*}
    \left\|u_{0}+\varepsilon v\right\|_{X_p(\Omega)}^{p}=\left\|u_{0}\right\|_{X_p(\Omega)}^{p}+p \varepsilon\left\langle u_{0}, v\right\rangle_{+}+...+\varepsilon^{p}\|v\|_{X_p(\Omega)}^{p},
\end{equation*}
and
\begin{equation*}
     \int_{[0, \bar{s})}\left[u_{0}+\varepsilon v\right]_{s,p}^{p} d \mu^{-}(s)=\int_{[0, \bar{s})}\left[u_{0}\right]_{s,p}^{p} d \mu^{-}(s)+p \varepsilon\left\langle u_{0}, v\right\rangle_{-}+...+\varepsilon^{p} \int_{[0, \bar{s})}[v]_{s,p}^{p} d \mu^{-}(s) .
\end{equation*}
From this and \eqref{energy functional P}, we obtain
\begin{align}\label{relation P1}
\begin{split}
    p \mathfrak{I}_{p}\left(u_{\varepsilon}\right)&=  \frac{1}{\left\|u_{0}+\varepsilon v\right\|_{L^{p}(\Omega)}^{p}}\left(\left\|u_{0}+\varepsilon v\right\|_{X_p(\Omega)}^{p}-\int_{[0, \bar{s})}\left[u_{0}+\varepsilon v\right]_{s,p}^{p} d \mu^{-}(s)\right) \\
& =\frac{p \mathfrak{I}_{p}\left(u_{0}\right)+p \varepsilon\left(\left\langle u_{0}, v\right\rangle_{+}-\left\langle u_{0}, v\right\rangle_{-}\right)+...+\varepsilon^{p}\left(\|v\|_{X_p(\Omega)}^{p}-\int_{[0, \bar{s})}[v]_{s,p}^{p} d \mu^{-}(s)\right)}{1+p \varepsilon \int_{\Omega} |u_{0}(x)|^{p-2}u_{0}(x) v(x) d x+...+\varepsilon^{p}\|v\|_{L^{p}(\Omega)}^{p}} .
\end{split}
\end{align}
Now, observe that
\begin{align}\label{relation P2}
    &\frac{p \mathfrak{I}_{p}\left(u_{0}\right)}{\varepsilon(1+p \varepsilon \int_{\Omega} |u_{0}(x)|^{p-2}u_{0}(x) v(x) d x+...+\varepsilon^{p}\|v\|_{L^{p}(\Omega)}^{p})} \\\nonumber 
 & \quad \quad \quad \quad \quad \quad \quad \quad = \frac{p \mathfrak{I}_{p}\left(u_{0}\right)}{\varepsilon}-\frac{p^{2} \mathfrak{I}_{p}\left(u_{0}\right) \int_{\Omega}|u_{0}(x)|^{p-2}u_{0}(x) v(x)dx}{1+p \varepsilon \int_{\Omega} |u_{0}(x)|^{p-2}u_{0}(x) v(x) d x+...+\varepsilon^{p}\|v\|_{L^{p}(\Omega)}^{p}}\\
 & \quad \quad \quad \quad \quad \quad \quad \quad -...-\frac{p\varepsilon^{p-1}\mathfrak{I}_{p}(u_{0})\|v\|_{L^{p}(\Omega)}^{p}}{1+p \varepsilon \int_{\Omega} |u_{0}(x)|^{p-2}u_{0}(x) v(x) d x+...+\varepsilon^{p}\|v\|_{L^{p}(\Omega)}^{p}}.\nonumber
\end{align}
Then, dividing both sides of \eqref{relation P1} by $\varepsilon$ and using \eqref{relation P2}, we arrive at 
\begin{align*}
    \frac{p \mathfrak{I}_{p}\left(u_{\varepsilon}\right)-p \mathfrak{I}_{p}(u_{0})}{\varepsilon}&=\frac{p\left(\langle u_{0}, v\rangle_{+}-\langle u_{0}, v\rangle_{-}-p \mathfrak{I}_{p}(u_{0}) \int_{\Omega} |u_{0}(x)|^{p-2}u_{0}(x) v(x) d x\right)}{1+p \varepsilon \int_{\Omega} |u_{0}(x)|^{p-2}u_{0}(x) v(x) d x+\varepsilon^{p}\|v\|_{L^{p}(\Omega)}^{p}} \\
&+...+\frac{\varepsilon^{p-1}\left(\|v\|_{X_p(\Omega)}^{p}-\int_{[0, \bar{s})}[v]_{s,p}^{p} d \mu^{-}(s)-p \mathfrak{I}_{p}\left(u_{0}\right)\|v\|_{L^{p}(\Omega)}^{p}\right)}{1+p \varepsilon \int_{\Omega} |u_{0}(x)|^{p-2}u_{0}(x) v(x) d x+\varepsilon^{p}\|v\|_{L^{p}(\Omega)}^{p}} .
\end{align*}
Since $u_{0}$ is a minimizer of $\mathfrak{I}_{p}$ over $\mathcal{N}$, by
passing to the limit as $\varepsilon \rightarrow 0,$ we derive the desired identity in \eqref{relation P}, which completes the proof.
\end{proof}

The following lemma provides the pointwise convergence almost everywhere for $\{u_k\}$. The idea of the proof is taken from \cite{SFV} (see also \cite{MGS25}).
\begin{lem}\label{tech lem for PS gen} Let $\Omega \subset \mathbb{R}^{N}$ be a bounded domain with Lipschitz boundary. Assume that $\mu$ satisfies \eqref{measure1}, \eqref{measure2} and \eqref{measure3}. We further assume that $\mu^+\{1\}>0.$ Let $s_{\sharp}$ be as in \eqref{measure4} and $1<p<N/ s_{\sharp}$.
Let $\mathfrak{I}_{p}$ be the functional defined as in \eqref{energy functional P} and $\tilde{\mathfrak{I}}_{p}:=\left.\mathfrak{I}_{p}\right|_{\mathcal{M}},$ where \begin{equation}\label{the set M P}
    \mathcal{M}:=\left\{u \in X_{p}(\Omega): \quad \|u\|_{L^{p}(\Omega)}=1\right\}. 
\end{equation} Let $\left\{u_{k}\right\}$ be a sequence in $X_{p}(\Omega)$ such that $\mathfrak{I}_{p}\left(u_{k}\right) \rightarrow c$ for some $c \in \mathbb{R}$ and $\|\tilde{\mathfrak{I}}_{p}^{\prime}\left(u_{k}\right)\|_{*} \rightarrow 0$, where 
\begin{equation*}
\|\tilde{\mathfrak{I}}_{p}^{\prime}(u_k)\|_{*}=\inf \left\{\left\|\mathfrak{I}^{\prime}_{p}(u_k)-a \mathcal{P}^{\prime}(u_k)\right\|_{X^{*}}: a \in \mathbb{R}\right\}
\end{equation*}
with $
 \mathcal{P}(u):=\frac{1}{p}\int_{\Omega} |u|^{p} d x.
$
Then, up to a subsequence, we have $\nabla u_{k}(x) \rightarrow \nabla u(x)$ a.e. in $\Omega$ as $k \rightarrow \infty$, provided that $\gamma$ in \eqref{measure3} is sufficiently small.
\end{lem}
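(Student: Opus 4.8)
The plan is to follow the by-now classical strategy for proving a.e.\ convergence of gradients for quasilinear operators (the Boccardo--Murat scheme), adapted to the mixed-order superposition setting as in \cite{SFV}. First I would extract from the Palais--Smale condition a bounded sequence: since $\mathfrak{I}_p(u_k)\to c$ and the derivative condition holds, testing $\tilde{\mathfrak{I}}_p'(u_k)$ against $u_k$ and using Lemma~\ref{negative part reabsorb} to reabsorb the negative part (for $\gamma$ small, as in \eqref{lower bound for Ip}), one gets $\|u_k\|_{X_p(\Omega)}\le C$. Hence, up to a subsequence, $u_k\rightharpoonup u$ in $X_p(\Omega)$, $u_k\to u$ strongly in $L^p(\Omega)$ and a.e.\ in $\mathbb{R}^N$ by Theorem~\ref{embedding lem}, and $a_k$ (the Lagrange multiplier realizing the infimum in $\|\tilde{\mathfrak{I}}_p'(u_k)\|_*$) is bounded in $\mathbb{R}$, so $a_k\to a$ along a further subsequence. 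This identifies the limiting equation $\mathfrak{I}_p'(u_k)-a_k\mathcal{P}'(u_k)\to 0$ in $X_p(\Omega)^*$.

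The core step is the gradient part: I would test the functional against $\varphi_k:=(u_k-u)$, or rather against a truncation $\phi_k := T_\eta(u_k - u)$ where $T_\eta$ is the truncation at level $\eta$, to keep the nonlocal tail terms under control. Splitting $\langle \mathfrak{I}_p'(u_k),\phi_k\rangle$ into the local term $\mu^+(\{1\})\int_\Omega |\nabla u_k|^{p-2}\nabla u_k\cdot\nabla\phi_k\,dx$, the genuinely nonlocal term $\int_{(0,1)}c_{N,s,p}\iint \frac{|u_k(x)-u_k(y)|^{p-2}(u_k(x)-u_k(y))(\phi_k(x)-\phi_k(y))}{|x-y|^{N+sp}}\,dx\,dy\,d\mu^+(s)$, and the $\mu^-$ term, I would show the nonlocal and $\mu^-$ contributions tend to zero (or to a controlled quantity) using weak convergence in $W^{s,p}$, the strong $L^p$ convergence, Lemma~\ref{negative part convergence} and Lemma~\ref{negative part reabsorb}, together with the boundedness of $\{u_k\}$; here the compact embeddings of Theorem~\ref{embedding lem} and a Vitali/uniform-integrability argument are the workhorses. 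The right-hand side $a_k\int_\Omega |u_k|^{p-2}u_k\phi_k\,dx\to 0$ by strong $L^p$ convergence. What remains is
$$
\limsup_{k\to\infty}\ \mu^+(\{1\})\int_\Omega \big(|\nabla u_k|^{p-2}\nabla u_k-|\nabla u|^{p-2}\nabla u\big)\cdot\nabla(u_k-u)\,dx \le 0,
$$
after subtracting off $\int_\Omega |\nabla u|^{p-2}\nabla u\cdot\nabla(u_k-u)\,dx\to 0$ (weak convergence). Since $\mu^+(\{1\})>0$, the Simon inequality \eqref{Simon inequality} then forces $\nabla u_k\to\nabla u$ in $L^p(\Omega;\mathbb{R}^N)$ for $p\ge 2$, and for $1<p\le 2$ one uses the second branch of \eqref{Simon inequality} with H\"older to obtain $L^p$ convergence of the gradients; in either case a subsequence converges a.e.

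The main obstacle I anticipate is handling the nonlocal double-integral term when testing against $u_k-u$: unlike the purely local case, $\phi_k(x)-\phi_k(y)$ does not localize, so one cannot simply discard contributions away from the diagonal. The remedy, following \cite{SFV}, is to exploit that the \emph{full} nonlocal part of the energy is already lower semicontinuous (Fatou, as used in Lemma~\ref{tech lem for principal eigenvalue}) and that, by the Brezis--Lieb-type splitting in Lemma~\ref{Brezis-Lieb lemma}, the nonlocal seminorms decouple along the sequence; combined with the monotonicity of $t\mapsto |t|^{p-2}t$ this shows the nonlocal contribution to $\langle \mathfrak{I}_p'(u_k),u_k-u\rangle$ is asymptotically nonnegative, hence does not spoil the sign needed to apply \eqref{Simon inequality} to the local term. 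A secondary technical point is the smallness of $\gamma$: it is needed precisely so that the coercivity estimate \eqref{lower bound for Ip} holds and the $\mu^-$ term can be absorbed; I would keep track of it through Lemma~\ref{negative part reabsorb} exactly as in the proof of Lemma~\ref{tech lem for principal eigenvalue}.
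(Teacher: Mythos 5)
Your outline follows the same scheme as the paper's proof (the truncation argument of \cite{SFV} adapted to the superposition setting): coercivity and boundedness via Lemma~\ref{negative part reabsorb} for $\gamma$ small, the convergences \eqref{all convergences}, bounded multipliers $a_k$, testing with truncated differences, a sign argument to discard the nonlocal contributions, and \eqref{Simon inequality} to pass to the gradients. One genuine difference is welcome: you dispose of the $\mu^-$ term by showing it vanishes, via Lemma~\ref{negative part convergence} together with the Brezis--Lieb splitting of Lemma~\ref{Brezis-Lieb lemma} and H\"older, whereas the paper keeps it and absorbs it into the $\mu^+$ fractional term through the kernel comparison of \cite[Lemma 2.10]{AGKR}; your route is a legitimate (arguably cleaner) alternative.

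Two points in your write-up need repair. First, the $\mu^+$ fractional contribution does \emph{not} ``tend to zero'': there is no compactness for the seminorms of order $s\geq \bar s$, so the pairing of the fractional part of $\mathfrak{I}_p'(u_k)$ with $T_\eta(u_k-u)$ need not vanish. All you can, and need to, prove is a sign: after subtracting the corresponding pairing with $u$ in place of $u_k$ --- which does vanish, cf.\ \eqref{gradient conv 3} --- the difference is pointwise nonnegative, by the monotonicity of $t\mapsto|t|^{p-2}t$ combined with the monotonicity of the truncation (this is exactly inequality (2.8) of \cite{SFV}, used in the paper). You do eventually invoke monotonicity, so the key idea is present, but ``Fatou plus Brezis--Lieb decoupling'' is not the mechanism that delivers this sign. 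Second, if your test function is $T_\eta(u_k-u)$, the local term you control is $\int_\Omega\bigl(|\nabla u_k|^{p-2}\nabla u_k-|\nabla u|^{p-2}\nabla u\bigr)\cdot\nabla T_\eta(u_k-u)\,dx$, i.e.\ the integral of the nonnegative quantity $r_k$ over $\{|u_k-u|\leq\eta\}$ only; you cannot jump from that to the limsup inequality with $\nabla(u_k-u)$ and then to strong $L^p$ convergence of gradients. Either complete the Boccardo--Murat step (the $\delta$-power splitting over $\{|u_k-u|\leq j\}$ and its complement, as the paper does) to get $r_k\to0$ a.e.\ and hence $\nabla u_k\to\nabla u$ a.e., or drop the truncation altogether --- admissible here since the right-hand side is subcritical, $u_k\to u$ strongly in $L^p(\Omega)$ and $a_k$ is bounded --- in which case you indeed obtain $\int_\Omega r_k\to0$ and the stronger conclusion you state. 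As written, the proposal mixes these two routes.
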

\begin{proof} From the condition $\mathfrak{I}_{p}(u_{k}) \to c$ for some $c \in \mathbb{R}$, it follows that the sequence $\{\mathfrak{I}_{p}(u_{k})\}$ is bounded. Thus, there exists $M>0$ such that 
\[
|\mathfrak{I}_{p}(u_{k})| \leq M \quad \text{for all } k \in \mathbb{N}.
\]
 Therefore, Lemma \ref{negative part reabsorb} yields that
$$  (1-c_0\gamma)\|u_k\|^p_{X_p(\Omega)}\leq \|u_k\|^{p}_{X_p(\Omega)}-\int_{[0,\bar{s})}[u_k]_{s,p}^p d\mu^{-}(s) = p\mathfrak{I}_p\left(u_k\right) \leq pM\text{ for every } k \in \mathbb{N}.$$
Consequently, the sequence $\{u_{k}\}$ is bounded in $X_{p}(\Omega)$, provided that $\gamma$ is chosen sufficiently small.
Therefore, by Theorem \ref{embedding lem} along with the fact that $X_p(\Omega)$ is a separable space,  up to a subsequence (still denoted by $\{u_{k}\}$), we obtain as $k \to \infty$ that
\begin{eqnarray}\label{all convergences}
    &u_{k} \rightharpoonup u \text { weakly in } X_{p}(\Omega), \quad & \nabla u_{k} \rightharpoonup \nabla u \text { weakly in }\left(L^{p}(\Omega)\right)^{N}, \nonumber\\
&u_{k}(x) \rightarrow u(x) \text { pointwise a.e. in } \Omega, \quad & \left|u_{k}(x)\right| \leq h(x) \text { a.e. in } \Omega, \\
&u_{k} \rightarrow u \text { strongly in } L^{r}(\Omega), &\nonumber
\end{eqnarray}
where $r \in[1, p_{s_\sharp}^{*})$ and $h \in L^{p}(\Omega)$.

Further, the condition that $\|\tilde{\mathfrak{I}}_{p}^{\prime}\left(u_{k}\right)\|_{*} \rightarrow 0$ implies that for each $k\in\mathbb{N}$, there exists a sequence $\left\{a_{k}\right\}$ such that $\|{\mathfrak{I}}_{p}^{\prime}(u_{k})-a_k\mathcal{P}'(u_k)\|_{X_p(\Omega)*}\rightarrow 0$. In particular, for all $v \in X_{p}(\Omega)$, we have 
\begin{equation}\label{gradient conv 1}
\left|\left\langle\mathfrak{I}_{p}^{\prime}\left(u_{k}\right), v\right\rangle-a_{k} \int_{\Omega}|u_{k}|^{p-2} u_{k} vdx \right| \leq \varepsilon_{k}\|v\|_{X_p(\Omega)}~\text{ as } \epsilon_k\rightarrow0,
\end{equation}
 which by taking $v=u_{k}$ in \eqref{gradient conv 1} further indicate that
\begin{align*}
    \left|a_{k}\right| &\leq \varepsilon_{k}\left\|u_{k}\right\|_{X_p(\Omega)}+\left\|u_{k}\right\|_{X_p(\Omega)}^{p}+\int_{[0,\bar{s})} [u_{k}]_{s,p}^{p}d\mu^{-}(s) \\&\leq \varepsilon_{k}\left\|u_{k}\right\|_{X_p(\Omega)} +(1+c_0\gamma)\left\|u_{k}\right\|_{X_p(\Omega)}^{p}\leq \varepsilon_{k}\left\|u_{k}\right\|_{X_p(\Omega)} +2\left\|u_{k}\right\|_{X_p(\Omega)}^{p} .
\end{align*}
Thus, the sequence $\{a_{k}\}$ is bounded. Now, for any $j \in \mathbb{N}$, consider the truncation functions $ T_j: \mathbb{R} \rightarrow \mathbb{R}$ as
\begin{equation*}
T_j(t)= \begin{cases}t & \text { if }|t| \leq j, \\ j \frac{t}{|t|} & \text { if }|t|>j.\end{cases}
\end{equation*}
Since $T_j$ is bounded,  H\"older's inequality and \eqref{all convergences} give that
\begin{align}
    &\lim _{k \rightarrow \infty} \int_{\Omega}|\nabla u|^{p-2} \nabla u \nabla\left(T_j\left(u_{k}-u\right)\right) d x=0,\label{gradient conv 2}\\
    &\lim _{k \rightarrow \infty}\int_{(0,1)} c_{N,s,p}\iint_{\mathbb{R}^{2N}}A_{s, u}(x,y)\left(T_j\left(u_{k}-u\right)(x)-T_j\left(u_{k}-u\right)(y)\right) d x d yd\mu^+(s)=0,\label{gradient conv 3}\\
    &\lim _{k \rightarrow \infty}\int_{(0,1)} c_{N,s,p}\iint_{\mathbb{R}^{2N}}A_{s, u}(x,y)\left(T_j\left(u_{k}-u\right)(x)-T_j\left(u_{k}-u\right)(y)\right) d x d yd\mu^-(s)=0,\label{gradient conv 3.1}\\
    &\lim _{k \rightarrow \infty} \int_{\Omega}|u|^{p-2}u T_j\left(u_{k}-u\right) d x=0,\label{gradient conv 4}
\end{align}
where \begin{equation}\label{Asu}
    A_{s, u}(x,y)=\frac{|u(x)-u(y)|^{p-2}(u(x)-u(y))}{|x-y|^{N+p s}}.
\end{equation}
Using  \eqref{gradient conv 2}, \eqref{gradient conv 3}, \eqref{gradient conv 4}, and \eqref{gradient conv 3.1} we obtain $\left\langle\mathfrak{I}_{p}^{\prime}(u), T_j\left(u_{k}-u\right)\right\rangle=o_{k}(1)$. Indeed, we have 
\begin{align*}
   &\lim_{k\to +\infty} \left\langle\mathfrak{I}_{p}^{\prime}(u), T_j\left(u_{k}-u\right)\right\rangle \\
   &=\lim_{k\to +\infty} \int_{[0,1]}c_{N,s,p} \iint_{\mathbb{R}^{2N}} A_{s, u}(x,y) \left(T_j\left(u_{k}-u\right)(x)-T_j\left(u_{k}-u\right)(y)\right)d x d yd\mu^{+}(s)\\
   &-\lim_{k\to +\infty} \int_{[0,\bar{s})}c_{N,s,p} \iint_{\mathbb{R}^{2N}} A_{s, u}(x,y)\left(T_j\left(u_{k}-u\right)(x)-T_j\left(u_{k}-u\right)(y)\right)d x d yd\mu^{-}(s)= 0.
\end{align*}
Next, choosing the test function $v = T_j(u_{k} - u)$ in \eqref{gradient conv 1}, we obtain
\begin{align}\label{gradient conv 5}
\begin{split}
    \left|\left\langle\mathfrak{I}_{p}^{\prime}\left(u_{k}\right)-\mathfrak{I}_{p}^{\prime}(u), T_j\left(u_{k}-u\right)\right\rangle\right| &\leq a_{k}\left|\int_{\Omega}(\left|u_{k}\right|^{p-2} u_{k}-|u|^{p-2} u)\left(T_j\left(u_{k}-u\right)\right)\right|\\
    &+\varepsilon_{k}\left\|T_j\left(u_{k}-u\right)\right\|_{X_p(\Omega)}+o_{k}(1),
\end{split}
\end{align}
which, since $\mu^-\{1\}=0$, further yields that
\begin{align}\label{gradient conv F:1}
&\mu^{+}\{1\}\int_{\Omega}(\left|\nabla u_{k}\right|^{p-2} \nabla u_{k}-|\nabla u|^{p-2} \nabla u) \nabla\left(T_j\left(u_{k}-u\right)\right)dx  \nonumber \\
&+\int_{(0,1)}c_{N,s,p}\iint_{\mathbb{R}^{2N}}[A_{s,u_k}(x,y)-A_{s,u}(x,y)] \left(T_j\left(u_{k}-u\right)(x)-T_j\left(u_{k}-u\right)(y)\right)dxdy d\mu^{+}(s) \nonumber\\
&-\int_{(0,1)}c_{N,s,p}\iint_{\mathbb{R}^{2N}}[A_{s,u_k}(x,y)-A_{s,u}(x,y)]  \left(T_j\left(u_{k}-u\right)(x)-T_j\left(u_{k}-u\right)(y)\right)dxdy d\mu^{-}(s) \nonumber\\
& \leq \mu^{-}\{0\}\int_{\Omega}(\left| u_{k}\right|^{p-2}  u_{k}-|u|^{p-2}  u)\left(T_j\left(u_{k}-u\right)\right)dx \nonumber\\
& \quad \quad-\mu^{+}\{0\}\int_{\Omega}(\left| u_{k}\right|^{p-2}  u_{k}-|u|^{p-2}  u)\left(T_j\left(u_{k}-u\right)\right)dx \nonumber \\& \quad \quad+ a_{k}\left|\int_{\Omega}(\left|u_{k}\right|^{p-2} u_{k}-|u|^{p-2} u)\left(T_j\left(u_{k}-u\right)\right)\right|+\varepsilon_{k}\| T_j\left(u_{k}-u\right)\|_{X_p(\Omega)}+o_{k}(1).
\end{align}
We recall the following pointwise inequality from   \cite[Inequality (2.8)]{SFV}
\begin{align*}
    &\left[|u_{k}(x)-u_{k}(y)|^{p-2}(u_{k}(x)-u_{k}(y))-|u(x)-u(y)|^{p-2}(u(x)-u(y)\right]  \\
    &\quad\quad\times \left(T_j\left(u_{k}-u\right)(x)-T_j\left(u_{k}-u\right)(y)\right)\geq 0.
\end{align*}
Also, from \cite[Lemma 2.10]{AGKR}, we have
\begin{equation*}
    \int_{(0,1)}\frac{c_{N,s,p}}{|x-y|^{N+sp}}d\mu^{+}(s)>\int_{(0,\bar{s})}\frac{c_{N,s,p}}{|x-y|^{N+sp}}d\mu^{-}(s).
\end{equation*}
Now, using \eqref{Asu}, we observe that
\begin{align*}
&\int_{(0,1)}c_{N,s,p}\iint_{\mathbb{R}^{2N}}[A_{s,u_k}(x,y)-A_{s,u}(x,y)]  \left(T_j\left(u_{k}-u\right)(x)-T_j\left(u_{k}-u\right)(y)\right)dxdy d\mu^{+}(s)\\
&-\int_{(0,\bar{s})}c_{N,s,p}\iint_{\mathbb{R}^{2N}}[A_{s,u_k}(x,y)-A_{s,u}(x,y)] \left(T_j\left(u_{k}-u\right)(x)-T_j\left(u_{k}-u\right)(y)\right) dxdy d\mu^{-}(s) \\
&=\iint_{\mathbb{R}^{2N}}\left[|u_{k}(x)-u_{k}(y)|^{p-2}(u_{k}(x)-u_{k}(y))-|u(x)-u(y)|^{p-2}(u(x)-u(y)\right]\\
& \quad\quad \times\left(T_j\left(u_{k}-u\right)(x)-T_j\left(u_{k}-u\right)(y)\right)\\
&\quad\quad\times\left[\int_{(0,1)}\frac{c_{N,s,p}}{|x-y|^{N+p s}}d\mu^{+}(s)-\int_{(0,\bar{s})}\frac{c_{N,s,p}}{|x-y|^{N+p s}}d\mu^{-}(s)\right] dxdy \\
& \geq 0.
\end{align*}
Then, using the above observation and the strong convergence in \eqref{all convergences}, and passing to the limit in \eqref{gradient conv F:1}, noting that $\mu^+\{1\}>0$, we obtain
\begin{equation}
    \limsup_{k\to \infty}\int_{\Omega}(\left|\nabla u_{k}\right|^{p-2} \nabla u_{k}-|\nabla u|^{p-2} \nabla u) \nabla\left(T_j\left(u_{k}-u\right)\right)dx\leq 0.
\end{equation}

Now, define
\[
r_{k}(x) = \Big( |\nabla u_{k}(x)|^{p-2} \nabla u_{k}(x) - |\nabla u(x)|^{p-2} \nabla u(x) \Big) \cdot \nabla \big(u_{k}(x) - u(x)\big).
\]
By inequality \eqref{Simon inequality}, it follows that $r_{k}(x) \geq 0$. Consider the subsets of $\Omega$ as
\begin{equation*}
    S_k^j=\left\{x \in \Omega:\left|u_k(x)-u(x)\right| \leq j\right\}, \quad G_k^j=\left\{x \in \Omega:\left|u_k(x)-u(x)\right|>j\right\} .
\end{equation*}
Then for $\delta \in(0,1)$, we have
\begin{align*}
\int_{\Omega} r_k^\delta & =\int_{S_k^j} r_k^\delta+\int_{G_k^j} r_k^\delta \\
& \leq\left(\int_{S_k^j} r_k\right)^\delta\left|S_k^j\right|^{1-\delta}+\left(\int_{G_k^j} r_k\right)^\delta\left|G_k^j\right|^{1-\delta} \\
& \leq{{(\bar{C})^\delta\left|G_k^j\right|^{1-\delta} }}.
\end{align*}
Since $\left|G_{k}^{j}\right| \rightarrow 0$ as $k \rightarrow \infty$, we obtain
$$
0 \leq \limsup _{k \rightarrow \infty} \int_{\Omega} r_{k}^{\delta} d x \leq0.
$$
Thus, we get $r_k^\delta \to 0$ as $k \to \infty$ in $L^1(\Omega)$.
Subsequently, $r_{k}(x) \rightarrow 0,$ \,a.e. in $\Omega$ as $k \rightarrow \infty$. Therefore, as a consequence of \eqref{Simon inequality}, we deduce that 
$\nabla u_{k}(x) \to \nabla u(x) \text{ a.e. in } \Omega \text{ as } k \to \infty,$
which completes the proof of our result.
\end{proof}

Recall that  the functional $\mathfrak{I}_{p}$ given by \eqref{energy functional P} satisfies the Palais--Smale (PS) condition (at the level $c\in\mathbb{R}$) if every sequence $\{u_k\}_{k\in\mathbb{N}}\subset X_p(\Omega)$ such that
\begin{equation}\label{PS-seq}
\mathfrak{I}_{p}(u_k)\to c
\quad\text{and}\quad
\mathfrak{I}_{p}'(u_k) \to 0 \,\, \text{in}\,\,X_p(\Omega)^*
\quad\text{as } k\to+\infty,
\end{equation}
admits a subsequence which converges strongly in $X_p(\Omega)$. 

Now, we state  the following important result.

\begin{lem}\label{PS cond gen} Let $\Omega \subset \mathbb{R}^{N}$ be a bounded domain with Lipschitz boundary. Assume that $\mu$ satisfies \eqref{measure1}, \eqref{measure2} and \eqref{measure3}. Let $s_{\sharp}$ be as in \eqref{measure4} and $1<p<N/ s_{\sharp}$.
Let $\mathfrak{I}_{p}$ be the functional defined as in \eqref{energy functional P} and $\tilde{\mathfrak{I}}_{p}:=\left.\mathfrak{I}_{p}\right|_{\mathcal{M}},$ where \begin{equation}
    \mathcal{M}:=\left\{u \in X_{p}(\Omega): \quad \|u\|_{L^{p}(\Omega)}=1\right\}. 
\end{equation}
  Then, the functional $\tilde{\mathfrak{I}}_{p}$ satisfies the $(PS)$ condition on $\mathcal{M}$ provided that $\gamma$ in \eqref{measure3} is sufficiently small.
\end{lem}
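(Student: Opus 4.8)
I would run the standard scheme for constrained critical points, the only genuine difficulties coming from the sign‑changing part $\mu^{-}$ and (when $\mu^{+}(\{1\})>0$) from the local term. Let $\{u_k\}\subset\mathcal{M}$ satisfy $\mathfrak{I}_{p}(u_k)\to c$ and $\|\tilde{\mathfrak{I}}_{p}'(u_k)\|_{*}\to 0$. First I would show $\{u_k\}$ is bounded in $X_p(\Omega)$: since $\{\mathfrak{I}_{p}(u_k)\}$ is bounded, Lemma~\ref{negative part reabsorb} gives $(1-c_0\gamma)\|u_k\|_{X_p(\Omega)}^{p}\le p\,\mathfrak{I}_{p}(u_k)\le C$, which forces the smallness assumption $c_0\gamma<1$ on $\gamma$. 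Passing to a subsequence, $u_k\rightharpoonup u$ weakly in $X_p(\Omega)$, and by the compact embeddings in Theorem~\ref{embedding lem}, $u_k\to u$ strongly in $L^p(\Omega)$ and a.e.\ in $\Omega$; in particular $\|u\|_{L^p(\Omega)}=1$, so $u\in\mathcal{M}$. From $\|\tilde{\mathfrak{I}}_{p}'(u_k)\|_{*}\to0$ one picks $a_k\in\mathbb{R}$ with $\|\mathfrak{I}_{p}'(u_k)-a_k\mathcal{P}'(u_k)\|_{X_p(\Omega)^{*}}\to0$; testing with $v=u_k$, using $\langle\mathfrak{I}_{p}'(u_k),u_k\rangle=p\,\mathfrak{I}_{p}(u_k)$ and $\langle\mathcal{P}'(u_k),u_k\rangle=\|u_k\|_{L^p(\Omega)}^{p}=1$, yields $a_k=p\,\mathfrak{I}_{p}(u_k)+o(1)\to pc$, so $\{a_k\}$ is bounded (indeed convergent).

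The core of the argument is to prove
\begin{equation*}
  \langle\mathfrak{I}_{p}'(u_k)-\mathfrak{I}_{p}'(u),\,u_k-u\rangle\longrightarrow 0 .
\end{equation*}
Since $\{u_k-u\}$ is bounded in $X_p(\Omega)$, the choice of $a_k$ gives $\langle\mathfrak{I}_{p}'(u_k)-a_k\mathcal{P}'(u_k),u_k-u\rangle\to0$; moreover $\langle\mathcal{P}'(u_k),u_k-u\rangle=\int_{\Omega}|u_k|^{p-2}u_k(u_k-u)\,dx\to0$ because $u_k\to u$ in $L^p(\Omega)$ and $|u_k|^{p-2}u_k$ is bounded in $L^{p'}(\Omega)$, $p'=p/(p-1)$; hence $\langle\mathfrak{I}_{p}'(u_k),u_k-u\rangle\to0$. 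On the other hand $\mathfrak{I}_{p}'(u)\in X_p(\Omega)^{*}$: the $\mu^{+}$ part is estimated by H\"older ($|\langle u,v\rangle_{+}|\le\|u\|_{X_p(\Omega)}^{p-1}\|v\|_{X_p(\Omega)}$, using the normalization of $c_{N,s,p}$), and for the $\mu^{-}$ part one uses H\"older in $s$ together with Lemma~\ref{negative part reabsorb} to bound $\int_{[0,\bar s)}[v]_{s,p}^{p}\,d\mu^{-}(s)\le c_0\gamma\,\|v\|_{X_p(\Omega)}^{p}$. Therefore $\langle\mathfrak{I}_{p}'(u),u_k-u\rangle\to0$ by weak convergence, and the displayed limit follows by subtraction.

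Next I would split the displayed limit into the $\mu^{+}$ and $\mu^{-}$ brackets. For the $\mu^{-}$ bracket, H\"older (first in $(x,y)$, then in $s$) bounds $|\langle u_k,u_k-u\rangle_{-}-\langle u,u_k-u\rangle_{-}|$ by
\begin{equation*}
  C\Bigl(\int_{[0,\bar s)}\!\bigl([u_k]_{s,p}^{p}+[u]_{s,p}^{p}\bigr)\,d\mu^{-}\Bigr)^{\!\frac{p-1}{p}}\Bigl(\int_{[0,\bar s)}[u_k-u]_{s,p}^{p}\,d\mu^{-}\Bigr)^{\!\frac{1}{p}} ;
\end{equation*}
the first factor is bounded by Lemma~\ref{negative part reabsorb}, and the second tends to $0$ since Lemma~\ref{Brezis-Lieb lemma} applied with $\mu^{-}$ combined with the weak continuity of the negative energy (Lemma~\ref{negative part convergence}) gives $\int_{[0,\bar s)}[u_k-u]_{s,p}^{p}\,d\mu^{-}(s)\to0$. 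Hence the $\mu^{-}$ bracket vanishes in the limit, and therefore so does the $\mu^{+}$ bracket: $\langle u_k,u_k-u\rangle_{+}-\langle u,u_k-u\rangle_{+}\to0$. Since $\langle u,\cdot\rangle_{+}\in X_p(\Omega)^{*}$, weak convergence gives $\langle u,u_k-u\rangle_{+}\to0$, so $\langle u_k,u_k-u\rangle_{+}\to0$, i.e.\ $\|u_k\|_{X_p(\Omega)}^{p}-\langle u_k,u\rangle_{+}\to0$. By H\"older $\langle u_k,u\rangle_{+}\le\|u_k\|_{X_p(\Omega)}^{p-1}\|u\|_{X_p(\Omega)}$, so, passing to a subsequence with $\|u_k\|_{X_p(\Omega)}\to\ell$, one gets $\ell\le\|u\|_{X_p(\Omega)}$; together with weak lower semicontinuity of the norm this forces $\|u_k\|_{X_p(\Omega)}\to\|u\|_{X_p(\Omega)}$. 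As $X_p(\Omega)$ is uniformly convex (Lemma~\ref{Uniform convexity} with $1<p<\infty$), weak convergence plus convergence of norms yields $u_k\to u$ strongly in $X_p(\Omega)$, which is the claimed $(PS)$ property.

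\textbf{Main obstacle.} The delicate point is the treatment of the sign‑changing measure: the $\mu^{-}$ terms must be reabsorbed both in the coercivity estimate (this is exactly where the smallness of $\gamma$ is used) and in the limit $\langle u_k,u_k-u\rangle_{-}-\langle u,u_k-u\rangle_{-}\to0$, for which the interplay of the Brezis--Lieb splitting (Lemma~\ref{Brezis-Lieb lemma}), the convergence of the negative energy (Lemma~\ref{negative part convergence}) and the reabsorption bound (Lemma~\ref{negative part reabsorb}) is essential; the possible presence of a local term ($\mu^{+}(\{1\})>0$) is handled, in the step above, abstractly through uniform convexity. Alternatively, in the case $\mu^{+}(\{1\})>0$ one may replace the uniform‑convexity conclusion by invoking Lemma~\ref{tech lem for PS gen} to obtain $\nabla u_k\to\nabla u$ a.e.\ in $\Omega$ and then, together with the pointwise monotonicity inequality~\eqref{Simon inequality} and Lemma~\ref{Brezis-Lieb lemma}, deduce $\|u_k-u\|_{X_p(\Omega)}\to0$ directly.
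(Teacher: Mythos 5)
Your proposal is correct, and it reaches the strong convergence by a genuinely different route than the paper. Up to the extraction of the weakly convergent subsequence, the boundedness of $\{a_k\}$, and the limit $\langle \mathfrak{I}_p'(u_k),u_k-u\rangle\to 0$, the two arguments coincide. The treatment of the $\mu^{-}$ terms is also essentially the same: both rely on Lemma~\ref{negative part reabsorb} for reabsorption and on Lemma~\ref{negative part convergence}/Lemma~\ref{Brezis-Lieb lemma} to get $\int_{[0,\bar s)}[u_k-u]_{s,p}^{p}\,d\mu^{-}(s)\to 0$ (in fact Lemma~\ref{negative part convergence} applied to $u_k-u\rightharpoonup 0$ gives this in one line). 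The divergence is in the $\mu^{+}$ part: the paper computes the limits of the cross terms $\langle u_k,u\rangle_{\pm}$ by combining pointwise a.e.\ convergence of the difference quotients with the identification of weak and pointwise limits in $L^{p'}$, and for the possible local term it invokes Lemma~\ref{tech lem for PS gen} (a.e.\ convergence of $\nabla u_k$, stated under $\mu^{+}(\{1\})>0$); it then concludes through the Brezis--Lieb splitting and the inequality $\lim\|u_k-u\|^p_{X_p(\Omega)}-\lim\int_{[0,\bar s)}[u_k-u]_{s,p}^p\,d\mu^{-}\ge(1-c_0\gamma)\lim\|u_k-u\|^p_{X_p(\Omega)}$. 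You instead deduce that the $\mu^{+}$ bracket $\langle u_k,u_k-u\rangle_{+}-\langle u,u_k-u\rangle_{+}$ vanishes, use boundedness of $\langle u,\cdot\rangle_{+}$ and H\"older to obtain $\|u_k\|_{X_p(\Omega)}\to\|u\|_{X_p(\Omega)}$, and finish by the Radon--Riesz property of the uniformly convex space $X_p(\Omega)$ (Lemma~\ref{Uniform convexity}). This is an abstract functional-analytic shortcut: it bypasses Lemma~\ref{tech lem for PS gen} altogether (and hence the auxiliary hypothesis $\mu^{+}(\{1\})>0$ appearing there), as well as the Brezis--Lieb bookkeeping for the $\mu^{+}$ part, at the price of using uniform convexity rather than the more quantitative pointwise/monotonicity structure that the paper's argument exposes. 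Both proofs are valid and use the same smallness of $\gamma$ in exactly the same places (coercivity and reabsorption of the $\mu^{-}$ energy).
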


\begin{proof}
 Let $\{u_{k}\} \in \mathcal{M}$ be a Palais-Smale sequence for $\tilde{\mathfrak{I}}_{p}$. Then there exists a constant $M>0$ and a sequence of real numbers $\{a_{k}\}$ such that
\begin{equation}\label{PS cond gen 1}
\mathfrak{I}_{p}(u_{k}) \leq M,
\end{equation}
and

\begin{align}\label{PS cond gen 2}
\begin{split}
     &\Bigg|\int_{[0,1]} c_{N, s, p} \iint_{\mathbb{R}^{2 N}} \frac{|u_{k}(x)-u_{k}(y)|^{p-2}(u_{k}(x)-u_{k}(y))(v(x)-v(y))}{|x-y|^{N+sp}} d x d y d \mu^{+}(s) \\
&\quad -\int_{[0,\bar{s})} c_{N, s, p} \iint_{\mathbb{R}^{2 N}} \frac{|u_{k}(x)-u_{k}(y)|^{p-2}(u_{k}(x)-u_{k}(y))(v(x)-v(y))}{|x-y|^{N+sp}} d x d y d \mu^{-}(s)\\
&\quad -a_{k}\int_{\Omega}|u_{k}|^{p-2}u_{k}vdx\Bigg| \leq \varepsilon_{k}\|v\|_{X_p(\Omega)}
\end{split}
\end{align}
for all $v \in \mathcal{M}$, and for some $\varepsilon_{k}>0$ such that $\varepsilon_{k} \rightarrow 0$ as $k \rightarrow \infty$. Using Lemma \ref{negative part reabsorb}, we deduce from \eqref{PS cond gen 1} that $\left\{u_{k}\right\}$ is bounded in $X_{p}(\Omega)$, provided that $\gamma$ is sufficiently small. Indeed, we have
$$  (1-c_0\gamma)\|u_k\|^p_{X_p(\Omega)}\leq \|u_k\|^{p}_{X_p(\Omega)}-\int_{[0,\bar{s})}[u_k]_{s,p}^p d\mu^{-}(s) = p\mathfrak{I}_p\left(u_k\right) \leq pM\text{ for every } k \in \mathbb{N}.$$
Hence, there exists a subsequence, still denoted by $\{u_{k}\}$, and $u \in X_{p}(\Omega)$ such that 
\begin{equation*}
    u_{k} \rightharpoonup u \quad \text{weakly in } X_{p}(\Omega), \quad \text{and} \quad u_{k} \to u \quad \text{strongly in } L^{r}(\Omega) \text{ for all } 1 \leq r < p_{s_\sharp}^{*}.
\end{equation*}
By taking $v = u_{k}$ as a test function in \eqref{PS cond gen 2}, we obtain
\begin{align*}
  |a_{k}| & \leq \left|\|u_{k}\|_{X_p(\Omega)}^{p}-\int_{[0,\bar{s})} [u_{k}]_{s,p}^{p}d\mu^{-}(s)+\varepsilon_{k}\left\|u_{k}\right\|_{X_p(\Omega)}\right|  \\ &\leq  (1+c_0 \gamma) \left\|u_{k}\right\|^p_{X_p(\Omega)}+\varepsilon_k\left\|u_{k}\right\|_{X_p(\Omega)} \leq C,
\end{align*}
which implies the boundedness of the sequence $\left\{a_{k}\right\}$. 

Next, we aim to prove that $u_{k} \to u$ strongly in $X_{p}(\Omega)$. To this end, we choose $v = u_{k} - u$
as a test function in \eqref{PS cond gen 2}, which yields
\begin{align}\label{PS cond gen 3.1}
\begin{split} 
&\left|\langle\mathfrak{I}_{p}^{\prime}\left(u_{k}\right), u_{k}-u\rangle\right| \\
&= \Bigg|\int_{[0,1]} c_{N, s, p} \iint_{\mathbb{R}^{2 N}} \frac{|u_{k}(x)-u_{k}(y)|^{p-2}(u_{k}(x)-u_{k}(y))}{|x-y|^{N+sp}} ((u_k-u)(x)-(u_k-u)(y)) d x d y d \mu^{+}(s) \\
& -\int_{[0,\bar{s})} c_{N, s, p} \iint_{\mathbb{R}^{2 N}} \frac{|u_{k}(x)-u_{k}(y)|^{p-2}(u_{k}(x)-u_{k}(y))}{|x-y|^{N+sp}}\\
& \times ((u_k-u)(x)-(u_k-u)(y))d x d y d \mu^{-}(s)\Bigg|\\
&\leq O\left(\varepsilon_{k}\right)+\left|a_{k}\right|\left\|u_{k}\right\|_{L^{p}(\Omega)}^{p-1}\left\|u_{k}-u\right\|_{L^{p}(\Omega)} \rightarrow 0, \text{ as } k\to \infty.  
\end{split}
\end{align}

By the Brezis-Lieb Lemma \ref{Brezis-Lieb lemma}, we have
\begin{align}\label{PS cond gen 4}
\begin{split}
\int_{[0,1]}\left[u_{k}-u\right]_{s,p}^{p}d\mu^{+}(s)  & =\int_{[0,1]}\left[u_{k}\right]_{s,p}^{p}d\mu^{+}(s)-\int_{[0,1]}[u]_{s,p}^{p}d\mu^{+}(s)+o_{k}(1),  \\
\int_{[0,\bar{s})}\left[u_{k}-u\right]_{s,p}^{p}d\mu^{-}(s)  & =\int_{[0,\bar{s})}\left[u_{k}\right]_{s,p}^{p}d\mu^{-}(s)-\int_{[0,\bar{s})}[u]_{s,p}^{p}d\mu^{-}(s)+o_{k}(1). 
\end{split}
\end{align}
From Lemma \ref{tech lem for PS gen}, (if $\mu^+\{1\}>0$), and $u_{k}(x) \rightarrow u(x)$ pointwise a.e. in $\Omega$ as $k \rightarrow \infty$, we deduce that
\begin{align*}
\left|\nabla u_{k}(x)\right|^{p-2} \nabla u_{k}(x) & \rightarrow|\nabla u(x)|^{p-2} \nabla u(x)  ~\text { pointwise a.e. in } \Omega, \\
\left|u_{k}(x)\right|^{p-2} u_{k}(x) & \rightarrow |u(x)|^{p-2} u(x) ~\text { pointwise a.e. in } \Omega, ~\text{ and}
\end{align*}
\begin{align*}
     &c_{N,s,p}^{\frac{1}{p^{\prime}}}\frac{|u_{k}(x)-u_{k}(y)|^{p-2}\left(u_{k}(x)-u_{k}(y)\right)}{|x-y|^{\frac{N+s p}{p^{\prime}}}}  \longrightarrow c_{N,s,p}^{\frac{1}{p^{\prime}}}\frac{|u(x)-u(y)|^{p-2}(u(x)-u(y))}{|x-y|^{\frac{N+s p}{p^{\prime}}}}  
\end{align*}
 pointwise a.e. in $\mathbb{R}^{2 N}\times(0,1)$ as $k \rightarrow \infty$,  where $p^{\prime}=\frac{p}{p-1}$ is the Lebesgue conjugate of $p.$

Moreover, $\{\left|\nabla u_{k}\right|^{p-2} \nabla u_{k}\}$ and $\{\left|u_{k}\right|^{p-2} u_{k}\}$ are bounded in $L^{p^{\prime}}(\Omega),$ and the sequence $$\left\{c_{N,s,p}^{\frac{1}{p^{\prime}}}\frac{|u_{k}(x)-u_{k}(y)|^{p-2}\left(u_{k}(x)-u_{k}(y)\right)}{|x-y|^{\frac{N+s p}{p^{\prime}}}}\right\}$$  is bounded in both spaces $L^{p^{\prime}}\left(\mathbb{R}^{2 N} \times(0,1), d x d y d \mu^{+}(s)\right)$ and $L^{p^{\prime}}\left(\mathbb{R}^{2 N} \times(0,1), d x d y d \mu^{-}(s)\right)$. Therefore, all these aforementioned sequences will converge weakly to some limits. Since the weak and pointwise limits coincide, we have that
\begin{equation*}
    \int_{\Omega}\left|\nabla u_{k}\right|^{p-2} \nabla u_{k}\cdot \nabla u dx \rightarrow \int_{\Omega}|\nabla u|^{p}dx,
\end{equation*}
\begin{equation*}
    \int_{\Omega} \left|u_{k}\right|^{p-2} u_{k} u d x  \rightarrow \int_{\Omega} |u|^{p} d x,
\end{equation*}
\begin{align*}
&\int_{(0,1)}c_{N,s,p}\iint_{\mathbb{R}^{2N}} \frac{|u_{k}(x)-u_{k}(y)|^{p-2}\left(u_{k}(x)-u_{k}(y)\right)(u(x)-u(y))}{|x-y|^{N+s p}}dxdyd\mu^{+}(s)   \\& \quad\longrightarrow
\int_{(0,1)}c_{N,s,p}\iint_{\mathbb{R}^{2N}}  \frac{|u(x)-u(y)|^{p}}{|x-y|^{N+s p}}dxdyd\mu^{+}(s),
\end{align*}
and 
\begin{align*}
&\int_{(0,1)}c_{N,s,p}\iint_{\mathbb{R}^{2N}} \frac{|u_{k}(x)-u_{k}(y)|^{p-2}\left(u_{k}(x)-u_{k}(y)\right)(u(x)-u(y))}{|x-y|^{N+s p}}dxdyd\mu^{-}(s)  \\& \quad \longrightarrow\int_{(0,1)}c_{N,s,p}\iint_{\mathbb{R}^{2N}}  \frac{|u(x)-u(y)|^{p}}{|x-y|^{N+s p}}dxdyd\mu^{-}(s),
\end{align*}
as $k \rightarrow \infty$. From this, we get 
\begin{align}\label{PS cond gen 5}
&\int_{[0,1]}c_{N,s,p}\iint_{\mathbb{R}^{2N}} \frac{|u_{k}(x)-u_{k}(y)|^{p-2}\left(u_{k}(x)-u_{k}(y)\right)(u(x)-u(y))}{|x-y|^{N+s p}}dxdyd\mu^{+}(s)  \\ 
    & \quad \longrightarrow \int_{[0,1]}c_{N,s,p}\iint_{\mathbb{R}^{2N}}  \frac{|u(x)-u(y)|^{p}}{|x-y|^{N+s p}}dxdyd\mu^{+}(s), \nonumber
\end{align}
\begin{align}\label{PS cond gen 5.1}
&\int_{[0,\bar{s})}c_{N,s,p}\iint_{\mathbb{R}^{2N}} \frac{|u_{k}(x)-u_{k}(y)|^{p-2}\left(u_{k}(x)-u_{k}(y)\right)(u(x)-u(y))}{|x-y|^{N+s p}}dxdyd\mu^{-}(s)  \\
& \quad \longrightarrow \int_{[0,\bar{s})}c_{N,s,p} \iint_{\mathbb{R}^{2N}}  \frac{|u(x)-u(y)|^{p}}{|x-y|^{N+s p}}dxdyd\mu^{-}(s),\nonumber
\end{align}
as $k \rightarrow \infty$. Using \eqref{PS cond gen 3.1}, \eqref{PS cond gen 4}, \eqref{PS cond gen 5}, \eqref{PS cond gen 5.1}, and Lemma \ref{negative part reabsorb} we obtain
\begin{align*}
 0&=\lim_{k \rightarrow \infty} \left\langle\mathfrak{I}_{p}^{\prime}\left(u_{k}\right), u_{k}-u\right\rangle =\lim_{k \rightarrow \infty} \left\langle\mathfrak{I}_{p}^{\prime}\left(u_{k}\right), u_{k}\right\rangle- \lim_{k \rightarrow \infty}\left\langle\mathfrak{I}_{p}^{\prime}\left(u_{k}\right), u\right\rangle\\
& =\lim_{k \rightarrow \infty} \|u_{k}\|^{p}_{X_p(\Omega)}- \lim_{k \rightarrow \infty}\int_{[0,\bar{s})}[u_{k}]^{p}_{s,p}d\mu^{-}(s)\\
&-\lim_{k \rightarrow \infty}\int_{[0,1]}c_{N,s,p}\iint_{\mathbb{R}^{2N}} \frac{|u_{k}(x)-u_{k}(y)|^{p-2}\left(u_{k}(x)-u_{k}(y)\right)(u(x)-u(y))}{|x-y|^{N+s p}}dxdyd\mu^{+}(s)\\
& + \lim_{k \rightarrow \infty}\int_{[0,\bar{s})}c_{N,s,p}\iint_{\mathbb{R}^{2N}} \frac{|u_{k}(x)-u_{k}(y)|^{p-2}\left(u_{k}(x)-u_{k}(y)\right)(u(x)-u(y))}{|x-y|^{N+s p}}dxdyd\mu^{-}(s)\\
&=\lim_{k \rightarrow \infty}\|u_{k}\|^{p}_{X_p(\Omega)}-\lim_{k \rightarrow \infty}\int_{[0,\bar{s})}[u_{k}]^{p}_{s,p}d\mu^{-}(s)-\|u\|^{p}_{X_p(\Omega)}+\int_{[0,\bar{s})}[u]^{p}_{s,p}d\mu^{-}(s)\\
& =\lim_{k \rightarrow \infty} \|u_{k}-u\|^{p}_{X_p(\Omega)}- \lim_{k \rightarrow \infty}\int_{[0,\bar{s})}[u_{k}-u]^{p}_{s,p}d\mu^{-}(s)\geq (1-c_{0}\gamma) \lim_{k \rightarrow \infty}\|u_{k}-u\|^{p}_{X_p(\Omega)}.
\end{align*}
Consequently,  we conclude that $u_k\rightarrow u$ converges strongly in $X_{p}(\Omega)$ as $k \rightarrow \infty$, provided that $\gamma$ is sufficiently small.
\end{proof}

Now, we state the main result of this section.

\begin{thm}\label{principal eigenvalue P}
   Let $\Omega \subset \mathbb{R}^{N}$ be a bounded domain with Lipschitz boundary, and let $\mu$ satisfy \eqref{measure1}--\eqref{measure3}. Let $s_{\sharp}$ be as in \eqref{measure4}, and assume $1 < p < N/s_{\sharp}$. Then there exists a constant $\gamma_{0} > 0$, depending only on $N$, $\Omega$, and $p$, such that, for all  $\gamma \in [0, \gamma_{0}]$ in \eqref{measure3}, the statements below concerning the eigenvalues and eigenfunctions of problem \eqref{Eig problem P} associated with $\mathcal{L}_{\mu, p}$ hold.
\begin{itemize}
    \item[(i)] The first eigenvalue $\lambda_{1, \mu}(\Omega)$ is given by
\begin{equation}\label{first eigenvalue}
\lambda_{1, \mu}(\Omega):=\inf_{u \in X_{p}(\Omega) \backslash\{0\}} \frac{\int_{[0,1]}[u]_{s,p}^{p} d \mu^{+}(s)-\int_{[0, \bar{s})}[u]_{s,p}^{p} d \mu^{-}(s)}{\int_{\Omega}|u|^{p} d x} . 
\end{equation}
\item[(ii)] There exists a  function $e_{1, \mu} \in X_p(\Omega)$, an eigenfunction corresponding to the eigenvalue $\lambda_{1, \mu}(\Omega) $ which attains the minimum in \eqref{first eigenvalue}.

\item[(iii)] The set of eigenvalues of the problem \eqref{Eig problem P} consists of a sequence $(\lambda_{n, \mu})$ with
  \begin{align}
      0<\lambda_{1, \mu} \leq \lambda_{2, \mu} \leq \ldots \leq \lambda_{n, \mu} \leq \lambda_{n+1, \mu} \leq \ldots~\text{and }&
      \lambda_{n, \mu} \rightarrow \infty\,\,\,\text{as}\,\,n \rightarrow \infty.
  \end{align}
  
  \item [(iv)] In addition, if  $\mu$ satisfies \eqref{measure5}, then every eigenfunction corresponding to the eigenvalue $\lambda_{1, \mu}(\Omega)$ in \eqref{first eigenvalue} does not change sign.
\end{itemize}

\end{thm}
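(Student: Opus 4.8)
The plan is to handle the four assertions in order: parts (i)--(iii) by a variational scheme (minimization for the principal pair, Lusternik--Schnirelmann for the sequence), and part (iv) by combining the comparison estimate of Lemma~\ref{energycomparison} with a convexity/Picone-type argument. Throughout, $\gamma_0$ is taken to be the minimum of the smallness thresholds produced in Lemmas~\ref{tech lem for principal eigenvalue}, \ref{tech lem for PS gen} and~\ref{PS cond gen}, so that all three are available whenever $\gamma\in[0,\gamma_0]$.

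\emph{Parts (i) and (ii).} I would apply Lemma~\ref{tech lem for principal eigenvalue} with $X_{0}:=X_{p}(\Omega)$ and $\mathcal{N}:=\mathcal{M}=\{u\in X_{p}(\Omega):\|u\|_{L^{p}(\Omega)}=1\}$. This yields a minimizer $u_{0}=:e_{1,\mu}\in\mathcal{M}$ of $\mathfrak{I}_{p}$, and identity~\eqref{relation P} is precisely the statement that $e_{1,\mu}$ solves~\eqref{Eig problem P} in the weak sense~\eqref{weak solution of eig problem P} with $\lambda=p\,\mathfrak{I}_{p}(e_{1,\mu})$, which is positive by~\eqref{minimizer exist}. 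Since numerator and denominator of the Rayleigh quotient in~\eqref{first eigenvalue} are positively $p$-homogeneous, the infimum there equals $p\inf_{\mathcal{M}}\mathfrak{I}_{p}=p\,\mathfrak{I}_{p}(e_{1,\mu})$; on the other hand, testing~\eqref{weak solution of eig problem P} with $v=u$ for any eigenfunction $u$ shows that its eigenvalue equals the Rayleigh quotient of $u$, hence is not smaller than that infimum. Therefore the number in~\eqref{first eigenvalue} is the least eigenvalue and is attained by $e_{1,\mu}$, giving (i) and (ii).

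\emph{Part (iii).} Here the tool is Theorem~\ref{tech them for seq gen} applied to $\tilde{\mathfrak{I}}_{p}=\mathfrak{I}_{p}|_{\mathcal{M}}$ on the $L^{p}$-unit sphere $\mathcal{M}$. One checks that $\mathcal{M}$ is a complete symmetric $C^{1,1}$-submanifold of $X_{p}(\Omega)$ (the $L^{p}$-sphere being smooth enough thanks to the uniform convexity and separability from Lemma~\ref{Uniform convexity}), that $\tilde{\mathfrak{I}}_{p}$ is even and of class $C^{1}$, that it is bounded below because Lemma~\ref{negative part reabsorb} gives $\mathfrak{I}_{p}(u)\ge\frac1p(1-c_{0}\gamma)\|u\|_{X_{p}(\Omega)}^{p}\ge0$ for $\gamma\le\gamma_0$, and that it satisfies the Palais--Smale condition on $\mathcal{M}$ by Lemma~\ref{PS cond gen} (when $\mu^{+}(\{1\})>0$ this uses the a.e.\ gradient convergence of Lemma~\ref{tech lem for PS gen}; when $\mu^{+}(\{1\})=0$ there is no gradient term and the nonlocal compactness estimates alone suffice). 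As $X_{p}(\Omega)$ is infinite-dimensional, for every $k$ the class $\mathcal{F}_{k}=\{K\in\mathcal{A}:K\subset\mathcal{M},\ \gamma^{*}(K)\ge k\}$ is nonempty (intersect a $k$-dimensional subspace with $\mathcal{M}$), so Theorem~\ref{tech them for seq gen} produces critical values $\beta_{k}:=\inf_{K\in\mathcal{F}_{k}}\sup_{u\in K}\tilde{\mathfrak{I}}_{p}(u)$, which are positive and nondecreasing in $k$ since $\mathcal{F}_{k+1}\subset\mathcal{F}_{k}$; setting $\lambda_{k,\mu}:=p\beta_{k}$ gives the desired nondecreasing sequence of eigenvalues, with $\lambda_{1,\mu}$ the one from (i). That $\lambda_{k,\mu}\to\infty$ is the standard argument: if $\beta_{k}\le\beta<\infty$ for all $k$, the Palais--Smale condition and the deformation lemma would force the sublevel set $\{\tilde{\mathfrak{I}}_{p}\le\beta\}\cap\mathcal{M}$ to have finite genus, contradicting its containing sets of arbitrarily large genus (cf.\ \cite{Struwe}). (As is customary in the nonlinear setting, one does not claim that this sequence exhausts the spectrum.)

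\emph{Part (iv) and the main obstacle.} Assume~\eqref{measure5}. For the sign statement: any eigenfunction $u$ for $\lambda_{1,\mu}(\Omega)$ realizes the infimum in~\eqref{first eigenvalue} (test its equation with itself), and since $\mu^{-}([\bar{s},1])=0$ its numerator equals $\int_{[0,1]}[u]_{s,p}^{p}\,d\mu(s)$; by Lemma~\ref{energycomparison} (which needs~\eqref{measure5}), $\int_{[0,1]}[|u|]_{s,p}^{p}\,d\mu(s)\le\int_{[0,1]}[u]_{s,p}^{p}\,d\mu(s)$ while $\||u|\|_{L^{p}(\Omega)}=\|u\|_{L^{p}(\Omega)}$, so $|u|$ is also a minimizer and the strict part of Lemma~\ref{energycomparison} forces $u\ge0$ or $u\le0$ a.e. For simplicity, take two first eigenfunctions; by the previous step, after replacing them with $|\cdot|$ and normalizing in $L^{p}(\Omega)$, we may assume $u,v\ge0$ with unit $L^{p}$-norm. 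Along the curve $w_{t}:=\bigl((1-t)u^{p}+tv^{p}\bigr)^{1/p}$ one has $\|w_{t}\|_{L^{p}(\Omega)}^{p}=1$, so $w_{t}\in\mathcal{M}$; by the hidden convexity of the Gagliardo seminorms (and of $\|\nabla\cdot\|_{L^{p}}^{p}$), $t\mapsto\int_{[0,1]}[w_{t}]_{s,p}^{p}\,d\mu^{+}(s)$ is convex with equality only if $u$ and $v$ are proportional, whereas the $\mu^{-}$-term $\int_{[0,\bar{s})}[w_{t}]_{s,p}^{p}\,d\mu^{-}(s)$ is, by Lemma~\ref{negative part reabsorb} sharpened by the quantitative smallness in~\eqref{measure5}, controlled by a quantity of order $\gamma$ relative to the $\mu^{+}$-energy; shrinking $\gamma_{0}$ if needed, the Rayleigh quotient of $w_{t}$ stays $\le\lambda_{1,\mu}(\Omega)$ along the curve, forcing the equality case and hence $u=c\,v$. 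I expect \emph{this last step to be the crux}: the sign-changing measure $\mu^{-}$ removes the maximum principle (unlike for $\mathcal{L}_{\mu,p}^{+}$) and spoils the convexity structure behind simplicity proofs for $p$-Laplacian-type operators, and the whole point of~\eqref{measure5} is to let one reabsorb the $\mu^{-}$-contribution as a controllably small perturbation, uniformly along $w_{t}$ in the convexity/Picone argument.
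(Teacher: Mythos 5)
Your handling of (i)--(ii) coincides with the paper's: Lemma~\ref{tech lem for principal eigenvalue} with $X_{0}=X_{p}(\Omega)$, identity~\eqref{relation P}, and the self-testing/homogeneity observation. For (iii) you also follow the paper up to the divergence of the eigenvalues; there the paper does not invoke a deformation argument but uses a biorthogonal system $\{w_{k},w_{k}^{*}\}$ of the separable, reflexive space $X_{p}(\Omega)$ together with the fact that a set of genus at least $k$ must meet a subspace of codimension $k-1$, producing a weak limit $v$ with $\|v\|_{L^{p}(\Omega)}=1$ annihilated by every $w_{m}^{*}$, a contradiction. Your appeal to the standard deformation-lemma/genus argument is a legitimate alternative route, though as phrased (Palais--Smale forcing the sublevel set to have finite genus) it is only a sketch: the standard argument runs through the compact critical set at the limiting level, not finiteness of the genus of sublevel sets. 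The constant-sign part of (iv) via Lemma~\ref{energycomparison} is exactly the paper's argument.

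The genuine gap is in your simplicity step. Write $N^{+}(w):=\int_{[0,1]}[w]_{s,p}^{p}\,d\mu^{+}(s)$ and $N^{-}(w):=\int_{[0,\bar{s})}[w]_{s,p}^{p}\,d\mu^{-}(s)$. Along $w_{t}=\bigl((1-t)u^{p}+t v^{p}\bigr)^{1/p}$ hidden convexity controls only the $\mu^{+}$-energy, $N^{+}(w_{t})\le(1-t)N^{+}(u)+tN^{+}(v)$, whereas the quantity compared with $\lambda_{1,\mu}(\Omega)$ is $N^{+}(w_{t})-N^{-}(w_{t})$: the $\mu^{-}$-term enters with the wrong sign, so you would need $N^{-}(w_{t})\ge(1-t)N^{-}(u)+tN^{-}(v)$, i.e.\ concavity, which hidden convexity does not provide. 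What Lemma~\ref{negative part reabsorb} and~\eqref{measure5} actually give is only that the convexity deficit $D(t):=(1-t)N^{+}(u)+tN^{+}(v)-N^{+}(w_{t})$ satisfies $0\le D(t)\le(1-t)N^{-}(u)+tN^{-}(v)-N^{-}(w_{t})\le C\gamma\,\lambda_{1,\mu}(\Omega)$; equivalently, the Rayleigh quotient of $w_{t}$ is at most $\lambda_{1,\mu}(\Omega)+O(\gamma)$, not at most $\lambda_{1,\mu}(\Omega)$, and no fixed choice of $\gamma_{0}>0$ makes an $O(\gamma)$ deficit vanish. Since proportionality of $u$ and $v$ requires \emph{exact} equality in the hidden-convexity inequality (or a quantitative stability version of it, which is neither proved in the paper nor standard in this generality), your absorption argument does not close. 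For comparison, the paper proves simplicity by a different device: it sets $g:=e_{1,\mu}-f$, argues that $g$ is again an eigenfunction for $\lambda_{1,\mu}(\Omega)$ and hence of constant sign by the first half of (iv), and then concludes from $\int_{\Omega}\bigl(e_{1,\mu}^{p}-f^{p}\bigr)\,dx=0$ that $g\equiv0$; no convexity along $w_{t}$ and no further shrinking of $\gamma_{0}$ enters that argument.
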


\begin{proof} \begin{enumerate}[(i)]
    \item This is an immediate consequence of Lemma \ref{tech lem for principal eigenvalue}. In particular,  \eqref{minimizer exist} with $X_{0}:=X_{p}(\Omega)$ asserts that 
the expression for $\lambda_{1, \mu}(\Omega)$ introduced in \eqref{first eigenvalue} exists. Moreover, using \eqref{relation P}, we conclude that $\lambda_{1, \mu}(\Omega)$ is an eigenvalue.

\item Let $e_{1, \mu} \in X_p(\Omega)$ be an eigenfunction corresponding to $\lambda_{1, \mu}(\Omega)$. Then, $e_{1, \mu}$ is a weak solution to \eqref{Eig problem P}. Thus, we have 
\begin{equation}\label{nn11}
    \left\langle \mathfrak{I}_{p}^{\prime}(e_{1, \mu}), v\right\rangle=\lambda \int_{\Omega} |e_{1, \mu}|^{p-2} e_{1, \mu} v d x
\end{equation}
for every $v \in X_p(\Omega).$ Taking $v=e_{1, \mu} \in X_p(\Omega)$ as a test function in \eqref{nn11} and recalling \eqref{functional Hp}, we obtain
$$
\int_{[0,1]}\left[e_{1, \mu}\right]_{s,p}^{p} d \mu^{+}(s)-\int_{[0, \bar{s})}\left[e_{1, \mu}\right]_{s,p}^{p} d \mu^{-}(s)=\lambda_{1, \mu}(\Omega) \int_{\Omega} |e_{1, \mu}(x)|^{p} d x.
$$
This implies that $e_{1, \mu}$ is a minimizer of the first eigenvalue given by the expression in \eqref{first eigenvalue}.

\item  Note that $\mathfrak{I}_{p}$ is even, $\mathfrak{I}_{p}(0)=0$, while $\mathcal{M}$ is a complete, symmetric, and $C^{1,1}$-manifold in $X_{p}(\Omega)$. Moreover, from Lemma \ref{PS cond gen}, we get that $\mathfrak{I}_{p}$ is bounded from below on $\mathcal{M}$ and satisfies the $(PS)$ condition on $\mathcal{M}$. Therefore, from the application of the {\it Lusternik-Schnirelmann theory} (see Theorem \ref{tech them for seq gen}), there exists an unbounded sequence of eigenvalues $(\lambda_{k, \mu})$ for the problem \eqref{Eig problem P} such that
\begin{align}
      0<\lambda_{1, \mu} \leq \lambda_{2, \mu} \leq \ldots \leq \lambda_{n, \mu} \leq \lambda_{n+1, \mu} \leq \ldots.
  \end{align}

It remains to show that $\lambda_{k, \mu} \rightarrow \infty$ as $k \rightarrow \infty$. We prove it by contradiction. Suppose there exists $L>0$ such that $0<\lambda_{k, \mu} \leq L$ for all $k \in \mathbb{N}$. Since, $X_{p}(\Omega)$ is separable and reflexive (as a consequence of being uniformly convex space), $X_{p}(\Omega)$ admits a biorthogonal system $\left\{w_{k}, w_{k}^{*}\right\}$ with the following properties: $X_{p}(\Omega)=\overline{\operatorname{span}\left\{w_k: k \in \mathbb{N}\right\}}$ such that for all $w_k^* \in\left(X_{p}(\Omega)\right)^*$ we have $\left\langle w_i^*, w_j\right\rangle=\delta_{i, j}$. Moreover, $\left\langle w_k^*, v\right\rangle=0,$ $\forall k \in \mathbb{N}$ implies that $v=0$, (see \cite{Pelczyński}).
Let
\begin{equation*}
    X_k=\overline{\operatorname{span}\left\{w_k, w_{k+1}, \cdots\right\}} \text { and } \quad a_k=\inf _{A \in \Sigma_k} \sup _{u \in A \cap X_k} p \mathfrak{I}_{p}(u).
\end{equation*}
Note that the co-dimension of $X_k$ is $k-1$. Recall the following property of genus (See, \cite[Proposition 2.3]{Szulkin}): Let $Z$ be a subspace of $X_{p}(\Omega)$ with codimension $k$ and $\gamma(A)>k$, then $A \cap Z \neq \emptyset$. Using this property, we have $A \cap X_k \neq \emptyset$ for all $A \in \Sigma_k$. This shows that $\sup_{u \in A \cap X_k} p \mathfrak{I}_{p}(u)>0$. Furthermore, $a_k \leq \lambda_{k, \mu} \leq L,$ $\forall k \in \mathbb{N}$ by the definition of $a_k$ and characterisation of $\lambda_{k, \mu}$. Now, for each $k \in \mathbb{N}$, choose $v_k \in A \cap X_k$ such that
\begin{equation*}
    \int_{\Omega} \left|v_k\right|^p d x=1 \text { and } 0 \leq a_k \leq p \mathfrak{I}_{p}\left(v_k\right) \leq L+1,\\\ k \in \mathbb{N}.
\end{equation*}
This implies that $\{v_k\}$ is a bounded sequence in $X_{p}(\Omega)$. Therefore, proceeding as in the proof of Lemma \ref{tech lem for principal eigenvalue}, we ensure the existence of an element $v \in X_{p}(\Omega)$ such that $v_k \rightharpoonup v$ in $X_{p}(\Omega)$ up to a subsequence with $\int_{\Omega} |v|^p d x=1$. Therefore $v \not \equiv 0$ in $\Omega$. However, by the choice of the biorthogonal system, we have
\begin{equation*}
    \left\langle w_m^*, v\right\rangle=\lim _{k \rightarrow \infty}\left\langle w_m^*, v_k\right\rangle=0, \text { for every } m \in \mathbb{N},
\end{equation*}
which implies $v=0$. This gives a contradiction. Hence, $\lambda_{k, \mu} \rightarrow \infty$ as $k \rightarrow \infty$. This completes the proof.

\item We now prove that every eigenfunction $e_{1, \mu}$ corresponding to $\lambda_{1, \mu}(\Omega)$ does not change sign if the measure $\mu$ satisfies condition \eqref{measure5}.

Let us assume that $e_{1, \mu}$ changes sign in $\Omega$. Observe that, we have $$\left\|\left|e_{1, \mu}\right|\right\|_{L^{p}(\Omega)}=\left\|e_{1, \mu}\right\|_{L^{p}(\Omega)}.$$ Moreover, for any $s \in[0,1]$ and any $p\geq 1$ it holds that $\left[\left|e_{1, \mu}\right|\right]_{s,p} \leq \left[e_{1, \mu}\right]_{s,p}$. Hence, we have
$$
\int_{[0,1]}\left[\left|e_{1, \mu}\right|\right]_{s,p}^{p} d \mu^{+}(s) \leq  \int_{[0,1]}\left[e_{1, \mu}\right]_{s,p}^{p} d \mu^{+}(s),
$$
which implies that $\left|e_{1, \mu}\right| \in X_{p}(\Omega)$. In addition, applying Lemma \ref{energycomparison} (under the condition \eqref{measure5}), and using the fact that $e_{1, \mu}$ changes sign in $\Omega$, we obtain the following inequality:
\begin{align*}
    \int_{[0,1]}\left[\left|e_{1, \mu}\right|\right]_{s,p}^{p} d \mu^{+}(s)&-\int_{[0,1]}\left[\left|e_{1, \mu}\right|\right]_{s,p}^{p} d \mu^{-}(s)\\&\quad\quad\quad \quad < \int_{[0,1]}\left[e_{1, \mu}\right]_{s,p}^{p} d \mu^{+}(s)-\int_{[0,1]}\left[e_{1, \mu}\right]_{s,p}^{p} d \mu^{-}(s).
\end{align*}
But this contradicts the fact that $e_{1, \mu}$ is a minimizer as in part (ii). Therefore, $e_{1, \mu}$ cannot change sign in $\Omega$. This completes the proof.
\qedhere
\end{enumerate}  \end{proof}

%%%%%%%%%%%%%%%%%%%%%%%%%%%%%%%%%%%%%%%%%%%%%%%

\section{Weak maximum principles for nonlinear superposition operators} \label{sec4}
In this section, we will prove the weak maximum principle. 
 Consider the problem
\begin{align}\label{weakmax}
\mathcal{L}_{\mu,p}^{+}  u=0~\text{in}~\Omega,\nonumber\\
u=0~\text{in}~{{\mathbb{R}^N}}\setminus\Omega.
\end{align}

We say that a function $u\in X_p(\Omega)$ is a weak solution  of \eqref{weakmax}, if for every $\phi \in X_p(\Omega)$, we have
\begin{equation}\label{weaksolu}
\int_{[0,1]} c_{N, p, s}\int_{{\mathbb{R}^N}}\int_{{\mathbb{R}^N}}\frac{|u(x)-u(y)|^{p-2}(u(x)-u(y))(\phi(x)-\phi(y))}{|x-y|^{N+ps}}dxdy d\mu^+(s) =0.
\end{equation}
Moreover, if $u \in X_p(\Omega)$ satisfies the following inequality
\begin{equation}\label{weaksolu1}
\int_{[0,1]} c_{N, p, s}\int_{{\mathbb{R}^N}}\int_{{\mathbb{R}^N}}\frac{|u(x)-u(y)|^{p-2}(u(x)-u(y))(\phi(x)-\phi(y))}{|x-y|^{N+ps}}dxdy d\mu^+(s) \geq  0
\end{equation} for any nonnegative $\phi \in X_p(\Omega),$ then we say that $u \in X_p(\Omega)$ satisfies $\mathcal{L}_{\mu, p}^+u \geq 0$ in $\Omega$ in the weak sense.

\begin{thm}
    Let $\Omega \subset \mathbb{R}^N$ be an open subset with Lipschitz boundary. We assume that $\mu^+$ satisfies \eqref{measure1} and $1<p<\frac{N}{s_\sharp}$, where $s_\sharp$ is defined by \eqref{measure4}. Let $u \in X_p(\Omega)$ be such that $\mathcal{L}_{\mu, p}^+ u \geq 0$ in $\Omega$ in the weak sense and $u \geq 0$ a.e. in $\mathbb{R}^N \backslash \Omega.$ Then, $u \geq 0$ a.e. in $\Omega.$
\end{thm}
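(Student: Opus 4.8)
The plan is to test the weak inequality \eqref{weaksolu1} against the negative part of $u$, namely $v := \max\{-u,0\}$. First I would check that $v$ is an admissible test function: since $t \mapsto \max\{-t,0\}$ is $1$-Lipschitz and vanishes at $0$, one has $|v(x)-v(y)| \le |u(x)-u(y)|$ for a.e.\ $x,y \in \mathbb{R}^N$, hence $[v]_{s,p} \le [u]_{s,p}$ for every $s \in [0,1]$ and therefore $\|v\|_{X_p(\Omega)} \le \|u\|_{X_p(\Omega)} < \infty$; moreover $v = 0$ a.e.\ in $\mathbb{R}^N \setminus \Omega$ because $u \ge 0$ there (interpreted as in the Remark after Theorem~\ref{smpintro} when $\mu^+((0,1)) = 0$). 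Thus $v \in X_p(\Omega)$ is nonnegative and may be used as $\phi$ in \eqref{weaksolu1}, which yields $\langle u, v\rangle_+ \ge 0$, where $\langle\cdot,\cdot\rangle_+$ is the pairing in \eqref{dual pairing +}.

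\textbf{Key pointwise inequality.} The analytic core is the elementary inequality: for all $a,b \in \mathbb{R}$ and every $p > 1$,
\[
|a-b|^{p-2}(a-b)\bigl(\max\{-a,0\}-\max\{-b,0\}\bigr) \;\le\; -\,\bigl|\max\{-a,0\}-\max\{-b,0\}\bigr|^{p}.
\]
I would prove this by a short case analysis on the signs of $a$ and $b$: when $a$ and $b$ have the same sign, both sides coincide; when $a \ge 0 > b$ (and symmetrically) one uses $|a-b| = a + |b| \ge |b| = |\max\{-a,0\}-\max\{-b,0\}|$ together with the monotonicity of $t \mapsto t^{p-1}$ on $[0,\infty)$ (this is in the spirit of the convexity estimate \eqref{Simon inequality}).

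\textbf{Conclusion.} Choosing $\phi = v$ in \eqref{weaksolu1}, applying the pointwise inequality to the integrand with $a = u(x)$ and $b = u(y)$, and integrating against $c_{N,p,s}\,|x-y|^{-(N+ps)}\,dx\,dy\,d\mu^+(s)$ — all the double integrals being absolutely convergent since $u, v \in X_p(\Omega)$, by H\"older's inequality on $\mathbb{R}^{2N} \times [0,1]$ with respect to this measure — I would obtain
\[
0 \;\le\; \langle u, v \rangle_+ \;\le\; -\int_{[0,1]} [v]_{s,p}^{p}\, d\mu^+(s) \;=\; -\,\|v\|_{X_p(\Omega)}^{p} \;\le\; 0.
\]
Hence $\|v\|_{X_p(\Omega)} = 0$, and the continuous embedding $X_p(\Omega) \hookrightarrow L^p(\Omega)$ from Theorem~\ref{embedding lem} forces $v = 0$ a.e.\ in $\Omega$; combined with $v = 0$ a.e.\ in $\mathbb{R}^N \setminus \Omega$, the negative part of $u$ vanishes a.e., i.e.\ $u \ge 0$ a.e.\ in $\Omega$.

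\textbf{Main obstacle.} The argument is essentially routine once the test function $v$ and the pointwise inequality are identified; the only delicate points are (i) the admissibility of $v$ in $X_p(\Omega)$, which relies on the $1$-Lipschitz bound above together with the boundary-value convention of the Remark when $\mu^+$ is concentrated on $\{0,1\}$, and (ii) the absolute convergence of the iterated integrals, which is what legitimizes splitting $\langle u, v\rangle_+$ as an integral in $s$ of the Gagliardo-type double integrals and passing the pointwise bound inside.
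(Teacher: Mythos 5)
Your proposal is correct, and its core coincides with the paper's own argument: the paper likewise tests \eqref{weaksolu1} with $u^{-}$ and invokes exactly the same pointwise inequality $|u(x)-u(y)|^{p-2}(u(x)-u(y))(u^{-}(x)-u^{-}(y))\le -|u^{-}(x)-u^{-}(y)|^{p}$ (cited there from \cite{BP16, BLP2014}). Where you genuinely differ is in the finish. The paper argues by contradiction and splits two cases: if $\mu^{+}(\{1\})>0$ it deduces $\int_{\Omega}|\nabla u^{-}|^{p}\,dx\le 0$ and uses the zero trace of $u^{-}$ on $\partial\Omega$; if $\mu^{+}(\{1\})=0$ it bounds the fractional energy from below by the interaction term $\int_{(\bar s,1)}c_{N,p,s}\int_{E}\int_{\mathbb{R}^N\setminus\Omega}|u^{-}(x)|^{p}|x-y|^{-N-ps}\,dy\,dx\,d\mu^{+}(s)>0$ between the putative negativity set $E$ and the complement, reaching a contradiction. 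You instead conclude $\|u^{-}\|_{X_p(\Omega)}=0$ and eliminate $u^{-}$ via the continuous embedding $X_p(\Omega)\hookrightarrow L^{p}(\Omega)$ of Theorem~\ref{embedding lem}; this is shorter, avoids the contradiction and the case distinction, and is perfectly valid in the paper's main (bounded) setting. Two small caveats are worth recording. First, Theorem~\ref{embedding lem} is stated for bounded $\Omega$, whereas the present statement only assumes $\Omega$ open with Lipschitz boundary; for unbounded $\Omega$ your last step can be repaired without the embedding: $\int_{[0,1]}[u^{-}]_{s,p}^{p}\,d\mu^{+}(s)=0$ together with $\mu^{+}([\bar s,1])>0$ yields some $s_{0}\ge\bar s$ with $[u^{-}]_{s_{0},p}=0$, so $u^{-}$ is a.e. constant on $\mathbb{R}^N$ and hence zero, since it vanishes on $\mathbb{R}^N\setminus\Omega$, a set of positive measure — which is, in spirit, the paper's tail argument. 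Second, your key inequality as stated covers the Gagliardo part $s\in(0,1)$; for possible atoms of $\mu^{+}$ at $s=1$ and $s=0$ one needs the equally elementary analogues $|\nabla u|^{p-2}\nabla u\cdot\nabla u^{-}\le-|\nabla u^{-}|^{p}$ and $|u|^{p-2}u\,u^{-}=-|u^{-}|^{p}$ (the paper treats $\mu^{+}(\{1\})$ separately for this reason); this deserves one explicit line in a final write-up but is not a gap in substance.
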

\begin{proof} We will prove this result by contradiction. For this purpose, let us assume that there exists a set $E \subset \Omega$ with $|E|>0$ such that $u <0$ a.e. in $E.$

Note that $u \geq 0$ a.e. in $\mathbb{R}^N \backslash \Omega,$ therefore, it yields that $u^{-} = 0$ a.e. in $\mathbb{R}^N \backslash \Omega.$ It is also easy to see that $u^{-} \in X_p(\Omega).$  Therefore, using $u^{-}$ as a test function in \eqref{weaksolu1} we get 
\begin{align}
  0 \leq &  \int_{[0,1]} c_{N, p, s}\int_{{\mathbb{R}^N}}\int_{{\mathbb{R}^N}}\frac{|u(x)-u(y)|^{p-2}(u(x)-u(y))(u^{-}(x)-u^{-}(y))}{|x-y|^{N+ps}}dxdy d\mu^+(s) \nonumber\\& = \int_{[0,1)} c_{N, p, s}\int_{{\mathbb{R}^N}}\int_{{\mathbb{R}^N}}\frac{|u(x)-u(y)|^{p-2}(u(x)-u(y))(u^{-}(x)-u^{-}(y))}{|x-y|^{N+ps}}dxdy d\mu^+(s) \nonumber \\&+ \mu^+(\{1\}) \int_{\Omega} |\nabla u (x)|^{p-2} \nabla u(x) \cdot \nabla u^{-}(x) \,dx.
\end{align}
Now, we use the following pointwise inequality (see \cite[Lemma A.2]{BP16} or \cite[Lemma C.2]{BLP2014})  $$|u(x)-u(y)|^{p-2}(u(x)-u(y))(u^{-}(x)-u^{-}(y)) \leq - |u^{-}(x)-u^{-}(y)|^p $$ to get 
\begin{align} \label{equ6.5}
    0 & \leq \int_{[0,1)} c_{N, p, s}\int_{{\mathbb{R}^N}}\int_{{\mathbb{R}^N}}\frac{|u(x)-u(y)|^{p-2}(u(x)-u(y))(u^{-}(x)-u^{-}(y))}{|x-y|^{N+ps}}dxdy d\mu^+(s) \nonumber \\&+ \mu^+(\{1\}) \int_{\Omega} |\nabla u (x)|^{p-2} \nabla u(x) \cdot \nabla u^{-}(x) \,dx\nonumber \\ \leq& - \int_{[0,1)} c_{N, p, s}\int_{{\mathbb{R}^N}}\int_{{\mathbb{R}^N}}\frac{|(u^{-}(x)-u^{-}(y))|^p}{|x-y|^{N+ps}}dxdy d\mu^+(s) - \mu^+(\{1\}) \int_{\Omega}  |\nabla u^{-}(x)|^p \,dx.
    \end{align}
    Now, if $\mu^+(\{1\}) >0,$ we infer from \eqref{equ6.5} that 
    $$\int_{\Omega}  |\nabla u^{-}(x)|^p \,dx \leq 0$$
    and thus, $u^{-}$ is constant and therefore, equal to zero  as $u^{-}$ has zero trace values along $\partial \Omega.$ This is a contradiction to the existence of a set $E$ as above. 

Next, let us suppose that $\mu^{+}(\{1\})=0.$ Thus, condition \eqref{measure1} implies that $\mu^+(\bar{s}, 1)>0$ and so from \eqref{equ6.5} it follows that 
\begin{align}
    0 \geq& \int_{(\bar{s},1)} c_{N, p, s}\int_{{\mathbb{R}^N}}\int_{{\mathbb{R}^N}}\frac{|(u^{-}(x)-u^{-}(y))|^p}{|x-y|^{N+ps}}dxdy d\mu^+(s) \nonumber \\& \geq \int_{(\bar{s},1)} c_{N, p, s}\int_{{E}}\int_{{\mathbb{R}^N \backslash \Omega}}\frac{|(u^{-}(x)|^p}{|x-y|^{N+ps}}dxdy d\mu^+(s) >0,
\end{align}
which is again a contradiction. Therefore, our assumption of the existence of the set $E$ is not correct. Therefore, $u \geq 0$ a.e. in $\Omega,$ completing the proof of the theorem. 
\end{proof}
%%%%%%%%%%%%%%%%%%%%%%%%%%%%%%%%%%%%%%%%%%%%%%%

\section{Strong minimum/maximum principles for nonlinear superposition operators} \label{sec5}
The main purpose of this section is to prove the strong minimum principle for the operator $\mathcal{L}_{\mu,p}^{+}.$
 Consider the problem
\begin{align}\label{p-kuusi}
\mathcal{L}_{\mu,p}^{+}  u=0~\text{in}~\Omega,\nonumber\\
u=0~\text{in}~{{\mathbb{R}^N}}\setminus\Omega.
\end{align} 

We say that a function $u\in X_p(\Omega)$ is a weak subsolution (or supersolution) of \eqref{p-kuusi}, if for every nonnegative $\phi \in X_p(\Omega)$, we have
\begin{equation}\label{k-subsup}
\int_{[0,1]} c_{N, p, s}\int_{{\mathbb{R}^N}}\int_{{\mathbb{R}^N}}\frac{|u(x)-u(y)|^{p-2}(u(x)-u(y))(\phi(x)-\phi(y))}{|x-y|^{N+ps}}dxdy d\mu^+(s) \leq (or \geq) 0.
\end{equation}

A function $u\in X_p(\Omega)$ is a weak solution of \eqref{p-kuusi}, if it is a weak subsolution as well as a weak supersolution of \eqref{k-subsup}. In particular, for every $\phi \in X_p(\Omega)$, $u$ satisfies
\begin{equation}\label{k-sol}
\int_{[0,1]} c_{N, p, s}\int_{{\mathbb{R}^N}}\int_{{\mathbb{R}^N}}\frac{|u(x)-u(y)|^{p-2}(u(x)-u(y))(\phi(x)-\phi(y))}{|x-y|^{N+ps}}dxdy d\mu^+(s)= 0.
\end{equation}

We define the nonlocal tail of a function $v\in X_p({\Omega})$ in a ball $B_R(x_0)\subset{{\mathbb{R}^N}}$ given by 
\begin{equation}
Tail(v,x_0,R)=\left[ \int_{(0, 1)} R^{sp} \left( \int_{{{\mathbb{R}^N}}\setminus{B_R(x_0)}}\frac{|v(x)|^{p-1}}{|x-x_0|^{N+ps}}dx \right)  d\mu^+(s) \right]^{\frac{1}{p-1}}.
\end{equation}
Clearly, for any $v\in L^r({{\mathbb{R}^N}}), r\geq p-1$ and $R>0$, we have $Tail(v,x_0,R)$ is finite, by using the H\"{o}lder inequality. It is important to mention here that the notion of nonlocal tail was first introduced by DiCastro et al. \cite{DKP16}.

The next aim is to establish a minimum principle for the problem \eqref{p-kuusi}. Prior to that, we will prove the following logarithmic estimate, which will be used to prove the minimum principle.
 \begin{lem}\label{k-log-lemma} 
 Let $\Omega \subset \mathbb{R}^{N}$ be a bounded domain with Lipschitz boundary, and assume that $\mu = \mu^{+}$ satisfies \eqref{measure1}, with $s_{\sharp}$ defined as in \eqref{measure4}. Let $1 < p < \infty$, and suppose that $u \in X_p(\Omega)$ is a weak supersolution of \eqref{p-kuusi} such that $u \geq 0$ in $B_R := B_R(x_0) \subset \Omega$.
  Then for any $B_r:=B_r(x_0)\subset B_{\frac{R}{2}}(x_0)$ and for any $d>0$, the following estimate holds:
 	\begin{align}\label{k-log-ineq}
 	\mu^+(\{1\})&\int_{B_{r}} |\nabla \log (u+d)|^p  dx+ \int_{(0, 1)} c_{N, p, s}\int_{B_r}\int_{B_r}\left|\log\frac{u(x)+d}{u(y)+d}\right|^p\frac{dxdy}{|x-y|^{N+ps}} d\mu^+(s) \nonumber\\&\leq Cr^{N} \sup_{s \in \Sigma} r^{-sp}\left(d^{1-p} \sup_{s \in \Sigma}  \left(\frac{r}{R}\right)^{sp}[Tail(u_{-},x_0,R)]^{p-1}+1\right) \nonumber \\&+   C\mu^+(\{1\}) r^{N-p}+C\mu^+(\{0\})r^N,
 	\end{align}
 	where $C=C(N,p,\Sigma, \mu^+)$, $\Sigma:=\text{supp}(\mu^+)\cap(0,1),$ $u_{-}$ is the negative part of $u$, that is, $u_{-}:=\max\{-u, 0\}$.
 \end{lem}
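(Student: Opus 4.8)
\emph{Setup.} Fix $d>0$. Since the balls are concentric and $B_r\subset B_{R/2}$, we have $2r\le R$, so we may pick $\eta\in C_c^\infty(B_{2r}(x_0))$ with $0\le\eta\le1$, $\eta\equiv1$ on $B_r(x_0)$, $|\nabla\eta|\le C/r$, and $\operatorname{supp}\eta\subset B_{2r}(x_0)\subset B_R(x_0)\subset\Omega$; in particular $u\ge0$ on $\operatorname{supp}\eta$. The plan is to insert
\[
\phi:=\eta^p\,(u+d)^{1-p}
\]
into the supersolution inequality \eqref{k-subsup} (read with ``$\ge 0$''). The function $\phi$ is nonnegative, compactly supported in $\Omega$, and bounded (because $u+d\ge d$ on $\operatorname{supp}\eta$ and $1-p<0$); since $t\mapsto(t+d)^{1-p}$ is Lipschitz on $[0,\infty)$, we have $\phi\in X_p(\Omega)$, so it is admissible. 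Using the decomposition recalled in \eqref{footnote}, split $0\le\langle u,\phi\rangle_+$ into its genuinely nonlocal part over $s\in(0,1)$, the local part $\mu^+(\{1\})\int_\Omega|\nabla u|^{p-2}\nabla u\cdot\nabla\phi\,dx$, and the zeroth--order part $\mu^+(\{0\})\int_\Omega|u|^{p-2}u\,\phi\,dx$, and estimate each.

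\emph{Nonlocal part ($s\in(0,1)$).} Since $\phi$ vanishes off $B_{2r}(x_0)$, by symmetry only the regions $B_R\times B_R$ and $B_R\times(\mathbb R^N\setminus B_R)$ (counted twice) contribute. On $B_R\times B_R$ one has $u(x),u(y)\ge0$, so $a:=u(x)+d$ and $b:=u(y)+d$ lie in $[d,\infty)$, and one applies the elementary pointwise inequality behind the fractional logarithmic lemma (cf.\ \cite[Lemma~1.3]{DKP16}): there are $c_1,c_2>0$ depending only on $p$ with
\[
|a-b|^{p-2}(a-b)\bigl(\eta(x)^p a^{1-p}-\eta(y)^p b^{1-p}\bigr)\le -c_1\Bigl|\log\tfrac ab\Bigr|^p\max\{\eta(x),\eta(y)\}^p+c_2\,|\eta(x)-\eta(y)|^p .
\]
Integrating against $c_{N,p,s}\,|x-y|^{-N-ps}\,dx\,dy\,d\mu^+(s)$ over $B_R\times B_R\times(0,1)$: the negative term restricted to $B_r\times B_r$ (where $\eta\equiv1$) produces exactly $-c_1$ times the nonlocal good term on the left of \eqref{k-log-ineq} (the rest of the negative part is discarded), while $\iint_{\mathbb R^{2N}}|\eta(x)-\eta(y)|^p|x-y|^{-N-ps}\,dx\,dy\le C_{s}\,r^{N-sp}$ with $c_{N,p,s}C_s\le C(N,p)$, so the positive term is $\le C\,r^{N}\sup_{s\in\Sigma}r^{-sp}$ after integrating in $\mu^+$ (here one uses that $\mu^+$ is finite and $c_{N,p,s}$ vanishes at $s=0,1$). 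For the cross region take $x\in B_{2r}(x_0)$, $y\notin B_R(x_0)$, so $|x-y|\ge\tfrac14|y-x_0|$; since $\phi(x)\ge0$ and $u(x)-u(y)\le u(x)+u_-(y)$,
\[
|u(x)-u(y)|^{p-2}(u(x)-u(y))\,\phi(x)\le |u(x)-u(y)|^{p-1}\eta(x)^p(u(x)+d)^{1-p}\le C_p\,\eta(x)^p\bigl(1+d^{1-p}u_-(y)^{p-1}\bigr),
\]
using $u(x)^{p-1}(u(x)+d)^{1-p}\le1$. The ``$1$''--summand integrates to $\le C\,r^N\sup_{s\in\Sigma}R^{-sp}\le C\,r^N\sup_{s\in\Sigma}r^{-sp}$; the $u_-$--summand, upon recalling the definition of $Tail$, integrates to $\le C\,d^{1-p}r^N\sup_{s\in\Sigma}R^{-sp}\,[Tail(u_-,x_0,R)]^{p-1}\le C\,d^{1-p}r^N\sup_{s\in\Sigma}r^{-sp}\cdot\sup_{s\in\Sigma}(r/R)^{sp}\,[Tail(u_-,x_0,R)]^{p-1}$, the last step because $\sup_{s\in\Sigma}R^{-sp}=\sup_{s\in\Sigma}\bigl(r^{-sp}(r/R)^{sp}\bigr)\le\sup_{s\in\Sigma}r^{-sp}\cdot\sup_{s\in\Sigma}(r/R)^{sp}$.

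\emph{Local and zeroth--order parts.} Writing $\nabla\phi=p\,\eta^{p-1}(u+d)^{1-p}\nabla\eta+(1-p)\eta^p(u+d)^{-p}\nabla u$, the second summand yields $-(p-1)\mu^+(\{1\})\int_\Omega\eta^p|\nabla\log(u+d)|^p\,dx$, whose restriction to $B_r$ is (up to the factor $p-1$) the local good term on the left of \eqref{k-log-ineq}; the first summand is dominated, by Young's inequality, by $\tfrac{p-1}{2}\mu^+(\{1\})\int_\Omega\eta^p|\nabla\log(u+d)|^p\,dx+C_p\,\mu^+(\{1\})\int_\Omega|\nabla\eta|^p\,dx$, with $\int_\Omega|\nabla\eta|^p\,dx\le C\,r^{N-p}$. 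For the zeroth--order part, $u\ge0$ on $\operatorname{supp}\eta$ gives $|u|^{p-2}u\,(u+d)^{1-p}=u^{p-1}(u+d)^{1-p}\le1$, hence $\mu^+(\{0\})\int_\Omega\eta^p\,|u|^{p-2}u\,(u+d)^{1-p}\,dx\le C\,\mu^+(\{0\})\,r^N$.

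\emph{Conclusion and main difficulty.} Inserting all the above into $0\le\langle u,\phi\rangle_+$, transferring the two good terms to the left-hand side and the errors to the right, and dividing by a positive constant depending only on $p$ (absorbing the $\tfrac{p-1}{2}$ from Young and the $c_1$ from the algebraic inequality), yields \eqref{k-log-ineq}. The principal obstacle is the nonlocal part: one must both produce and correctly apply the sharp pointwise algebraic inequality with the cutoff $\eta$, and carefully track the $s$--dependence of the constants $c_{N,p,s}$ when integrating in $\mu^+$ — verifying the cancellations that keep $\int_{(0,1)}c_{N,p,s}(\,\cdot\,)\,d\mu^+(s)$ finite, splitting the interaction with $\mathbb R^N\setminus B_R$ to generate the $Tail(u_-,x_0,R)$ contribution, and using the ``supremum of a product $\le$ product of suprema'' estimate to reach the precise form of the right-hand side. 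The $s=1$ and $s=0$ contributions, by contrast, reduce respectively to the classical $p$-Laplacian logarithmic computation and to a trivial pointwise bound.
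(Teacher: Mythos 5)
Your proposal follows essentially the same route as the paper's proof of Lemma \ref{k-log-lemma}: the test function $\eta^{p}(u+d)^{1-p}$ inserted in \eqref{k-subsup}, the splitting into the $\mu^{+}(\{0\})$, $\mu^{+}(\{1\})$ and $(0,1)$ contributions, Young's inequality for the gradient part, a pointwise logarithmic inequality for the near interactions, a tail bound for the far interactions, and the same bookkeeping in $s$ (finiteness of $\mu^{+}$, the factor $c_{N,p,s}$ absorbing the blow-up of the $\eta$-seminorm, and $\sup_{s}R^{-sp}\le \sup_{s}r^{-sp}\cdot\sup_{s}(r/R)^{sp}$), so in substance you reproduce the paper's argument.

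Two steps need repair, though neither changes the architecture. First, the pointwise inequality you quote is false as stated: taking $\eta(x)=1$, $\eta(y)=0$ and $a=u(x)+d\gg b=u(y)+d$, your left-hand side equals $\bigl((a-b)/a\bigr)^{p-1}\in(0,1)$, while your right-hand side $-c_{1}\bigl|\log(a/b)\bigr|^{p}+c_{2}$ tends to $-\infty$; hence the factor $\max\{\eta(x),\eta(y)\}^{p}$ cannot stand. The correct form, which is what the paper derives in \eqref{k-3.6}--\eqref{k-3.11} (and what \cite{DKP16} actually proves), carries $\eta^{p}$ evaluated at the point where $u+d$ is smaller; equivalently one may weaken it to $\min\{\eta(x),\eta(y)\}^{p}$. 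Since you only keep the good term on $B_{r}\times B_{r}$, where $\eta\equiv1$, the corrected inequality gives exactly the same conclusion, so this is a misquotation rather than a missing idea — but as written your bound over $B_{R}\times B_{R}$ is not justified. Second, the comparability $|x-y|\ge\frac14|y-x_{0}|$ for $x\in\operatorname{supp}\eta$, $y\notin B_{R}$ fails when $\eta$ is supported in all of $B_{2r}$ and $r$ is close to $R/2$ (at $r=R/2$ no finite constant works); choose instead $\operatorname{supp}\eta\subset B_{3r/2}$, $\eta\equiv1$ on $B_{r}$, which recovers the constant $\tfrac14$ exactly since $|x-y|\ge|y-x_{0}|-\tfrac{3r}{2}\ge\tfrac14|y-x_{0}|$ for $|y-x_{0}|\ge R\ge 2r$, and which also makes the analogous tail step \eqref{k-3.13} airtight. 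With these two adjustments your argument coincides with the paper's proof.
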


 \begin{proof}
  Let us first recall the following important inequality (see \cite[Lemma 3.1]{DKP16}): 
 	for $p\geq1$ and $\epsilon\in(0,1]$, we have that
 	\begin{align}\label{k-ineq}
 	|a|^p\leq  |b|^p +c_p\epsilon|b|^p+c_p(1+c_p\epsilon)\epsilon^{1-p}|a-b|^p,
 	\end{align} for all $a,b\in\mathbb{R},$
 	where $c_p:=(p-1)\Gamma(\max\{1,p-2\}).$ 
	
 We will now proceed to prove the main estimate of this lemma. Let $d>0$ and $\eta \in C_c^{\infty}({\mathbb{R}^N})$ be such that \begin{equation}
 0\leq\eta\leq1,~~~\eta\equiv1~\text{in}~B_r,~~~\eta\equiv0~\text{in}~{\mathbb{R}^N}\setminus B_{3r/2}~~~~\text{and}~|\nabla \eta|<Cr^{-1}.
 \end{equation}	
 	Since $u(x)\geq0$ for all $x\in supp(\eta)$,  $\psi=(u+d)^{1-p}\eta^p$ is a well-defined test function for \eqref{k-subsup}. Thus, we get
	
 \begin{align}\label{k-3.5}
 &\mu^+(\{0\}) \int_{\Omega} |u(x)|^{p-2}u(x) \frac{\eta^p(x)}{(u(x)+d)^{p-1}} dx \nonumber\\
 &+ \mu^+(\{1\}) \int_{\Omega} |\nabla u|^{p-2} \Big\langle \nabla u, \nabla \left( \frac{\eta^p(x)}{(u(x)+d)^{p-1}} \right) \Big\rangle dx  \nonumber \\&+ \int_{(0,1)} c_{N, p, s}\int_{B_{2r}} \int_{B_{2r}}\frac{|u(x)-u(y)|^{p-2}(u(x)-u(y))}{|x-y|^{N+ps}}\\&\quad\quad\quad \quad \times \left[\frac{\eta^{p}(x)}{(u(x)+d)^{p-1}}-\frac{\eta^{p}(y)}{(u(y)+d)^{p-1}}\right]dxdy\,d\mu^+(s)\nonumber\\
 &+2\int_{(0,1)} c_{N, p, s}\int_{{\mathbb{R}^N}\setminus B_{2r}} \int_{B_{2r}}\frac{|u(x)-u(y)|^{p-2}(u(x)-u(y))}{|x-y|^{N+ps}}\frac{\eta^{p}(x)}{(u(x)+d)^{p-1}}dxdyd\mu^+(s) \geq 0.\nonumber
 \end{align}

 	We will estimate each term individually. Set
 	\begin{align} \label{I3}
 	    I_1:= \mu^+(\{0\}) \int_{\Omega} |u(x)|^{p-2}u(x) \frac{\eta^p(x)}{(u(x)+d)^{p-1}} dx
 	\end{align}
     \begin{align} \label{I4}
         I_2:= \mu^+(\{1\}) \int_{\Omega} |\nabla u|^{p-2} \Big\langle \nabla u, \nabla \left( \frac{\eta^p(x)}{(u(x)+d)^{p-1}} \right) \Big\rangle dx 
     \end{align}
 	\begin{align}
 	I_3=\int_{(0,1)} c_{N, p, s} \int_{B_{2r}} \int_{B_{2r}}&\frac{|u(x)-u(y)|^{p-2}(u(x)-u(y))}{|x-y|^{N+ps}} \nonumber \\& \times \left[\frac{\eta^{p}(x)}{(u(x)+d)^{p-1}}-\frac{\eta^{p}(y)}{(u(y)+d)^{p-1}}\right]dxdy d\mu^+(s)\label{k-I1}
     \end{align}
     \begin{align*}
 	I_4&=2\int_{(0,1)} c_{N, p, s}\int_{{\mathbb{R}^N} \setminus B_{2r}} \int_{B_{2r}}\frac{|u(x)-u(y)|^{p-2}(u(x)-u(y))}{|x-y|^{N+ps}}\frac{\eta^{p}(x)}{(u(x)+d)^{p-1}}dxdy d\mu^+(s).
 	\end{align*}
 Let us first estimate $I_1$ and $I_2.$
 It is easy to see that 
 \begin{align}\label{I_1est}
      I_1:= \mu^+(\{0\}) \int_{\Omega} |u(x)|^{p-2}u(x) \frac{\eta^p(x)}{(u(x)+d)^{p-1}} dx \leq C\mu^+(\{0\}) r^{N}.
 \end{align}
 Next, let us estimate $I_2.$ For this, let us observe using the weighted Young inequality that 
 \begin{align} \label{I_2est}
      I_2&:= \mu^+(\{1\}) \int_{\Omega} |\nabla u|^{p-2} \Big\langle \nabla u, \nabla \left( \frac{\eta^p(x)}{(u(x)+d)^{p-1}} \right) \Big\rangle dx \nonumber \\& \leq p \mu^+(\{1\})   \int_{\Omega} |\nabla u|^{p-1} \frac{\eta(x)^{p-1} |\nabla \eta|  }{(u(x)+d)^{p-1}} dx - (p-1) \mu^+(\{1\})   \int_{\Omega} \frac{|\nabla u|^{p}  \eta^{p}(x)}{(u(x)+d)^{p}} dx \nonumber \\& \leq \frac{p-1}{2}  \mu^+(\{1\})\int_{\Omega} \frac{|\nabla u|^p}{(u+d)^p} \eta^p dx +2^{p-1}   \mu^+(\{1\}) \int_{\Omega} |\nabla \eta|^p dx \nonumber \\& \quad\quad\quad \quad\quad\quad\quad- (p-1) \mu^+(\{1\})   \int_{\Omega} \frac{|\nabla u|^{p}  \eta^{p}(x)}{(u(x)+d)^{p}} dx \nonumber \\& \leq -\frac{p-1}{2}  \mu^+(\{1\})\int_{B_{r}} \frac{|\nabla u|^p}{(u+d)^p}  dx+ c' 2^{p-1}   \mu^+(\{1\}) r^{N-p} \nonumber \\& \leq -\frac{p-1}{2}  \mu^+(\{1\})\int_{B_{r}} |\nabla \log (u+d)|^p  dx+ c' 2^{p-1}   \mu^+(\{1\}) r^{N-p},
 \end{align}
 for some position constant $c'>0$.

 	Now, we will estimate $I_3$ and $I_4$. Let us assume that $u(x)>u(y)$. Observe that $u(y)\geq0$ for all $y\in B_{2r}\subset B_R$ using the support of $\eta$. Then,  choosing 
 	\begin{align} \label{corona}
 	    a=\eta(x), b=\eta(y)~\text{and}~\epsilon=l\frac{u(x)-u(y)}{u(x)+d}\in(0,1)~\text{with}~l\in(0,1)
 	\end{align}
 	in the inequality \eqref{k-ineq}, it can be estimated that

 \begin{align}\label{k-3.6}
 	&\frac{|u(x)-u(y)|^{p-2}(u(x)-u(y))}{|x-y|^{N+ps}}\left[\frac{\eta^{p}(x)}{(u(x)+d)^{p-1}}-\frac{\eta^{p}(y)}{(u(y)+d)^{p-1}}\right]\nonumber\\
 	&
 		\leq \frac{(u(x)-u(y))^{p-1}}{(u(x)+d)^{p-1}}\frac{ \eta^{p}(y)}{|x-y|^{N+ps}}\left[1+c_pl \frac{u(x)-u(y)}{u(x)+d}-\left(\frac{u(x)+d}{u(y)+d}\right)^{p-1}\right]\nonumber\\
 		&+c_pl^{1-p} \frac{|\eta(x)-\eta(y)|^{p}}{|x-y|^{N+ps}}\nonumber\\
 		&= \left(\frac{u(x)-u(y)}{u(x)+d}\right)^{p} \frac{ \eta^{p}(y)}{|x-y|^{N+ps}}\left[\frac{1-\left(\frac{u(y)+d}{u(x)+d}\right)^{1-p}}{1-\frac{u(y)+d}{u(x)+d}}+c_pl\right] +c_pl^{1-p} \frac{|\eta(x)-\eta(y)|^{p}}{|x-y|^{N+ps}}\nonumber\\
 		&:=J_1 +c_pl^{1-p} \frac{|\eta(x)-\eta(y)|^{p}}{|x-y|^{N+ps}}.
 \end{align}
 We now aim to estimate $J_1$. Consider the following function
 \begin{equation*}
 	h(t):=\frac{1-t^{1-p}}{1-t}=-\frac{p-1}{1-t} \int_{t}^{1} \tau^{-p} d\tau, \quad \forall t \in(0,1).
 \end{equation*}

 Since, the function $h_1(t)=\frac{1}{1-t} \int_{t}^{1} \tau^{-p} d\tau$ is decreasing in $t\in(0,1)$, we have $h$ is increasing in $t\in(0,1)$. Thus, we have
 \begin{equation*}
 	h(t) \leq-(p-1),~\forall\, t\in(0,1).
 \end{equation*}
 {\bf{Case-1:}} $0<t\leq\frac{1}{2}$.\\
 In this case, 
 \begin{equation*}
 	h(t) \leq-\frac{p-1}{2^{p}} \frac{t^{1-p}}{1-t}.
 \end{equation*}
 For $t=\frac{u(y)+d}{u(x)+d} \in(0,1 / 2]$, i.e. for $u(y)+d \leq \frac{u(x)+d}{2}$, we get 
 \begin{equation}\label{k-3.7}
 	J_{1} \leq \left(c_pl-\frac{p-1}{2^{p}}\right)\left[\frac{u(x)-u(y)}{u(y)+d}\right]^{p-1}\frac{ \eta^{p}(y)}{|x-y|^{N+ps}},
 \end{equation}
 since $$(u(x)-u(y))\left(\frac{(u(y)+d)^{p-1}}{(u(x)+d)^{p}} \right)=\left(\frac{u(y)+d}{u(x)+d}\right)^{p-1} - \left(\frac{u(y)+d}{u(x)+d}\right)^{p}\leq 1.$$
 By choosing $l$ as
 \begin{equation}\label{k-3.8}
 	l=\frac{p-1}{2^{p+1} c_p} \left( =\frac{1}{2^{p+1} \Gamma (\text{max} \{1, p-2\})}<1\right),
 \end{equation}
 we obtain
 \begin{equation*}
 	J_{1} \leq-\frac{p-1}{2^{p+1}} \left[\frac{u(x)-u(y)}{u(y)+d}\right]^{p-1}\frac{ \eta^{p}(y)}{|x-y|^{N+ps}}.
 \end{equation*}

 {\bf{Case-2:}} $\frac{1}{2}<t<1$.\\
 Again choosing, $t=\frac{u(y)+d}{u(x)+d} \in(1 / 2,1)$, i.e. $u(y)+d>\frac{u(x)+d}{2}$, we obtain

 \begin{align}\label{k-3.9}
 	J_{1} &\leq [c_pl-(p-1)]\left[\frac{u(x)-u(y)}{u(x)+d}\right]^{p} \frac{ \eta^{p}(y)}{|x-y|^{N+ps}}\nonumber\\
 	&- \frac{\left(2^{p+1}-1\right)(p-1)}{2^{p+1}}\left[\frac{u(x)-u(y)}{u(x)+d}\right]^{p} \frac{ \eta^{p}(y)}{|x-y|^{N+ps}}
 \end{align}
 for the choice of $l$ as in \eqref{k-3.8}.

 We note that, for $2(u(y)+d)<u(x)+d$, we have 
 \begin{equation}\label{k-3.10}
 	\left[\log \left(\frac{u(x)+d}{u(y)+d}\right)\right]^{p} \leq c_p\left[\frac{u(x)-u(y)}{u(y)+d}\right]^{p-1},
 \end{equation}
 and, for $2(u(y)+d) \geq u(x)+d,$ we derive
 \begin{equation}\label{k-3.11}
 	\left[\log \left(\frac{u(x)+d}{u(y)+d}\right)\right]^{p}=\left[\log \left(1+\frac{u(x)-u(y)}{u(y)+d}\right)\right]^{p} \leq 2^{p}\left(\frac{u(x)-u(y)}{u(x)+d}\right)^{p},
 \end{equation}
 by using $u(x)>u(y)$ and $\log (1+x)\leq x, ~ \forall x\geq 0$.

 Thus, from the estimates \eqref{k-3.6}, \eqref{k-3.7}, \eqref{k-3.9}, \eqref{k-3.10} and \eqref{k-3.11}, we obtain
 	\begin{align*}
 		&\frac{|u(x)-u(y)|^{p-2}(u(x)-u(y))}{|x-y|^{N+ps}}\left[\frac{\eta^{p}(x)}{(u(x)+d)^{p-1}}-\frac{\eta^{p}(y)}{(u(y)+d)^{p-1}}\right]\\
 		& \leq-\frac{1}{c_p}\left[\log \left(\frac{u(x)+d}{u(y)+d}\right)\right]^{p} \frac{ \eta^{p}(y)}{|x-y|^{N+ps}}+c_pl^{1-p} \frac{|\eta(x)-\eta(y)|^{p}}{|x-y|^{N+ps}}.
 	\end{align*}
 This is also true for  $u(y)>u(x)$ by exchanging $x$ and $y$. The case $u(x)=u(y)$ holds trivially. Thus, we can estimate $I_3$ in \eqref{k-I1} as
 	\begin{align}\label{k-3.12}
 		I_{3} \leq &-\frac{1}{c(p)} \int_{(0,1)} c_{N, p, s} \int_{B_{2 r}} \int_{B_{2 r}} \left|\log \left(\frac{u(x)+d}{u(y)+d}\right)\right|^{p} \frac{ \eta^{p}(y)}{|x-y|^{N+ps}}dxdy\, d\mu^+(s)\nonumber \\
 		&+c(p) \int_{(0,1)} c_{N, p, s} \int_{B_{2 r}} \int_{B_{2 r}} \frac{|\eta(x)-\eta(y)|^{p}}{|x-y|^{N+ps}} dxdy\,d\mu^+(s),
 	\end{align}
 	for some constant $c(p)$ depending on the choice of $l$.
	
 We will now estimate the term $I_{4}$. We first observe the following facts: $u(y)\geq 0$ and $u(x)-u(y)\leq u(x)$ for all  $y \in B_{R}\setminus B_{2r}$. Thus, using $(u(x)-u(y))_{+}\leq u(x)$, we get
 \begin{equation}\label{4.40}
 	\frac{(u(x)-u(y))_{+}^{p-1}}{(d+u(x))^{p-1}} \leq 1,~\forall\, x \in B_{2 r}, \,y \in B_{R}.
 \end{equation}
 On the other hand for $y \in \mathbb{R}^N\setminus B_{R}$, we have
 \begin{equation}\label{4.41}
 	(u(x)-u(y))_{+}^{p-1} \leq 2^{p-1}\left[u^{p-1}(x)+(u(y))_{-}^{p-1}\right],~\forall\, x\in B_{2r}.
 \end{equation}
 Then using the inequalities \eqref{4.40}, \eqref{4.41} and $supp(\eta)\subset B_{3r/2}$, we obtain
 	\begin{align}
 		I_{4} \leq & 2 \int_{(0,1)} c_{N, p, s} \int_{B_{R} \setminus B_{2 r}} \int_{B_{2 r}} (u(x)-u(y))_{+}^{p-1}(d+u(x))^{1-p} \frac{ \eta^{p}(x)}{|x-y|^{N+ps}}dxdy d\mu^+(s)\nonumber \\
 		&+2\int_{(0,1)} c_{N, p, s} \int_{{\mathbb{R}^N} \setminus B_{R}} \int_{B_{2 r}} (u(x)-u(y))_{+}^{p-1}(d+u(x))^{1-p} \frac{ \eta^{p}(x)}{|x-y|^{N+ps}}dxdy d\mu^+(s)\nonumber \\
        \leq&2 \int_{(0,1)} c_{N, p, s} \int_{B_{3r/2}}\int_{B_{R} \setminus B_{2 r}}  (u(x)-u(y))_{+}^{p-1}(d+u(x))^{1-p} \frac{ \eta^{p}(x)}{|x-y|^{N+ps}}dydx d\mu^+(s)\nonumber \\
 		&+2\int_{(0,1)} c_{N, p, s} \int_{B_{3r/2}} \int_{{\mathbb{R}^N} \setminus B_{R}}  (u(x)-u(y))_{+}^{p-1}(d+u(x))^{1-p} \frac{ \eta^{p}(x)}{|x-y|^{N+ps}}dydx d\mu^+(s)\nonumber \\
 		\leq & C\int_{(0,1)} c_{N, p, s} \int_{B_{3r/2}}\int_{{\mathbb{R}^N}\setminus B_{2 r}}  \frac{\eta^{p}(x)}{|x-y|^{N+ps}}dydx d\mu^+(s)\nonumber\\
        &+C'd^{1-p} \int_{(0,1)} c_{N, p, s} \int_{B_{3r/2}}\int_{{\mathbb{R}^N} \setminus B_{R}}  \frac{(u(y))_{-}^{p-1}}{|x-y|^{N+ps}} dydx d\mu^+(s)\label{eq new}
\end{align}
Now, to estimate the first integral of \eqref{eq new}, we use $$|x-y|>2r-3r/2=r/2~\text{ for all }x\in B_{3r/2} \text { and } y\in \mathbb{R}^N \setminus B_{2r}.$$
In addition, for the second integral in \eqref{eq new}, we use $2r<R$ and
$$\frac{|y-x_0|}{|y-x|}\leq 1 +\frac{|x-x_0|}{|y-x|}\leq 1 +\frac{3r/2}{r/2}=4~\text{ for all }x\in B_{3r/2} \text { and } y\in \mathbb{R}^N \setminus B_{2r}.$$
Therefore, we have
        \begin{align}\label{k-3.13}
 		I_{4} \leq & C\int_{(0,1)} c_{N, p, s} \int_{B_{3r/2}}\int_{{\mathbb{R}^N}\setminus B_{2 r}}  \frac{\eta^{p}(x)}{|x-y|^{N+ps}}dydx d\mu^+(s)\nonumber\\
        &+C'd^{1-p} \int_{(0,1)} c_{N, p, s} \int_{B_{3r/2}}\int_{{\mathbb{R}^N} \setminus B_{2r}}  \frac{(u(y))_{-}^{p-1}}{|x-y|^{N+ps}} dydx d\mu^+(s)\nonumber\\
        \leq &C \sup _{x \in B_{3r/2}} r^{N} \int_{(0,1)} c_{N, p, s} 2^{N+ps}\int_{{\mathbb{R}^N}\setminus B_{2 r}} \frac{dy}{|2(x-y)|^{N+ps}} d\mu^+(s)\nonumber\\
        &+C'd^{1-p}\left|B_{r}\right| \int_{(0,1)} c_{N, p, s} 4^{N+ps}\int_{{\mathbb{R}^N} \setminus B_{R}} \frac{(u(y))_{-}^{p-1}}{\left|x_{0}-y\right|^{N+ps}}dy d\mu^+(s)\nonumber\\
 		\leq & C \sup_{s \in \Sigma}r^{N-ps} + C'd^{1-p} r^{N} \sup_{s \in \Sigma} r^{-sp}\sup_{s \in \Sigma} \bigg(\frac{r}{R}\bigg)^{sp}\left[\operatorname{Tail}\left(u_{-} ; x_{0}, R\right)\right]^{p-1}
 	\end{align}
 	for some constants $C:=C(N,p,\Sigma, \mu^+), C':=C'(N,p,\Sigma,\mu^+)$. Therefore, by using \eqref{I_1est}, \eqref{I_2est}, \eqref{k-3.12} and \eqref{k-3.13} in \eqref{k-3.5}, we get
 	\begin{align}\label{k-3.16}
 	  \mu^+(\{1\})&\int_{B_{r}} |\nabla \log (u+d)|^p  dx\nonumber \\&+ \int_{(0,1)} c_{N, p, s} \int_{B_{2 r}} \int_{B_{2 r}} \left|\log \left(\frac{u(x)+d}{u(y)+d}\right)\right|^{p} \frac{\eta^{p}(y)}{|x-y|^{N+ps}} dxdy\,d\mu^+(s)\nonumber \\
 	&\leq   \int_{(0,1)} c_{N, p, s}  \int_{B_{2 r}} \int_{B_{2 r}} \frac{|\eta(x)-\eta(y)|^{p}}{|x-y|^{N+ps}} dxdy \,d\mu^+(s) \nonumber \\
    &+C' d^{1-p} r^{N} \sup_{s \in \Sigma} r^{-sp}\sup_{s \in \Sigma} \bigg(\frac{r}{R}\bigg)^{sp}   \left[\operatorname{Tail}\left(u_{-} ; x_{0}, R\right)\right]^{p-1}\nonumber \\
 	&+C r^{N}\sup_{s \in \Sigma} r^{-sp} +c' 2^{p-1}   \mu^+(\{1\}) r^{N-p}+ C \mu^+(\{0\}) r^{N}.
 \end{align}
 Again, by using {$|\nabla\eta|\leq Cr^{-1}$}, we have
 \begin{align}\label{k-3.17}
 	\int_{(0,1)} c_{N,p,s}  &\int_{B_{2 r}} \int_{B_{2 r}} \frac{|\eta(x)-\eta(y)|^{p}}{|x-y|^{N+ps}} dxdy \,d\mu^+(s) \nonumber \\
    & \leq Cr^{-p} \int_{(0,1)}c_{N,p,s}   \int_{B_{2 r}} \int_{B_{2 r}}|x-y|^{-N+p(1-s)}dxdyd\mu^+  \nonumber \\
    & \leq C\left|B_{2 r}\right| \int_{(0,1)}  \frac{c_{N,p,s}}{p(1-s)}  r^{-s p}\,d\mu^+(s) \leq C(N,p,\Sigma,\mu^+) r^{N} \sup_{s \in \Sigma} r^{-sp}. 
 \end{align}
 Therefore, the logarithmic estimate \eqref{k-log-ineq} follows from \eqref{k-3.16} and \eqref{k-3.17}.
 \end{proof}

We now have all the ingredients to state the following strong minimum principle.
\begin{thm}[Strong minimum principle]\label{min p} 
Let $\Omega \subset \mathbb{R}^{N}$ be a bounded domain with Lipschitz boundary. 
Let $\mu=\mu^{+}$ satisfy \eqref{measure1} and $s_{\sharp}$ be as in \eqref{measure4}.  Assume that $u \in X_{p}(\Omega)$ is a weak supersolution of \eqref{p-kuusi} such that $u \not\equiv 0$ in $\Omega.$ Then $u>0$ a.e. in $\Omega$.	
\end{thm}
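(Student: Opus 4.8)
The strategy is to combine the weak maximum principle with the logarithmic estimate of Lemma~\ref{k-log-lemma}, and then propagate positivity across $\Omega$ by connectedness. First I would reduce to the case $u\ge 0$: since $u\in X_p(\Omega)$ we have $u\equiv 0$ in $\mathbb{R}^N\setminus\Omega$, hence $u\ge 0$ there, and being a weak supersolution of \eqref{p-kuusi} means precisely that $\mathcal{L}_{\mu,p}^{+}u\ge 0$ in $\Omega$ in the weak sense; the weak maximum principle (Theorem~\ref{wmpinto}) then gives $u\ge 0$ a.e. in $\mathbb{R}^N$. In particular $u_{-}\equiv 0$, so $\operatorname{Tail}(u_{-},x_0,R)=0$ for every ball, and Lemma~\ref{k-log-lemma} reduces, for every $x_0\in\Omega$, every $R>0$ with $B_R(x_0)\subset\Omega$, every $r\in(0,R/2)$ and every $d>0$, to
\begin{equation}\label{plan-bound}
\mu^{+}(\{1\})\int_{B_r}\bigl|\nabla\log(u+d)\bigr|^p\,dx+\int_{(0,1)}c_{N,p,s}\int_{B_r}\int_{B_r}\Bigl|\log\tfrac{u(x)+d}{u(y)+d}\Bigr|^p\frac{dx\,dy}{|x-y|^{N+ps}}\,d\mu^{+}(s)\le K,
\end{equation}
where $K<\infty$ is independent of $d$.

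The core of the proof is the following local dichotomy: for every such ball $B_r(x_0)$ one cannot have both $|\{u=0\}\cap B_r|>0$ and $|\{u>0\}\cap B_r|>0$. To see this, suppose both hold and set $w_d:=\log(u+d)$. Assumption \eqref{measure1} (recall $\bar s>0$) forces $\mathcal{L}_{\mu,p}^{+}$ to have a genuinely diffusive part, i.e. $\mu^{+}((0,1))>0$ or $\mu^{+}(\{1\})>0$. In the first case I would apply the fractional Poincaré inequality on $B_r$ for each $s$ and integrate it against $c_{N,p,s}\,d\mu^{+}(s)$; in the second case I would use the classical Poincaré inequality on $B_r$. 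Either way, \eqref{plan-bound} yields a bound $\|w_d-(w_d)_{B_r}\|_{L^p(B_r)}\le M$ with $M$ independent of $d$. Since $w_d=\log d$ on the positive-measure set $\{u=0\}\cap B_r$, this forces $(w_d)_{B_r}\le \log d+M'\to-\infty$ as $d\to 0^{+}$; but on $\{u>0\}\cap B_r$ one has $w_d\to\log u$ pointwise, a finite limit, so $w_d-(w_d)_{B_r}\to+\infty$ a.e. on a set of positive measure, and Fatou's lemma gives $\|w_d-(w_d)_{B_r}\|_{L^p(B_r)}\to+\infty$ — a contradiction. Hence every $x_0\in\Omega$ admits a ball $B_\rho(x_0)\subset\Omega$ on which $u=0$ a.e. or $u>0$ a.e.

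Finally, letting $\Omega_0$ (resp.\ $\Omega_1$) denote the set of points of $\Omega$ near which $u=0$ a.e. (resp.\ $u>0$ a.e.), the dichotomy gives $\Omega=\Omega_0\cup\Omega_1$ with $\Omega_0,\Omega_1$ open and disjoint; connectedness of $\Omega$ forces $\Omega=\Omega_0$ or $\Omega=\Omega_1$, and the former is excluded by $u\not\equiv 0$ in $\Omega$, so $u>0$ a.e. in $\Omega$. I expect the main obstacle to lie in the second step, namely deriving the $d$-uniform $L^p$ bound on $w_d-(w_d)_{B_r}$ from \eqref{plan-bound}: one must treat the regimes $\mu^{+}((0,1))>0$ and $\mu^{+}((0,1))=0<\mu^{+}(\{1\})$ uniformly and verify that the (fractional or classical) Poincaré constants combine correctly with the normalization $c_{N,p,s}$ and with $\mu^{+}$ — in particular that $\int_{(0,1)}c_{N,p,s}\,d\mu^{+}(s)>0$ survives the Poincaré step as a genuine lower bound.
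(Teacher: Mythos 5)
Your argument is correct, and its core is the same as the paper's: reduce to $u\ge 0$ so that $\operatorname{Tail}(u_-,\cdot,R)=0$, invoke the logarithmic estimate of Lemma~\ref{k-log-lemma} to get a $d$-uniform bound, and let $d\to 0^+$ to exclude the coexistence of a zero set and a positivity set of positive measure in a ball. Where you differ is in the bookkeeping around that core. The paper never subtracts means: it anchors the logarithm directly at the zero set $Z$, writing $|F_d(x)|^p=|F_d(x)-F_d(y)|^p$ for $y\in Z$ (where $F_d=\log(1+u/d)$ vanishes), integrating in $y$ over $Z$ and in $x$ over the ball, and bounding the kernel by $|x-y|^{N+ps}\le r^{N+ps}$; this is exactly an elementary ``Poincar\'e with a fat zero level set'' and it makes the obstacle you flag disappear — no sharp $s$-dependence of Poincar\'e constants is needed, only the crude bound $|x-y|^{N+ps}\le(2r)^{N+ps}$ on $B_r\times B_r$, after which the prefactor $\int_{(0,1)}c_{N,p,s}(2r)^{-N-sp}\,d\mu^+(s)$ is positive whenever $\mu^+((0,1))>0$, while the case $\mu^+((0,1))=0<\mu^+(\{1\})$ is exactly the paper's opening reduction to the classical $p$-Laplacian and is covered by your classical Poincar\'e branch. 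Your mean-oscillation version (bound $\|w_d-(w_d)_{B_r}\|_{L^p}$ uniformly in $d$, force $(w_d)_{B_r}\to-\infty$ via the zero set, contradict finiteness of $\log u$ on the positivity set by Fatou) is a valid, slightly more roundabout, rendering of the same mechanism. The globalization also differs: the paper exhausts $\Omega$ by compact connected sets, covers them by chains of overlapping balls satisfying the intersection condition, and propagates ``$u\equiv 0$ in a ball'' along the chain to contradict $u\not\equiv 0$, whereas you use the cleaner clopen decomposition $\Omega=\Omega_0\cup\Omega_1$ and connectedness; both are fine. One point in your favour: you make the preliminary step $u\ge 0$ explicit via Theorem~\ref{wmpinto}, which the paper uses only tacitly when it sets $u_-=0$ in the tail.
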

\begin{proof} If $\mu^+((0, 1))=0$ then the problem turns out to be the classical strong maximum principles for the $p$-Laplace operator, therefore, it is reasonable to assume that $\mu^{+}((0, 1))>0.$
 Suppose for a moment that $u>0$ a.e. in $K$ for every connected and compact subset of $\Omega$. Since $\Omega$ is connected and $u\not\equiv0$ in $\Omega$, there exists a sequence of compact and connected sets $K_j\subset\Omega$ such that	
	\begin{equation*}
		\left|\Omega \backslash K_{j}\right|<\frac{1}{j}~\text { and }~ u \not \equiv 0 ~\text { in }~ K_{j}.
	\end{equation*}
Thus $u>0$ a.e. in $K_j$ for all $j$. Now passing to the limit as $j\rightarrow\infty$, we get that $u>0$ a.e. $\Omega$. Thus, it is enough to prove the result stated in the lemma for compact and connected subsets of $\Omega$. Since $K \subset \Omega$ is compact and connected, then there exists $r>0$ such that $K \subset\{x \in \Omega: \operatorname{dist}(x, \partial \Omega)>2 r\}$. Again, using the compactness, there exist $x_i\in K$, $i=1,2,...,k,$ such that the balls $B_{r / 2}\left(x_{1}\right), \ldots B_{r / 2}\left(x_{k}\right)$ cover $K$ and	
	\begin{equation}\label{bf-A4}
		\left|B_{r / 2}\left(x_{i}\right) \cap B_{r / 2}\left(x_{i+1}\right)\right|>0, \quad i=1, \ldots, k-1.
	\end{equation}
	Suppose that $u$ vanishes on a subset of $K$ with positive measure. Then with the help of \eqref{bf-A4}, we conclude that there exists $i \in\{1, \ldots, k-1\}$ such that
	\begin{equation*}
		|Z|:=|\left\{x \in B_{r / 2}\left(x_{i}\right): u(x)=0\right\}|>0.
	\end{equation*}
 For $d>0$ and $x \in B_{r / 2}\left(x_{i}\right)$, define	
	\begin{equation*}
		F_d(x)=\log \left(1+\frac{u(x)}{d}\right).
	\end{equation*}
	Observe that for every $x\in Z$ we have 	
	\begin{equation*}
		F_d(x)=0.
	\end{equation*}
	Thus for every $x \in B_{r/2}\left(x_{i}\right)$ and $y\in Z$ with $x\neq y$ we get	
	\begin{equation*}
		\left|F_d(x)\right|^{p}=\frac{\left|F_d(x)-F_d(y)\right|^{p}}{|x-y|^{N+ps}}|x-y|^{N+ps} .
	\end{equation*}
	Integrating with respect to $y \in Z$, we get	
		\begin{equation*}
		|Z|\left|F_d(x)\right|^{p} \leq\left(\max _{x, y \in B_{r / 2}\left(x_{i}\right)}|x-y|^{N+ps}\right) \int_{B_{r / 2}\left(x_{i}\right)} \frac{\left|F_d(x)-F_d(y)\right|^{p}}{|x-y|^{N+ps}} d y.
	\end{equation*}
	 Again integrating with respect to $x \in B_{r / 2}\left(x_{i}\right)$ we deduce the following inequality:	
		\begin{equation}\label{bf-A5}
		\int_{B_{r/2}\left(x_{i}\right)}\left|F_{d}\right|^{p}dx\leq \frac{r^{N+ps}}{|Z|} \int_{B_{r/2}\left(x_{i}\right)} \int_{B_{r/2}\left(x_{i}\right)} \frac{\left|F_{d}(x)-F_{d}(y)\right|^{p}}{|x-y|^{N+ps}} dx dy.
	\end{equation}
    Again, integrating both sides with respect to $s \in (0, 1)$ with the measure $\mu^+,$ we obtain
   \begin{align} \label{bf-A5'}
       \int_{(0, 1)}c_{N, p, s}	&\int_{B_{r/2}\left(x_{i}\right)}\left|F_{d}\right|^{p}dx d\mu^+(s)\nonumber \\& \leq \int_{(0, 1)} c_{N, p, s}	 \frac{r^{N+ps}}{|Z|} \int_{B_{r/2}\left(x_{i}\right)} \int_{B_{r/2}\left(x_{i}\right)} \frac{\left|F_{d}(x)-F_{d}(y)\right|^{p}}{|x-y|^{N+ps}} dx dy\,\, d\mu^+(s) \nonumber  \\& \leq   \int_{(0, 1)} c_{N, p, s}  \frac{r^{N+sp}}{|Z|} \int_{B_{r/2}\left(x_{i}\right)} \int_{B_{r/2}\left(x_{i}\right)} \frac{\left|F_{d}(x)-F_{d}(y)\right|^{p}}{|x-y|^{N+ps}} dx dy\,\, d\mu^+(s) .
   \end{align}
Observe that
\begin{equation*}
	\left|\log \left(\frac{d+u(x)}{d+u(y)}\right)\right|^{p}=\left|F_d(x)-F_d(y)\right|^{p}.
\end{equation*}
Plugging the logarithmic estimate \eqref{k-log-ineq} into the above  inequality \eqref{bf-A5'} by using the fact that $u_{-}=0$ (hence $Tail(u_{-},x_i,R)=0$), we get 
	\begin{align} \label{bf-A6}
		& C'({N, p, s})\int_{B_{r/2}\left(x_{i}\right)}\left|\log \left(1+\frac{u(x)}{d}\right)\right|^{p} dx \nonumber \\& \quad\quad\quad\quad \leq C(N,p,s) \sup_{s \in \Sigma} \frac{r^{N+ps}}{|Z|} ( \sup_{s \in \Sigma} r^{N-ps}+ \mu^+(\{1\}) r^{N-p}+\mu^+(\{0\}) r^N),
	\end{align}
    where $C>0$ is independent of $d.$
	Now taking limit $d\rightarrow0$ in \eqref{bf-A6}, we obtain that $u=0$ a.e. in $B_{r/2}\left(x_{i}\right).$ Thanks to \eqref{bf-A4}, by repeating this arguments in the quasi-balls $B_{r / 2}\left(x_{i-1}\right)$ and $B_{r / 2}\left(x_{i+1}\right)$ and so on we obtain that $u\equiv0$ a.e. on $K$. This is a contradiction and hence $u>0$ a. e. in $K$. This completes the proof of the result.
\end{proof}

%%%%%%%%%%%%%%%%%%%%%%%%%%%%%%%%%%%%%%%%%%%%%%%
\section{Eigenvalue problem  for nonlinear superposition operators $\mathcal{L}_{\mu,p}^{+}$} \label{sec6}
%%%%%%%%%%%%%%%%%%%%%%%%%%%%%%%%%%%%%%%%%%%%%%%
%%%%%%%%%%%%%%%%%%%%%%%%%%%%%%%%%%%%%%%%%%%%%%%
%%%%%%%%%%%%%%%%%%%%%%%%%%%%%%%%%%%%%%%%%%%%%%%

In this section, we study the Dirichlet eigenvalue problem associated with the operator \( \mathcal{L}_{\mu,p}^{+} \) defined in~\eqref{positive part of L}. 
Precisely, for \( \lambda \in \mathbb{R} \), we consider the problem
\begin{equation}\label{Eig problem}
    \begin{cases}
        \mathcal{L}_{\mu,p}^{+} u = \lambda |u|^{p-2}u & \text{in } \Omega, \\[0.3em]
        u = 0 & \text{in } \mathbb{R}^{N} \setminus \Omega.
    \end{cases}
\end{equation}
Note that all the results established in Section~\ref{sec3} for the operator \( \mathcal{L}_{\mu,p} \) remain valid for the operator \( \mathcal{L}_{\mu,p}^{+} \). 
The main advantage of \( \mathcal{L}_{\mu,p}^{+} \) lies in the fact that it satisfies the strong maximum principle, which allows for a deeper analysis of its spectral properties. 
Accordingly, in this section, we investigate several spectral properties of this operator that fundamentally rely on the strong maximum principle.

The weak formulation of the eigenvalue problem \eqref{Eig problem} is given by
\begin{align*}
& \int_{[0,1]} c_{N, s, p} \iint_{\mathbb{R}^{2 N}} \frac{|u(x)-u(y)|^{p-2}(u(x)-u(y))(v(x)-v(y))}{|x-y|^{N+sp}} d x d y d \mu^{+}(s)\\
& \quad \quad \quad \quad \quad \quad \quad \quad \quad \quad \quad \quad \quad \quad \quad \quad \quad =\lambda \int_{\Omega} |u(x)|^{p-2}u(x) v(x) d x 
\end{align*}
for any $v \in X_{p}(\Omega)$.

Recall that if there exists a nontrivial function $u \in X_{p}(\Omega)$ solving \eqref{Eig problem}, then $\lambda \in \mathbb{R}$ is referred to as an eigenvalue of the operator $\mathcal{L}^{+}_{\mu,p}$. Any such function $u \in X_{p}(\Omega)$ is called an eigenfunction corresponding to the eigenvalue $\lambda$.

We define the energy functional $\mathfrak{L}_{p}:X_{p}(\Omega) \rightarrow \mathbb{R}$ as
\begin{align}\label{energy functional}
\begin{split}
    \mathfrak{L}_{p}(u) & :=\frac{1}{p} \int_{[0,1]}[u]_{s,p}^{p} d \mu^{+}(s)=\frac{1}{p}\|u\|_{X_p(\Omega)}^{p}. 
\end{split}
\end{align}

Note that  a direct computation shows that $\mathfrak{L}_{p} \in C^1\left(X_{p}(\Omega), \mathbb{R}\right)$ with
\begin{align}\label{derivative of L}
\begin{split}
    \left\langle \mathfrak{L}_{p}^{\prime}(u), v\right\rangle&:=\left\langle \mathfrak{L}_{p}^{\prime}(u), v\right\rangle_{+}\\
    &=\int_{[0,1]} c_{N, s, p} \iint_{\mathbb{R}^{2 N}} \frac{|u(x)-u(y)|^{p-2}(u(x)-u(y))(v(x)-v(y))}{|x-y|^{N+sp}} d x d y d \mu^{+}(s)
\end{split}
\end{align}
for any $v \in X_{p}(\Omega)$. %Also $\tilde{\mathfrak{L}}_{p}:=\left.\mathfrak{L}_{p}\right|_{\mathcal{M}}$ is $C^1\left(X_{p}(\Omega), \mathbb{R}\right)$, where $\mathcal{M}$ is defined in \eqref{the set M P}.

\begin{definition}\label{weak solution def +}
      A function $u \in X_{p}(\Omega)$ is a solution of \eqref{Eig problem} if $u$ satisfies the equation
\begin{equation}\label{weak solution of eig problem}
    \left\langle \mathfrak{L}_{p}^{\prime}(u), v\right\rangle=\lambda \int_{\Omega} |u(x)|^{p-2}u(x) v(x) d x \text { for all } v \in X_{p}(\Omega),
\end{equation}
where $\left\langle \mathfrak{L}_{p}^{\prime}(u), v\right\rangle$ is defined in \eqref{derivative of L}.
\end{definition}

Note that Theorem \ref{principal eigenvalue P} remains true for the eigenvalue problem \eqref{Eig problem}. Taking advantage of the fact $\mu^-\equiv0$, we prove the following additional properties of the eigenvalues and eigenfunctions of \eqref{Eig problem}. We begin with the following result.

\begin{thm}\label{strong maximum for eigenvalue}
    Let $\Omega \subset \mathbb{R}^{N}$ be a bounded domain with Lipschitz boundary, and let $\mu=\mu^+$ satisfy \eqref{measure1}. Let $s_{\sharp}$ be as in \eqref{measure4}, and $1 < p < N/s_{\sharp}$. Let $u \geq 0$ in $\Omega$ and $u = 0$ in $\mathbb{R}^{N} \backslash \Omega$ be an eigenfunction of \eqref{Eig problem} associated with an eigenvalue $\lambda > 0$. Then $u > 0$ in $\Omega$.
\end{thm}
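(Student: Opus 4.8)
The plan is to reduce the statement to the strong minimum principle of Theorem~\ref{min p} (equivalently Theorem~\ref{smpintro}). Since $u$ is an eigenfunction of \eqref{Eig problem} associated with $\lambda>0$, it satisfies the weak formulation
\[
\langle \mathfrak{L}_{p}'(u), \phi\rangle=\lambda\int_{\Omega}|u(x)|^{p-2}u(x)\,\phi(x)\,dx=\lambda\int_{\Omega}u(x)^{p-1}\phi(x)\,dx
\]
for every $\phi\in X_{p}(\Omega)$, where in the last equality we used the hypothesis $u\geq 0$ in $\Omega$. First I would observe that, for any \emph{nonnegative} test function $\phi\in X_{p}(\Omega)$, the right-hand side is nonnegative because $\lambda>0$ and $u\geq 0$; hence
\[
\int_{[0,1]} c_{N,p,s}\iint_{\mathbb{R}^{2N}}\frac{|u(x)-u(y)|^{p-2}(u(x)-u(y))(\phi(x)-\phi(y))}{|x-y|^{N+ps}}\,dx\,dy\,d\mu^{+}(s)\geq 0,
\]
which is precisely the statement, in the sense of \eqref{k-subsup}, that $u$ is a weak supersolution of \eqref{p-kuusi}.

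Next I would check that the remaining hypotheses of Theorem~\ref{min p} are in force: $u\in X_{p}(\Omega)$, with $u=0$ in $\mathbb{R}^{N}\setminus\Omega$ by assumption; $\mu=\mu^{+}$ satisfies \eqref{measure1} with $s_{\sharp}$ as in \eqref{measure4}; and $u\not\equiv 0$ in $\Omega$, since an eigenfunction is by definition nontrivial. Applying the strong minimum principle then yields $u>0$ a.e. in $\Omega$, which is the asserted conclusion.

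The argument is essentially immediate once this reduction is set up; the only point requiring a little care is the verification that the sign of the eigenvalue together with the nonnegativity of $u$ combine to produce the correct differential inequality against \emph{all} admissible nonnegative test functions. This step uses $\lambda>0$ in an essential way — indeed, for $\lambda\le 0$ one cannot conclude supersolutionality, and the statement itself fails (for instance, $\lambda=0$ forces $\|u\|_{X_{p}(\Omega)}^{p}=0$, so $u\equiv 0$). If one wishes to formulate the conclusion for a possibly disconnected $\Omega$, the same reasoning applies on each connected component on which $u\not\equiv 0$; on the remaining components one uses that $u$ is a nonnegative supersolution vanishing on the complement, so Theorem~\ref{min p} still gives $u>0$ there unless $u\equiv 0$ on that component.
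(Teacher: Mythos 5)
Your proposal is correct and follows essentially the same route as the paper: test the weak formulation with nonnegative $\phi\in X_p(\Omega)$, use $\lambda>0$ and $u\geq 0$ to conclude that $u$ is a weak supersolution of \eqref{p-kuusi}, and then invoke the strong minimum principle of Theorem~\ref{min p}. The additional remarks on $\lambda\le 0$ and on disconnected sets are fine but not needed, since $\Omega$ is assumed to be a domain.
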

\begin{proof}
    Since $u \geq 0$ is an eigenfunction corresponding to $\lambda > 0$, we have
    \begin{equation*}
        \left\langle \mathfrak{L}_{p}^{\prime}(u), v\right\rangle=\lambda\int_{\Omega}|u|^{p-2}uvdx\geq 0,
    \end{equation*}
for any $v\in X_{p}(\Omega)$ with $v\geq 0$. This shows that the eigenfunction $u \geq 0$ is a weak
supersolution to the problem \eqref{p-kuusi}. The conclusion then follows from Theorem \ref{min p}.
\end{proof}

\begin{thm}\label{strict positive +}
    Let $\Omega$ be a bounded domain of $\mathbb{R}^{N}$ with Lipschitz boundary. Let $\mu^+$ satisfy \eqref{measure1}. Let $s_{\sharp}$ be as in \eqref{measure4} and $1<p<N/s_{\sharp}$. Let $u \in X_{p}(\Omega)$ be an eigenfunction of \eqref{Eig problem} associated with the eigenvalue $\lambda_{1, \mu^+}$.
Then either $u > 0$ or $u < 0$ in $\Omega$.
\end{thm}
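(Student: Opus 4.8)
The plan is to deduce the statement from two facts already at our disposal: the sign–definiteness of first eigenfunctions and the strong minimum principle. First I would observe that since here $\mu=\mu^{+}$, i.e. $\mu^{-}\equiv 0$, condition~\eqref{measure5} holds trivially (its left-hand side vanishes). Therefore part~(iv) of Theorem~\ref{principal eigenvalue P}, which remains valid verbatim for the operator $\mathcal{L}_{\mu,p}^{+}$, applies to our eigenfunction $u$: any eigenfunction associated with $\lambda_{1,\mu^{+}}$ does not change sign in $\Omega$. Consequently, either $u\ge 0$ a.e. in $\Omega$ or $u\le 0$ a.e. in $\Omega$.

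Next, assume first that $u\ge 0$ a.e. in $\Omega$ (and, being in $X_{p}(\Omega)$, $u=0$ in $\mathbb{R}^{N}\setminus\Omega$). Since by Theorem~\ref{principal eigenvalue P}(i) we have $\lambda_{1,\mu^{+}}>0$, for every nonnegative $v\in X_{p}(\Omega)$ the weak formulation gives
\[
\langle \mathfrak{L}_{p}^{\prime}(u),v\rangle=\lambda_{1,\mu^{+}}\int_{\Omega}|u|^{p-2}u\,v\,dx\ge 0,
\]
so $u$ is a nonnegative weak supersolution of \eqref{p-kuusi} which is not identically zero in $\Omega$ (it is a nontrivial eigenfunction). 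Hence Theorem~\ref{min p} (equivalently, Theorem~\ref{strong maximum for eigenvalue}) yields $u>0$ a.e. in $\Omega$. If instead $u\le 0$ a.e. in $\Omega$, I would apply the same argument to $-u$, which is again an eigenfunction for $\lambda_{1,\mu^{+}}$ and is nonnegative, obtaining $-u>0$, i.e. $u<0$ a.e. in $\Omega$.

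There is no real analytic difficulty here, since both ingredients are in place; the only point that deserves a word is the verification that the extra hypothesis~\eqref{measure5} required for the no–sign–change conclusion of Theorem~\ref{principal eigenvalue P} is automatically satisfied in the present setting $\mu^{-}\equiv 0$, so that no restriction on $\gamma$ is needed. This short proof also illustrates why the operator $\mathcal{L}_{\mu,p}^{+}$ enjoys stronger spectral properties than the general $\mathcal{L}_{\mu,p}$: the decisive step is the availability of the strong minimum principle, which, as recalled in the introduction, may fail when $\mu$ changes sign.
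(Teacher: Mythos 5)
Your proof is correct and follows essentially the same route as the paper: first invoke the non-sign-changing property of first eigenfunctions (Theorem~\ref{principal eigenvalue P}(iv) applied with $\mu^-\equiv 0$, where \eqref{measure5} is trivially satisfied), then upgrade $u\ge 0$ to $u>0$ via the strong minimum principle through Theorem~\ref{strong maximum for eigenvalue}, handling the nonpositive case by replacing $u$ with $-u$. Your explicit check that \eqref{measure5} holds automatically and that $\lambda_{1,\mu^+}>0$ is a nice touch that the paper leaves implicit.
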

\begin{proof}
    By Theorem \ref{principal eigenvalue P}-(iv) for $\mathcal{L}_{\mu, p}=\mathcal{L}^+_{\mu, p}$, we have that either $u \geq 0$ or $u \leq 0$ in $\Omega$.  If $u \geq 0$ in $\Omega$, by Theorem \ref{strong maximum for eigenvalue}, we have $u > 0$ in $\Omega$. Similarly, the case $u < 0$ can be proved by replacing $u$ with $(-u)$.
\end{proof}

\begin{thm}\label{global boundedness result} Let $\Omega$ be a bounded domain of $\mathbb{R}^{N}$ with Lipschitz boundary. Let $\mu^+$ satisfy \eqref{measure1} with $s_{\sharp}$ be as in \eqref{measure4} and $1 < p < N/s_{\sharp}$. Then, all eigenfunctions $u \in X_p(\Omega)$ to \eqref{Eig problem} for positive eigenvalues are bounded, that is, $u \in L^\infty(\mathbb{R}^N).$
\end{thm}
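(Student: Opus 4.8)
The plan is to prove the bound by a De Giorgi level-set iteration applied to the weak formulation \eqref{weak solution of eig problem}. Since $-u$ is again an eigenfunction of \eqref{Eig problem} with the same eigenvalue $\lambda>0$ (the operator $\mathcal{L}_{\mu,p}^{+}$ is odd and $(p-1)$-homogeneous), it suffices to produce a finite upper bound for $u$ on $\mathbb{R}^N$; applying the same bound to $-u$ then controls $u$ from below. For $k>0$ set $A_k:=\{x\in\Omega:\,u(x)>k\}$ (equivalently $A_k=\{u>k\}$ in all of $\mathbb{R}^N$, since $u\equiv0$ outside $\Omega$) and $w_k:=(u-k)_+$. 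Because $t\mapsto(t-k)_+$ is $1$-Lipschitz and nondecreasing, $[w_k]_{s,p}\le[u]_{s,p}$ for every $s\in[0,1]$ and $w_k\equiv 0$ in $\mathbb{R}^N\setminus\Omega$; hence $w_k\in X_p(\Omega)$ and is an admissible test function in Definition~\ref{weak solution def +}.

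First I would derive a Caccioppoli-type inequality. Testing \eqref{weak solution of eig problem} with $v=w_k$ and using the elementary pointwise inequality
\[
|a-b|^{p-2}(a-b)\big((a-k)_+-(b-k)_+\big)\ge \big|(a-k)_+-(b-k)_+\big|^{p},\qquad a,b\in\mathbb{R},
\]
term by term against $\mu^{+}$ (the integrand over $(0,1)$ directly, and the Dirac contributions at $s=0$ and $s=1$ via the footnote convention of Section~\ref{sec2}, exactly as the pointwise inequalities are used in Section~\ref{sec4}; cf. \cite{BP16,BLP2014}), one obtains
\[
\|w_k\|_{X_p(\Omega)}^{p}\le\big\langle\mathfrak{L}_p'(u),w_k\big\rangle=\lambda\int_{A_k}u^{p-1}(u-k)\,dx .
\]
Writing $u=w_k+k$ on $A_k$, using $(w_k+k)^{p-1}\le 2^{p-1}(w_k^{p-1}+k^{p-1})$, and then Hölder's inequality together with the continuous embedding $X_p(\Omega)\hookrightarrow L^{p_{s_{\sharp}}^{*}}(\Omega)$ from Theorem~\ref{embedding lem}(i) (here $1<p<N/s_{\sharp}$, $p_{s_{\sharp}}^{*}=Np/(N-ps_{\sharp})$, and $1-p/p_{s_{\sharp}}^{*}=ps_{\sharp}/N$), one arrives at
\[
\|w_k\|_{X_p(\Omega)}^{p}\le C\lambda\,\|w_k\|_{X_p(\Omega)}^{p}\,|A_k|^{ps_{\sharp}/N}+C\lambda\,k^{p-1}\,\|w_k\|_{X_p(\Omega)}\,|A_k|^{1-1/p_{s_{\sharp}}^{*}} .
\]
Since $u\in X_p(\Omega)\subset L^{p_{s_{\sharp}}^{*}}(\Omega)$, Chebyshev gives $|A_k|\le k^{-p_{s_{\sharp}}^{*}}\|u\|_{L^{p_{s_{\sharp}}^{*}}}^{p_{s_{\sharp}}^{*}}\to 0$, so for all $k\ge k_0$ with $k_0$ large enough the first term is absorbed into the left-hand side, leaving
\[
\|w_k\|_{X_p(\Omega)}\le C_0\,k\,|A_k|^{\frac{1}{p-1}(1-1/p_{s_{\sharp}}^{*})}.
\]

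Next I would run the iteration. For $h>k\ge k_0$ we have $w_k\ge h-k$ on $A_h$, so combining the last estimate with the embedding,
\[
(h-k)\,|A_h|^{1/p_{s_{\sharp}}^{*}}\le\|w_k\|_{L^{p_{s_{\sharp}}^{*}}(\mathbb{R}^N)}\le C_1\,k\,|A_k|^{\frac{1}{p-1}(1-1/p_{s_{\sharp}}^{*})},
\]
hence
\[
|A_h|\le\frac{C_1^{p_{s_{\sharp}}^{*}}\,k^{p_{s_{\sharp}}^{*}}}{(h-k)^{p_{s_{\sharp}}^{*}}}\,|A_k|^{\beta},\qquad \beta:=\frac{p_{s_{\sharp}}^{*}-1}{p-1}>1,
\]
where $\beta>1$ precisely because $p_{s_{\sharp}}^{*}>p$. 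Choosing the dyadic levels $k_j:=k_0(2-2^{-j})$, the prefactor $k^{p_{s_{\sharp}}^{*}}/(h-k)^{p_{s_{\sharp}}^{*}}$ becomes a constant times a geometric factor $\Lambda^{j}$ (the powers of $k_0$ cancel), turning the recursion into $|A_{k_{j+1}}|\le C_2\Lambda^{j}|A_{k_j}|^{\beta}$ with $C_2,\Lambda$ depending only on $N,p,s_{\sharp},\lambda,\mu^{+}$. Taking $k_0$ additionally so large that $|A_{k_0}|$ satisfies the smallness threshold of the standard fast-geometric-convergence lemma, one gets $|A_{k_j}|\to 0$, hence $|A_{2k_0}|=0$, i.e. $u\le 2k_0$ a.e. in $\Omega$ and in $\mathbb{R}^N$. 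Applying the same to $-u$ yields the lower bound and completes the proof.

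I expect the Caccioppoli step to be the main point of care: one must verify that the monotonicity inequality survives integration against $\mu^{+}$, including the Dirac masses at $s=0$ and $s=1$, and that the nonlocal tail interactions of $w_k$ with $\mathbb{R}^N\setminus\Omega$ contribute with a favorable sign (they do, as $w_k\ge 0$), so that no extra tail term appears — in contrast to the logarithmic estimate of Lemma~\ref{k-log-lemma}. After the absorption step everything is routine; the only mild subtlety is organizing the dyadic levels so that the unavoidable linear-in-$k$ factor from the Caccioppoli bound cancels in the recursion, which it does by homogeneity.
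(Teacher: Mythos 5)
Your proof is correct and follows essentially the same approach as the paper: a De Giorgi truncation argument that tests the weak formulation \eqref{weak solution of eig problem} with nonnegative truncations of $u$, uses the same pointwise monotonicity inequality (valid against every component of $\mu^+$, including the Dirac masses at $s=0,1$), the embedding of Theorem~\ref{embedding lem}, Chebyshev's inequality, and a fast geometric convergence lemma. The only difference is organizational: the paper normalizes $\|u_+\|_{L^p}\le\delta$ and iterates the $L^p$-norms $a_m$ of the truncations $(u-(1-2^{-m}))_+$, recovering the general case by the $(p-1)$-homogeneity of the equation, whereas you iterate the superlevel-set measures $|A_k|$ at large dyadic levels after an absorption step in the Caccioppoli inequality — both implementations are standard and yours closes correctly.
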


\begin{proof}
 To establish the theorem, it is sufficient to obtain an upper bound for the positive part $u_{+}$ of $u \in X_p(\Omega)$. Indeed, since $-u$ is also an eigenfunction associated with $\lambda$, an analogous argument yields the corresponding estimate for the negative part. Consequently, it suffices to show that  
\begin{equation}\label{boud of eigenfunction 1}
    \|u_{+}\|_{L^{\infty}} \leq 1 \quad \text{whenever} \quad \|u_{+}\|_{L^p} \leq \delta,
\end{equation}  
for some $\delta>0$ to be determined. This restriction is not essential, as the general case, that is boundedness for by any constant $C>0$ instead of $1,$ can be recovered by a scaling argument, owing to the homogeneity of equation \eqref{Eig problem}.

Now, for any integer $m \geq 1$, we define the function $w_m$ by  
\begin{equation*}
    w_m := (u - (1 - 2^{-m}))_{+}.
\end{equation*}
By construction, we have $w_m \in X_{p}(\Omega)$. Moreover, the following inequalities hold:
\begin{align}\label{boud of eigenfunction 2}
\begin{split}
    w_{m+1}(x) &\leq w_m(x) \quad \text{a.e. in } \mathbb{R}^N, \\
    u(x) &< (2^{m+1} - 1)\, w_m(x) \quad \text{for } x \in \{w_{m+1} > 0\},
\end{split}
\end{align}
together with the inclusions
\begin{equation*}
    \{w_{m+1} > 0\} \subseteq \{w_m > 2^{-(m+1)}\},
\end{equation*}
which are valid for all $m \in \mathbb{N}$.

We recall the  elementary fact for $v \in X_{p}(\Omega)$:
\begin{equation}\label{tex inequality 1}
    |v(x)-v(y)|^{p-2}\left(v_{+}(x)-v_{+}(y)\right)(v(x)-v(y)) \geq\left|v_{+}(x)-v_{+}(y)\right|^p
\end{equation}
for all $x, y \in \mathbb{R}^N$.

We will prove \eqref{boud of eigenfunction 1} by  estimating the decay of the quantity $a_m:=\left\|w_m\right\|_{L^p}^p$. On the one hand, in view of \eqref{tex inequality 1} with $v=u-\left(1-2^{-(m+1)}\right),$
\begin{align}\label{boud of eigenfunction 2.0}
\begin{split}
   & \left\|w_{m+1}\right\|_{X_p(\Omega)}^p= \int_{[0,1]} c_{N,s,p} \iint_{\mathbb{R}^{2 N}} \frac{|w_{m+1}(x)-w_{m+1}(y)|^p}{|x-y|^{N+ sp}} d x d y d \mu^{+}(s) \\
& \leq \int_{[0,1]} c_{N,s,p} \iint_{\mathbb{R}^{2 N}} \frac{|u(x)-u(y)|^{p-2}(u(x)-u(y))(w_{m+1}(x)-w_{m+1}(y))}{|x-y|^{N+sp}} d x d y d \mu^{+}(s).
\end{split}
\end{align}
Now, by plugging $w_{m+1}$ as a test function in \eqref{weak solution of eig problem}, we get the following
\begin{align}\label{boud of eigenfunction 3.0}
\begin{split}
& \lambda \int_{\Omega} |u(x)|^{p-2}u(x) w_{m+1}(x) d x\\
    & =\int_{[0,1]} c_{N, s, p} \iint_{\mathbb{R}^{2 N}} \frac{|u(x)-u(y)|^{p-2}(u(x)-u(y))(w_{m+1}(x)-w_{m+1}(y))}{|x-y|^{N+sp}} d x d y d \mu^{+}(s).
\end{split}
\end{align}

Thus, applying \eqref{boud of eigenfunction 3.0} and using \eqref{boud of eigenfunction 2}, one obtains from \eqref{boud of eigenfunction 2.0} that
\begin{align}\label{boud of eigenfunction 3}
    \left\|w_{m+1}\right\|_{X_p(\Omega)}^{p}&\leq \int_{[0,1]} c_{N, s, p} \iint_{\mathbb{R}^{2 N}} \frac{|u(x)-u(y)|^{p-2}(u(x)-u(y))}{|x-y|^{N+sp}} \times \nonumber\\
    &\times (w_{m+1}(x)-w_{m+1}(y))d x d y d \mu^{+}(s) \nonumber\\
& \leq\lambda\int_{\Omega} |u(x)|^{p-2}u(x) w_{m+1}(x) d x= \lambda \int_{\left\{w_{m+1}>0\right\}} |u(x)|^{p-2}u(x) w_{m+1}(x) d x \nonumber\\
& \leq\lambda \int_{\left\{w_{m}>2^{-(m+1)}\right\}} (2^{m+1}-1)^{p-1} w^p_{m}(x) \mathrm{d} x\nonumber \leq \lambda (2^{m+1}-1)^{p-1} a_{m}.\\
\end{align}

The left-hand side of the above inequality can, in turn, be bounded from below by $a_{m+1}$ through the application of the fractional Sobolev embedding. For this analysis, we apply the H\"older's inequality (with exponents $p_{\sharp}^{*} / p$ and $N /(p s_{\sharp})$ ) and continuous fractional Sobolev embedding given in  Theorem \ref{embedding lem}, to obtain
\begin{align}\label{boud of eigenfunction 4}
\begin{split}
    a_{m+1}=\|w_{m+1}\|_{L^p}^p&=\int_{\{w_{m+1}>0\}}|w_{m+1}|^pdx  \leq \left\|w_{m+1}\right\|_{L^{p_{\sharp}^*}}^p\left|\left\{w_{m+1}>0\right\}\right|^{\frac{ps_\sharp}{N}}\\ & \leq C \left\|w_{m+1}\right\|_{X_p(\Omega)}^p\left|\left\{w_{m+1}>0\right\}\right|^{\frac{ps_\sharp}{N}},
\end{split}
\end{align}
with a constant $C:=C(N, s, p)>0.$  

On the other hand, by \eqref{boud of eigenfunction 2} and Chebychev's inequality, one has
\begin{equation}\label{boud of eigenfunction 5}
    \left|\left\{w_{m+1}>0\right\}\right| \leq\left|\left\{w_m>2^{-(m+1)}\right\}\right| \leq 2^{p(m+1)} \|w_{m}\|_{L^p}^p = 2^{p(m+1)} a_m.
\end{equation}
Thus, combining \eqref{boud of eigenfunction 3}, \eqref{boud of eigenfunction 4} and \eqref{boud of eigenfunction 5} we obtain
\begin{align*}
    a_{m+1} & \leq C \left( 2^{p(m+1)} a_m\right)^{\frac{ps_\sharp}{N}} \left(\lambda(2^{m+1}-1)^{p-1} a_{m}\right) \\
    &\leq C \left( 2^{p(m+1)} a_m\right)^{\frac{ps_\sharp}{N}} \left(\lambda 2^{p(m+1)} a_{m}\right) \leq C \lambda \left( 2^{p(m+1)} a_m\right)^{1+\frac{ps_\sharp}{N}}.
\end{align*}

 Thus, we conclude that, for $ps_{\sharp}< N$, an estimate of the form
\begin{equation*}
    a_{m+1} \leq \tilde{C}^m a_m^{1+\alpha}, \quad \text { for all } \quad m \in \mathbb{N},
\end{equation*}
holds for some $\alpha>0$ and a suitable constant $\tilde{C}:= \max \{1, C \lambda \left( 2^{p(m+1)} \right)^{1+\frac{ps_\sharp}{N}} \}.$ By definition, we have $a_0:=\left\|u_{+}\right\|_{L^p}^{p}.$ Therefore, by choosing $\delta$ such that $ a_0  \leq \tilde{C}^{-\frac{1}{\alpha^2}}=: \delta^p,$  we conclude that 
\begin{equation}\label{boud of eigenfunction 6}
    \lim _{m \rightarrow \infty} a_m=0.
\end{equation}
 Since $a_m \rightarrow \|(u-1)_{+}\|_{L^p(\Omega)}^p$ as $m \rightarrow \infty,$ from \eqref{boud of eigenfunction 6} we infer that $(u-1)_+=0.$ This implies that $\|u_+\|_{L^\infty(\Omega)} \leq 1,$ which combined with the fact that $u =$ in $\mathbb{R}^N \backslash \Omega$ shows that $u_+ \in L^\infty(\mathbb{R}^N).$ This completes the proof of this result.
\end{proof}  

\begin{remark}
  Recall part $(ii)$ and $(iii)$ of Theorem \ref{embedding lem}. Observe that Theorem \ref{global boundedness result} remains true even for the range $ps_{\sharp}\geq N$. Indeed, for $ps_{\sharp}>N$, we have $u\in C^{0,\alpha}(\bar{\Omega})$ for $0<\alpha<s_{\sharp}-N/p$ and hence $u\in L^{\infty}(\Omega)$. On the other hand, when $ps_{\sharp}=N$, then $u\in L^q(\Omega)$ for all $q\in(1,\infty)$. Therefore, repeating the arguments of Theorem \ref{global boundedness result}, we conclude that $u\in L^{\infty}(\Omega)$.
\end{remark}

\begin{thm}\label{result sign changing}
      Let $\Omega$ be a bounded domain of $\mathbb{R}^{N}$ with Lipschitz boundary. Let $\mu^+$ satisfy \eqref{measure1}. Let $s_{\sharp}$ be as in \eqref{measure4} and $1<p<N/s_{\sharp}$. If $v$ is an eigenfunction of \eqref{Eig problem} associated to an eigenvalue $\lambda>\lambda_{1, \mu^+}(\Omega)$, then $v$ must be sign-changing.
\end{thm}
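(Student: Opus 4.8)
The plan is to argue by contradiction, using a Picone-type test function to force $\lambda\le\lambda_{1,\mu^+}(\Omega)$. Suppose $v$ is an eigenfunction of \eqref{Eig problem} for some $\lambda>\lambda_{1,\mu^+}(\Omega)$ that does not change sign; replacing $v$ by $-v$ if necessary, we may assume $v\ge 0$ in $\Omega$ and $v=0$ in $\mathbb{R}^N\setminus\Omega$. Since $\lambda>\lambda_{1,\mu^+}(\Omega)>0$, testing the weak formulation of \eqref{Eig problem} against nonnegative functions shows that $v$ is a nonnegative weak supersolution of \eqref{p-kuusi}; hence Theorem~\ref{strong maximum for eigenvalue} gives $v>0$ a.e.\ in $\Omega$, and Theorem~\ref{global boundedness result} gives $v\in L^\infty(\mathbb{R}^N)$. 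Let $e_{1,\mu^+}$ be a first eigenfunction, which by Theorem~\ref{strict positive +} may be taken strictly positive in $\Omega$ and which lies in $L^\infty(\mathbb{R}^N)$ by Theorem~\ref{global boundedness result}; normalize it so that $\|e_{1,\mu^+}\|_{L^p(\Omega)}=1$. Since $e_{1,\mu^+}$ attains the infimum in \eqref{first eigenvalue+ int}, we then have $\|e_{1,\mu^+}\|_{X_p(\Omega)}^p=\lambda_{1,\mu^+}(\Omega)$.

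For $\varepsilon>0$, consider $\phi_\varepsilon:=e_{1,\mu^+}^{\,p}\,(v+\varepsilon)^{1-p}$. First I would verify that $\phi_\varepsilon\in X_p(\Omega)$: it vanishes outside $\Omega$ (where $e_{1,\mu^+}=0$), it is bounded since $v+\varepsilon\ge\varepsilon$ and $e_{1,\mu^+}\in L^\infty$, and its Gagliardo seminorms $[\phi_\varepsilon]_{s,p}$ (together with the local and zero-order endpoint quantities) are controlled, measurably in $s$ and in $L^1(d\mu^+)$, by elementary pointwise bounds exploiting $e_{1,\mu^+},v\in X_p(\Omega)\cap L^\infty(\mathbb{R}^N)$ and $v+\varepsilon\ge\varepsilon>0$. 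Using $\phi_\varepsilon$ as a test function in \eqref{weak solution of eig problem} for $v$ gives
\[
\big\langle\mathfrak{L}_p'(v),\phi_\varepsilon\big\rangle=\lambda\int_\Omega v^{p-1}\,\frac{e_{1,\mu^+}^{\,p}}{(v+\varepsilon)^{p-1}}\,dx .
\]

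Next I would bound the left-hand side from above using the discrete Picone inequality: for $p>1$, $a,b>0$ and $c,d\ge 0$,
\[
|a-b|^{p-2}(a-b)\left(\frac{c^p}{a^{p-1}}-\frac{d^p}{b^{p-1}}\right)\le |c-d|^p ,
\]
together with its $s=1$ counterpart (the classical Picone inequality $|\nabla a|^{p-2}\nabla a\cdot\nabla(c^p a^{1-p})\le|\nabla c|^p$) and its $s=0$ counterpart (the trivial bound $v^{p-1}(v+\varepsilon)^{1-p}e_{1,\mu^+}^{\,p}\le e_{1,\mu^+}^{\,p}$). Applying these pointwise with $a=v(x)+\varepsilon$, $b=v(y)+\varepsilon$, $c=e_{1,\mu^+}(x)$, $d=e_{1,\mu^+}(y)$ (and noting $(v(x)+\varepsilon)-(v(y)+\varepsilon)=v(x)-v(y)$), then integrating in $x,y$ and against $d\mu^+(s)$ over $[0,1]$, yields
\[
\big\langle\mathfrak{L}_p'(v),\phi_\varepsilon\big\rangle\le\int_{[0,1]}[e_{1,\mu^+}]_{s,p}^p\,d\mu^+(s)=\|e_{1,\mu^+}\|_{X_p(\Omega)}^p=\lambda_{1,\mu^+}(\Omega).
\]
On the other hand, since $v>0$ a.e.\ in $\Omega$ we have $v^{p-1}(v+\varepsilon)^{1-p}\to 1$ a.e.\ as $\varepsilon\to 0^+$, with $0\le v^{p-1}(v+\varepsilon)^{1-p}e_{1,\mu^+}^{\,p}\le e_{1,\mu^+}^{\,p}\in L^1(\Omega)$, so by dominated convergence the right-hand side of the identity tends to $\lambda\int_\Omega e_{1,\mu^+}^{\,p}\,dx=\lambda$. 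Combining with the previous estimate and letting $\varepsilon\to 0^+$ gives $\lambda\le\lambda_{1,\mu^+}(\Omega)$, contradicting $\lambda>\lambda_{1,\mu^+}(\Omega)$. Hence $v$ must change sign.

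The step I expect to be the main obstacle is the admissibility $\phi_\varepsilon\in X_p(\Omega)$ and, more precisely, the measurable-in-$s$, $\mu^+$-integrable control of $[\phi_\varepsilon]_{s,p}$ needed to legitimately integrate the pointwise Picone inequalities against $\mu^+$; once this is in place, everything else reduces to the known pointwise inequalities (with the endpoints $s=0,1$ treated separately as above) and a routine passage to the limit. If one prefers, an essentially equivalent route is to use the symmetric D\'iaz--Sa\'a inequality with the pair $(e_{1,\mu^+},v)$, both normalized in $L^p(\Omega)$, which gives the same contradiction; the Picone version above is slightly more economical since only the bound $\lambda\le\lambda_{1,\mu^+}(\Omega)$ is needed, not the equality case.
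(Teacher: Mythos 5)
Your argument is correct, but it proceeds by a genuinely different route than the paper. You run a Picone-type argument: test the equation for $v$ with $\phi_\varepsilon=e_{1,\mu^+}^{\,p}(v+\varepsilon)^{1-p}$, control the left-hand side for every $s\in[0,1]$ by the discrete Picone inequality (with the classical Picone inequality at $s=1$ and the trivial bound at $s=0$), integrate against $d\mu^+$, and let $\varepsilon\to0^+$ to force $\lambda\le\lambda_{1,\mu^+}(\Omega)$. The paper instead uses the hidden-convexity device: it connects the two normalized eigenfunctions by the curve $\sigma_t=(t\,u^p+(1-t)v^p)^{1/p}$, shows $\langle\mathfrak{L}_p'(\sigma_t),\sigma_t\rangle\le t\langle\mathfrak{L}_p'(u),u\rangle+(1-t)\langle\mathfrak{L}_p'(v),v\rangle$ (via \cite[Lemma 4.1]{FP14} for the fractional part and convexity for the gradient and zero-order parts), combines this with the convexity inequality $\langle\mathfrak{L}_p'(\sigma_t),\sigma_t\rangle-\langle\mathfrak{L}_p'(v),v\rangle\ge p\langle\mathfrak{L}_p'(v),\sigma_t-v\rangle$, and passes to the limit $t\to0^+$ by dominated convergence to reach the same contradiction $\lambda\le\lambda_{1,\mu^+}(\Omega)$. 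Both proofs need the strong maximum principle (Theorems \ref{strong maximum for eigenvalue} and \ref{strict positive +}) to get strict positivity of $v$ and $e_{1,\mu^+}$. The trade-off: your route additionally needs $e_{1,\mu^+}\in L^\infty(\mathbb{R}^N)$ (Theorem \ref{global boundedness result}) to make $\phi_\varepsilon$ admissible in $X_p(\Omega)$ — this is available and proved independently of the sign-changing result, so there is no circularity — while the paper's proof avoids any regularity input beyond the $L^p$ normalization (its dominating function is $v^{p-1}(v-u)\in L^1$ by H\"older). On the other hand, your argument is shorter once the Picone inequalities are granted, and, as you note, it only uses the inequality $\lambda\le\lambda_{1,\mu^+}(\Omega)$ rather than any equality analysis. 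One small point worth making explicit when writing it up: the uniform-in-$s$ bound $[\phi_\varepsilon]_{s,p}^p\le C(\varepsilon,p,\|e_{1,\mu^+}\|_\infty)\bigl([e_{1,\mu^+}]_{s,p}^p+[v]_{s,p}^p\bigr)$ follows from the Lipschitz bound $|(v(x)+\varepsilon)^{1-p}-(v(y)+\varepsilon)^{1-p}|\le(p-1)\varepsilon^{-p}|v(x)-v(y)|$ together with $|e_{1,\mu^+}^p(x)-e_{1,\mu^+}^p(y)|\le p\|e_{1,\mu^+}\|_\infty^{p-1}|e_{1,\mu^+}(x)-e_{1,\mu^+}(y)|$, and this bound is measurable in $s$ and $\mu^+$-integrable, which settles the admissibility issue you flagged.
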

\begin{proof}
Suppose $v$ does not change sign in $\Omega$. Thus, without loss of generality, we assume that $v \geq 0.$ By Theorem \ref{strong maximum for eigenvalue}, we conclude that $v>0$ in $\Omega$. Let $u$ be the eigenfunction corresponding to $\lambda_{1}.$  Then, Theorem \ref{strict positive +} yields that $u>0$ in $\Omega$. Without loss of generality we can assume that $\|v\|_{L^{p}(\Omega)}=\|u\|_{L^{p}(\Omega)}=1$. Consider the function $\sigma_t$ defined by
\begin{equation*}
    \sigma_t=\left(t u^p+(1-t) v^p\right)^{\frac{1}{p}}, \quad \forall t \in (0,1).
\end{equation*} 
Obviously, 
\begin{equation*}
   \int_{\Omega} \left|\sigma_t\right|^p d x=t \int_{\Omega} |u|^p d x+(1-t) \int_{\Omega} |v|^p d x=1. 
\end{equation*}
Applying the convexity of the map $t \mapsto t^p$ for $p>1$, we get
\begin{align*}
\left|\nabla \sigma_t\right|^p & =\left|\left(t u^p+(1-t) v^p\right)^{\frac{1}{p}-1}\left(t u^{p-1} \nabla u+(1-t) v^{p-1} \nabla v\right)\right|^p \\
& =\sigma_t^p\left|t \frac{u^p \nabla u}{\sigma_t^p u}+(1-t) \frac{v^p \nabla v}{\sigma_t^p v}\right|^p \\
& =\sigma_t^p\left| w \frac{\nabla u}{u}+(1-w) \frac{\nabla v}{v}\right|^p  \\
& \leq \sigma_t^p\left(w\left|\frac{\nabla u}{u}\right|^p+(1-w)\left|\frac{\nabla v}{v}\right|^p\right)=t|\nabla u|^p+(1-t)|\nabla v|^p ,
\end{align*}
where $w=\frac{t u^p}{t u^p+(1-t) v^p}.$
Moreover, by \cite[Lemma 4.1]{FP14} we have 
\begin{align*}
    [\sigma_t]_{s,p}^p&= \iint_{\mathbb{R}^{2N}} \frac{\left|\sigma_t(x)-\sigma_t(y)\right|^p}{|x-y|^{N+sp}} d x d y \\
& \leq t \iint_{\mathbb{R}^{2N}} \frac{|u(x)-u(y)|^p}{|x-y|^{N+sp}} d x d y+(1-t) \iint_{\mathbb{R}^{2N}} \frac{|v(x)-v(y)|^p}{|x-y|^{N+sp}} d x d y\\
&=t[u]_{s,p}^p+(1-t)[v]_{s,p}^p.
\end{align*}
Combining the above facts, we obtain
\begin{align*}
    \left\langle \mathfrak{L}_{p}^{\prime}(\sigma_t), \sigma_t\right\rangle&\leq t\left\langle \mathfrak{L}_{p}^{\prime}(u), u\right\rangle+(1-t)\left\langle \mathfrak{L}_{p}^{\prime}(v), v\right\rangle.
\end{align*}
Consequently, owing to the fact that $u > 0$ is an eigenfunction associated with principal eigenvalue $\lambda_{1, \mu^+}$ and $v > 0$ is an eigenfunction associated with an eigenvalue $\lambda>\lambda_{1, \mu^+}$, we arrive at
\begin{align}\label{sign change est 1}
\begin{split}
    \left\langle \mathfrak{L}_{p}^{\prime}(\sigma_t), \sigma_t\right\rangle-\left\langle \mathfrak{L}_{p}^{\prime}(v), v\right\rangle&\leq t\left\langle \mathfrak{L}_{p}^{\prime}(u), u\right\rangle-t \left\langle \mathfrak{L}_{p}^{\prime}(v), v\right\rangle=t(\lambda_{1, \mu^+}-\lambda)<0.
\end{split}
\end{align}
On the other hand, by the convexity of the map $t \mapsto t^p$, we have
\begin{align*}
& \int_{\Omega} \left|\sigma_t\right|^p d x-\int_{\Omega} |v|^p d x \geq p \int_{\Omega} |v|^{p-2} v\left(\sigma_t-v\right) d x, \\
& \int_{\Omega}\left|\nabla \sigma_t\right|^p d x-\int_{\Omega}|\nabla v|^p d x \geq p \int_{\Omega}|\nabla v|^{p-2} \nabla v \cdot \nabla\left(\sigma_t-v\right) d x,
\end{align*}
and 
\begin{align*}
    &\int_{(0,1)}\iint_{\mathbb{R}^{2N}} \frac{\left|\sigma_t(x)-\sigma_t(y)\right|^p}{|x-y|^{N+ps}} d x d yd\mu^+(s)-\int_{(0,1)}\iint_{\mathbb{R}^{2N}}  \frac{|v(x)-v(y)|^p}{|x-y|^{N+p s}} d x d yd\mu^+(s)\\
    &\quad \quad \geq p \int_{(0,1)}\iint_{\mathbb{R}^{2N}} \frac{|v(x)-v(y)|^{p-2}}{|x-y|^{N+p s}}(v(x)-v(y))\left(\left(\sigma_t-v\right)(x)-\left(\sigma_t-v\right)(y)\right) d x d y d\mu^+(s).
\end{align*}
From the inequalities above, it follows that
\begin{equation}\label{sign change est 2}
    \left\langle \mathfrak{L}_{p}^{\prime}(\sigma_t), \sigma_t\right\rangle-\left\langle \mathfrak{L}_{p}^{\prime}(v), v\right\rangle \geq p\left\langle \mathfrak{L}_{p}^{\prime}(v), \sigma_t-v\right\rangle.
\end{equation}
Then, combining inequalities \eqref{sign change est 1} and \eqref{sign change est 2}, and using Definition \ref{weak solution def +}, we obtain
\begin{align*}
\begin{split}
 & p \lambda \int_{\Omega} |v|^{p-2} v\left(\sigma_t-v\right) d x=p\left\langle \mathfrak{L}_{p}^{\prime}(v), \sigma_t-v\right\rangle\leq t\left(\lambda_{1, \mu^+}-\lambda\right)<0.
\end{split}
\end{align*}
This implies that
\begin{align}\label{sign change est 3}
    \frac{p \lambda}{t} \int_{\Omega} |v|^{p-2} v\left(\sigma_t-v\right) d x \leq \lambda_{1, \mu^+}-\lambda<0, \quad \forall t\in (0,1).
\end{align}
From this, since $v>0$, we obtain that $\sigma_t -v \leq 0$ a.e. in $\Omega$. Again, by the convexity of the map $t \mapsto t^p$, it follows that
\begin{equation*}
    v-\sigma_t=v-\left(t u^p+(1-t) v^p\right)^{\frac{1}{p}} \leq v-(t u+(1-t) v)=t(v-u). 
\end{equation*}
Thus, for all $t \in(0,1),$ we have $\left| v^{p-1}\left(\frac{\sigma_t-v}{t}\right)\right| \leq  v^{p-1}(v-u),$ which is an integrable function. Moreover, we have
\begin{align*}
\lim _{t \rightarrow 0}  v^{p-1}\left(\frac{\sigma_t-v}{t}\right) & = v^{p-1} \lim _{t \rightarrow 0}\left(\frac{\sigma_t-\sigma_0}{t}\right) \\
& =\frac{1}{p}\left[ v^{p-1} v^{1-p}\left(u^p-v^p\right)\right]=\frac{1}{p}\left(u^p-v^p\right)  
\end{align*}
pointwise in $\Omega$. Consequently, the dominated convergence theorem yields $ v^{p-1}\left(\frac{\sigma_t-v}{t}\right)  \rightarrow \frac{1}{p}\left(u^p-v^p\right)$ in $L^1(\Omega)$. Thus, applying the limit as $t \rightarrow 0$ in \eqref{sign change est 3}, we get
\begin{equation*}
    p \lambda \int_{\Omega} \frac{1}{p}\left(u^p-v^p\right)  d x \leq \lambda_{1, \mu^+}-\lambda,
\end{equation*}
which leads to
\begin{equation*}
    0=\lambda\left(\int_{\Omega} |u|^p d x-\int_{\Omega} |v|^p d x\right) \leq \lambda_{1, \mu^+}-\lambda.
\end{equation*}
This contradicts our assumption that $\lambda>\lambda_{1, \mu^+}$. Hence, the proof is complete.
\end{proof}

\begin{lem}
 Let $\Omega$ be a bounded domain of $\mathbb{R}^{N}$ with Lipschitz boundary. Let $\mu^+$ satisfy \eqref{measure1}. Let $s_{\sharp}$ be as in \eqref{measure4} and $1<p<N/s_{\sharp}$. Let $v$ be an eigenfunction of problem \eqref{Eig problem} corresponding to $\lambda \neq \lambda_{1, \mu^+}(\Omega)$. Then there is a positive constant $C$ independent of $v$ such that  
\begin{equation}\label{need for sign changing}
    \lambda \geq C(N, s_{\sharp},p)\left|\Omega_{+}\right|^{-\frac{ps_{\sharp}}{N}} \text { and  } \lambda \geq C(N, s_{\sharp},p)\left|\Omega_{-}\right|^{-\frac{ps_{\sharp}}{N}} ,
\end{equation}
where $\Omega_{+}:=\{x \in \Omega:\, v>0\}$ and $\Omega_{-}:=\{x \in \Omega:\,v<0\}$.
\end{lem}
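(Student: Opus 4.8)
The plan is to test the weak formulation~\eqref{weak solution of eig problem} against the positive part $v_+$ of the eigenfunction $v$, exploiting the by-now standard pointwise inequality~\eqref{tex inequality 1}, which in the present setting reads
\[
|v(x)-v(y)|^{p-2}\bigl(v(x)-v(y)\bigr)\bigl(v_+(x)-v_+(y)\bigr)\ \geq\ |v_+(x)-v_+(y)|^p
\]
for all $x,y\in\mathbb{R}^N$ (and similarly the trivial pointwise bound for the endpoint contributions $\mu^+(\{0\})$ and $\mu^+(\{1\})$). First I would note that $v_+\in X_p(\Omega)$ since $v\in X_p(\Omega)$ and $v_+=0$ in $\mathbb{R}^N\setminus\Omega$, so it is an admissible test function. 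Plugging $v=v_+$ on the right and using $|v|^{p-2}v\,v_+=v_+^p$ gives
\[
\lambda\int_\Omega v_+^p\,dx \;=\; \bigl\langle \mathfrak{L}_p'(v),v_+\bigr\rangle \;\geq\; \int_{[0,1]} c_{N,s,p}\iint_{\mathbb{R}^{2N}}\frac{|v_+(x)-v_+(y)|^p}{|x-y|^{N+sp}}\,dx\,dy\,d\mu^+(s)\;=\;\|v_+\|_{X_p(\Omega)}^p .
\]

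Next I would convert the right-hand side into an $L^p$-norm of $v_+$ via the continuous embedding $X_p(\Omega)\hookrightarrow L^{p_{s_\sharp}^*}(\Omega)$ from Theorem~\ref{embedding lem}(i), together with H\"older's inequality applied on the set $\Omega_+=\{v>0\}=\{v_+>0\}$: since $v_+$ is supported in $\Omega_+$ up to a null set,
\[
\int_\Omega v_+^p\,dx \;=\;\int_{\Omega_+} v_+^p\,dx \;\leq\; \|v_+\|_{L^{p_{s_\sharp}^*}(\Omega)}^p\,|\Omega_+|^{\frac{p s_\sharp}{N}} \;\leq\; C\,\|v_+\|_{X_p(\Omega)}^p\,|\Omega_+|^{\frac{p s_\sharp}{N}},
\]
with $C=C(N,s_\sharp,p)$ the embedding constant. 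Combining this with the previous display and cancelling the common factor $\|v_+\|_{X_p(\Omega)}^p$ (which is strictly positive because $v_+\not\equiv 0$, as $v$ must be sign-changing when $\lambda>\lambda_{1,\mu^+}$ by Theorem~\ref{result sign changing}, while for $\lambda<\lambda_{1,\mu^+}$ no eigenvalue exists, so effectively $\lambda>\lambda_{1,\mu^+}$ here — alternatively one argues that if $v_+\equiv 0$ then $v\le 0$ and $-v$ is a nonnegative eigenfunction, forcing $\lambda=\lambda_{1,\mu^+}$ by simplicity, contradicting $\lambda\neq\lambda_{1,\mu^+}$) yields
\[
\lambda \;\geq\; \frac{1}{C}\,|\Omega_+|^{-\frac{p s_\sharp}{N}} \;=\; C(N,s_\sharp,p)\,|\Omega_+|^{-\frac{p s_\sharp}{N}},
\]
after relabelling the constant. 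The bound involving $\Omega_-$ follows by repeating the argument verbatim with $v$ replaced by $-v$ (also an eigenfunction for the same $\lambda$, with $(-v)_+=v_-$ supported in $\Omega_-$).

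The only genuinely delicate point is justifying that $v_+$ is not identically zero so that the cancellation step is legitimate; this is where I would invoke the interplay between Theorem~\ref{principal eigenvalue P}(iv) (simplicity and fixed sign of the first eigenfunction) and Theorem~\ref{result sign changing} (sign-change for higher eigenvalues), noting that $\lambda\neq\lambda_{1,\mu^+}$ together with the variational characterization~\eqref{first eigenvalue} (which shows no eigenvalue lies below $\lambda_{1,\mu^+}$) forces $\lambda>\lambda_{1,\mu^+}$, hence $v$ is sign-changing and both $v_+$ and $v_-$ are nontrivial. Everything else is a one-line application of inequality~\eqref{tex inequality 1}, the test-function identity, and the Sobolev embedding; no new machinery is needed beyond what has already been set up in Sections~\ref{sec2} and~\ref{sec6}.
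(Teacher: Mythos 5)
Your argument is correct and follows essentially the same route as the paper's proof: the paper tests the weak formulation with $v_-$ (rather than $v_+$), uses the analogous pointwise inequality to bound $-\langle\mathfrak{L}_p'(v),v_-\rangle$ below by $\|v_-\|_{X_p(\Omega)}^p$, applies H\"older together with the embedding $X_p(\Omega)\hookrightarrow L^{p^*_{s_\sharp}}(\Omega)$ on $\Omega_-$, invokes Theorem~\ref{result sign changing} for $v_-\not\equiv 0$, and then gets the $\Omega_+$ bound by replacing $v$ with $-v$. Your handling of the nontriviality of $v_\pm$ (including the observation that $\lambda\neq\lambda_{1,\mu^+}$ forces $\lambda>\lambda_{1,\mu^+}$ by the variational characterization) matches the paper's implicit assumption, so no gap remains.
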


\begin{proof}
    Let $(\lambda, v)$ be an eigenpair for the problem \eqref{Eig problem} such that $\lambda>\lambda_{1, \mu^+}$ with $$\int_{\Omega} |v(x)|^p d x=1.$$ Then by Theorem \ref{result sign changing}, we have $v_{-} \neq 0$. Using $v_{-} \in X_{p}(\Omega)$ as a test function in the weak formulation \eqref{weak solution of eig problem}, we obtain
\begin{equation}\label{tech lem proof 1}
    \left\langle \mathfrak{L}_{p}^{\prime}(v), v_{-}\right\rangle =-\lambda \int_{\Omega} \left|v_{-}(x)\right|^p d x.
\end{equation}
Note that
\begin{align*}
 v(x)  v_{-}(x) & =-\left|v_{-}(x)\right|^2 \text { a.e. in } \Omega,\\
\nabla v(x) \cdot \nabla\left(v_{-}(x)\right) & =-\left(\nabla\left(v_{-}(x)\right)\right)^2 \text { a.e. in } \Omega, \text { and } \\
-(v(x)-v(y))\left(v_{-}(x)-v_{-}(y)\right) & =-\left(\left(v_{+}(x)-v_{+}(y)-\left(v_{-}(x)-v_{-}(y)\right)\left(v_{-}(x)-v_{-}(y)\right)\right.\right. \\
& \geq\left(v_{-}(x)-v_{-}(y)\right)^2 .
\end{align*}
By the above relations, we get
\begin{equation}\label{tech lem proof 2}
    -\left\langle \mathfrak{L}_{p}^{\prime}(v), v_{-}\right\rangle \geq \int_{[0,1]}c_{N,s,p} \iint_{\mathbb{R}^{2N}} \frac{\left|v_{-}(x)-v_{-}(y)\right|^p}{|x-y|^{N+sp}} d x d y d \mu^{+}(s).
\end{equation}
Moreover, applying the continuous embedding $X_{p}(\Omega) \hookrightarrow L^{p^{*}_{s_\sharp}}(\Omega)$ from Theorem \ref{embedding lem} and the Hölder's inequality with exponents $p_{s_\sharp}^{*}/p$ and $p_{s_\sharp}^{*}/(p_{s_\sharp}^{*}-p)$, it follows that
\begin{align}\label{tech lem proof 3}
    \int_{\Omega}\left|v_{-}(x)\right|^p d x=\int_{\Omega_{-}}\left|v_{-}(x)\right|^p d x &\leq \left|\Omega_{-}\right|^{(p_{s_\sharp}^{*}-p)/p_{s_\sharp}^{*}} \|v_{-}(x)\|_{L^{p^{*}_{s_\sharp}}(\Omega)}^{p}\nonumber\\
    &\leq \left|\Omega_{-}\right|^{(p_{s_\sharp}^{*}-p)/p_{s_\sharp}^{*}}(C_{p_{s_\sharp}^{*}})^{p} \|v_{-}(x)\|_{X_p(\Omega)}^{p},
\end{align}
where $C_{p_{s_\sharp}^{*}}$ denotes an embedding constant.

Combining \eqref{tech lem proof 1}, \eqref{tech lem proof 2} and \eqref{tech lem proof 3}, we obtain
\begin{equation*}
    \lambda \int_{\Omega} \left|v_{-}(x)\right|^p d x=-\left\langle \mathfrak{L}_{p}^{\prime}(v), v_{-}\right\rangle\geq \|v_{-}\|_{X_p(\Omega)}^{p}\geq \left|\Omega_{-}\right|^{(p-p_{s_\sharp}^{*})/p_{s_\sharp}^{*}}(C_{p_{s_\sharp}^{*}})^{-p} \int_{\Omega} \left|v_{-}(x)\right|^p d x.
\end{equation*}
Since $\left\|v_{-}\right\|_{L^{p}(\Omega)} \neq 0$, dividing both sides of the above inequality by $\left\|v_{-}\right\|_{L^{p}(\Omega)},$ we obtain
\begin{equation*}
    \left|\Omega_{-}\right| \geq\left(\frac{1}{C \lambda}\right)^{\frac{p_{s_\sharp}^{*}}{p_{s_\sharp}^{*}-p}},
\end{equation*}
where $C=(C_{p_{s_\sharp}^{*}})^{p}$.

Following the above arguments for $-v$ in place of the eigenfunction $v$, one can infer that
\begin{equation*}
    \left|\Omega_{+}\right| \geq\left(\frac{1}{C \lambda}\right)^{\frac{p_{s_\sharp}^{*}}{p_{s_\sharp}^{*}-p}},
\end{equation*}
where $C=(C_{p_{s_\sharp}^{*}})^{p}$. This completes the proof.
\end{proof}

\begin{thm}\label{simple new}
   Let $\Omega$ be a bounded domain of $\mathbb{R}^{N}$ with Lipschitz boundary. Let $\mu^+$ satisfy \eqref{measure1}. Let $s_{\sharp}$ be as in \eqref{measure4} and $1<p<N/s_{\sharp}$. Then, the first eigenvalue $\lambda_{1, \mu^+}(\Omega)$ of the problem \eqref{Eig problem} is simple.
\end{thm} 
\begin{proof}
Recall
\begin{equation*}
    \left\langle \mathfrak{L}_{p}^{\prime}(\sigma), \sigma\right\rangle:=  \int_{[0,1]} c_{N, s, p} \iint_{\mathbb{R}^{2 N}} \frac{|\sigma(x)-\sigma(y)|^{p}}{|x-y|^{N+sp}} d x d y d \mu^{+}(s)=\lambda_1>0.
\end{equation*} 
We aim to show that the principal eigenvalue $\lambda_{1}$ is simple. Let $u, v$ be two eigenfunctions associated with $\lambda_{1}$. Without loss of generality, assume that $u, v>0$ in $\Omega$ and
\begin{equation*}
    \int_{\Omega} |u|^{p} d x=\int_{\Omega} |v|^{p} d x=1.
\end{equation*}

Define, $\sigma:=\left(\frac{u^{p}+v^{p}}{2}\right)^{\frac{1}{p}}$. Then we have
\begin{equation*}
   \int_{\Omega} \left|\sigma\right|^p d x=\frac{1}{2} \int_{\Omega} |u|^p d x+\frac{1}{2} \int_{\Omega} |v|^p d x=1,
\end{equation*}
Applying the convexity of the map $t \mapsto t^p$ for $p>1$, we get
\begin{align}\label{extra pr del 1}
\left|\nabla \sigma\right|^p & =\left|\left(\frac{1}{2} u^p+\frac{1}{2} v^p\right)^{\frac{1}{p}-1}\left(\frac{1}{2} u^{p-1} \nabla u+\frac{1}{2} v^{p-1} \nabla v\right)\right|^p \nonumber\\
&=\frac{1}{2^p}\left(\frac{u^p+v^p}{2}\right)^{1-p}\left|u^{p-1} \nabla u+v^{p-1} \nabla v\right|^p\nonumber\\
& = \frac{\sigma^p}{2^p}\left|\frac{u^p \nabla u}{\sigma^p u}+ \frac{v^p \nabla v}{\sigma^p v}\right|^p \\
& =\sigma^p\left| w \frac{\nabla u}{u}+(1-w) \frac{\nabla v}{v}\right|^p \nonumber \\
& \leq \sigma^p\left(w\left|\frac{\nabla u}{u}\right|^p+(1-w)\left|\frac{\nabla v}{v}\right|^p\right)=\frac{1}{2}\left(|\nabla u|^p+|\nabla v|^p\right) ,\nonumber
\end{align}
where $w=\frac{ u^p}{ u^p+ v^p}.$ Hence, we have
\begin{equation}\label{extra pr del 2}
\int_{\Omega}\left|\nabla \sigma\right|^p d x \leq \frac{1}{2} \left(\int_{\Omega}|\nabla u|^p d x+ \int_{\Omega}|\nabla v|^p d x\right). 
\end{equation}
Moreover, by \cite[Lemma 4.1]{FP14} we have 
\begin{align}\label{extra pr del 3}
    [\sigma]_{s,p}^p&= \iint_{\mathbb{R}^{2N}} \frac{\left|\sigma(x)-\sigma(y)\right|^p}{|x-y|^{N+sp}} d x d y \\
& \leq \frac{1}{2}\iint_{\mathbb{R}^{2N}} \frac{|u(x)-u(y)|^p}{|x-y|^{N+sp}} d x d y+\frac{1}{2}\ \iint_{\mathbb{R}^{2N}} \frac{|v(x)-v(y)|^p}{|x-y|^{N+sp}} d x d y \nonumber\\
&=\frac{1}{2}\left([u]_{s,p}^p+[v]_{s,p}^p\right).\nonumber
\end{align}
Combining the above facts, we obtain
\begin{align}\label{extra pr del 4}
    \left\langle \mathfrak{L}_{p}^{\prime}(\sigma), \sigma\right\rangle&\leq \frac{1}{2}\left(\left\langle \mathfrak{L}_{p}^{\prime}(u), u\right\rangle+\left\langle \mathfrak{L}_{p}^{\prime}(v), v\right\rangle\right)=\frac{1}{2}\left(\lambda_1+\lambda_1\right)=\lambda_1.
\end{align}
Now, by the characterization of $\lambda_1$, we should have the equality in \eqref{extra pr del 4}, and it can be achieved when the equality holds in \eqref{extra pr del 3}, and in \eqref{extra pr del 2}. Moreover, the inequality \eqref{extra pr del 2} is obtained from \eqref{extra pr del 1}, and \eqref{extra pr del 1} is a consequence of the strict convexity property of the map $t \mapsto|t|^{p}$. Hence, in order to have equality in \eqref{extra pr del 1}, we must have $\frac{\nabla u}{u}=\frac{\nabla v}{v}$ a.e. in $\Omega$. This implies that $\nabla\left(\frac{u}{v}\right)=0$ a.e. in $\Omega$, and hence there exists a constant $c>0$ such that $u=c v$ a.e. in $\Omega$. This asserts that $\lambda_{1}$ is simple.
\end{proof}

\begin{thm}\label{properties of 1 eigenvalue}
   Let $\Omega$ be a bounded domain of $\mathbb{R}^{N}$ with Lipschitz boundary. Let $\mu^+$ satisfy \eqref{measure1}. Let $s_{\sharp}$ be as in \eqref{measure4} and $1<p<N/s_{\sharp}$. Then, the first eigenvalue $\lambda_{1, \mu^+}(\Omega)$ of the problem \eqref{Eig problem} is isolated.
\end{thm}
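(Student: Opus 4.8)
The plan is to argue by contradiction, following the classical scheme of Anane and Lindqvist adapted to the superposition setting. Suppose $\lambda_{1,\mu^+}:=\lambda_{1,\mu^+}(\Omega)$ is not isolated. Since testing the weak formulation \eqref{weak solution of eig problem} of \eqref{Eig problem} with the eigenfunction itself shows that every eigenvalue is a value of the Rayleigh quotient in \eqref{first eigenvalue}, and $\lambda_{1,\mu^+}$ is its infimum, there must exist a sequence of eigenvalues $\lambda_k>\lambda_{1,\mu^+}$ with $\lambda_k\searrow\lambda_{1,\mu^+}$ and corresponding eigenfunctions $u_k$, normalized so that $\|u_k\|_{L^p(\Omega)}=1$. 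Testing \eqref{weak solution of eig problem} with $v=u_k$ yields $\|u_k\|_{X_p(\Omega)}^{p}=\lambda_k$, so $\{u_k\}$ is bounded in $X_p(\Omega)$. By reflexivity (Lemma \ref{Uniform convexity}) and the compact embedding of Theorem \ref{embedding lem}, after passing to a subsequence we have $u_k\rightharpoonup u$ weakly in $X_p(\Omega)$, $u_k\to u$ strongly in $L^p(\Omega)$ and a.e.\ in $\Omega$, with $\|u\|_{L^p(\Omega)}=1$; in particular $u\not\equiv 0$.

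Next I would identify $u$ as a first eigenfunction. Weak lower semicontinuity of $\|\cdot\|_{X_p(\Omega)}$ together with the characterization \eqref{first eigenvalue} gives
\[
\lambda_{1,\mu^+}\;\le\;\frac{\|u\|_{X_p(\Omega)}^{p}}{\|u\|_{L^{p}(\Omega)}^{p}}\;=\;\|u\|_{X_p(\Omega)}^{p}\;\le\;\liminf_{k\to\infty}\|u_k\|_{X_p(\Omega)}^{p}\;=\;\liminf_{k\to\infty}\lambda_k\;=\;\lambda_{1,\mu^+},
\]
so all inequalities are equalities: $u$ attains the infimum in \eqref{first eigenvalue}, hence, by the identity \eqref{relation P} of Lemma \ref{tech lem for principal eigenvalue} (applied with $X_0=X_p(\Omega)$), it is an eigenfunction associated with $\lambda_{1,\mu^+}$; moreover $\|u_k\|_{X_p(\Omega)}\to\|u\|_{X_p(\Omega)}$, which with weak convergence and uniform convexity upgrades $u_k\to u$ to strong convergence in $X_p(\Omega)$. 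By Theorem \ref{strict positive +}, $u$ has constant sign in $\Omega$; without loss of generality $u>0$ a.e.\ in $\Omega$.

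On the other hand, since $\lambda_k>\lambda_{1,\mu^+}$, Theorem \ref{result sign changing} forces each $u_k$ to change sign, so $\Omega_k^{+}:=\{u_k>0\}$ and $\Omega_k^{-}:=\{u_k<0\}$ both have positive measure. Applying the quantitative bound \eqref{need for sign changing} to the eigenpair $(\lambda_k,u_k)$ gives a constant $C=C(N,s_\sharp,p)>0$ with $\lambda_k\ge C|\Omega_k^{-}|^{-p s_\sharp/N}$, hence $|\Omega_k^{-}|\ge (C/\lambda_k)^{N/(p s_\sharp)}$; since $\lambda_k\to\lambda_{1,\mu^+}$ the right-hand side is bounded below by a constant $c_0>0$ independent of $k$, so $|\Omega_k^{-}|\ge c_0$ for all large $k$. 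But $u>0$ a.e.\ forces $|\{u\le\delta\}|\to 0$ as $\delta\to 0^{+}$, while $u_k\to u$ in $L^p(\Omega)$ implies convergence in measure, i.e.\ $|\{|u_k-u|>\delta\}|\to 0$ for each fixed $\delta>0$; from the inclusion $\{u_k<0\}\subset\{u\le\delta\}\cup\{|u_k-u|>\delta\}$, choosing $\delta$ small and then $k$ large yields $|\Omega_k^{-}|\to 0$, contradicting $|\Omega_k^{-}|\ge c_0$. Therefore no such sequence $(\lambda_k)$ exists, and $\lambda_{1,\mu^+}$ is isolated.

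The step I expect to be the main obstacle is the interplay in the last paragraph: one must play the \emph{uniform} lower bound on the measures $|\Omega_k^{\pm}|$ coming from the Faber--Krahn-type estimate \eqref{need for sign changing} against the fact that the portion of the nodal set of $u_k$ carrying the ``wrong'' sign relative to the strictly positive limit $u$ shrinks to a null set. Each ingredient is elementary, but the whole argument hinges on \eqref{need for sign changing} being independent of the eigenfunction and on the strong $X_p(\Omega)$-convergence used to guarantee that the weak limit $u$ is genuinely a first eigenfunction, so that Theorem \ref{strict positive +} applies; checking these uniformities is where the care is needed.
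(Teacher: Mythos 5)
Your proposal is correct and follows essentially the same route as the paper: a contradiction argument using compactness of the normalized eigenfunctions, identification of the limit as a constant-sign first eigenfunction via lower semicontinuity/Fatou and Theorem \ref{strict positive +}, and then the uniform lower bound \eqref{need for sign changing} on $|\Omega_k^{-}|$ played against the shrinking of the negative set of $u_k$. The only cosmetic difference is that the paper closes the argument with Egorov's theorem (uniform convergence off a small set) where you use convergence in measure together with $|\{u\le\delta\}|\to 0$, and you additionally upgrade to strong $X_p(\Omega)$-convergence, which is not needed.
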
 
\begin{proof}
     By definition, $\lambda_{1, \mu^+}(\Omega)$ is left-isolated. To prove that $\lambda_{1, \mu^+}(\Omega)$ is right-isolated, we argue by contradiction. We assume that there is a sequence of eigenvalues $\left\{\lambda_{m, \mu^+}\right\}$ such that $\lambda_{m, \mu^+} \searrow \lambda_{1, \mu^+}$ as $m \rightarrow \infty$ and $\lambda_{m, \mu^+} \neq \lambda_{1, \mu^+}$. Let $u_m$ be an eigenfunction associated to $\lambda_{m, \mu^+}$. Without loss of generality, we may assume that $\left\|u_m\right\|_{L^p(\Omega)}=1$. Then we have 
     \begin{equation*}
         \lambda_{m, \mu^+}=\int_{[0,1]}[u_m]_{s,p}^{p} d \mu^{+}(s).
     \end{equation*}
    We also define $\Omega_{m_{-}}=\left\{x \in \Omega: u_m(x) < 0\right\}$. Then by \eqref{need for sign changing} we have
\begin{equation}\label{need for sign changing 1}
\left|\Omega_{m_{-}}\right| \geq\left(\frac{C(N, s_{\sharp},p)}{\lambda_{m, \mu^+}}\right)^{\frac{N}{p s_\sharp}}=M \text { (say). }
\end{equation}
     
     Moreover, the sequence $\left\{u_m\right\}$ is bounded in $X_{p}(\Omega)$ and therefore we can extract a subsequence (still denoted by $\left\{u_m\right\}$) such that
\begin{equation*}
    u_m \rightharpoonup u \text { weakly in }X_{p}(\Omega), \quad u_m \rightarrow u \text { strongly in } L^p(\Omega).
\end{equation*}
Consequently, we have $\left\|u\right\|_{L^p(\Omega)}=1$, and applying Fatou’s lemma, we get
\begin{equation*}
    \frac{\int_{[0,1]}[u]_{s,p}^{p} d \mu^{+}(s)}{\int_{\Omega}|u(x)|^p d x} \leq \lim _{m \rightarrow \infty} \lambda_{m, \mu^+}=\lambda_{1, \mu^+}.
\end{equation*}
Hence, we conclude that $u$ is an eigenfunction associated to $\lambda_{1, \mu^+}(\Omega)$. Therefore, $u$ has a constant sign. Without loss of generality, let us assume that $u>0$ in $\Omega$. 

On the other hand, as $u_m \rightarrow u$ a.e. in $\Omega,$ by the Egorov’s theorem, for any $\delta>0$ there exists a subset $A_\delta$ of $\Omega$ such that $\left|A_\delta\right|<\delta$ and $u_m \rightarrow u>0$ uniformly in $\Omega \backslash A_\delta$. Since $u>0$ in $\Omega$ and $\Omega/A_\delta$ is compact, there exists $\epsilon>0$ such that $u \geq \epsilon$ in $\Omega/A_\delta$. Moreover, since $u_n \rightarrow u$ uniformly in $\Omega/A_\delta$, there exists $m_0 \in \mathbb{N}$ such that for all $x \in \Omega/A_\delta$,
$$
\left|u_m(x)-u(x)\right|<\frac{\epsilon}{2}, \quad \forall m \geq m_0 .
$$
This implies that
$$
u_{m_0}(x)>u(x)-\frac{\epsilon}{2} \geq \epsilon-\frac{\epsilon}{2}=\frac{\epsilon}{2}, \quad \forall x \in \Omega/A_\delta .
$$
As a consequence we have $\Omega/A_\delta \subset \Omega_{m_{0_{+}}}$. This again implies that $\Omega_{m_{0_{-}}} \subset  A_\delta$. Thus, we get $|\Omega_{m_{0_{-}}}| \leq|A_\delta|<\delta$. Since $\delta>0$ is arbitrary, choosing $\delta=M$, we get a contradiction to \eqref{need for sign changing 1}. This completes the proof.
\end{proof}
The following proposition is a technical result that will be used in the next result.
\begin{prop}\label{S+ operator} Let $\Omega \subset \mathbb{R}^{N}$ be a bounded domain with Lipschitz boundary, and let $\mu=\mu^+$ satisfy \eqref{measure1}. Let $s_{\sharp}$ be as in \eqref{measure4}, and assume $1 < p < N/s_{\sharp}$.
    Let $\{u_{k}\}$ be a sequence in $X_{p}(\Omega)$ such that $u_{k} \rightharpoonup u$ in $X_{p}(\Omega)$  and
$$
\limsup _{k\to \infty}\left\langle\mathcal{L}_{\mu,p}^+u_{k}, u_{k}-u\right\rangle = 0.
$$
Then, $u_{k} \rightarrow u$ in $X_{p}(\Omega).$ 
\end{prop}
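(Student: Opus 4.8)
The plan is to exploit the uniform convexity of $X_p(\Omega)$ together with the elementary monotonicity inequality \eqref{Simon inequality}, in the spirit of the classical $S_+$-property for the (fractional) $p$-Laplacian. First I would observe that the functional $\langle \mathcal{L}_{\mu,p}^+ \cdot, \cdot\rangle$ coincides with the duality pairing $\langle \mathfrak{L}_p'(\cdot), \cdot\rangle$ defined in \eqref{derivative of L}, so the hypothesis reads $\limsup_{k\to\infty}\langle \mathfrak{L}_p'(u_k), u_k - u\rangle = 0$. Since $u_k \rightharpoonup u$ weakly in $X_p(\Omega)$ and $\mathfrak{L}_p' $ maps bounded sets to bounded sets (because $\{u_k\}$ is bounded in $X_p(\Omega)$, the term $|u(x)-u(y)|^{p-2}(u(x)-u(y))|x-y|^{-(N+ps)/p'}$ lies in a bounded subset of $L^{p'}(\mathbb{R}^{2N}\times[0,1], dx\,dy\,d\mu^+(s))$, and similarly for the local terms), the weak convergence $u_k - u \rightharpoonup 0$ gives $\langle \mathfrak{L}_p'(u), u_k - u\rangle \to 0$. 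Subtracting, we obtain
\begin{equation}\label{Splus-key}
\limsup_{k\to\infty}\,\big\langle \mathfrak{L}_p'(u_k) - \mathfrak{L}_p'(u),\, u_k - u\big\rangle = 0.
\end{equation}

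Next I would unfold \eqref{Splus-key} using the explicit form of $\mathfrak{L}_p'$. Writing, for each $s\in(0,1)$, $U_k(x,y) := u_k(x)-u_k(y)$, $U(x,y):=u(x)-u(y)$, the integrand of the $s$-part of the pairing difference is
$$
\big(|U_k|^{p-2}U_k - |U|^{p-2}U\big)(U_k - U)\,|x-y|^{-(N+ps)},
$$
which is pointwise nonnegative by \eqref{Simon inequality}; likewise the $\mu^+(\{0\})$-term $(|u_k|^{p-2}u_k - |u|^{p-2}u)(u_k-u)$ and the $\mu^+(\{1\})$-term $(|\nabla u_k|^{p-2}\nabla u_k - |\nabla u|^{p-2}\nabla u)\cdot\nabla(u_k-u)$ are nonnegative. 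Hence the full quantity in \eqref{Splus-key} is a $\mu^+$-integral of a nonnegative integrand whose $\limsup$ is $0$; since it is also $\geq 0$, its limit is $0$. For $p\geq 2$, \eqref{Simon inequality} gives the lower bound $C\,|U_k - U|^p|x-y|^{-(N+ps)}$ on each Gagliardo integrand and $C|\nabla(u_k-u)|^p$, $C|u_k-u|^p$ on the endpoint terms, so directly
$$
C\,\|u_k - u\|_{X_p(\Omega)}^p \;=\; C\int_{[0,1]}[u_k-u]_{s,p}^p\,d\mu^+(s) \;\longrightarrow\; 0,
$$
which is the claim. For $1<p<2$ one uses instead the second branch of \eqref{Simon inequality} combined with Hölder's inequality with exponents $2/p$ and $2/(2-p)$: writing $[u_k-u]_{s,p}^p$ as $\int (|U_k-U|^2/(|U_k|+|U|)^{2-p})^{p/2}\,(|U_k|+|U|)^{(2-p)p/2}\,|x-y|^{-(N+ps)}\,dx\,dy$, one bounds it by $(\langle \mathfrak{L}_p'(u_k)-\mathfrak{L}_p'(u), u_k-u\rangle_s)^{p/2}$ times a factor controlled by $([u_k]_{s,p}+[u]_{s,p})^{(2-p)p/2}$, which is bounded; integrating in $d\mu^+(s)$ (using Hölder once more in $s$ if needed, against the finite measure $\mu^+$) and passing to the limit yields $\|u_k-u\|_{X_p(\Omega)}\to 0$.

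The main obstacle I anticipate is the bookkeeping in the $1<p<2$ case: one must be careful that the Hölder splitting is performed pointwise in $(x,y,s)$ and then integrated against the product measure $dx\,dy\,d\mu^+(s)$, and that the ``bounded factor'' $(|U_k|+|U|)^{(2-p)p/2}$ integrates to something uniformly bounded — this is where the a priori boundedness of $\{u_k\}$ in $X_p(\Omega)$ (inherited from weak convergence) is essential, and where one should track that the constant from \eqref{Simon inequality} is uniform in $s\in[0,1]$. A secondary, more routine point is justifying $\langle \mathfrak{L}_p'(u), u_k-u\rangle\to 0$: this is exactly the statement that $\mathfrak{L}_p'(u)\in X_p(\Omega)^*$ and $u_k-u\rightharpoonup 0$, so it is immediate once one notes $\mathfrak{L}_p'(u)$ is a fixed element of the dual. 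No new ingredient beyond Lemma with \eqref{Simon inequality}, the embedding Theorem~\ref{embedding lem} (only to know $\{u_k\}$ bounded), and uniform convexity of $X_p(\Omega)$ (Lemma~\ref{Uniform convexity}) is needed.
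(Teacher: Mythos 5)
Your argument is correct, but it follows a genuinely different route from the paper's. The paper never touches the pointwise monotonicity inequality: it pairs the hypothesis with the weak convergence to get $\langle \mathcal{L}_{\mu,p}^+u_k,u_k-u\rangle+\langle \mathcal{L}_{\mu,p}^+u,u-u_k\rangle=o(1)$, expands, bounds the cross terms by H\"older in the dual pairing to obtain
$o(1)\geq\bigl(\|u_k\|_{X_p(\Omega)}^{p-1}-\|u\|_{X_p(\Omega)}^{p-1}\bigr)\bigl(\|u_k\|_{X_p(\Omega)}-\|u\|_{X_p(\Omega)}\bigr)\geq 0$, concludes $\|u_k\|_{X_p(\Omega)}\to\|u\|_{X_p(\Omega)}$, and then invokes uniform convexity of $X_p(\Omega)$ (Lemma~\ref{Uniform convexity}, i.e.\ the Radon--Riesz property) to upgrade weak to strong convergence; this is short and requires no case distinction in $p$. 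You instead run the classical $S_+$ argument: reduce to $\langle \mathfrak{L}_p'(u_k)-\mathfrak{L}_p'(u),u_k-u\rangle\to 0$ and control $\|u_k-u\|_{X_p(\Omega)}^p$ directly via \eqref{Simon inequality}, immediately for $p\geq 2$ and through a double H\"older splitting (first in $(x,y)$, then in $s$ against the finite measure $\mu^+$) for $1<p<2$. Your route is more quantitative and avoids appealing to uniform convexity, at the price of the two-case analysis; the only point to tighten is the $1<p<2$ bookkeeping you yourself flag: the factor $\bigl([u_k]_{s,p}+[u]_{s,p}\bigr)^{(2-p)p/2}$ need not be bounded pointwise in $s$, so the H\"older-in-$s$ step against $\mu^+$ (which turns it into $\bigl(\|u_k\|_{X_p(\Omega)}^p+\|u\|_{X_p(\Omega)}^p\bigr)^{(2-p)/2}$) is not optional but essential; with that made explicit, the proof is complete.
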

\begin{proof}
  Let $\{u_{k}\}$ be a sequence in $X_{p}(\Omega)$ such that $u_k \rightharpoonup u$ in $X_{p}(\Omega)$  and 
  \begin{equation*}
      \limsup _{k\to \infty}\left\langle \mathcal{L}_{\mu,p}^+ u_k, u_k-u\right\rangle = 0.
  \end{equation*}
Since $u_k \rightharpoonup u$ in $X_{p}(\Omega),$ we have 
\begin{equation*}
    \lim_{k\to \infty}\left\langle \mathcal{L}_{\mu,p}^+ u, u_k-u\right\rangle = 0.
\end{equation*}
Then, applying H\"older's inequality and using Theorem \ref{embedding lem} , we get, for all $k \in \mathbb{N},$ that
\begin{align*}
 \mathbf{o}(1) &= \left\langle \mathcal{L}_{\mu,p}^+ u_k, u_k-u\right\rangle+\left\langle \mathcal{L}_{\mu,p}^+ u, u-u_k\right\rangle\\
&  = \left\langle \mathcal{L}_{\mu,p}^+ u_k, u_k\right\rangle-\left\langle \mathcal{L}_{\mu,p}^+ u_k, u\right\rangle-\left\langle \mathcal{L}_{\mu,p}^+ u, u_k\right\rangle+\left\langle \mathcal{L}_{\mu,p}^+ u, u\right\rangle\\
&\geq \left\|u_k\right\|^p_{X_p(\Omega)}-\left\|u_k\right\|^{p-1}_{X_p(\Omega)}\|u\|_{X_p(\Omega)}-\left\|u_k\right\|_{X_p(\Omega)}\|u\|^{p-1}_{X_p(\Omega)}+\|u\|^p_{X_p(\Omega)} \\
& = \left(\left\|u_k\right\|^{p-1}_{X_p(\Omega)}-\left\|u\right\|^{p-1}_{X_p(\Omega)}\right)\left(\|u_k\|_{X_p(\Omega)}-\|u\|_{X_p(\Omega)}\right).
\end{align*}

Thus, we have $\left\|u_k\right\|_{X_p(\Omega)} \rightarrow\|u\|_{X_p(\Omega)}$. Therefore,  using the fact that the space $X_{p}(\Omega)$ is  uniformly convex, we conclude that  $u_k \rightarrow u$ in $X_{p}(\Omega)$. 
\end{proof}

\begin{thm} \label{closep} Let $\Omega \subset \mathbb{R}^{N}$ be a bounded domain with Lipschitz boundary, and let $\mu=\mu^+$ satisfy \eqref{measure1}. Let $s_{\sharp}$ be as in \eqref{measure4}, and assume $1 < p < N/s_{\sharp}$.
    The set of all $(s,\mu^+)$-eigenvalues, that is, the spectrum $\sigma(s, \mu^+)$ to \eqref{Eig problem} is closed.
\end{thm}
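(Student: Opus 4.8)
## Proof Plan for Theorem~\ref{closep}

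The plan is to show that the spectrum $\sigma(s,\mu^+)$ is sequentially closed: if $(\lambda_k)$ is a sequence of $(s,\mu^+)$-eigenvalues with $\lambda_k \to \lambda_\infty$ in $\mathbb{R}$, then $\lambda_\infty$ is itself an $(s,\mu^+)$-eigenvalue. First I would record that every eigenvalue is positive by Theorem~\ref{principal eigenvalue P} (applied to $\mathcal{L}_{\mu,p}^+$, i.e. with $\mu^- \equiv 0$), so $\lambda_\infty \geq \lambda_{1,\mu^+}(\Omega) > 0$; in particular the limit point is not $0$ and the normalization below makes sense. For each $k$, pick an eigenfunction $u_k \in X_p(\Omega)\setminus\{0\}$ associated with $\lambda_k$, normalized so that $\|u_k\|_{L^p(\Omega)} = 1$. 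Testing the weak formulation \eqref{weak solution of eig problem} with $v = u_k$ gives $\|u_k\|_{X_p(\Omega)}^p = \lambda_k$, and since $\lambda_k \to \lambda_\infty$ the sequence $(\lambda_k)$ is bounded, hence $(u_k)$ is bounded in $X_p(\Omega)$.

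Next I would extract a subsequence (not relabeled) such that $u_k \rightharpoonup u_\infty$ weakly in $X_p(\Omega)$ and, by the compact embedding in Theorem~\ref{embedding lem} (using $1 < p < N/s_\sharp$, so $L^p(\Omega)$ is in the compact range), $u_k \to u_\infty$ strongly in $L^p(\Omega)$ and a.e. in $\Omega$. Strong $L^p$ convergence gives $\|u_\infty\|_{L^p(\Omega)} = 1$, so in particular $u_\infty \not\equiv 0$. The key step is to upgrade the weak convergence to strong convergence in $X_p(\Omega)$ via Proposition~\ref{S+ operator}: I would compute
\begin{align*}
\langle \mathcal{L}_{\mu,p}^+ u_k,\, u_k - u_\infty\rangle
&= \langle \mathfrak{L}_p'(u_k),\, u_k - u_\infty\rangle \\
&= \lambda_k \int_\Omega |u_k|^{p-2} u_k (u_k - u_\infty)\, dx,
\end{align*}
where the first equality is \eqref{derivative of L} and the second is the weak formulation with test function $u_k - u_\infty \in X_p(\Omega)$. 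Since $(|u_k|^{p-2}u_k)$ is bounded in $L^{p'}(\Omega)$ and $u_k - u_\infty \to 0$ strongly in $L^p(\Omega)$, Hölder's inequality shows the right-hand side tends to $0$; combined with boundedness of $\lambda_k$ this yields $\limsup_{k\to\infty}\langle \mathcal{L}_{\mu,p}^+ u_k,\, u_k - u_\infty\rangle = 0$. Proposition~\ref{S+ operator} then gives $u_k \to u_\infty$ strongly in $X_p(\Omega)$.

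Finally I would pass to the limit in the weak formulation. Fix an arbitrary $v \in X_p(\Omega)$. Strong convergence $u_k \to u_\infty$ in $X_p(\Omega)$ implies, by the continuity of the map $w \mapsto \langle \mathfrak{L}_p'(w), v\rangle$ (equivalently, continuity of the duality pairing $\langle\cdot,\cdot\rangle_+$ jointly, or boundedness of $|u_k(x)-u_k(y)|^{p-2}(u_k(x)-u_k(y))$ in the appropriate $L^{p'}$-space with a.e. convergence), that
\[
\langle \mathfrak{L}_p'(u_k), v\rangle \longrightarrow \langle \mathfrak{L}_p'(u_\infty), v\rangle,
\]
while the right-hand side $\lambda_k \int_\Omega |u_k|^{p-2}u_k\, v\, dx \to \lambda_\infty \int_\Omega |u_\infty|^{p-2}u_\infty\, v\, dx$ by strong $L^p$ convergence and $\lambda_k \to \lambda_\infty$. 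Hence $u_\infty$ solves \eqref{weak solution of eig problem} with eigenvalue $\lambda_\infty$, and since $u_\infty \neq 0$, $\lambda_\infty \in \sigma(s,\mu^+)$. The main obstacle is the strong-convergence upgrade: the Rayleigh quotient for $\mathcal{L}_{\mu,p}^+$ is not scaling-invariant in the general superposition setting, so one cannot normalize by the $X_p$-norm; the clean route is precisely the $(S_+)$-type property in Proposition~\ref{S+ operator} together with the uniform convexity of $X_p(\Omega)$ (Lemma~\ref{Uniform convexity}), which is why I route the argument through that proposition rather than attempting a direct Brezis--Lieb decomposition.
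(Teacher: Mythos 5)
Your proposal is correct and follows essentially the same route as the paper's proof: normalize the eigenfunctions in $L^p(\Omega)$, get boundedness in $X_p(\Omega)$ from $\|u_k\|_{X_p(\Omega)}^p=\lambda_k$, use the compact embedding to pass to weak $X_p$ and strong $L^p$ limits, show $\langle \mathfrak{L}_p'(u_k),u_k-u\rangle\to 0$ via H\"older, invoke Proposition~\ref{S+ operator} to upgrade to strong $X_p$ convergence, and pass to the limit in the weak formulation. The only cosmetic differences are your (unneeded but harmless) remark on positivity of eigenvalues and the slightly more explicit justification of the limit $\langle \mathfrak{L}_p'(u_k),v\rangle\to\langle \mathfrak{L}_p'(u_\infty),v\rangle$.
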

\begin{proof} 
 Let $\lambda \in \overline{\sigma(s, \mu^+)}.$  Then, there exists a sequence of eigenvalues $\{\lambda_{k, \mu^+}\}$ of the problem \eqref{Eig problem} such that $\lambda_{k, \mu^+} \rightarrow \lambda$. Then, $\{\lambda_{k, \mu^+}\}$ is a bounded sequence. For each $k \in \mathbb{N}$, let $u_k$ be an eigenfunction corresponding to the eigenvalue $\lambda_{k, \mu^+}$ such that $\int_{\Omega} \left|u_k\right|^p d x=1$. Then we have 
    \begin{equation}\label{eq for closed}
        \left\langle \mathfrak{L}_{p}^{\prime}(u_k), v\right\rangle=\lambda_{k, \mu^+} \int_{\Omega} |u_k|^{p-2} u_k v d x ,
    \end{equation}
    for all $v \in X_{p}(\Omega)$.
  
 By taking $u_k$ as the test function for the eigenpair $\left(\lambda_{k, \mu^+}, u_k\right)$ in the weak formulation \eqref{eq for closed}, we have
\begin{equation*}
  \lambda_{k, \mu^+} =\left\|u_k\right\|^p_{X_p(\Omega)}. 
\end{equation*}
Hence, the sequence $\{u_k\}$ is bounded in $X_{p}(\Omega).$  Since $X_{p}(\Omega)$ is a reflexive Banach space, there exists a subsequence, still denoted by $\{u_k\}$, such that $u_k \rightharpoonup u$ weakly in $X_{p}(\Omega)$. Then, by Theorem \ref{embedding lem}, we conclude that $u_k \rightarrow u$, up to a subsequence, in $L^q(\Omega)$ for $1 \leq q<p_{s_\sharp}^*$. Therefore, testing \eqref{eq for closed} with $v=u_k-u$, using the Hölder's inequality, we get
 \begin{align}\label{eq for closed e1}
 \begin{split}
       \left\langle \mathfrak{L}_{p}^{\prime}(u_k), u_k-u\right\rangle&=\lambda_{k, \mu^+} \int_{\Omega} |u_k|^{p-2} u_k (u_k-u) d x \\
        &\leq \lambda_{k, \mu^+} \left\|u_k-u\right\|_{L^p(\Omega)}\left\|u_k\right\|_{L^p(\Omega)}^{p-1} \to 0, \text{ as } k\to \infty.
 \end{split}
    \end{align}
Then, by applying the weak convergence $u_k \rightharpoonup u$ in $X_{p}(\Omega)$ and Proposition \ref{S+ operator}, we obtain $u_k \rightarrow u$ in $X_p(\Omega).$
Therefore, passing to the limit under the integral sign in \eqref{eq for closed}, we obtain
\begin{equation*}
        \left\langle \mathfrak{L}_{p}^{\prime}(u), v\right\rangle=\lambda \int_{\Omega} |u|^{p-2} u v d x ,
    \end{equation*}
for all $v \in X_{p}(\Omega)$. Moreover, $\|u\|_{L^p(\Omega)}=\lim _{k \rightarrow \infty}\left\|u_{k}\right\|_{L^p(\Omega)}=1$. Hence, $(\lambda, u)$ is an eigenpair to \eqref{Eig problem P}. This concludes the proof. \end{proof}

Finally, for convenience, we combine the above results and state the main theorem of this section.
\begin{thm}\label{first eigenvalue exist +}
  Let $\Omega \subset \mathbb{R}^{N}$ be a bounded domain with Lipschitz boundary, and let $\mu=\mu^+$ satisfy \eqref{measure1}. Let $s_{\sharp}$ be as in \eqref{measure4}, and assume $1 < p < N/s_{\sharp}$. Then the statements below concerning the eigenvalues and eigenfunctions of problem \eqref{Eig problem} associated with $\mathcal{L}_{\mu, p}^+$ hold.
\begin{enumerate}[(i)]
    \item The first eigenvalue $\lambda_{1, \mu^+}(\Omega)$ is given by
\begin{equation}\label{first eigenvalue+}
\lambda_{1, \mu^+}(\Omega):=\inf_{u \in X_{p}(\Omega) \backslash\{0\}} \frac{\int_{[0,1]}[u]_{s,p}^{p} d \mu^{+}(s)}{\int_{\Omega}|u|^{p} d x} . 
\end{equation}
\item There exists a  function $e_{1, \mu^+} \in X_p(\Omega)$, an eigenfunction corresponding to the eigenvalue $\lambda_{1, \mu^+}(\Omega) $ which attains the minimum in \eqref{first eigenvalue+}.

\item The set of eigenvalues of the problem \eqref{Eig problem} consists of a sequence $(\lambda_{n, \mu^+})$ with
  \begin{align}
      0<\lambda_{1, \mu^+}<\lambda_{2, \mu^+} \leq \ldots \leq \lambda_{n, \mu^+} \leq \lambda_{n+1, \mu^+} \leq \ldots~\text{and }&
      \lambda_{n, \mu^+} \rightarrow \infty\,\,\,\text{as}\,\,n \rightarrow \infty.
  \end{align}
  
  \item  Every eigenfunction corresponding to the eigenvalue $\lambda_{1, \mu^+}(\Omega)$ in \eqref{first eigenvalue+} does not change sign and $\lambda_{1, \mu^+}(\Omega)$ is simple.
  \item The set of all $(s,\mu^+)$-eigenvalues, that is the spectrum $\sigma(s, \mu^+)$ to \eqref{Eig problem} is closed.
  \item Let $u \geq 0$ in $\Omega$ be an eigenfunction of \eqref{Eig problem} associated with an eigenvalue $\lambda > 0$. Then $u > 0$ in $\Omega$.

\item Let $v$ be an eigenfunction of \eqref{Eig problem} associated to an eigenvalue $\lambda>\lambda_{1, \mu^+}(\Omega)$. Then $v$ must be sign-changing.

\item Let $v$ be an eigenfunction of \eqref{Eig problem} associated to an eigenvalue $\lambda\neq \lambda_{1, \mu^+}(\Omega)$. Then there is a positive constant $C$ independent of $v$ such that  
\begin{equation*}
    \lambda \geq C(N, s_{\sharp},p)\left|\Omega_{+}\right|^{-\frac{ps_{\sharp}}{N}} \text { and  } \lambda \geq C(N, s_{\sharp},p)\left|\Omega_{-}\right|^{-\frac{ps_{\sharp}}{N}} ,
\end{equation*}
where $\Omega_{+}:=\{x \in \Omega:\, v>0\}$ and $\Omega_{-}:=\{x \in \Omega:\,v<0\}$.

\item   The first eigenvalue $\lambda_{1, \mu^+}$ of the problem \eqref{Eig problem} is isolated.

\item All eigenfunctions for positive eigenvalues $u \in X_p(\Omega)$ of \eqref{Eig problem} are globally bounded, that is, $u \in L^\infty(\mathbb{R}^N).$ 
\end{enumerate}
   \end{thm}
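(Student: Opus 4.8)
The plan is to assemble this statement from the results already established in Sections~\ref{sec3}--\ref{sec6}, after the single observation that in the present setting $\mu=\mu^{+}$ (so $\mu^{-}\equiv 0$) the hypotheses \eqref{measure2} and \eqref{measure3} hold with $\gamma=0$, and \eqref{measure5} holds trivially. Consequently $\gamma=0\in[0,\gamma_{0}]$ for the constant $\gamma_{0}$ supplied by Theorem~\ref{principal eigenvalue P}, and that theorem applies with $\mathcal{L}_{\mu,p}=\mathcal{L}_{\mu,p}^{+}$. Items (i), (ii) and (iv) then follow at once: formula \eqref{first eigenvalue+} is \eqref{first eigenvalue} specialised to $\mu^{-}\equiv 0$, a minimiser $e_{1,\mu^{+}}$ exists, every first eigenfunction is of fixed sign, and $\lambda_{1,\mu^{+}}(\Omega)$ is simple. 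Theorem~\ref{principal eigenvalue P}(iii) produces a non-decreasing, diverging sequence $0<\lambda_{1,\mu^{+}}\le\lambda_{2,\mu^{+}}\le\cdots$ exhausting the spectrum, which together with Theorem~\ref{closep} gives the non-strict part of (iii) and item (v).

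It then remains only to quote the maximum-principle-based statements, all of which were proved precisely in the case $\mu=\mu^{+}$ and therefore require no adaptation: item (vi) is Theorem~\ref{strong maximum for eigenvalue}; item (vii) is Theorem~\ref{result sign changing}; item (viii) is the measure estimate \eqref{need for sign changing} proved in the lemma immediately preceding Theorem~\ref{properties of 1 eigenvalue}; item (ix) is Theorem~\ref{properties of 1 eigenvalue}; and item (x) is Theorem~\ref{global boundedness result}. The only point that is not a verbatim citation is the strict inequality $\lambda_{1,\mu^{+}}(\Omega)<\lambda_{2,\mu^{+}}(\Omega)$ in item (iii). For this I would combine simplicity with isolatedness: by item (ix) there is $\varepsilon>0$ such that the interval $(\lambda_{1,\mu^{+}},\lambda_{1,\mu^{+}}+\varepsilon)$ contains no eigenvalue, so the next term $\lambda_{2,\mu^{+}}$ in the Lusternik--Schnirelmann enumeration is either $\lambda_{1,\mu^{+}}$ or at least $\lambda_{1,\mu^{+}}+\varepsilon$; the first alternative is ruled out because, $\lambda_{1,\mu^{+}}$ being simple, the critical set of $\tilde{\mathfrak{I}}_{p}$ at the minimal level is $\{\pm e_{1,\mu^{+}}\}$, which has genus $1$, so $\lambda_{1,\mu^{+}}$ occurs only once in the enumeration and hence $\lambda_{2,\mu^{+}}>\lambda_{1,\mu^{+}}$.

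There is no genuine obstacle in this theorem: it is a compendium of facts already proven, and the only care needed is the bookkeeping that the hypotheses of Theorem~\ref{principal eigenvalue P} are automatically satisfied when $\mu^{-}\equiv 0$, together with the short simplicity-plus-isolatedness argument for the strict gap between the first two eigenvalues. If one wished to be entirely self-contained, the mildest technical point would be to record explicitly that the Lusternik--Schnirelmann values together with the closedness of $\sigma(s,\mu^{+})$ and the divergence $\lambda_{n,\mu^{+}}\to\infty$ indeed exhaust the spectrum and present it as a discrete increasing sequence, which is standard.
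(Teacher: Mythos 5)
Your proposal is correct and follows essentially the paper's own route: the theorem is presented as a compendium whose proof is precisely the assembly of Theorem~\ref{principal eigenvalue P} applied with $\mu^{-}\equiv 0$ (so $\gamma=0$ and \eqref{measure5} holds trivially), together with Theorems~\ref{strong maximum for eigenvalue}, \ref{result sign changing}, \ref{properties of 1 eigenvalue}, \ref{closep}, \ref{global boundedness result} and the lemma giving \eqref{need for sign changing}. Your simplicity-plus-isolatedness (genus) argument for the strict gap $\lambda_{1,\mu^{+}}<\lambda_{2,\mu^{+}}$ is sound and supplies the one point the compendium leaves implicit, which the paper otherwise settles via Theorem~\ref{properties of 2 eigenvalue} in Section~\ref{sec8}.
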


\section{Faber-Krahn inequality for nonlinear superposition operators} \label{sec7}

This section is devoted to the study of the shape optimization problem 
\begin{equation}\label{eq:shape-opt}
    \inf\{\lambda_{1, \mu^+}(\Omega) :\, |\Omega| = \rho\},
\end{equation}
where \( B \) denotes the Euclidean ball with volume \( \rho \), via the Faber--Krahn inequality for the operator \( \mathcal{L}_{\mu, p}^+ \). 
Since we are dealing with nonlinear superposition operators of mixed fractional order, it is necessary to employ a generalized form of the rearrangement inequality for the Sobolev spaces naturally associated with such operators. 
Accordingly, the proof of the Faber--Krahn inequality relies on an Almgren-Lieb type rearrangement result, whose proof follows the arguments of Theorem~A.1 in Frank and Seiringer~\cite{FS-2008}. 
We state the result below.

\begin{lem}\label{nonloc rear}
Let $\Omega$ be an open and bounded subset of $\mathbb{R}^{N}$. Assume that $\mu=\mu^{+}$ satisfies \eqref{measure1}. Let $s_{\sharp}$ be as in \eqref{measure4} and $1<p<N/s_{\sharp}$.
Then for any $u\in X_p(\Omega)$ we have
\begin{align*}
\begin{split}
 &\int_{(0,1)}c_{N,s,p}\iint_{\mathbb{R}^{2N}}  \frac{|u(x)-u(y)|^p}{|x-y|^{N+p s}} d x d yd\mu^{+}(s) \\
 & \quad \quad \quad \quad \quad \quad \quad \quad \quad \quad \quad \geq \int_{(0,1)}c_{N,s,p}\int_{\mathbb{R}^{2N} } \frac{\left|u^*(x)-u^*(y)\right|^p}{|x-y|^{N+p s}} d x d yd\mu^{+}(s),  
\end{split}
\end{align*}
where $u^*$ is a symmetric decreasing rearrangement of $u$.
If $p=1$, then equality holds iff $u$ is proportional to a non-negative function $v$ such that the level set $\{v>\tau\}$ is a ball for a.e. $\tau>0$. If $p>1$, then equality holds iff $u$ is proportional to a translate of a symmetric decreasing function.
\end{lem}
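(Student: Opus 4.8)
The plan is to deduce the inequality from the fixed-order Almgren--Lieb/Frank--Seiringer rearrangement inequality applied slicewise in $s$ and then integrated against $d\mu^{+}(s)$. First I would record two preliminary reductions. Since $u\in X_p(\Omega)$ satisfies $\int_{(0,1)}[u]_{s,p}^p\,d\mu^{+}(s)<\infty$, the Gagliardo seminorm $[u]_{s,p}$ is finite for $\mu^{+}$-a.e.\ $s\in(0,1)$, so for such $s$ the function $u$ (extended by $0$ outside $\Omega$) lies in $W^{s,p}(\mathbb{R}^N)$ and the classical theory applies. Moreover, the pointwise bound $\big||u(x)|-|u(y)|\big|\le|u(x)-u(y)|$ gives $[|u|]_{s,p}\le[u]_{s,p}$ for every $s$, while $(|u|)^{*}=u^{*}$; hence it suffices to prove the inequality (and to identify the equality cases) for nonnegative $u$, while keeping track of the extra sign information for the equality analysis.

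Second, for fixed $s\in(0,1)$, $1\le p<\infty$ and nonnegative $u\in W^{s,p}(\mathbb{R}^N)$, the rearrangement inequality of Almgren--Lieb \cite{AL89} (see also Theorem~A.1 of Frank--Seiringer \cite{FS-2008}) yields
\[
\iint_{\mathbb{R}^{2N}}\frac{|u(x)-u(y)|^p}{|x-y|^{N+ps}}\,dx\,dy
\ \ge\ \iint_{\mathbb{R}^{2N}}\frac{|u^{*}(x)-u^{*}(y)|^p}{|x-y|^{N+ps}}\,dx\,dy .
\]
Multiplying by the positive constant $c_{N,s,p}$ and integrating over $(0,1)$ with respect to $d\mu^{+}(s)$ --- which is legitimate since the integrands are nonnegative and measurable in $s$ --- produces the asserted inequality. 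In particular $\int_{(0,1)}c_{N,s,p}[u^{*}]_{s,p}^p\,d\mu^{+}(s)<\infty$, so $u^{*}\in X_p(\Omega^{*})$, where $\Omega^{*}$ is the ball centered at the origin with $|\Omega^{*}|=|\Omega|$; this is exactly the inclusion needed to run the Faber--Krahn argument of Theorem~\ref{FKintro}.

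For the equality statement I would assume $\mu^{+}((0,1))>0$ (otherwise both sides vanish identically and there is nothing to characterize). If equality holds in the integrated inequality, then, because the slicewise inequality is nonstrict and both sides are integrated against the same finite measure, equality holds in the fixed-$s$ inequality for $\mu^{+}$-a.e.\ $s\in(0,1)$; fix one such order $s_0$. If $p>1$: the intermediate equality $[u]_{s_0,p}=[|u|]_{s_0,p}$ together with the strict form of $\big||a|-|b|\big|\le|a-b|$ for $ab<0$ forces $u$ to have constant sign a.e., and the equality $[|u|]_{s_0,p}=[(|u|)^{*}]_{s_0,p}$ combined with the equality clause of Frank--Seiringer's theorem forces $|u|$ to be a translate of a symmetric decreasing function; hence $u$ is proportional to a translate of a symmetric decreasing function. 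If $p=1$: the equality clause of the fixed-$s_0$ result forces $u$ to be proportional to a nonnegative function $v$ whose superlevel sets $\{v>\tau\}$ are balls for a.e.\ $\tau>0$. In both cases the structural conclusion does not involve $s_0$, so it is genuinely $s$-independent. Conversely, if $u$ has the stated structure, then $[u]_{s,p}=[u^{*}]_{s,p}$ for every $s\in(0,1)$ --- by translation invariance of the Gagliardo seminorm when $p>1$, and by the converse direction of Frank--Seiringer's equality characterization when $p=1$ --- and integrating gives equality.

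The inequality itself is a routine consequence of the slicewise result and monotone integration, so the only genuinely delicate point is the equality analysis: one must verify that the equality characterization for $\iint|u(x)-u(y)|^p|x-y|^{-N-ps}\,dx\,dy$ is the same for all $s\in(0,1)$. This holds because the rearrangement mechanism is a Riesz-type rearrangement inequality for the kernel $|x-y|^{-N-ps}$, which is strictly radially decreasing for every admissible $s$; its equality cases (balls up to translation) therefore depend only on the strict monotonicity of the kernel, not on the particular value of $s$. Ensuring that $\mu^{+}$ actually charges $(0,1)$ so that this characterization is not vacuous is the remaining bookkeeping point.
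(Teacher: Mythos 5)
Your proof is correct, and it takes a somewhat different route from the paper's. You apply the fixed-order Almgren--Lieb/Frank--Seiringer rearrangement inequality slice by slice in $s$ and then integrate against $d\mu^{+}(s)$, handling equality by noting that equality in the integrated inequality forces equality for $\mu^{+}$-a.e.\ $s$ and that the fixed-$s$ equality characterization does not depend on $s$. The paper instead folds the entire superposition into a single decomposition: using $|x-y|^{-(N+ps)}=\Gamma((N+ps)/2)^{-1}\int_0^{\infty}t^{(N+ps)/2-1}e^{-t|x-y|^{2}}\,dt$ it rewrites $\int_{(0,1)}c_{N,s,p}[u]_{s,p}^{p}\,d\mu^{+}(s)$ as $\int_0^{\infty}w(t)\,K_t(u)\,dt$ with $K_t(u)=\iint_{\mathbb{R}^{2N}}|u(x)-u(y)|^{p}e^{-t|x-y|^{2}}\,dx\,dy$ and a nonnegative weight $w$, and then invokes Lemma A.2 of Frank--Seiringer (the Gaussian-kernel rearrangement inequality together with its equality cases) once. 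The two arguments rest on the same machinery---the paper's Gaussian subordination is precisely how Frank--Seiringer prove their Theorem A.1, so your slicewise use of that theorem is a modular repackaging of the same idea, and both ultimately require an ``a.e.\ in the superposition parameter'' step for the equality analysis ($s$ in yours, $t$ in the paper's). Your version is more explicit about signs (reducing to $|u|$ and excluding sign changes at equality), which the paper's one-line proof leaves implicit, and your caveat that the equality characterization becomes vacuous when $\mu^{+}((0,1))=0$ (e.g.\ $\mu^{+}=\delta_{1}$) is a fair observation that the paper does not address either. One minor side remark: your parenthetical claim that the inequality alone yields $u^{*}\in X_p(\Omega^{*})$ is slightly too quick when $\mu^{+}(\{1\})>0$ or $\mu^{+}(\{0\})>0$, since the $X_p$-norm also carries the endpoint contributions; these are covered by the classical P\'olya--Szeg\"o inequality and equimeasurability, which the paper invokes separately in the Faber--Krahn argument, so nothing is lost for the lemma itself.
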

\begin{proof}
    First, we have the following representation
\begin{align*}
    &\int_{(0,1)}c_{N,s,p}\iint_{\mathbb{R}^{2N}} \frac{|u(x)-u(y)|^p}{|x-y|^{N+p s}} d x d yd\mu^{+}(s)\\
    &\quad \quad \quad \quad \quad \quad \quad \quad \quad \quad \quad \quad = \int_0^{\infty}  \left(\int_{(0,1)}\frac{c_{N,s,p}}{\Gamma((N+p s) / 2)} t^{(N+p s) / 2-1}d\mu^{+}(s)\right)K_t(u) d t,
\end{align*}
with
\begin{equation*}
    K_t(u)=\iint_{\mathbb{R}^{2N}} |u(x)-u(y)|^p e^{-t|x-y|^2}dxdy.
\end{equation*}
Then, applying Lemma A.2 from \cite{FS-2008} concludes the proof.
\end{proof}

The following theorem is the main result of this section.

\begin{thm}\label{thm Faber-Krahn inequality}
    (Faber-Krahn inequality for $\lambda_{\mu^{+}}(\Omega)$ ). Let $\Omega \subset \mathbb{R}^N$ be a bounded open set with boundary $\partial \Omega$ of class $C^1$. Assume that $\mu=\mu^{+}$ satisfies \eqref{measure1}. Let $s_{\sharp}$ be as in \eqref{measure4} and $1<p<N/s_{\sharp}$.  Let $\rho:=|\Omega| \in(0, \infty)$, and let $B$ be any Euclidean ball with volume $\rho$. Then,
\begin{equation}\label{Faber-Krahn inequality}
    \lambda_{1, \mu^+}(\Omega) \geq \lambda_{1, \mu^+}\left(B\right).
\end{equation}
Moreover, if the equality holds in \eqref{Faber-Krahn inequality}, then $\Omega$ is a ball.
\end{thm}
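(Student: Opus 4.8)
The plan is to derive the Faber--Krahn inequality directly from the variational characterization of $\lambda_{1,\mu^+}$ in Theorem~\ref{first eigenvalue exist +}, combined with the rearrangement inequality of Lemma~\ref{nonloc rear}. First I would take the first eigenfunction $e_{1,\mu^+}\in X_p(\Omega)$ of $\mathcal{L}_{\mu,p}^+$ on $\Omega$; by Theorem~\ref{first eigenvalue exist +}(iv) (and Theorem~\ref{strict positive +}) it does not change sign, so I may assume $e_{1,\mu^+}\ge 0$ in $\Omega$ and $e_{1,\mu^+}=0$ in $\mathbb{R}^N\setminus\Omega$. Let $u^*$ denote its symmetric decreasing rearrangement. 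Since $|\Omega|=\rho=|B|$, the function $u^*$ is supported in $B$ (up to translation), hence $u^*\in X_p(B)$, and by equimeasurability $\|u^*\|_{L^p(B)}=\|e_{1,\mu^+}\|_{L^p(\Omega)}$. The numerator in the Rayleigh quotient is the quantity $\int_{(0,1)}c_{N,s,p}\iint_{\mathbb{R}^{2N}}\frac{|u(x)-u(y)|^p}{|x-y|^{N+ps}}\,dx\,dy\,d\mu^+(s)$ together with the contributions of the atoms at $0$ and $1$; the atom at $0$ is $\mu^+(\{0\})\|u\|_{L^p}^p$, which is invariant under rearrangement, and the atom at $1$ is $\mu^+(\{1\})\|\nabla u\|_{L^p}^p$, which does not increase under symmetric rearrangement by the classical P\'olya--Szeg\H{o} inequality. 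By Lemma~\ref{nonloc rear} the purely nonlocal part also does not increase. Therefore
\begin{equation*}
\|u^*\|_{X_p(B)}^p=\int_{[0,1]}[u^*]_{s,p}^p\,d\mu^+(s)\le \int_{[0,1]}[e_{1,\mu^+}]_{s,p}^p\,d\mu^+(s)=\|e_{1,\mu^+}\|_{X_p(\Omega)}^p.
\end{equation*}
Dividing by $\|u^*\|_{L^p(B)}^p=\|e_{1,\mu^+}\|_{L^p(\Omega)}^p$ and using \eqref{first eigenvalue+} for $B$ as an infimum, we get
\begin{equation*}
\lambda_{1,\mu^+}(B)\le \frac{\|u^*\|_{X_p(B)}^p}{\|u^*\|_{L^p(B)}^p}\le \frac{\|e_{1,\mu^+}\|_{X_p(\Omega)}^p}{\|e_{1,\mu^+}\|_{L^p(\Omega)}^p}=\lambda_{1,\mu^+}(\Omega),
\end{equation*}
which is \eqref{Faber-Krahn inequality}.

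For the equality case, suppose $\lambda_{1,\mu^+}(\Omega)=\lambda_{1,\mu^+}(B)$. Then all the inequalities in the chain above must be equalities. In particular $u^*$ realizes the infimum defining $\lambda_{1,\mu^+}(B)$, so it is a first eigenfunction on $B$; and, crucially, equality must hold in the rearrangement inequality of Lemma~\ref{nonloc rear}. Here I would split into cases according to whether $\mu^+((0,1))>0$ or $\mu^+((0,1))=0$. If $\mu^+$ charges the open interval $(0,1)$, then there is some $s$-value in $\mathrm{supp}(\mu^+)\cap(0,1)$ for which the nonlocal Gagliardo energy of $e_{1,\mu^+}$ equals that of $u^*$; by the equality statement in Lemma~\ref{nonloc rear} (for $p>1$, equality forces $e_{1,\mu^+}$ to be a translate of a symmetric decreasing function), $e_{1,\mu^+}$ is, up to translation, equal to $u^*$, and since $\{e_{1,\mu^+}>0\}=\Omega$ up to a null set while $\{u^*>0\}$ is a ball, $\Omega$ must be a ball up to a set of measure zero; the $C^1$ regularity of $\partial\Omega$ then upgrades this to $\Omega$ being genuinely a ball. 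If instead $\mu^+((0,1))=0$, then the operator reduces to (a multiple of) $-\Delta_p$ plus possibly a zeroth-order term, and the equality case is the classical one for the $p$-Laplacian Faber--Krahn inequality, for which rigidity under $C^1$ boundary is known; one invokes equality in the P\'olya--Szeg\H{o} inequality together with, e.g., the arguments of Brothers--Ziemer / the results cited in the introduction.

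The main obstacle I anticipate is the equality/rigidity statement rather than the inequality itself. Two technical points require care. First, one must be sure that equality in the \emph{full} Rayleigh quotient really does force equality in each constituent inequality; this is immediate since all terms are nonnegative and the denominators are preserved. Second, and more delicate, is transferring ``$\{e_{1,\mu^+}>0\}$ is a ball up to measure zero'' to ``$\Omega$ is a ball'': this uses that $e_{1,\mu^+}>0$ a.e. in $\Omega$ (Theorem~\ref{strict positive +}, via the strong minimum principle Theorem~\ref{min p}), so $\{e_{1,\mu^+}>0\}$ and $\Omega$ differ by a null set, and then the $C^1$ hypothesis on $\partial\Omega$ rules out pathological modifications of a ball by a null set. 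A subtle case is when $\mathrm{supp}(\mu^+)\cap(0,1)=\emptyset$ but $\mu^+(\{1\})>0$: here the rigidity comes entirely from the P\'olya--Szeg\H{o} equality case for $\|\nabla u\|_{L^p}$, which again yields that $e_{1,\mu^+}$ is a translate of a radially symmetric decreasing function on a ball, and one concludes as before. Throughout, one should also check that $u^*$ genuinely lies in $X_p(B)$, i.e. that the full norm $\rho_p(u^*)$ is finite; this follows from the rearrangement inequalities applied to each $s$ together with $e_{1,\mu^+}\in X_p(\Omega)$ and the monotone convergence / Fubini argument used implicitly in Lemma~\ref{nonloc rear}.
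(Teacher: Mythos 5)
Your proposal is correct and follows essentially the same route as the paper: symmetrize the first eigenfunction, control the atom at $s=1$ by the classical P\'olya--Szeg\"o inequality and the part on $(0,1)$ by Lemma~\ref{nonloc rear}, note that the atom at $s=0$ and the $L^p$ denominator are preserved, and compare Rayleigh quotients, with the equality case settled through the rigidity statement of Lemma~\ref{nonloc rear}. If anything, your treatment of the equality case is more careful than the paper's, since you explicitly handle the degenerate situation $\mu^{+}((0,1))=0$ (where the nonlocal rigidity gives no information and one must fall back on the P\'olya--Szeg\"o/Brothers--Ziemer rigidity for $\|\nabla u\|_{L^p}$), a case the paper's argument passes over silently.
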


\begin{proof} We denote the Euclidean ball with centre at origin $0$ and volume $\rho$ by  $\widehat{B}$. Assume that $u_0 \in X_{p}(\Omega) \backslash\{0\}$ is the principal eigenfunction of $\mathcal{L}_{\mu, p}^{+}$ in $\Omega.$ Then, we denote the (decreasing) Schwarz symmetrization of $u_0$ by $u_0^*: \mathbb{R}^N \rightarrow \mathbb{R}.$
 Now, since $u_0 \in X_{p}(\Omega)$, it follows from the Polya-Szeg\"o theorem (see \cite{PS51}) that
\begin{equation}\label{eq7.2}
u_0^* \in X_p(\widehat{B}) \quad \text { and } \quad \int_{\widehat{B}}\left|\nabla u_0^*\right|^p d x \leq \int_{\Omega}|\nabla u|^p d x.
\end{equation}
Furthermore, by Lemma \ref{nonloc rear} we also have
\begin{align}\label{eq7.3}
\begin{split}
     &\int_{(0,1)}c_{N,s,p}\iint_{\mathbb{R}^{2 N}} \frac{\left|u_0^*(x)-u_0^*(y)\right|^p}{|x-y|^{N+sp}} d x d y d\mu^{+}(s)\\
     & \quad \quad \quad \quad \quad \quad \quad \quad \quad \quad \quad \leq \int_{(0,1)}c_{N,s,p}\iint_{\mathbb{R}^{2 N}} \frac{\left|u_0(x)-u_0(y)\right|^p}{|x-y|^{N+sp}} d x d yd\mu^{+}(s).
\end{split}
\end{align}

Combining  all these facts and the inequality above, we conclude that
\begin{align}\label{eq7.4}
\begin{split}
    \lambda_{1, \mu^+}(\Omega) & =\int_{[0,1]}[u_0]_{s,p}^p d \mu^{+}(s) \geq \int_{[0,1]}[u_0^*]_{s,p}^p d \mu^{+}(s) =\lambda_{1, \mu^+}(\widehat{B}).
\end{split}
\end{align}

From inequality \eqref{eq7.4} and the translation invariance of $\lambda_{1, \mu^+}(\Omega)$, we conclude that \eqref{Faber-Krahn inequality} holds for every Euclidean ball $B$ of volume $\rho$.

To finish the proof of this result, we assume  that
$$
\lambda_{1, \mu^+}(\Omega)=\lambda_{1, \mu^+}(B)
$$
for some  ball (and therefore, for all balls) $B$ with $\left|B\right|=\rho$. Thus, using  \eqref{eq7.4} we have
\begin{align}
\begin{split}
    & \int_{[0,1]}c_{N,s,p}\iint_{\mathbb{R}^{2 N}} \frac{\left|u_0(x)-u_0(y)\right|^p}{|x-y|^{N+sp}} d x d yd\mu^{+}(s)=\lambda_{1, \mu^+}(\Omega) \\
&=\lambda_{1, \mu^+}(\widehat{B})=\int_{[0,1]}c_{N,s,p}\iint_{\mathbb{R}^{2 N}} \frac{\left|u_0^*(x)-u_0^*(y)\right|^p}{|x-y|^{N+sp}} d x d yd\mu^{+}(s).
\end{split}
\end{align}
Particularly, from \eqref{eq7.2}-\eqref{eq7.3} and together with the fact that $\|u_0\|_{L^p(\Omega)}= \|u^*_0\|_{L^p(\widehat{B})},$ we get
\begin{equation*}
    \int_{(0,1)}c_{N,s,p}\iint_{\mathbb{R}^{2 N}} \frac{\left|u_0(x)-u_0(y)\right|^p}{|x-y|^{N+sp}} d x d y\mu^{+}(s)= \int_{(0,1)}c_{N,s,p}\iint_{\mathbb{R}^{2 N}} \frac{\left|u_0^*(x)-u_0^*(y)\right|^p}{|x-y|^{N+sp}} d x d y\mu^{+}(s).
\end{equation*}
Then, again by Lemma \ref{nonloc rear}, $u_{0}$ must be proportional to a translate of a symmetric decreasing function. This insures that the set
\begin{equation*}
    \Omega=\left\{x \in \mathbb{R}^N: u_0(x)>0\right\}
\end{equation*}
must be a ball (up to a set of zero Lebesgue measure). This completes the proof of the theorem.
\end{proof}

\section{Analysis of second eigenvalue of operator $\mathcal{L}_{\mu, p}^+$} \label{sec8}

The main aim of this section is to investigate the well-definedness  of the second eigenvalue of $\mathcal{L}_{\mu,p}^{+}.$ To begin this section, let us define
\begin{equation*}
    \Gamma_{1}(\Omega)=\left\{\phi: \mathbb{S}^{1} \rightarrow \mathcal{M}: \phi \text { is odd and continuous }\right\}
\end{equation*}
and
\begin{equation}\label{2 eigenvalue def}
\lambda_{2, \mu^+}(\Omega)=\inf _{\phi \in \Gamma_{1}(\Omega)} \max _{u \in \operatorname{Im}(\phi)}\|u\|_{X_p(\Omega)}^{p}, 
\end{equation}
where $\operatorname{Im}(\phi):=\phi(\mathbb{S}^1) \subset \mathcal{M},$ denotes the Image of $\phi,$ and $\mathcal{M}$ is defined by  \begin{equation}\label{the set M P11}
    \mathcal{M}:=\left\{u \in X_{p}(\Omega): \quad \|u\|_{L^{p}(\Omega)}=1\right\}. 
\end{equation}

Then, adopting the method of \cite{BP16}, we obtain the following result concerning the second eigenvalue of the operator $\mathcal{L}_{\mu,p}^{+}$.

\begin{thm}\label{properties of 2 eigenvalue}
    Let $\Omega$ be an open bounded subset of $\mathbb{R}^{N}$. Let $\mu^+$ satisfy \eqref{measure1}. Let $s_{\sharp}$ be as in \eqref{measure4} and $1<p<N/s_{\sharp}$. Let $\lambda_{2, \mu^+}(\Omega)$ be the positive number defined in \eqref{2 eigenvalue def}. Then the following statements hold.
     
(i) $\lambda_{2, \mu^+}(\Omega)$ is an eigenvalue of the operator $\mathcal{L}_{\mu,p}^+$.

(ii) $\lambda_{2, \mu^+}(\Omega)>\lambda_{1, \mu^+}(\Omega)$.

(iii) If $\lambda>\lambda_{1, \mu^+}(\Omega)$ is an eigenvalue of $\mathcal{L}_{\mu,p}^{+}$, then $\lambda \geq \lambda_{2, \mu^+}(\Omega)$.

(iv) Every eigenfunction $u \in \mathcal{M}$ associated to $\lambda_{2, \mu^+}(\Omega)$ has to change sign.
\end{thm}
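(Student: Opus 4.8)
The plan is to prove Theorem~\ref{properties of 2 eigenvalue} by following the variational minimax strategy of Brasco--Parini~\cite{BP16}, adapting each step to the superposition operator $\mathcal{L}_{\mu,p}^{+}$ whose Rayleigh functional $u\mapsto\|u\|_{X_p(\Omega)}^p=\int_{[0,1]}[u]_{s,p}^p\,d\mu^+(s)$ lacks the homogeneity-under-scaling structure of a single $(s,p)$-seminorm. Throughout I will work on the $C^{1,1}$-manifold $\mathcal{M}=\{u\in X_p(\Omega):\|u\|_{L^p(\Omega)}=1\}$, where by Lemma~\ref{PS cond gen} (in the case $\mu^-\equiv0$) the restricted functional $\tilde{\mathfrak{L}}_p:=\mathfrak{L}_p|_{\mathcal M}$ satisfies the Palais--Smale condition, and where its critical points are exactly the normalized eigenfunctions of $\mathcal{L}_{\mu,p}^{+}$ with critical value $\tfrac1p$ times the corresponding eigenvalue.

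\textbf{Step 1: $\lambda_{2,\mu^+}(\Omega)$ is an eigenvalue.} The class $\Gamma_1(\Omega)$ of odd continuous maps $\phi:\mathbb S^1\to\mathcal M$ is nonempty and the number $\lambda_{2,\mu^+}(\Omega)$ in \eqref{2 eigenvalue def} is finite: one constructs an explicit competitor, e.g.\ using two disjointly supported first-eigenfunction-like bumps, to get an upper bound, and $\lambda_{2,\mu^+}(\Omega)\ge\lambda_{1,\mu^+}(\Omega)>0$ by definition. One then checks that $\lambda_{2,\mu^+}(\Omega)$ is a minimax value of Krasnoselskii/genus type: the family $\{\operatorname{Im}(\phi):\phi\in\Gamma_1(\Omega)\}$ consists of symmetric compact subsets of $\mathcal M$ of genus $\ge 2$ (an odd continuous image of $\mathbb S^1$ has genus $\ge2$ by Borsuk--Ulam), so $\lambda_{2,\mu^+}(\Omega)=p\,\inf_{\gamma^*(K)\ge2}\sup_{u\in K}\tilde{\mathfrak L}_p(u)$ up to the factor $p$. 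Since $\tilde{\mathfrak L}_p$ is even, $C^1$, bounded below, and satisfies $(PS)$ on $\mathcal M$, the standard deformation-lemma argument (Theorem~\ref{tech them for seq gen} and its proof in \cite{Struwe}) yields that this minimax level is a critical value; hence $\lambda_{2,\mu^+}(\Omega)\in\sigma(s,\mu^+)$. Here one uses essentially the closedness of the spectrum (Theorem~\ref{closep}) only indirectly.

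\textbf{Step 2: strict inequality $\lambda_{2,\mu^+}(\Omega)>\lambda_{1,\mu^+}(\Omega)$, and minimality (iii).} For (ii), suppose toward a contradiction that $\lambda_{2,\mu^+}(\Omega)=\lambda_{1,\mu^+}(\Omega)$. Using the isolatedness of $\lambda_{1,\mu^+}$ (Theorem~\ref{properties of 1 eigenvalue}) together with the simplicity of $\lambda_{1,\mu^+}$ and the fact that the only normalized first eigenfunctions are $\pm e_{1,\mu^+}$ (Theorem~\ref{first eigenvalue exist +}(iv) and Theorem~\ref{strict positive +}), one shows that a minimizing sequence of paths $\phi_n\in\Gamma_1(\Omega)$ with $\max\|\cdot\|_{X_p}^p\to\lambda_{1,\mu^+}$ must have images shrinking into an arbitrarily small neighbourhood of $\{\pm e_{1,\mu^+}\}$ in $\mathcal M$; but an odd loop cannot be squeezed into a disjoint pair of small balls around $e_{1,\mu^+}$ and $-e_{1,\mu^+}$ while remaining continuous (a connectedness/deformation contradiction, exactly as in \cite[Lemma~5.3 and Thm.~5.4]{BP16}). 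For (iii), if $\lambda$ is any eigenvalue with $\lambda>\lambda_{1,\mu^+}$, take a corresponding normalized eigenfunction $u_\lambda$; by (iv)-type reasoning $u_\lambda$ changes sign, and one builds an odd continuous path in $\Gamma_1(\Omega)$ passing through $u_\lambda$ (joining $e_{1,\mu^+}$ to $u_\lambda^+/\|u_\lambda^+\|_{L^p}$ to $-e_{1,\mu^+}$ along the positive cone and antisymmetrizing) on which $\|\cdot\|_{X_p}^p$ stays $\le p\mathfrak L_p(u_\lambda)=\lambda$; this forces $\lambda_{2,\mu^+}(\Omega)\le\lambda$. The lack of scaling invariance means the path must be reparametrized carefully so that it lands in $\mathcal M$ and the energy along it is controlled by convexity of $t\mapsto t^p$ (the inequalities in the proof of Theorem~\ref{result sign changing} are the relevant tool).

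\textbf{Step 3: nodal property (iv).} Let $u\in\mathcal M$ be an eigenfunction for $\lambda_{2,\mu^+}(\Omega)$. If $u$ did not change sign, then by Theorem~\ref{strict positive +} (if $u$ is first) or Theorem~\ref{result sign changing} (any eigenfunction for $\lambda>\lambda_{1,\mu^+}$ must change sign) we reach a contradiction the moment we know $\lambda_{2,\mu^+}(\Omega)>\lambda_{1,\mu^+}(\Omega)$ from Step~2: a non-sign-changing eigenfunction would be a first eigenfunction, hence its eigenvalue would be $\lambda_{1,\mu^+}(\Omega)\ne\lambda_{2,\mu^+}(\Omega)$. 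This gives (iv) directly. I will also record, for completeness, the quantitative lower bound $\lambda_{2,\mu^+}(\Omega)\ge C(N,s_\sharp,p)\,|\Omega_\pm|^{-ps_\sharp/N}$ from the lemma preceding Theorem~\ref{properties of 1 eigenvalue}, which both confirms $\lambda_{2,\mu^+}(\Omega)<\infty$ is compatible and is reused later for Hong--Krahn--Szeg\H o type estimates.

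\textbf{Main obstacle.} The delicate point is Step~2, specifically the strict gap $\lambda_{2,\mu^+}(\Omega)>\lambda_{1,\mu^+}(\Omega)$ and the construction in (iii) of an admissible odd path through an arbitrary sign-changing eigenfunction: in \cite{BP16} these rely on the $(s,p)$-homogeneity of the Rayleigh quotient (so that normalization in $L^p$ and energy scale together), whereas $\|\cdot\|_{X_p(\Omega)}^p$ is a $\mu^+$-average of seminorms of different homogeneities and is \emph{not} homogeneous of a fixed degree under $u\mapsto tu$ unless $\mu^+$ is a single Dirac mass. The fix is to replace scaling arguments by convexity arguments: along the segment-type deformations $\sigma_t=(t u^p+(1-t)v^p)^{1/p}$ used in the proof of Theorem~\ref{result sign changing}, one has $\|\sigma_t\|_{L^p}=1$ automatically and $\|\sigma_t\|_{X_p}^p\le t\|u\|_{X_p}^p+(1-t)\|v\|_{X_p}^p$ (by the pointwise gradient inequality and \cite[Lemma~4.1]{FP14} integrated against $\mu^+$, exactly as in Section~\ref{sec6}); this convex interpolation is the substitute for the missing homogeneity and suffices to build the required paths with controlled maximal energy. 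The remaining care is purely topological (Borsuk--Ulam/genus bookkeeping for odd loops), which is standard.
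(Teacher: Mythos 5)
Your outline reproduces the paper's strategy for (ii) and (iv) (the connectedness argument with two disjoint $L^p$-balls around $\pm e_{1,\mu^+}$, using simplicity and weak lower semicontinuity — isolatedness of $\lambda_{1,\mu^+}$ is not actually needed — and then (iv) follows from (ii) together with Theorem \ref{result sign changing}), but part (iii) has a genuine gap. The paper proves (iii) by testing the eigenvalue equation for $u_\lambda$ with $u_+$ and $u_-$, setting $A=u_+(x)-u_+(y)$, $B=u_-(x)-u_-(y)$, and considering, for every $\omega=(\omega_1,\omega_2)\in\mathbb{S}^1$, the odd competitor $f(\omega)=(\omega_1 u_+-\omega_2 u_-)/\|\omega_1 u_+-\omega_2 u_-\|_{L^p(\Omega)}\in\Gamma_1(\Omega)$; the uniform bound $\int_{[0,1]}[f(\omega)]_{s,p}^p\,d\mu^+(s)\le\lambda$ then comes from the pointwise inequality \eqref{lam big lam 22} (equivalently Lemma \ref{tech lem foe lam 2}), and (iii) follows from the definition \eqref{2 eigenvalue def}. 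Your substitute — a path "joining $e_{1,\mu^+}$ to $u_\lambda^+/\|u_\lambda^+\|_{L^p}$ to $-e_{1,\mu^+}$ along the positive cone and antisymmetrizing," controlled by the hidden-convexity inequalities from the proof of Theorem \ref{result sign changing} — does not work as stated: those convexity inequalities only control segments between nonnegative functions, while the crucial part of any admissible odd loop through a sign-changing eigenfunction consists of combinations $\omega_1 u_+-\omega_2 u_-$, whose Gagliardo energy contains cross terms of the form $\omega_1 u_+(x)+\omega_2 u_-(y)$ that precisely require the inequality \eqref{lam big lam 22} (together with the identities obtained by testing with $u_\pm$) to be dominated by $\lambda$. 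Moreover "joining $u_\lambda^+/\|u_\lambda^+\|_{L^p}$ to $-e_{1,\mu^+}$ along the positive cone" is not meaningful, so the claimed bound "$\le p\mathfrak{L}_p(u_\lambda)=\lambda$" along your loop is unjustified; this is the missing idea, not a routine reparametrization issue.

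Two further points. For (i), you pass to the genus: an odd continuous image of $\mathbb{S}^1$ has genus $\ge 2$, but this only gives that the genus-two Krasnoselskii value is $\le\lambda_{2,\mu^+}(\Omega)$; your asserted equality of the two minimax values is unproved, so applying Theorem \ref{tech them for seq gen} shows the genus value is critical, not that the number defined in \eqref{2 eigenvalue def} is. The paper instead applies Cuesta's minimax theorem directly to the class $\Gamma_1(\Omega)$ (which is invariant under odd deformations), using the (PS) condition from Lemma \ref{PS cond gen}; your Step 1 should be rerouted accordingly. Finally, your "main obstacle" is misdiagnosed: $\|\cdot\|_{X_p(\Omega)}^p$ is trivially $p$-homogeneous under $u\mapsto tu$, since each $[\cdot]_{s,p}$ is a seminorm; what the superposition operator lacks is a clean behavior under spatial dilations, which plays no role in this theorem, so convexity is not a "substitute for homogeneity" here — the actual missing ingredient in (iii) is the algebraic inequality \eqref{lam big lam 22}.
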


\begin{proof}
\textit{(i)}
By Lemma \ref{PS cond gen}, the functional $\mathfrak{L}_{p}$ satisfies the Palais–Smale $(PS)$ condition on $\mathcal{M}$. Therefore, the application of \cite[Proposition 2.7]{Cuesta} proves the claim. 

\textit{(ii)} To show this  we use a contradiction argument. If possible, assume that 
$$
\lambda_{2, \mu^+}(\Omega)=\inf _{\phi \in \Gamma_{1}(\Omega)} \max _{u \in \operatorname{Im}(\phi)}\left(\int_{[0,1]}[u]_{s,p}^p d \mu^{+}(s)\right)=\lambda_{1}(\Omega)
$$
is true. Then, from the definition of $\lambda_{2, \mu^+}(\Omega)$, for each $m \in \mathbb{N}$, there exists $\phi_{m} \in \Gamma_{1}$ such that
\begin{equation}\label{lam 2 big lam 1}
\max _{u \in \phi_{m}\left(\mathbb{S}^{1}\right)}\int_{[0,1]}[u]_{s,p}^p d \mu^{+}(s) \leq \lambda_{1, \mu^+}(\Omega)+\frac{1}{m} .
\end{equation}

Let $e_{1, \mu^+}$ be the first eigenfunction to \eqref{Eig problem} corresponding to $\lambda_{1, \mu^+}$. By Theorem \ref{strict positive +}, we have either $e_{1, \mu^+}>0$ or $e_{1, \mu^+}<0$ in $\Omega$. Fix $\epsilon>0$, sufficiently small. Consider the following two disjoint neighborhoods of $e_{1, \mu^+}$
$$
B_{\epsilon}^{+}=\left\{u \in \mathcal{M}:\left\|u-e_{1, \mu^+}\right\|_{L^{p}(\Omega)}<\epsilon\right\} \quad \text { and } \quad B_{\epsilon}^{-}=\left\{u \in \mathcal{M}:\left\|u-\left(-e_{1, \mu^+}\right)\right\|_{L^{p}(\Omega)}<\epsilon\right\} .
$$

Note that $\phi_{m}\left(\mathbb{S}^{1}\right) \not \subset B_{\epsilon}^{+} \cup B_{\epsilon}^{-}$ due to the fact $\phi_{m} \in \Gamma_{1}(\Omega)$, implying that $\phi_{m}\left(\mathbb{S}^{1}\right)$ is symmetric and connected. Therefore, there exists $u_{m} \in \phi_{m}\left(\mathbb{S}^{1}\right) \backslash\left(B_{\epsilon}^{+} \cup B_{\epsilon}^{-}\right)$ for each $m \in \mathbb{N}$. Moreover, the sequence $\{u_{m}\}$ is bounded in $X_{p}(\Omega)$, thanks to \eqref{lam 2 big lam 1}. Therefore, there exists $v \in \mathcal{M}$ and a subsequence of $\{u_{m}\}$, (still denoted by $\{u_{m}\}$) such that $u_{m} \rightharpoonup v$ weakly in $X_{p}(\Omega)$ and $u_{m} \rightarrow v$ strongly in $L^{p}(\Omega)$. By the lower semicontinuity of the norm we have
\begin{align*}
    \int_{[0,1]}[v]_{s,p}^p d \mu^{+}(s)  &\leq \liminf _{k \rightarrow \infty}\left\|u_{k}\right\|_{X_p(\Omega)}^{p}=\liminf _{k \rightarrow \infty} \int_{[0,1]}[u_k]_{s,p}^p d\mu^{+}(s)=\lambda_{1, \mu^+}(\Omega),
\end{align*}
implying that $v \in \mathcal{M}$ is a global minimizer for $\mathfrak{L}_{p}$. Thus we get either $v=e_{1, \mu^+}$ or $v=-e_{1, \mu^+}$. Again, since $u_{m} \rightarrow v$ strongly in $L^{p}(\Omega)$, we have $v \in \mathcal{M} \backslash\left(B_{\epsilon}^{+} \cup B_{\epsilon}^{-}\right)$ giving us a contradiction. Hence, $\lambda_{2, \mu^+}(\Omega)>\lambda_{1, \mu^+}(\Omega)$.

\textit{(iii)} Let $(u, \lambda)$ be an eigenpair to the problem \eqref{Eig problem}, with $\lambda>\lambda_{1, \mu^+}(\Omega)$. Then, by Theorem \ref{result sign changing}, we conclude that $u$ needs  to change sign in $\Omega$, that is,  $u=u_{+}-u_{-}$ with $u_{+} \not \equiv 0$ and $u_{-} \not \equiv 0$, both being positive. Now, we test  the equation \eqref{weak solution of eig problem} for $(u, \lambda)$ with  $u_{+}$ and $u_{-}$ as test function to obtain
$$
\lambda \int_{\Omega} u_{+}^{p} d x=\int_{[0,1]} c_{N,s,p}\iint_{\mathbb{R}^{2N}} \frac{|u(x)-u(y)|^{p-2}(u(x)-u(y))}{|x-y|^{N+ps}}\left(u_{+}(x)-u_{+}(y)\right) d x d y d\mu^{+}(s),
$$
and
$$
-\lambda \int_{\Omega} u_{-}^{p} d x=\int_{[0,1]} c_{N,s,p}\iint_{\mathbb{R}^{2N}} \frac{|u(x)-u(y)|^{p-2}(u(x)-u(y))}{|x-y|^{N+ps}}\left(u_{-}(x)-u_{-}(y)\right) d x d y d\mu^{+}(s).
$$
Let us introduce the notations
$$
A:=A(x, y):=u_{+}(x)-u_{+}(y) \quad \text { and } \quad B:=B(x, y):=u_{-}(x)-u_{-}(y).
$$
Then we have
$$
A-B=u(x)-u(y).
$$
So, we can rewrite
$$
\lambda \int_{\Omega} u_{+}^{p} d x=\int_{[0,1]} c_{N,s,p}\iint_{\mathbb{R}^{2N}} \frac{(A-B)A}{|x-y|^{N+ps}}  d x d y d\mu^{+}(s),
$$
and
$$
-\lambda \int_{\Omega} u_{-}^{p} d x=\int_{[0,1]} c_{N,s,p}\iint_{\mathbb{R}^{2N}} \frac{(A-B)B}{|x-y|^{N+ps}}  d x d y d\mu^{+}(s).
$$

Let us take $\left(\omega_{1}, \omega_{2}\right) \in \mathbb{S}^{1}$. Multiplying the previous two identities by $\left|\omega_{1}\right|^{p}$ and $\left|\omega_{2}\right|^{p}$ respectively and subtracting them, we obtain
\begin{equation}\label{lam big lam 21}
\lambda=\frac{\int_{[0,1]} c_{N,s,p}\iint_{\mathbb{R}^{2N}} \frac{\left[\left|\omega_{1}\right|^{p}(A-B) A-\left|\omega_{2}\right|^{p}(A-B) B\right]}{|x-y|^{N+ps}} d x d yd\mu^{+}(s)}{\left|\omega_{1}\right|^{p} \int_{\Omega} u_{+}^{p}+\left|\omega_{2}\right|^{p} \int_{\Omega} u_{-}^{p} d x}. 
\end{equation}
Observe that we can write
\begin{align} \label{eq8.5}
    |\omega_{1}|^{p}(A-B) A-|\omega_{2}|^{p}(A-B) B&=|\omega_{1} A-\omega_{1} B|^{p-2}\left(\omega_{1} A-\omega_{1} B\right) \omega_{1} A \nonumber \\
    &-|\omega_{2} A-\omega_{2} B|^{p-2}\left(\omega_{2} A-\omega_{2} B\right) \omega_{2} B.
\end{align}
Now, let us recall the  following pointwise inequality (see \cite[Inequality $(4.7)$]{BP16})
\begin{align}\label{lam big lam 22}
\begin{split}
    |\omega_{1} A-\omega_{1} B|^{p-2}\left(\omega_{1} A-\omega_{1} B \right) \omega_{1} A & -|\omega_{2} A-\omega_{2} B|^{p-2}\left(\omega_{2} A-\omega_{2} B\right) \omega_{2} B \\
&\geq\left|\omega_{1} A-\omega_{2} B\right|^{p}.
\end{split}
\end{align}

In order to complete the proof, we define the following element of $\Gamma_{1}(\Omega)$
$$
f(\omega)=\frac{\omega_{1} u_{+}-\omega_{2} u_{-}}{\left(\left|\omega_{1}\right|^{p} \int_{\Omega} u_{+}^{p}+\left|\omega_{2}\right|^{p} \int_{\Omega} u_{-}^{p} d x\right)^{1 / p}}, \quad \omega=\left(\omega_{1}, \omega_{2}\right) \in \mathbb{S}^{1}.
$$
Then we have
$$
\int_{[0,1]} [f(\omega)]_{s,p}^pd\mu^{+}(s)=\frac{\int_{[0,1]} c_{N,s,p}\iint_{\mathbb{R}^{2N}} \frac{\left|\omega_{1} A-\omega_{2} B\right|^{p}}{|x-y|^{N+ps}} d x d y d\mu^{+}(s)}{\left|\omega_{1}\right|^{p} \int_{\Omega} u_{+}^{p}+\left|\omega_{2}\right|^{p} \int_{\Omega} u_{-}^{p} d x}.
$$
Next, by using the  inequalities \eqref{eq8.5} and \eqref{lam big lam 22}, and recalling the relation \eqref{lam big lam 21}, we get
$$
\int_{[0,1]} [f(\omega)]_{s,p}^pd\mu^{+}(s) \leq \lambda, \quad \text { for every } \omega \in \mathbb{S}^{1}.
$$
By appealing to the definition of $\lambda_{2, \mu^+}(\Omega)$ we get the desired conclusion that $\lambda \geq \lambda_{2, \mu^+}(\Omega).$ 

$(iv)$ The sign-changing property of eigenfunctions associated with $\lambda_{2, \mu^+}(\Omega)$ follows from Theorem \ref{result sign changing} as $\lambda_{1, \mu^+}(\Omega)< \lambda_{2, \mu^+}(\Omega)$ by part $(ii).$
Thus, the proof is complete.
\end{proof}

\section{Mountain pass characterization of the second eigenvalue of nonlinear superposition operators} \label{sec9}

This section is devoted to establishing a mountain pass characterization of the second eigenvalue introduced in the preceding section. We begin with a technical lemma, the proof of which is inspired by the combination of the arguments presented in \cite[Lemma 5.1]{BP16} and \cite[Lemma 5.2]{BF13ii}. 

\begin{lem}\label{M-P car tech lem}
   Let $\Omega \subset \mathbb{R}^{N}$ be a bounded domain and $1<p<\infty$.  Let $\mu^+$ satisfy \eqref{measure1}.  For every $u \in \mathcal{M},$ we set
\begin{equation*}
    A(x, y)=u_{+}(x)-u_{+}(y) \quad \text { and } \quad B(x, y)=u_{-}(x)-u_{-}(y).
\end{equation*}
Define the continuous curve on $\mathcal{M}$ as
\begin{equation*}
    \gamma_t=\frac{u_{+}-\cos (\pi t) u_{-}}{\left\|u_{+}-\cos (\pi t) u_{-}\right\|_{L^p(\Omega)}}, \quad t \in\left[0, \frac{1}{2}\right].
\end{equation*}
Assume the following conditions 

 \begin{align} \label{eq9.1}
    &\left\|u_{-}\right\|_{L^p(\Omega)}^p \int_{(0,1)}c_{N,s,p} \iint_{\mathbb{R}^{2N}}\frac{|A-B|^{p-2}(A-B) A}{|x-y|^{N+s p}} d x d yd\mu^+(s)  \nonumber \\
    &\quad \quad \quad \quad +\left\|u_{+}\right\|_{L^p(\Omega)}^p \int_{(0,1)}c_{N,s,p} \iint_{\mathbb{R}^{2N}}  \frac{|A-B|^{p-2}(A-B) B}{|x-y|^{N+s p}} d x d yd\mu^+(s) \leq 0,
\end{align}
and 
\begin{align} \label{eq9.2}
    &\left\|u_{-}\right\|_{L^p(\Omega)}^p\int_{\Omega}|\nabla u_{+}|^p d x- \left\|u_{+}\right\|_{L^p(\Omega)}^p\int_{\Omega}|\nabla u_{-}|^p d x\leq 0
\end{align}
hold. Then we have
\begin{equation*}
    \left\|\gamma_t\right\|_{X_p(\Omega)} \leq\|u\|_{X_p(\Omega)}, \quad t \in\left[0, \frac{1}{2}\right] .
\end{equation*}
\end{lem}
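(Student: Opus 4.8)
The goal is to show that the curve $\gamma_t$ never has larger $X_p(\Omega)$-energy than $u$ itself. Writing $\gamma_t = (u_+ - \cos(\pi t)u_-)/\|u_+-\cos(\pi t)u_-\|_{L^p(\Omega)}$, and decomposing the energy into the nonlocal part over $(0,1)$ plus the two Dirac contributions at $s=0$ and $s=1$, I would treat each $s$-slice separately. Fix $s\in(0,1)$ (the cases $s=0,1$ are analogous but with $\int_\Omega|u|^p$ and $\int_\Omega|\nabla u|^p$ respectively in place of the Gagliardo form). For each such slice, using $A = u_+(x)-u_+(y)$ and $B = u_-(x)-u_-(y)$, note that $\gamma_t(x)-\gamma_t(y) = (A + \cos(\pi t)(-B))/\|u_+-\cos(\pi t)u_-\|_{L^p(\Omega)}$, so the numerator of $[\gamma_t]_{s,p}^p$ is $\iint |A+\cos(\pi t)(-B)|^p/|x-y|^{N+sp}$, and the numerator of $\|\gamma_t\|_{L^p}^p = 1$ forces the normalization $\|u_+-\cos(\pi t)u_-\|_{L^p(\Omega)}^p = \|u_+\|_{L^p}^p + |\cos(\pi t)|^p\|u_-\|_{L^p}^p$ (disjoint supports).

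The key observation is that $u_+$ and $u_-$ have disjoint supports, so $A$ and $B$ always satisfy $A\cdot B \le 0$ pointwise (whenever one is positive the other is $\le 0$). This is precisely the hypothesis $UV\le 0$ in Lemma \ref{tech lem foe lam 2}. I would apply that lemma with $U = A$, $V = B$, and evaluate the function $g$ there at $t = \cos(\pi t)$ — more precisely, set, for each pair $(x,y)$, the scalar function
\begin{equation*}
   \varphi(\tau) := \frac{|A - \tau B|^p + |A-B|^{p-2}(A-B)B\,|\tau|^p}{|x-y|^{N+sp}}.
\end{equation*}
By Lemma \ref{tech lem foe lam 2}, $\varphi(\tau) \le \varphi(1) = |A-B|^{p-2}(A-B)A/|x-y|^{N+sp}$ for all $\tau\in\mathbb{R}$. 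Integrating this pointwise inequality over $\mathbb{R}^{2N}$ (against $c_{N,s,p}\,dx\,dy$) and then over $(0,1)$ against $d\mu^+(s)$, and handling the Dirac terms similarly (using $|U-\tau V|^2 + |U-V|^0(U-V)V|\tau|^2$ type identities which are elementary for the local terms, or rather simply observing $u_+ u_- = 0$ pointwise so that $\int_\Omega |\nabla(u_+-\tau u_-)|^p = \int_\Omega|\nabla u_+|^p + |\tau|^p\int_\Omega|\nabla u_-|^p$), one obtains, with $\tau = \cos(\pi t) \in [0,1]$,
\begin{equation*}
   \|u_+ - \tau u_-\|_{X_p(\Omega)}^p + |\tau|^p \,\mathcal{R}(u) \le \|u\|_{X_p(\Omega)}^p,
\end{equation*}
where $\mathcal{R}(u)$ collects the three ``cross'' terms $\int_{(0,1)}c_{N,s,p}\iint |A-B|^{p-2}(A-B)B\,|x-y|^{-N-sp}\,dx\,dy\,d\mu^+(s) + \mu^+(\{0\})\int_\Omega |u|^{p-2}u\,u_-\,dx + \mu^+(\{1\})\int_\Omega |\nabla u|^{p-2}\nabla u\cdot\nabla u_-\,dx$, i.e. essentially $-\langle \mathfrak{L}_p'(u), u_-\rangle$-type quantities.

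The final step is to divide by the normalization $\|u_+-\tau u_-\|_{L^p(\Omega)}^p = \|u_+\|_{L^p}^p + \tau^p\|u_-\|_{L^p}^p$. Doing so gives
\begin{equation*}
   \|\gamma_t\|_{X_p(\Omega)}^p = \frac{\|u_+-\tau u_-\|_{X_p(\Omega)}^p}{\|u_+\|_{L^p}^p + \tau^p\|u_-\|_{L^p}^p} \le \frac{\|u\|_{X_p(\Omega)}^p - \tau^p\,\mathcal{R}(u)}{\|u_+\|_{L^p}^p + \tau^p\|u_-\|_{L^p}^p}.
\end{equation*}
Here the hypotheses \eqref{eq9.1} and \eqref{eq9.2} enter: multiplying out, they are exactly the statements that $\|u_-\|_{L^p}^p$ times the appropriate cross term plus $\|u_+\|_{L^p}^p$ times the other is $\le 0$ in each block — summing the nonlocal block with the two local blocks (the latter being $\le 0$ by \eqref{eq9.2}, with the $s=0$ term being $-\|u_-\|_{L^p}^p$-proportional and automatically nonpositive), this yields $\|u_-\|_{L^p}^p\,[\![\text{num terms}]\!] + \|u_+\|_{L^p}^p\,\mathcal{R}(u) \le 0$, which after rearrangement is precisely the inequality needed to conclude that the right-hand fraction above is $\le \|u\|_{X_p(\Omega)}^p/(\|u_+\|_{L^p}^p+\|u_-\|_{L^p}^p) = \|u\|_{X_p(\Omega)}^p$ since $\|u\|_{L^p(\Omega)}=1$. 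A short convexity/monotonicity argument in the variable $\tau^p$ (the map $\tau^p \mapsto (a - \tau^p c)/(b_1 + \tau^p b_2)$ is monotone, so its extrema over $\tau\in[0,1]$ occur at the endpoints $\tau=0$ or $\tau=1$, and at $\tau=0$ it equals $\|u_+\|_{X_p}^p/\|u_+\|_{L^p}^p \le \|u\|_{X_p}^p$ while at $\tau=1$ it is $\le \|u\|_{X_p}^p$ by the rearranged hypotheses) closes the estimate uniformly in $t\in[0,\tfrac12]$.

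\textbf{Main obstacle.} The delicate point is the bookkeeping in combining the nonlocal slice with the two Dirac slices while keeping track of the weights $\|u_+\|_{L^p}^p$ and $\|u_-\|_{L^p}^p$ so that hypotheses \eqref{eq9.1}–\eqref{eq9.2} can be invoked in exactly the form stated; in particular one must verify that the $s=0$ local term contributes with the correct sign automatically (it equals $-\|u_-\|_{L^{p}(\Omega)}^{p}$ up to the $\mu^+(\{0\})$ factor, hence is harmless) and that the quotient estimate is genuinely monotone in $\tau^p$ so that checking the two endpoints suffices. Everything else is a direct application of Lemma \ref{tech lem foe lam 2} pointwise in $(x,y)$ followed by integration.
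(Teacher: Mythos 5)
Your skeleton matches the paper's proof: use the disjoint supports of $u_{+},u_{-}$ so that the $s=1$ and $s=0$ contributions of $u_{+}-\cos(\pi t)u_{-}$ decouple exactly, apply Lemma \ref{tech lem foe lam 2} pointwise to the fractional part (legitimate, since $A\cdot B\le 0$), and reduce everything to a monotonicity statement in $\xi=|\cos(\pi t)|^{p}$ for which \eqref{eq9.1} and \eqref{eq9.2} are precisely the required conditions. But the way you close the argument does not work. Your intermediate inequality $\|u_{+}-\tau u_{-}\|_{X_p(\Omega)}^{p}+\tau^{p}\mathcal{R}(u)\le\|u\|_{X_p(\Omega)}^{p}$, with $\mathcal{R}(u)$ the collected cross terms (i.e.\ $\langle\mathfrak{L}_{p}^{\prime}(u),u_{-}\rangle$), is true but far too lossy: since $|u(x)-u(y)|^{p-2}(u(x)-u(y))(u_{-}(x)-u_{-}(y))\le-|u_{-}(x)-u_{-}(y)|^{p}$ and the local cross terms equal $-\mu^{+}(\{1\})\int_\Omega|\nabla u_{-}|^{p}dx$ and $-\mu^{+}(\{0\})\int_\Omega u_{-}^{p}dx$, one has $\mathcal{R}(u)\le 0$, so your majorant $\|u\|_{X_p(\Omega)}^{p}-\tau^{p}\mathcal{R}(u)$ is $\ge\|u\|_{X_p(\Omega)}^{p}$ for every $\tau$; dividing by $\|u_{+}\|_{L^p(\Omega)}^{p}+\tau^{p}\|u_{-}\|_{L^p(\Omega)}^{p}\le 1$ then gives a fraction $F(\tau^{p})\ge\|u\|_{X_p(\Omega)}^{p}$ identically (strictly when $u$ changes sign), so no endpoint or monotonicity argument applied to $F$ can ever yield $\|\gamma_t\|_{X_p(\Omega)}^{p}\le\|u\|_{X_p(\Omega)}^{p}$. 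Concretely, both of your endpoint evaluations fail: at $\tau=1$ the bound is $\|u\|_{X_p(\Omega)}^{p}-\mathcal{R}(u)\ge\|u\|_{X_p(\Omega)}^{p}$, and at $\tau=0$ your assertion $\|u_{+}\|_{X_p(\Omega)}^{p}/\|u_{+}\|_{L^p(\Omega)}^{p}\le\|u\|_{X_p(\Omega)}^{p}$ is stated without justification; it is not automatic for an arbitrary sign-changing $u\in\mathcal{M}$ (one divides by $\|u_{+}\|_{L^p(\Omega)}^{p}<1$), and it is in fact part of what \eqref{eq9.1}--\eqref{eq9.2} must deliver, so invoking it here is circular.

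The repair is to not collapse the cross terms into a single $\mathcal{R}(u)$: after the pointwise lemma, keep the bound in the two-parameter affine form and write the quotient as $I_{1}(\xi)+I_{2}(\xi)+\mu^{+}(\{0\})$, where $I_{2}(\xi)=(a-\xi b)/(c+\xi d)$ with $a,b$ the weighted integrals of $|A-B|^{p-2}(A-B)A$ and $|A-B|^{p-2}(A-B)B$, $c=\|u_{+}\|_{L^p(\Omega)}^{p}$, $d=\|u_{-}\|_{L^p(\Omega)}^{p}$, and $I_{1}(\xi)$ the analogous gradient quotient. Then $I_{2}$ is nondecreasing in $\xi$ if and only if $cb+da\le 0$, which is exactly \eqref{eq9.1}, and $I_{1}$ is nondecreasing if and only if \eqref{eq9.2} holds; since $\xi=|\cos(\pi t)|^{p}\le 1$, the maximum is attained at $\xi=1$, where $|A-B|^{p-2}(A-B)A-|A-B|^{p-2}(A-B)B=|A-B|^{p}$ and $c+d=\|u\|_{L^p(\Omega)}^{p}=1$, so the bound collapses to exactly $\|u\|_{X_p(\Omega)}^{p}$. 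This is how the paper concludes, and it is the step your proposal is missing.
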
 

\begin{proof}
Observe that, since $u_{+}$ and $u_{-}$ have disjoint supports, we have
\begin{align}\label{M-P tech 1}
    \left\|\gamma_t\right\|_{X_p(\Omega)}^p&=\frac{\|u_{+}-cos(\pi t) u_{-}\|_{X_p(\Omega)}^p}{\|u_{+}-cos(\pi t) u_{-}\|_{L^p(\Omega)}^p}=\frac{\int_{[0,1]}[u_{+}-cos(\pi t) u_{-}]_{s,p}^pd\mu^+(s)}{\int_{\Omega}|u_{+}-cos(\pi t) u_{-}|^pdx} \nonumber\\
    &=\frac{\mu^+(1)\int_{\Omega}|\nabla(u_{+}-cos(\pi t) u_{-})|^pdx}{\int_{\Omega}|u_{+}-cos(\pi t) u_{-}|^pdx}\\
    &+\frac{\int_{(0,1)}c_{N,s,p}\iint_{\mathbb{R}^{2N}}\frac{|(u_{+}-cos(\pi t) u_{-})(x)-(u_{+}-cos(\pi t) u_{-})(y)|^p}{|x-y|^{N+sp}}dxdyd\mu^+(s)}{\int_{\Omega}|u_{+}-cos(\pi t) u_{-}|^pdx} \nonumber\\
    &+\frac{\mu^+(0)\int_{\Omega}|u_{+}-cos(\pi t) u_{-}|^pdx}{\int_{\Omega}|u_{+}-cos(\pi t) u_{-}|^pdx} \nonumber\\
     &=\frac{\mu^+(1)\left(\int_{\Omega}|\nabla u_{+}|^pdx+|cos(\pi t)|^p \int_{\Omega}|\nabla u_{-}|^pdx\right)}{\int_{\Omega} u_{+}^pdx+|cos(\pi t)|^p \int_{\Omega}u_{-}^pdx} \nonumber\\
    &+\frac{\int_{(0,1)}c_{N,s,p}\iint_{\mathbb{R}^{2N}}\frac{|A-cos(\pi t) B|^p}{|x-y|^{N+sp}}dxdyd\mu^+(s)}{\int_{\Omega} u_{+}^pdx+|cos(\pi t)|^p \int_{\Omega}u_{-}^pdx}+\mu^+(0).\nonumber
\end{align}
Since, by definition, $A \cdot B \leq 0$, we get, using Lemma \ref{tech lem foe lam 2}, that 
\begin{align*}
    |A-cos(\pi t) B|^p\leq |A-B|^{p-2}(A-B)A-|A-B|^{p-2}(A-B)B|cos(\pi t)|^p. 
\end{align*}
Substituting this into \eqref{M-P tech 1}, we obtain
\begin{align}\label{M-P tech 2}
    \left\|\gamma_t\right\|_{X_p(\Omega)}^p&\leq\mu^+(1)\frac{\int_{\Omega}|\nabla u_{+}|^pdx+|cos(\pi t)|^p \int_{\Omega}|\nabla u_{-}|^pdx}{\int_{\Omega} u_{+}^pdx+|cos(\pi t)|^p \int_{\Omega}u_{-}^pdx} \nonumber \\
    &+\frac{\int_{(0,1)}c_{N,s,p}\iint_{\mathbb{R}^{2N}}\frac{|A-B|^{p-2}(A-B)A}{|x-y|^{N+sp}}dxdyd\mu^+(s)}{\int_{\Omega} u_{+}^pdx+|cos(\pi t)|^p \int_{\Omega}u_{-}^pdx}\\
    &-|cos(\pi t)|^p\frac{\int_{(0,1)}c_{N,s,p}\iint_{\mathbb{R}^{2N}}\frac{|A-B|^{p-2}(A-B)B}{|x-y|^{N+sp}}dxdyd\mu^+(s)}{\int_{\Omega} u_{+}^pdx+|cos(\pi t)|^p \int_{\Omega}u_{-}^pdx}+\mu^+(0)\nonumber
\end{align}
for every $t \in[0,1 / 2].$
Let us now denote
\begin{equation*}
    I_1:=\frac{\int_{\Omega}|\nabla u_{+}|^pdx+|cos(\pi t)|^p \int_{\Omega}|\nabla u_{-}|^pdx}{\int_{\Omega} u_{+}^pdx+|cos(\pi t)|^p \int_{\Omega}u_{-}^pdx},
\end{equation*}
and 
\begin{align*}
    I_2:=&\frac{\int_{(0,1)}c_{N,s,p}\iint_{\mathbb{R}^{2N}}\frac{|A-B|^{p-2}(A-B)A}{|x-y|^{N+sp}}dxdyd\mu^+(s)}{\int_{\Omega} u_{+}^pdx+|cos(\pi t)|^p \int_{\Omega}u_{-}^pdx}\\
    &-|cos(\pi t)|^p\frac{\int_{(0,1)}c_{N,s,p}\iint_{\mathbb{R}^{2N}}\frac{|A-B|^{p-2}(A-B)B}{|x-y|^{N+sp}}dxdyd\mu^+(s)}{\int_{\Omega} u_{+}^pdx+|cos(\pi t)|^p \int_{\Omega}u_{-}^pdx}.
\end{align*}
We define the following functions
\begin{equation*}
    g(\xi)=\frac{a-\xi b}{c+\xi d} \quad \text{ and } \quad h(\xi)=\frac{e^2+\xi f^2}{k^2+\xi m^2} \quad \text{ for } \quad \xi\in [0,1],
\end{equation*}
where $a,b,e,f \in \mathbb{R}$ and $c,d,k,m\geq 0$ such that $c+d>0$ and $k^2+m^2>0$. 
Observe that, if we set
$$\xi=|cos(\pi t)|^p, \quad \quad c=\int_{\Omega} u_{+}^pdx, \quad d=\int_{\Omega} u_{-}^pdx,$$
$$a=\int_{(0,1)}c_{N,s,p}\iint_{\mathbb{R}^{2N}}\frac{|A-B|^{p-2}(A-B)A}{|x-y|^{N+sp}}dxdyd\mu^+(s), $$
and 
$$b=\int_{(0,1)}c_{N,s,p}\iint_{\mathbb{R}^{2N}}\frac{|A-B|^{p-2}(A-B)B}{|x-y|^{N+sp}}dxdyd\mu^+(s),$$
then $I_2$ coincides with a function $g$. Similarly, taking 
$$\xi=|cos(\pi t)|^p, \ e^2=\int_{\Omega} |\nabla u_{+}|^pdx, \quad f^2=\int_{\Omega} |\nabla u_{-}|^pdx, \quad k^2=\int_{\Omega} u_{+}^pdx, \quad m^2=\int_{\Omega} u_{-}^pdx,$$
we can identify $I_1$ with a function $h$.
Then, in order to get the conclusion, it suffices to show that the functions $g$ and $h$ are monotone increasing. But the function $g$ is monotone increasing if and only if $c b+d a \leq 0$, and the function $h$ is monotone increasing if and only if $e^2m^2-k^2f^2\leq 0$, which are guaranteed by conditions \eqref{eq9.1} and \eqref{eq9.2}, respectively. Using this and the fact that $u \in \mathcal{M}$ has a unit $L^p$ norm along with the fact that $u_+$ and $u_-$ have disjoint supports, we get from \eqref{M-P tech 2} that
\begin{align*}
    \left\|\gamma_t\right\|_{X_p(\Omega)}^p&\leq\mu^+(1)\int_{\Omega}|\nabla u|^pdx+\int_{(0,1)}c_{N,s,p}\iint_{\mathbb{R}^{2N}}\frac{|A-B|^{p}}{|x-y|^{N+sp}}dxdyd\mu^+(s)+\mu^+(0)\\
    &=\|u\|_{X_p(\Omega)}^p
\end{align*}
for every $t \in[0,1 / 2].$ This concludes the proof. 
\end{proof}

\begin{remark}\label{M-P car tech rem}
    Let $u \in \mathcal{M}$ be a function that does not satisfy conditions \eqref{eq9.1} and \eqref{eq9.2}, that is, 
\begin{align*}
    &\left\|u_{-}\right\|_{L^p(\Omega)}^p \int_{(0,1)}c_{N,s,p} \iint_{\mathbb{R}^{2N}}\frac{|A-B|^{p-2}(A-B) A}{|x-y|^{N+s p}} d x d yd\mu^+(s)\\
    &\quad \quad \quad \quad +\left\|u_{+}\right\|_{L^p(\Omega)}^p \int_{(0,1)}c_{N,s,p} \iint_{\mathbb{R}^{2N}}  \frac{|A-B|^{p-2}(A-B) B}{|x-y|^{N+s p}} d x d yd\mu^+(s) > 0
\end{align*}
and 
\begin{align*} 
    &\left\|u_{-}\right\|_{L^p(\Omega)}^p\int_{\Omega}|\nabla u_{+}|^p d x- \left\|u_{+}\right\|_{L^p(\Omega)}^p\int_{\Omega}|\nabla u_{-}|^p d x> 0.
\end{align*}
Then the function $v=-u \in \mathcal{M}$ satisfies conditions \eqref{eq9.1} and \eqref{eq9.2}. 
\end{remark} 

Let us define
\begin{equation*}
    \Gamma\left(e_{1, \mu^+},-e_{1, \mu^+}\right)=\left\{\gamma \in C\left([0,1] ; \mathcal{M}\right): \gamma_0=e_{1, \mu^+}, \gamma_1=-e_{1, \mu^+}\right\},
\end{equation*}
the set of continuous curves on $\mathcal{M}$ connecting the two solutions $e_{1, \mu^+}$ and $-e_{1, \mu^+}$ of \eqref{first eigenvalue+}. We have the following characterization for $\lambda_2(\Omega)$.

\begin{thm}
    (Mountain pass characterization).  Let $\Omega \subset \mathbb{R}^N$ be an open and bounded set.  Let $\mu^+$ satisfy \eqref{measure1} and   let $1<p<\infty$.  Then we have
\begin{equation*}
    \lambda_{2, \mu^+}(\Omega)=\inf _{\gamma \in \Gamma\left(e_{1, \mu^+},-e_{1, \mu^+}\right)} \max _{u \in \operatorname{Im}(\gamma)}\|u\|_{X_p(\Omega)}^p.
\end{equation*}
\end{thm}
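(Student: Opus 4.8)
The plan is to prove the two inequalities
\[
\lambda_{2,\mu^+}(\Omega)\;\le\;\inf_{\gamma\in\Gamma(e_{1,\mu^+},-e_{1,\mu^+})}\;\max_{u\in\operatorname{Im}(\gamma)}\|u\|_{X_p(\Omega)}^p
\qquad\text{and}\qquad
\lambda_{2,\mu^+}(\Omega)\;\ge\;\inf_{\gamma}\;\max_{u\in\operatorname{Im}(\gamma)}\|u\|_{X_p(\Omega)}^p
\]
separately. For the first ($\le$), I would start from the definition \eqref{2 eigenvalue def}: given any $\gamma\in\Gamma(e_{1,\mu^+},-e_{1,\mu^+})$, one reflects it to produce an odd map $\phi\in\Gamma_1(\Omega)$ whose image is $\operatorname{Im}(\gamma)\cup(-\operatorname{Im}(\gamma))$. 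Concretely, parametrise $\mathbb{S}^1$ by $\theta\in[-1,1]$ with endpoints identified; set $\phi(\theta)=\gamma(\theta)$ for $\theta\in[0,1]$ and $\phi(\theta)=-\gamma(-\theta)$ for $\theta\in[-1,0]$, which is continuous (at $0$ it reads $\gamma_0=e_{1,\mu^+}$, at $\pm 1$ it reads $\mp e_{1,\mu^+}$) and odd by construction. Since $\|{-u}\|_{X_p(\Omega)}=\|u\|_{X_p(\Omega)}$, we have $\max_{\operatorname{Im}(\phi)}\|u\|_{X_p(\Omega)}^p=\max_{\operatorname{Im}(\gamma)}\|u\|_{X_p(\Omega)}^p$, and taking the infimum over $\gamma$ gives the bound $\lambda_{2,\mu^+}(\Omega)\le\inf_\gamma\max_{\operatorname{Im}(\gamma)}\|u\|_{X_p(\Omega)}^p$.

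For the reverse inequality ($\ge$), the idea is that every odd loop $\phi\in\Gamma_1(\Omega)$ can be ``opened up'' into a curve in $\Gamma(e_{1,\mu^+},-e_{1,\mu^+})$ without increasing the maximum of $\|\cdot\|_{X_p(\Omega)}^p$ by more than an arbitrarily small amount. Fix $\phi\in\Gamma_1(\Omega)$; by Theorem~\ref{properties of 2 eigenvalue}(ii) and the oddness of $\phi$, the image $\operatorname{Im}(\phi)$ is a symmetric connected set that must intersect both $B_\epsilon^+$ and $B_\epsilon^-$ (the $\epsilon$-neighbourhoods of $\pm e_{1,\mu^+}$ in $\mathcal M$), so we can pick a subarc of $\phi$ running from a point $u^+$ near $e_{1,\mu^+}$ to a point $u^-$ near $-e_{1,\mu^+}$. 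It then remains to connect $u^+$ to $e_{1,\mu^+}$, and $-e_{1,\mu^+}$ to $u^-$, by short curves in $\mathcal M$ along which $\|\cdot\|_{X_p(\Omega)}^p$ stays close to $\lambda_{1,\mu^+}(\Omega)$; one builds these using a deformation-flow/Ekeland-type argument exploiting that $e_{1,\mu^+}$ is a strict (up to sign) minimiser of $\|\cdot\|_{X_p(\Omega)}^p$ on $\mathcal M$ — here the simplicity and isolatedness from Theorem~\ref{first eigenvalue exist +}(iv),(ix) together with the $(PS)$ condition from Lemma~\ref{PS cond gen} are exactly what is needed to keep the connecting arcs inside a sublevel set $\{\|\cdot\|_{X_p(\Omega)}^p<\lambda_{1,\mu^+}+\delta\}$. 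This produces $\gamma\in\Gamma(e_{1,\mu^+},-e_{1,\mu^+})$ with $\max_{\operatorname{Im}(\gamma)}\|u\|_{X_p(\Omega)}^p\le\max(\max_{\operatorname{Im}(\phi)}\|u\|_{X_p(\Omega)}^p,\ \lambda_{1,\mu^+}+\delta)$, and since $\lambda_{2,\mu^+}>\lambda_{1,\mu^+}$ the second term is harmless for $\delta$ small; taking the infimum over $\phi$ and then letting $\epsilon,\delta\to0$ yields $\inf_\gamma\max_{\operatorname{Im}(\gamma)}\|u\|_{X_p(\Omega)}^p\le\lambda_{2,\mu^+}(\Omega)$.

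There is, however, a cleaner route for the $\ge$ direction that avoids most of the deformation machinery, and I would present that one: use Theorem~\ref{properties of 2 eigenvalue}(i), which asserts $\lambda_{2,\mu^+}(\Omega)$ is attained as an eigenvalue, with eigenfunction $u\in\mathcal M$ that changes sign by part (iv). Normalising so that (after possibly replacing $u$ by $-u$, via Remark~\ref{M-P car tech rem}) the structural conditions \eqref{eq9.1} and \eqref{eq9.2} hold, Lemma~\ref{M-P car tech lem} produces an explicit continuous half-curve $t\mapsto\gamma_t$, $t\in[0,\tfrac12]$, from $u_+/\|u_+\|_{L^p}$ to $u$ with $\|\gamma_t\|_{X_p(\Omega)}\le\|u\|_{X_p(\Omega)}=\lambda_{2,\mu^+}(\Omega)^{1/p}$; a symmetric construction on $[\tfrac12,1]$ continues from $u$ to $u_-$-type endpoints, and one then joins the normalised positive and negative parts $u_\pm/\|u_\pm\|_{L^p}$ to $e_{1,\mu^+}$ and $-e_{1,\mu^+}$ respectively using that $\|u_\pm/\|u_\pm\|_{L^p}\|_{X_p(\Omega)}^p\le\lambda_{2,\mu^+}(\Omega)$ (testing the eigenvalue equation against $u_\pm$ exactly as in the proof of Theorem~\ref{properties of 2 eigenvalue}(iii)) together with a convexity/path argument as in the proof of Theorem~\ref{result sign changing} to stay below $\lambda_{2,\mu^+}(\Omega)$ — since $e_{1,\mu^+}$ has constant sign, the straight path $w\mapsto (\tau\, e_{1,\mu^+}^p+(1-\tau)(u_+/\|u_+\|_{L^p})^p)^{1/p}$ is admissible and its energy is $\le\max$ of the two endpoint energies, both $\le\lambda_{2,\mu^+}$. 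Concatenating gives $\gamma\in\Gamma(e_{1,\mu^+},-e_{1,\mu^+})$ with $\max_{\operatorname{Im}(\gamma)}\|u\|_{X_p(\Omega)}^p\le\lambda_{2,\mu^+}(\Omega)$, hence $\inf_\gamma\max\le\lambda_{2,\mu^+}(\Omega)$.

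\textbf{Main obstacle.} The delicate point is the $\ge$ inequality: one must genuinely produce a \emph{continuous} curve in $\mathcal M$ connecting $\pm e_{1,\mu^+}$ whose energy never exceeds $\lambda_{2,\mu^+}(\Omega)$, and the joints between the Lemma~\ref{M-P car tech lem} arcs (which end at $u_\pm/\|u_\pm\|_{L^p}$) and the endpoints $\pm e_{1,\mu^+}$ are where care is required — the convexity inequalities for $[\,\cdot\,]_{s,p}^p$ used in Theorem~\ref{result sign changing} (via \cite[Lemma 4.1]{FP14}) must be applied to each of the $\mu^+$-components (the $s=0$, $s=1$, and $s\in(0,1)$ pieces of the integral in \eqref{footnote}) and then integrated in $d\mu^+(s)$; the lack of scaling invariance of the Rayleigh quotient means the normalisation of the convex-combination path has to be tracked explicitly. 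Verifying that all these pieces patch together continuously — and that the hypotheses \eqref{eq9.1}, \eqref{eq9.2} of Lemma~\ref{M-P car tech lem} can always be arranged by the sign flip of Remark~\ref{M-P car tech rem} — is the technical heart of the argument; the $\le$ inequality, by contrast, is essentially formal.
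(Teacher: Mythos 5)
Your ``$\le$'' direction is essentially the paper's argument (glue $\gamma$ with $-\gamma$ into an odd loop), although your explicit parametrization $\phi(\theta)=-\gamma(-\theta)$ on $[-1,0]$ is neither continuous at $\theta=0$ (left limit $-e_{1,\mu^+}$, right limit $e_{1,\mu^+}$) nor odd with respect to the antipodal map of $\mathbb{S}^1$; the correct gluing is $\phi(t)=\gamma(t)$, $\phi(1+t)=-\gamma(t)$ on a circle of length $2$, which is a fixable slip. The genuine gap is in your preferred route for the reverse inequality. You apply Lemma~\ref{M-P car tech lem} to a second eigenfunction $u$ itself, claiming that the hypotheses \eqref{eq9.1}--\eqref{eq9.2} can always be arranged by the sign flip of Remark~\ref{M-P car tech rem}. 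They cannot: testing \eqref{weak solution of eig problem} with $u_+$ and with $u_-$ and using that $u_\pm$ have disjoint supports gives $S_A=\lambda\|u_+\|_{L^p(\Omega)}^p-\mu^+(\{0\})\|u_+\|_{L^p(\Omega)}^p-\mu^+(\{1\})\|\nabla u_+\|_{L^p(\Omega)}^p$ and $S_B=-\lambda\|u_-\|_{L^p(\Omega)}^p+\mu^+(\{0\})\|u_-\|_{L^p(\Omega)}^p+\mu^+(\{1\})\|\nabla u_-\|_{L^p(\Omega)}^p$, where $S_A,S_B$ denote the two $(0,1)$-integrals appearing in \eqref{eq9.1}; hence
\[
\|u_-\|_{L^p(\Omega)}^p S_A+\|u_+\|_{L^p(\Omega)}^p S_B=\mu^+(\{1\})\left(\|u_+\|_{L^p(\Omega)}^p\|\nabla u_-\|_{L^p(\Omega)}^p-\|u_-\|_{L^p(\Omega)}^p\|\nabla u_+\|_{L^p(\Omega)}^p\right),
\]
which is exactly $-\mu^+(\{1\})$ times the left-hand side of \eqref{eq9.2}. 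So when $\mu^+(\{1\})>0$ an eigenfunction can satisfy both hypotheses only in the borderline case of equality, and since both quantities are odd under $u\mapsto-u$, replacing $u$ by $-u$ does not help. Thus the very first arc of your path (from $u_+/\|u_+\|_{L^p(\Omega)}$ to $u$) is in general not available.

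Even granting that arc, your curve only reaches from $e_{1,\mu^+}$ through $u_+/\|u_+\|_{L^p(\Omega)}$ to $u$: the ``symmetric construction on $[1/2,1]$'' toward the $u_-$ side is the Lemma~\ref{M-P car tech lem} curve for $-u$, whose hypotheses are precisely the reverses of those for $u$, so the two halves cannot both be controlled, and you are left with no admissible continuation from $u$ to $-e_{1,\mu^+}$ below level $\lambda_{2,\mu^+}(\Omega)$. The paper's proof avoids both problems by never using the second eigenfunction: it takes nearly optimal odd loops $\phi_n$ with $\max_{\operatorname{Im}(\phi_n)}\|\cdot\|_{X_p(\Omega)}^p\le\lambda_{2,\mu^+}(\Omega)+1/n$, picks $u_n\in\operatorname{Im}(\phi_n)$ satisfying \eqref{eq9.1}--\eqref{eq9.2}, connects $u_n$ to $e_{1,\mu^+}$ by the Lemma~\ref{M-P car tech lem} curve followed by the hidden-convexity segment $\sigma_n$, reflects to connect $-u_n$ to $-e_{1,\mu^+}$, and travels from $u_n$ to $-u_n$ along the loop $\phi_n$ itself (this is exactly the middle piece your construction lacks), finally letting $n\to\infty$. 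If you wish to keep your eigenfunction-based strategy, the correct middle piece is not Lemma~\ref{M-P car tech lem} but the path $\omega\mapsto f(\omega)$ from the proof of Theorem~\ref{properties of 2 eigenvalue}(iii): estimates \eqref{lam big lam 21} and \eqref{lam big lam 22} give $\|f(\omega)\|_{X_p(\Omega)}^p\le\lambda_{2,\mu^+}(\Omega)$ for every $\omega\in\mathbb{S}^1$, so the quarter-circle joins $u_+/\|u_+\|_{L^p(\Omega)}$ to $-u_-/\|u_-\|_{L^p(\Omega)}$ below level $\lambda_{2,\mu^+}(\Omega)$, and two hidden-convexity segments to $\pm e_{1,\mu^+}$ then finish the path (and even show the infimum is attained). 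Finally, your first sketched route (deformation/Ekeland) also contains an unjustified step: a near-optimal odd loop need not pass anywhere near $\pm e_{1,\mu^+}$, so its image need not meet $B^{\pm}_{\epsilon}$; Theorem~\ref{properties of 2 eigenvalue}(ii) only shows the image is not contained in $B^{+}_{\epsilon}\cup B^{-}_{\epsilon}$.
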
 
\begin{proof}
    Observe that, for every $\gamma \in \Gamma(e_{1, \mu^+},-e_{1, \mu^+})$, the closed path on $\mathcal{M}$ obtained by gluing $\gamma$ and $-\gamma$ can be identified with the image of some odd continuous mapping $\phi$ from $\mathbb{S}^1$ to $\mathcal{M}$. Therefore, by definition of $\lambda_{2, \mu^+}(\Omega)$ we have
\begin{equation*}
    \lambda_{2, \mu^+}(\Omega)=\inf _{\phi \in \Gamma_{1}(\Omega)} \max _{u \in \operatorname{Im}(\phi)}\|u\|_{X_p(\Omega)}^{p} \leq \max _{u \in \operatorname{Im}(\phi)}\|u\|_{X_p(\Omega)}^p=\max _{u \in \operatorname{Im}(\gamma)}\|u\|_{X_p(\Omega)}^p.
\end{equation*}
By taking the infimum among all admissible paths $\gamma$, we obtain
\begin{equation*}
    \lambda_{2, \mu^+}(\Omega)\leq \inf _{\gamma \in \Gamma\left(e_{1, \mu^+},-e_{1, \mu^+}\right)}\max _{u \in \operatorname{Im}(\gamma)}\|u\|_{X_p(\Omega)}^p.
\end{equation*}
Let us now prove the reverse inequality. For every $n \in \mathbb{N}$, we take $\phi_n \in \Gamma_1(\Omega)$ such that
\begin{equation}\label{M-P car pr 1}
    \max _{u \in \operatorname{Im}\left(\phi_n\right)}\|u\|_{X_p(\Omega)}^p \leq \lambda_{2, \mu^+}(\Omega)+\frac{1}{n} .
\end{equation}
Let us pick up a function $u_n \in \operatorname{Im}(\phi_n)$ such that the hypotheses \eqref{eq9.1} and \eqref{eq9.2} of Lemma \ref{M-P car tech lem} are satisfied. This choice is always possible. Indeed, since $\phi_n$ is odd, the set $\operatorname{Im}\left(\phi_n\right)$ is symmetric with respect to the origin, i.e., if $v \in \operatorname{Im}\left(\phi_n\right)$, then $-v \in \operatorname{Im}\left(\phi_n\right)$ as well. Then the existence of such a $u_n$ follows from Remark \ref{M-P car tech rem}. Consequently, applying Lemma \ref{M-P car tech lem} and \eqref{M-P car pr 1}, we conclude that 
\begin{equation}\label{M-P car pr 2}
    \left\|\gamma_{n, t}\right\|_{X_p(\Omega)}^p \leq \lambda_{2, \mu^+}(\Omega)+\frac{1}{n}, \quad 0 \leq t \leq \frac{1}{2},
\end{equation}
where the curve $\gamma_{n, t}$ is given by
\begin{equation*}
    \gamma_{n, t}=\frac{\left(u_n\right)_{+}-\cos (\pi t)\left(u_n\right)_{-}}{\left\|\left(u_n\right)_{+}-\cos (\pi t)\left(u_n\right)_{-}\right\|_{L^p(\Omega)}}, \quad 0 \leq t \leq \frac{1}{2}.
\end{equation*}
Observe that the curve $\gamma_n$ connects $u_n$ to its $L^p$-renormalized positive part.

Now, we aim to connect the function $\frac{\left(u_n\right)_{+}}{\left\|\left(u_n\right)_{+}\right\|_{L^p(\Omega)}}$ to the first eigenfunction $e_{1, \mu^+}$. For this, we consider the curve
\begin{equation*}
  \sigma_{n, t}=\left((1-t) \frac{\left(u_n\right)_{+}^p}{\left\|\left(u_n\right)_{+}\right\|_{L^p(\Omega)}}+t e_{1, \mu^+}^p\right)^{\frac{1}{p}}, \quad t \in[0,1],  
\end{equation*}
along which our energy functional is convex (see the proof of Theorem \ref{result sign changing}), i.e.
\begin{equation*}
   \left\|\sigma_{n, t}\right\|_{X_p(\Omega)}^p \leq(1-t) \frac{\left\|\left(u_n\right)_{+}\right\|_{X_p(\Omega)}^p}{\left\|\left(u_n\right)_{+}\right\|_{L^p(\Omega)}^p}+t\left\|e_{1, \mu^+}\right\|_{X_p(\Omega)}^p . 
\end{equation*}
In particular, it follows from \eqref{M-P car pr 2} that
\begin{equation*}
    \left\|\sigma_{n, t}\right\|_{X_p(\Omega)}^p \leq \lambda_{2, \mu^+}(\Omega)+\frac{1}{n}, \quad t \in[0,1].
\end{equation*}
Now, gluing together $\gamma_n$ and $\sigma_n$ we obtain the new curve 
\begin{equation*}
    \widetilde{\gamma}_{n, t}= \begin{cases}\gamma_{n, t}, & t \in[0,1 / 2], \\ \sigma_{n,(2 t-1)}, & t \in[1 / 2,1],\end{cases}
\end{equation*}
which connects $u_n$ to $e_{1, \mu^+}$ and on which the energy is always less than $\lambda_{2, \mu^+}(\Omega)+1 / n$. 

Finally, gluing together the three paths $\widetilde{\gamma}_n$, $-\widetilde{\gamma}_n$ and $\phi_n$, using the fact that the energy functional is
even (therefore, the previous estimate still holds true on this path), we get a continuous curve $\eta_n \in \Gamma(e_{1, \mu^+},-e_{1, \mu^+})$ such that
\begin{equation*}
    \max _{t \in[0,1]}\left\|\eta_{n, t}\right\|_{X_p(\Omega)}^p \leq \lambda_{2, \mu^+}(\Omega)+\frac{1}{n}, \quad n \in \mathbb{N} .
\end{equation*}
By taking the infimum over $\Gamma(e_{1, \mu^+},-e_{1, \mu^+})$, we then get
\begin{equation*}
    \inf _{\gamma \in \Gamma\left(e_{1, \mu^+},-e_{1, \mu^+}\right)} \max _{u \in \operatorname{Im}(\gamma)}\|u\|_{X_p(\Omega)}^p \leq \lambda_{2, \mu^+}(\Omega)+\frac{1}{n} .
\end{equation*}
Passing to the limit as $n$ goes to $\infty$, we obtain the desired conclusion.
\end{proof}

%%%%%%%%%%%%%%%%%%%%%%%%%%%%%%%%%%%%%%%%%%%%%%%

\section*{Conflict of interest statement}
On behalf of all authors, the corresponding author states that there is no conflict of interest.

\section*{Data availability statement}
Data sharing is not applicable to this article as no datasets were generated or analysed during the current study.

\section*{Acknowledgement}
YA is supported by the Bolashak Government Scholarship of the Republic of Kazakhstan. This work was completed while SG was visiting the Ghent Analysis \& PDE Center, Ghent University, Belgium. SG gratefully acknowledges the financial support for this research work under ARG-MATRICS, grant No: ANRF/ARGM/2025/001570/MTR, Anusandhan National Research Foundation (ANRF), Government of India.  YA, VK, and MR are supported by the FWO Odysseus 1 grant G.0H94.18N: Analysis and Partial Differential Equations and the Methusalem program of the Ghent University Special Research Fund (BOF) (Grant number 01M01021). VK and MR are also supported by FWO Senior Research Grant G011522N.
%%%%%%%%%%%%%%%%%%%%%%%%%%%%%%%%%%%%%%%%%%%%%%%
%%%%%%%%%%%%%%%%%%%%%%%%%%%%%%%%%%%%%%%%%%%%%%%

\end{document}